\newtheorem{theorem}{Theorem}[section]
\newtheorem{lemma}[theorem]{Lemma}
\newtheorem{proposition}[theorem]{Proposition}
\newtheorem{corollary}[theorem]{Corollary}
\newtheorem{remark}[theorem]{Remark}
\theoremstyle{definition}
\newtheorem{definition}[theorem]{Definition}
\def\paragraph#1{\noindent \textbf{#1}}
\numberwithin{equation}{section}
\def\dist{\mathop{\rm dist}\nolimits}
\def\d{\mathrm{d}}
\def\<{\langle}
\def\>{\rangle}
\def\a{\alpha}
\def\b{\beta}
\def\e{\epsilon}
\def\ve{\varepsilon}
\def\g{\gamma}
\def\tg{\tilde\gamma}
\def\l{\lambda}
\def\s{\sigma}
\def\t{\tau}
\def\o{\omega}
\def\L{\Lambda}
\def\G{\Gamma}
\def\O{\Omega}
\def\Th{\Theta}
\def\del{\partial}
\def\R{{\Bbb R}}  
\def\N{{\Bbb N}}  
\def\P{{\Bbb P}}  
\def\E{{\Bbb E}}  
\def\T{{\Bbb T}}
\let\cal=\mathcal
\def\AA{{\cal A}}
\def\BB{{\cal B}}
\def\DD{{\cal D}}
\def\EE{{\cal E}}
\def\FF{{\cal F}}
\def\II{{\cal I}}
\def\MM{{\cal M}}
\def\OO{{\cal O}}
\def\PP{{\cal P}}
\def\RR{{\cal R}}
\def\SS{{\cal S}}
\def\VV{{\cal V}}
\def\UU{{\cal U}}
\def\VV{{\cal V}}
 \def \G {{\Gamma}}
 \def \L {{\Lambda}}
 \def \b {{\beta}}
\def \e {{\epsilon}}
 \def \s {{\sigma}}
 \def \t {{\tau}}
 \def \T {{\Theta}}
 \def \g {{\gamma}}
 \def \l {{\lambda}}
 \def \d {{\delta}}
 \def \a {{\alpha}}
 \def \o {{\omega}}
 \def \O {{\Omega}}
 \def \del {{\partial}}
 \def \ba {\begin{array}}
 \def \ea {\end{array}}
\def \frf {{\mathfrak f}}
 \def \cR {{\cal R}}
 \newcommand{\be}{\begin{equation}}
 \newcommand{\ee}{\end{equation}}
\newcommand{\bea}{\begin{eqnarray}}
 \newcommand{\eea}{\end{eqnarray}}
\def\TH(#1){\label{#1}}\def\thv(#1){\ref{#1}}
\def\Eq(#1){\label{#1}}\def\eqv(#1){(\ref{#1})}
\def\sfrac#1#2{{\textstyle{#1\over #2}}}
 \def \1{\mathbbm{1}}
\def\wt {\widetilde}
\def\wh{\widehat}
 \def\asl{\hbox{\rm Asl}}
\def \frf {{\mathfrak f}}
\def\cc{{\cal C}}
\def\cd{{\cal D}}
\def\md{{\mathfrak D}}
\def\ff{{\mathfrak f}}
\def\mg{{\mathfrak G}}
\def\mj{{\mathfrak J}}
\def\ww{{\mathfrak w}}
\def\ms{{\mathfrak S}}
\def\mt{{\mathfrak T}}
\def\frp{{\mathfrak P}}
\def \mR {{\mathbb R}}
\def\ci{{\cal I}}
\def\ck{{\cal K}}
\def\cm{{\cal M}}
\def\cp{{\cal P}}
\def\cq{{\cal Q}}
\def\cu{{\cal U}}
\def\cv{{\cal V}}
\def\cw{{\cal W}}
\def\fn{\mathfrak{N}}
\def\cR{{\cal R}}
\def\cs{{\cal S}}
\def\cg{{\cal G}}
\def\ce{{\cal E}}
\def\cz{{\cal Z}}
\def\cy{{\cal Y}}
\def\ca{{\cal A}}
\def\bcv{\bar\cu}
\def\hcv{\hat\cu}
\def\tcv{\tilde\cu}
\def\ttau{\tilde\tau}
\def\bb{\bar\b}
\def\Ups{\Upsilon}
\def\eps{\epsilon}
\def\tv{\tilde\varphi}
\def\bxn{\bar X^N}
\def\cx{{\cal X}}
\def\ax{\acute X}
\def\uxn{\breve X^N}
\def\ed{\stackrel{d}{=}}
\def\nn{\nonumber}
\begin{document}
 \title[Low temperature cascading 2-GREM close to equilibrium]
{Asymptotic behavior and aging of a low temperature cascading 2-GREM dynamics at extreme time scales}
\author[L.R. Fontes]{Luiz Renato Fontes}
\address{
L.R. Fontes\\
Universidade de S\~ao Paulo, IME, 
Rua do Mat\~ao 1010,
05508-090 S\~ao Paulo SP,
Brasil 
}
\email{lrenato@ime.usp.br}
\author[V. Gayrard]{V\'eronique Gayrard}
\address{
V. Gayrard\\ Aix Marseille Univ, CNRS, Centrale Marseille, I2M, Marseille, France
}
\email{veronique.gayrard@math.cnrs.fr}

\subjclass[2010]{60K35, 82C44}

\keywords{
GREM, Random Hopping Dynamics, low temperature, fine tuning temperature, scaling limit, extreme time-scale, K process, aging, spin glasses.
}
\date{\today}

\begin{abstract} 
We derive scaling limit results for the Random Hopping Dynamics for the cascading two-level GREM at low temperature at extreme time scales. It is known that in the cascading regime there are two static critical temperatures. We show that there exists a (narrow) set of {\em fine tuning} temperatures; when they lie below the static lowest critical temperature, three distinct {\em dynamical} phases emerge below the lowest critical temperature, with three different types of limiting dynamics depending on whether the temperature is (well) above or below, or at a fine tuning temperature, all of which are given in terms of K processes. We also derive scaling limit results for temperatures between the lowest and he highest critical ones, as well as aging results for all the limiting processes mentioned above, by taking a second small time limit.
\end{abstract}

\thanks{L.R.F.~was supported in part by CNPq 311257/2014-3 and 305760/2010-6; FAPESP 2017/10555-0 and 09/52379-8, and thanks the Institut de Math\'ematiques de Marseille of AMU for kind hospitality and support.
V.G. thanks the Instituto de Matem\' atica e Estat\'\i stica of USP for kind hospitality.
}

\maketitle

\section{Introduction}
    \TH(1)
    
Unlike classical magnetic systems for which both the existence and the nature of phase transitions can be
 characterized through the purely static Gibbsian formalism, phase transitions in spin-glasses (and glasses in general) are primarily dynamic: in the low temperature glassy phase,  the time to reach equilibrium is so long that the relaxation dynamics is the dominant aspect of  any experimental observation; even more unusual, relaxation is history dependent and  dominated by increasingly slow transients  \cite{BCKM98}. The appearance of this phenomenon, known as \emph{aging}, was proposed as an operating definition of the spin-glass transition \cite{Bou92, Ku01} and simple Markovian dynamics on finite graphs,  the so-called \emph{trap-models}, were designed to reproduce the power law decay of two-time correlations functions that characterizes aging experimentally \cite{Bou92},  \cite{BD95},  \cite{SN00} (see also \cite{BCKM98} and references therein).  This is  to be contrasted with equilibrium (i.e.~stationary) dynamics where correlations become time-translation invariant and decrease exponentially fast in time as the size of the system diverges. Very popular in theoretical physics, these phenomenological models have often replaced  the microscopic   spin-glass models from which they are inspired, primarily mean-field models such as the REM, GREM and $p$-spin SK models, with no other  justification than their apparent effectiveness.

To study aging in mean-field  spin-glass models, one first endows the microscopic spin space $\{-1,1\}^N$ with a Glauber  dynamics, namely, a stochastic dynamics that is reversible with respect to the Gibbs measure at inverse temperature $\b>0$ associated to the model's Hamiltonian. Under mild conditions, such dynamics converges to equilibrium. Knowing that,  to leading order,  the time-scale of equilibrium is exponential in $N$ \cite{FIKP, GJ17}, times-scales of interest are of the form $\exp{(\g\b N)}$, $\g>0$ (possibly depending on $N)$, the aim  being to choose $\b$ and $\g$ in such a way that the process is out of or close to equilibrium, and away from its high temperature phase. The rigorous study of aging in such dynamics  has first  been carried out in the REM which is now well understood in the domain $0<\g<\min(\b, \b_\star)$, $\b_\star\equiv \sqrt{2\log 2}$ \cite{BBG03b, BC06b, G10b, G16} (see also the references therein) as well as on sub-exponential time-scales  \cite{Gun09}. The dynamical transition from the glassy to the high temperature phase occurs along the line $\g=\b$  \cite{GH} with the boundary value $\g=\b_\star$ yielding the time-scale of equilibrium or \emph{extreme time-scale}. For $\g>\b_\star$ the dynamics is stationary and aging is interrupted. The much harder,  strongly correlated $p$-spin SK models could also be dealt with albeit only for a particular choice of the dynamics, the so-called \emph{Random Hopping Dynamics} and in domains of the parameters for which a {\em REM-like universality} takes place, namely, when the dynamics has not had time to discover the full correlation structure of the random environment and, not being influenced by strong long-distance correlations, behaves essentially like a REM
 \cite{BBC08,BG13,BAGun11,BGS13}. These limitations reflect our very poor grasp of the structure of extremes of the $p$-spin Hamiltonian and  of course puts the understanding of the dynamical spin-glass transition out of reach.

In the present paper we  initiate the study of the aging dynamics of the GREM, a model for which both the process of extremes and the low temperature Gibbs measure are fully understood \cite{CCP,BK,BB07}. More specifically, we consider the 2-GREM evolving under the {Random Hopping Dynamics} at {extreme time scales}, where it is close to equilibrium, and visits the configurations in the support of the Gibbs measure. The parameters of the model and the temperature are chosen such that this support has a fully cascading structure, reflecting the fact that correlations do matter. For  the most part of this paper, we will be concerned with the scaling limit of the dynamics for those time scales and parameters. Having obtained the limiting dynamics -- given by K processes appropriate to each regime that emerges in the analysis -- (see Subsection~\ref{thms}), we then proceed to take a {\em further small time limit} for which aging results follow which, as will be seen, not only go beyond the REM-like picture but are also richer than predicted  in the physics literature on the basis of trap models (see Subsection~\ref{aging}). In a follow-up paper we will consider all shorter time-scales where the dynamics is aging.

\subsection{The model} \label{model}
   \TH(S1.1)    
          
We now specify our setting.
Let	{$\VV_N=\{-1,1\}^N$, $\VV_{N_i}=\{-1,1\}^{N_i}$}, $\s=\s_1\s_2$, $\s_i\in \VV_{N_i}$, $i=1,2$, and make $N_1=\lfloor pN\rfloor$ for some $p\in(0,1)$ and $N_2=N-N_1$; we view $\s_i$ as the $i$-th {\em hierarchy} or {\em level} of $\s$.
Given $a\in(0,1)$, set 
\be
H_N(\s)=H_N^{(1)}(\s)+H_N^{(2)}(\s), \quad \s\in\VV_N,
\Eq(01.1)
\ee
where \be\label{eq:Xi}
H_N^{(1)}(\s)=H_N^{(1)}(\s_1)=-\sqrt{aN}\Xi^{(1)}_{\s_1},\quad H_N^{(2)}(\s)=-\sqrt{(1-a)N} \Xi^{(2)}_{\s_1\s_2}, \quad \s\in\VV_N,
\ee
and $\Xi:=\{\Xi^{(1)}_{\s_1},\Xi^{(2)}_\s;\,\s\in\VV_N\}$ is a family of i.i.d.~standard Gaussian random variables.
We  call  \emph{random environment} and denote by $(\O, \FF, \PP)$  the probability space on which the sequence of processes $(H_N(\s), \s\in\VV_N)$, $N>1$, is defined. As usual, we call $H_N(\s)$ the {\em Hamiltonian} or {\em energy} of $\s$. We refer to the minima of 
$H_N(\cdot)$ as {\em low energy} or {\em ground state} configurations. We will 
also refer to them, for being minima of $H_N(\cdot)$,
as {\em top} configurations. Likewise for $H_N^{(i)}(\cdot)$, $i=1,2$. 
The associated Gibbs measure at inverse the temperature $\b>0$  is the (random) measure $G_{\b,N}$ defined on $\VV_N$ through $G_{\b,N}(\s)=e^{-\b H_N(\s)}/Z_{\b,N}$ where $Z_{\b,N}$ is a normalization.

Let us briefly recall the key  features of the statics of the 2-GREM (see \cite{BS02, GK15} for nicely detailed accounts of the 2-GREM). As regard the Hamiltonian, two scenarios may be distinguished, related to the composition of the ground state energies in terms of their first and second level constituents: the {\em cascading phase}, when $a>p$, and where those energies are achieved by adding up the minimal energies of the two levels, so that to each first level ground state configuration, there corresponds many second level ground state configurations; in the complementary {\em non-cascading phase}, the composition of the ground state energies is different, and for each first level constituent there corresponds a single second level constituent.

Consider now the case where  $a>p$. 
\textcolor{black}{ 
The free energy exhibits two discontinuities at the critical temperatures 
\be
\b^{cr}_1\equiv\b_*\sqrt{\frac{p}{a}}<\b^{cr}_2\equiv\b_*\sqrt{\frac{1-p}{1-a}}, \quad \b_{*}=\sqrt{2\ln 2},
\Eq(new.temp)
\ee
and Gibbs measure behaves as follows.
In the  \emph{high-temperature} region $\b<\b^{cr}_1$, no single configuration carries a positive mass in the limit $N\uparrow\infty$, $\PP$-a.s.; here the measure resembles the high temperature Gibbs measure of the REM.
On the contrary, in the \emph{low temperature} region
$
\b>\b^{cr}_2
$,
Gibbs measure becomes fully concentrated on the set of ground state configurations,
yielding Ruelle's two-level probability cascade. In between, when 
$
\b^{cr}_1<\b<\b^{cr}_2
$,
an intermediate situation occurs in which the first level Hamiltonian variables ``freeze'' close to their ground state values, but not the second level ones, so that, once again, no single configuration carries a positive mass in the limit $N\uparrow\infty$. To obtain a macroscopic mass, one must lump together an exponentially large number of second level configurations. }
In this paper we focus on the cascading phase  ($a>p$) of the model at low temperature  ($\b>\b^{cr}_2$). We will also treat the  case $\b^{cr}_1<\b<\b^{cr}_2$  in a sub-domain of the parameters where we can prove a scaling limit for the dynamics at the extreme time-scale. In the complementary sub-domain the process is in an aging phase and will thus be treated in the follow-up paper.

The dynamics we consider is the popular Random Hopping dynamics (hereafter RHD). This 
is  a Markov jump process $(\s^N(t), t>0)$ that evolves along the edges of $\VV_N$ with transition rates given by,
for spin configurations $\s,\s'\in\VV_N$,  
\be
Nw_N(\s,\s')
=
e^{\b H_N(\s)}\1_{\s\,{\buildrel 1\over\sim}\,\s'}+e^{\b H^{(2)}_N(\s)}\1_{\s\,{\buildrel 2\over\sim}\,\s'}
\Eq(01.11)
\ee
and $w_N(\s,\s')=0$ else, where $\s\,{\buildrel i\over\sim}\,\s'$ iff $\s\sim\s'$ and $\s_i\sim\s'_i$.
Following a standard notation $\s\sim\s'$ indicates that 
$d(\s,\s')=1$, where $d(\cdot)$ stands for the usual Hamming distance in $\VV_N$ -- we will below denote by
$d_i$ such distance in $\VV_{N_i}$, $i=1,2$. In other words, $\s\sim\s'$ indicates that 
$\s,\s'$ differ in exactly one coordinate; we say in this context that $\s,\s'$ are (nearest) neighbors (in $\VV_N$).
We recognize the graph whose vertices are $\VV_N$ and whose edges are the neighboring pairs of configurations of $\VV_N$, 
abusively denoted also by $\VV_N$, as the $N$-dimensional hypercube.
Clearly, $\s^N$   is reversible w.r.t.~$G_{\b,N}$.

It now remains to specify the time-scale in which we observe this process. As mentioned earlier, we are
interested in {extreme time-scales}, where the dynamics is close to equilibrium. What we mean here by the dynamics being close to equilibrium at a given extreme time-scale 
 is that the dynamics with time rescaled by that time-scale converges in distribution
to a nontrivial 
Markov process which is ergodic in the sense of having an irreducible (countable) state space and a unique equilibrium distribution. 
The limiting dynamics is thus close to equilibrium, since it converges to equilibrium as time diverges, and it is in this sense that we say that the original dynamics is close to equilibrium at the extreme time-scale
-- see Remark~\ref{mean_ext-2} for a more precise discussion.

For future reference we call $\P$ the law of $\s^N$ conditional on the $\s$-algebra $\FF$, i.e. for fixed 
realizations of the random environment, or $\P_\eta$, when the initial configuration $\eta$ is specified.
We will denote by $\cp\otimes\P_\eta$ the probability measure obtained by integrating $\P_\eta$ with respect to $\cp$. Expectation with respect to  $\P$,  $\PP$ and $\cp\otimes\P_\mu$ are denoted  by  $\E$, $\EE$
and $\ce\otimes\E_\mu$, respectively, where $\mu$ is the uniform probability measure on $\VV_N$. 

\subsection{Dynamical phase transitions}\label{heuristics}

The distinct {\em static} phases of the cascading 2-GREM, determined by $\b=\b^{cr}_i$, $i=1,2$, are expected to exhibit 
different {\em dynamical} behaviors under the RHD at extreme (and conceivably other) time scales. 
This will be seen when comparing the results of our analysis of the RHD below the lowest critical temperature ($\b>\b_2^{cr}$) on the one hand, and those for intermediate temperatures
($\b\in(\b_1^{cr},\b_2^{cr})$), on the other hand. Another source of dynamical phase transition in the RHD at extreme time scales 
is the {\em fine tuning} phenomenon discussed next.

\subsubsection{Fine tuning; heuristics.} \label{ft:heuuristics}
There are two competing factors governing the behavior of the RHD at extreme time-scales.
One is the number of jumps it takes for the dynamics to leave a first level ground state configuration $\s_1$.
This is a geometric random variable with mean 
$\frac{N_2}{N_1}\exp\{\b\sqrt{aN}\Xi^{(1)}_{\s_1}\}\sim \exp\{\b\b_* \sqrt{pa}N\}$.
The other factor is the number of jumps the process makes until it finds a second level low energy configuration. 
This is $\sim 2^{(1-p)N}$. The relative size of these numbers determines three temperature regimes.

At relatively high temperatures, 
the second number dominates, and so after leaving a ground state configuration
$\s=\s_1\s_2$, which it does at times of 
order  $\exp\{\b \sqrt{(1-a)N}\Xi^{(2)}_{\s}\}\\\sim \exp\{\b\b_* \sqrt{(1-p)(1-a)}N\}$,
 $\s^N$ will visit many first level ground state configurations before it finds a second level ground state configuration. 
When it first finds such a second level 
configuration, say $\s'_2$, while in a first level ground state configuration $\s'_1$
(meaning that it first returned to an overall ground state configuration $\s'_1\s'_2$), $\s'_1$ will be effectively distributed proportionally to $\exp\{{\b\sqrt{aN}\Xi^{(1)}_{\s'_1}}\}$;
this can be explained by a size-bias mechanism that operates in the selection of $\s'_1$. 
There is no such mechanism for the choice of $\s'_2$, and it is distributed uniformly.

On the other hand, at low enough temperatures, the first factor dominates, and while staying at a first level 
low energy configuration, the process has time to reach equilibrium at the second level, so at the time scale 
where we see (uniform) transitions between first level low energy configurations, the second level is in equilibrium.
This is a longer time scale, composed of the many jump times at second level till exiting first level.

In a narrow strip of borderline temperatures, we see nontrivial dynamics at both levels going on at the same time scale (corresponding to jump times out of second level ground state configurations, of magnitude 
$\exp\{\b\b_* \sqrt{(1-p)(1-a)}N\}$, 
as at high temperatures).

In order for the above picture to represent the dynamics, 
{we need the temperature to be below the static lowest phase transition temperature  $1/\b^{cr}_2$}, 
so that  the time spent off the ground state configurations is negligible. Moreover, this three-phase dynamical picture will take place  if (and only if) the borderline temperatures alluded to above -- and to be called {\em fine tuning} temperatures below -- are (well) below the static lowest phase transition temperature; otherwise, we will see only one dynamical phase below that lowest critical temperature, namely the low temperature phase alluded to above.

\subsubsection{Intermediate temperatures.}\label{intermediate}
For values of $\beta$ between $\b=\b^{cr}_1$ and $\b=\b^{cr}_2$, we investigate the behavior of the dynamics at a time scale when we see transitions between the first level ground state configurations at times of order 1. In order that this time scale corresponds to an extreme time scale (as stipulated above -- see the one but last paragraph of Subsection~\ref{model}), we need a further 
a restriction in the temperature, to be seen below. 
The behavior of the dynamics of the first level configuration for intermediate temperatures in those conditions 
is similar to the one below the 
minimum between the lowest critical and  the fine tuning temperature.

\subsection{Aging results.} \label{aging}
Let us briefly anticipate our main aging results, holding in the case described at the last paragraph of Subsection~\ref{ft:heuuristics}, where fine tuning temperatures are below the lowest static critical temperature. In this case, as mentioned above, we have three phases for the dynamics at extreme time scales below the lowest static critical temperature. As already explained briefly above, our aging results in this paper are obtained by first taking the scaling limit of the dynamics at extreme time scales, thus obtaining ergodic processes, and next taking a small time limit in those processes, thus obtaining aging results. Let us suppose we have already taken the first, extreme time scale limit. We obtain three distinct dynamics in each of the temperature ranges: above fine tuning, at fine tuning, and below fine tuning (see Theorems~\ref{thm:cm},~\ref{thm:cft}, and~\ref{thm:cl} in Subsection~\ref{thms}). Let us consider the events 
\begin{equation}\label{ni}
	\fn_i=\fn_i(t_w,t)=\{Y_i\mbox{ does not jump between times }t_w\mbox{ and }t_w+t\},\,\,i=1,2,
\end{equation}
where $Y_i$ represents the $i$-th level marginal of the process, and $t_w,t>0$, and let us define
\begin{equation}\label{overlap}
	\Pi(t_w,t_w+t)=P(\fn_1\cap\fn_2)+pP(\fn_1\cap\fn_2^c)+(1-p)P(\fn_1^c\cap\fn_2).
\end{equation}
$\Pi$ is an analogue of a (limiting) two-time overlap function of the RHD. In the regime considered in this subsection,
we have the following (vanishing time limit) aging result.
\begin{equation}\label{age_res}
\hspace{-9pt}	\lim_{t_w,t\to0\atop{t/t_w\to\theta}}\Pi(t_w,t_w+t)=
	\begin{cases}
	 \asl_{\a_2}\left(\frac{1}{1+\theta}\right),&\mbox{above fine tuning,}\cr
	p \asl_{\a_1\a_2}\left(\frac{1}{1+\theta}\right)+(1-p)\asl_{\a_2}\left(\frac{1}{1+\theta}\right),&\mbox{at fine tuning,}\cr
	p\asl_{\a_1}\left(\frac{1}{1+\theta}\right),&\mbox{below fine tuning,}	
	\end{cases}	
\end{equation}
with proper choices of $\a_1,\a_2\in(0,1)$, where $\asl_\cdot$ is the arcsine distribution function. 
See Section~\ref{age} and also the last paragraph of Section~\ref{inter} for details and other regimes.

\subsection{A 2-GREM-like trap model.} The idea behind the construction of trap models for low temperature glassy dynamics is as follows:
the traps represent the ground state configurations and, assuming that at low temperature the dynamics
spends virtually all of the time on those configurations -- here an extreme time-scale is assumed --, higher energy configurations are simply  not represented, 
 and all one needs to do is specify  the times spent by the dynamics at each visit to a ground state configurations, and the transitions among those configurations, in such a way that the resulting process be Markovian.

The simplest such model
 to be proposed in the study of aging was put forth in~\cite{BD95}, with $\{1,\ldots,M\}$ as configuration space, mean waiting time at $i$ given by $X_i$, with $X_1,X_2,\ldots$ iid random variables in the domain of attraction of of an $\a$-stable law, $\a\in(0,1)$, and uniform transitions among the configurations. This is the so called {\em REM-like trap model} or {\em trap model on the complete graph}. Models of a similar nature for the GREM were proposed in~\cite{BD95} and also in~\cite{SN00}. The scaling limit of the latter model for a {\em fine tuning} choice of level volumes was computed in~\cite{FGG} and its aging behavior away from fine tuning was studied in \cite{GG16}.

Out of our analysis of the RHD in the cascading phase at low temperatures 
comes up the following GREM-like trap model on the ground state configurations of the GREM. The configuration space is represented by $M_1$ first level ground state configurations, labeled in decreasing order, and for each of those configurations, we have $M_2$ second level ground state configurations, labeled in decreasing order. 
The transition probabilities $p(x,y)$ between $x=(x_1,x_2)$ and $y=(y_1,y_2)$,
 $1\leq x_i,y_i\leq M_i$, $i=1,2$, are given by
\be\label{trans}
\vspace{-5pt}
p(x,y)=
         \begin{cases}
	\left[(1-\l^{y_1})+\ \l^{y_1}\,\nu_1(y_1)\right]\frac{1}{M_2},&\mbox{if }x_1=y_1,\cr
	\nu_1(y_1)\l^{y_1}\frac{1}{M_2},&\mbox{otherwise},
	\end{cases}	
\ee
where 
\be
\l^{y_1}=\frac1{1+M_2\psi\g_1(y_1)} \quad\text{and}\quad
\nu_1(y_1)=\frac{
\g_1(y_1)\l^{y_1}
}
{
\sum_{z=1}^{M_1}\g_1(z)\l^{z}
}.
\Eq(trans.2)
\ee
The factor $\psi\in[0,\infty]$ in \eqv(trans.2) interpolates between higher temperatures, above fine tuning ($\psi=0$) 
and low temperatures, below fine tuning ($\psi=\infty$);
$\psi\in(0,\infty)$ corresponds to borderline, fine tuning temperatures in the picture outlined above, and to be described 
more precisely below. The factors $\g_1(\cdot)$ correspond to the scaled $M_1$ maxima first-level {\em Boltzmann factors} 
$\exp\{\b\sqrt{aN}\Xi^{(1)}_{\cdot}\}$.

The time spent at each visit to $x$ in the appropriate time scale is an exponential random variable with mean
$\g_2(x)$, where for each $x_1$, $\g_2(x_1,\cdot)$  corresponds to the scaled $M_2$ maxima second-level 
{\em Boltzmann factors}  $\exp\{\b\sqrt{(1-a)N}\Xi^{(2)}_{\s_1\cdot}\}$, with $\s_1$ the first level configuration labeled $x_1$, as explained above. It must be said that this time scale is of the order of magnitude of the time needed for the dynamics to jump out of ground state configurations, and it is indeed {\em extreme} in the above sense only for fine tuning temperatures and above. In these cases,~(\ref{trans}) indeed represents the transitions among ground states (in the extreme time scale). At lower temperatures, as explained in the discussion on dynamical phase transition above, the extreme time scale is longer, with uniform transitions on first level, with exponential waiting times, and on second level the dynamics is a trivial product of equilibria at different  times.

The results indicated above do not seem to be in or be predicted by the physics literature, which has focused on short time scales,
where all levels age simultaneously, and thus no effect of the longer time dynamical phase transition is present. 
This matches our {\em short} extreme times aging results only at fine tuning, where that simultaneity takes place.
Also, our GREM-like trap model differs from those considered in the literature (in~\cite{BD95,SN00}).

\subsection{Organization.} In Section~\ref{mres} we make precise the notions introduced in this introduction, and formulate our 
scaling limit results for $\s^N$ on extreme time scales for $\beta>\beta_2^{cr}$. In Sections~\thv(1.5)-\thv(S4.4) we formulate and argue entrance law 
results leading in particular to the transition probabilities between ground state configurations described in~\ref{trans}. These results are key ingredients to the proofs of the above mentioned scaling limit results, which are undertaken in 
Sections~\ref{proofcl}-\ref{proofcft}. 
Section~\ref{age} is devoted to a brief discussion about aging results that we obtain for the limit processes, as already mentioned.
In Section~\ref{inter} we briefly discuss results for the intermediate temperature phase $(\beta_1^{cr},\beta_2^{cr})$.
An appendix contains definitions of the limit processes entering our 
scaling limit results, as well as auxiliary results.

\section{Scaling limit of $\s^N$. Main Results}\label{mres}

\subsection{Choice of parameters} 

As mentioned above, we will study the cascading phase, which, we recall, corresponds to
\begin{equation}\label{casca}
a>p.
\end{equation}
As regards temperatures, we want to take volume dependent ones (this is needed in order to capture the fine tuning phase transition).
We also want low temperatures, so in the cascading phase this corresponds to
\begin{equation}\label{low}
\liminf_{N\to\infty}\b>\b_*\sqrt{\frac{1-p}{1-a}},
\end{equation}
where the dependence of $\b$ on $N$ is implicit. In order to describe that dependence, 
let us start by setting $\b_{*}=\sqrt{2\ln 2}$, $\kappa=\sfrac{1}{2}{(\ln\ln 2 + \ln 4\pi)}$,
\begin{equation}\label{a1c1}
\a_1^N=\frac{\b_{*}}{\b}\sqrt{\frac{N_1}{Na}},\quad 
c_1^N=\exp\left\{-\frac{1}{\a_1^N}\left(\b^2_{*}N_1-\frac{1}{2}\ln N_1+\kappa\right)\right\}.
\end{equation}
Given a sequence $\zeta_N< N_2\b^2_{*}/2$ of real numbers, let $\b(a, p, N,\zeta_N)$ be the solution in $\b$ of the equation
\be
c_1^N2^{N_2}=e^{\zeta_N + \kappa/\a_1^N}. 
\Eq(1.temp.1)
\ee
In explicit form
\be\label{betan}
\b(a, p, N,\zeta_N)=\frac{\b_{*}}{2}\frac{N_2}{N_1}\sqrt{\frac{N_1}{Na}}
\frac{
	1-\frac{2\zeta_N}{N_2\b^2_{*}}
}{
	1-\frac{\ln N_1}{2\b^2_{*}N_1}
}
=
\b^{FT}
\left(1-\sfrac{2\zeta_N}{N_2\b^2_{*}}\right)(1+o(1))
\ee
where 
$
\b^{FT}\equiv\frac{\b_{*}}{2}\frac{1-p}{\sqrt{pa}}
$
is the inverse of the \emph{fine tuning} temperature.
Depending on the behavior of $\zeta_N$ we distinguish three types of  temperature regimes. (Given two sequence $s_N$ and
$\bar s_N$ we write  $s_N\sim \bar s_N$ iff $\lim_{N\rightarrow\infty} s_N/ \bar s_N=1$. We also write $s_N=\OO(1)$, resp. $s_N=o(1)$, iff $|s_N|\leq C'<\infty$, for  some $C'$ and all $N>0$, resp. $\lim_{N\rightarrow\infty}s_N=0$.)

\begin{definition}[At/above/below fine tuning]
	\TH(1.def1)
	We say that a  sequence $\b^{-1}\equiv\b^{-1}_N>0$  of temperatures is in the fine tuning (FT) regime if there exists 
	a finite real constant $\zeta$ and  a convergent sequence $\zeta_N\sim\zeta$ such that $\b=\b(a, p, N,\zeta_N)$.
	We say that $\b^{-1}$ is below fine tuning if there exists a sequence $\zeta^-_N$ satisfying $\zeta^-_N\rightarrow -\infty$ as $N\rightarrow\infty$, and such that $\b=\b(a, p, N,\zeta^-_N)$. 
	Finally, we say that $\b^{-1}$ is above fine tuning if there exists a sequence $\zeta^+_N\leq (1-\d)N_2\b^2_{*}/2$, $\d>0$, satisfying $\zeta^+_N\rightarrow +\infty$ as $N\rightarrow\infty$, and such that $\b=\b(a, p, N,\zeta^+_N)$. 
	(Note that for $\b=\b(a, p, N,\zeta_N)$ to be a convergent sequence, $\zeta_N/N_2$ must  be convergent.)
\end{definition}

In order to precisely describe our results, we start with some technical preliminaries.
As described above, the way the ground state configurations are arranged in the cascading phase
naturally suggests the following relabeling of the state space
$\VV_N$.

\subsection{Change of representation} \label{cor}

Let $\DD_i=\{1\dots2^{N_i}\}$, $i=1,2$. Call $\xi_1^{x_1}$, $x_1 \in\DD_1$, the vertices  (of $\VV_{N_1}$) 
that carry the ranked variables 
\be
\Xi^{(1)}_{\xi_1^{1}}\geq  \Xi^{(1)}_{\xi_1^{2}}\geq \dots \Xi^{(1)}_{\xi_1^{x_1}}\geq\dots
\Eq(01.17)
\ee
and, similarly, for each $x_1 \in\DD_1$ call $\xi_2^{x_1x_2}$, $x_2 \in\DD_2$,  the vertices (of $\VV_{N_2}$) such that
\be
\Xi^{(2)}_{\xi_1^{x_1}\xi_2^{x_1 1}}\geq  \Xi^{(2)}_{\xi_1^{x_1}\xi_2^{x_1 2}}\geq \dots \Xi^{(2)}_{\xi_1^{x_1}\xi_2^{x_1x_2}}\geq\dots
\Eq(01.18)
\ee

Let $\xi:\DD\to\VV_N$ be such that $\xi(x)=\xi^x:=\xi_1^{x_1}\xi_2^{x_1x_2}$. 
This is a one to one mapping for almost every realization of $\Xi$.
Let now $X^N=X^N_1X^N_2$ be the mapping of $\s^N$ on $\DD$ by the inverse of $\xi$. This is the process we will state scaling limit results for. This alternative representation suits our purpose of taking scaling limits, mainly due to the convenience of working with a state space which naturally extends to set of the natural numbers, which will be the state space of the limiting processes. The class to which these processes belong, namely K processes, is described in the appendix. In the build up for those scaling limit results, let us introduce next scaling factors, and then the scaling limit of the environment. 

\subsection{Scalings}  Set 
\be
\a_2^N=\frac{\b_{*}}{\b}\sqrt{\frac{N_2}{N(1-a)}},\quad
c_2^N=\exp\left\{-\frac{1}{\a_2^N}\left(\b^2_{*}N_2-\frac{1}{2}\ln N_2+\kappa\right)\right\},\,\,\,
\Eq(01.2)
\ee
and define the scaled variables 
\bea
\gamma^N_1(\s_1)
&\equiv&
c_1^Ne^{\b\sqrt{a_1N} \Xi^{(1)}_{\s_1}}
=e^{u^{-1}_{N_1}(\Xi^{(1)}_{\s_1})/\a_1^N}
\Eq(01.3)
\\
\gamma^N_2(\s_1\s_2)
&\equiv&
c_2^N e^{\b\sqrt{a_2N} \Xi^{(2)}_{\s_1\s_2}}
=e^{u^{-1}_{N_2}(\Xi^{(2)}_{\s_1\s_2})/\a_2^N}
\Eq(01.4)
\eea
where for $i=1,2$, $u_{N_i}$ is the scaling function for the maximum of $2^{N_i}$ i.i.d.~standard Gaussians,
\be 
u_{N_i}(x)=\b_{*}\sqrt{N_i}+\frac{1}{\b_{*}\sqrt{N_i}}\left\{x-(\ln({N_i}\ln 2) + \ln 4\pi)/2 \right\},\quad x\in \R.
\Eq(01.5)
\ee

For later use set 
\be
\psi^{-1}_N=\sfrac{N_1}{N_2}e^{\zeta_N + \kappa/\a_1^N}.
\Eq(01.9)
\ee
Clearly, in the fine tuning regime, 
\be
\textstyle
\b\rightarrow \b^{FT},
\a_1^N\rightarrow \a_1\equiv 2\frac{p}{1-p},
\a_2^N\rightarrow \a_2\equiv2\sqrt{\frac{a}{1-a}\frac{p}{1-p}}
\,\,\,\text{and}\,\,\,
\psi^{-1}_N\rightarrow
\psi^{-1}
\equiv\sfrac{p}{1-p}e^{\zeta + \kappa\frac{1-p}{2p}}.
\Eq(01.10)
\ee

\subsection{Scaling limit of the environment}
For the remainder of this section, we assume that $\lim_{N\to\infty}\zeta_N^+/N_2$ exists. It follows that
so does 
\be\label{limb}
\lim_{N\to\infty}\b=:\bb.
\ee

Let
 ${\g^N_1}=\{\g^N_1(x_1),\,x_1\in\cd_1\}$,\, and for $x_1\in\cd_1$, set
${\g^{N,x_1}_2}:=\{\g^N_2(x_1x_2),\,x_2\in\cd_2\}$, where $\g^N_1(x_1)$ and $\g^N_2(x_1x_2)$ stand for 
$\g^N_1(\xi_1^{x_1})$ and $\g^N_2(\xi_2^{x_1x_2})$, respectively.
Then 
\begin{equation}
\label{g1ntog1}
{\g_1^N}\to{\g_1},\,\,\g^{N,x_1}_2\to{\g_2^1},
\end{equation}
$x_1\in\N$,
in distribution as $N\to\infty$, as point processes on $\R^+$,  and random measures on $\N$, 
where ${\g_1}:=\{\g_1(x_1),\,x_1\in\N\}$, ${\g_2^1}:=\{\g_2(1\,x_2),\,x_2\in\N\}$
are independent Poisson point processes in $\R^+$, enumerated in decreasing order, 
with respective intensity functions given by $\a_i/x^{1+\a_i}$, $i=1,2$, with
\begin{equation}
\label{alfas}
\a_1=\frac{\b^{cr}_1}{\bb},
\,\,
\a_2=\frac{\b^{cr}_2}{\bb}.
\end{equation}
Notice that, as follows from our assumptions for this phase, $0<\a_1<\a_2<1$.
We also have that
\begin{equation}
\label{gtog}
{\g^N}:=\{\g_1^N(x_1)\g_2^N(x_1x_2),\,x_1x_2\in\cd\}\to{\g}:=\{\g_1(x_1)\g_2(x_1x_2),\,x_1x_2\in\N^2\}
\end{equation}
in distribution as $N\to\infty$ as point processes on $\R^+$, 
and (a.s.~finite) random measures on $\N^2$, where 
$\g_{2}^{x_{1}}:=\{\g_2(x_{1}x_2),\,x_2\in\N\}$, $x_{1}\geq2$, 
are independent copies of ${\g_2^1}$.
We will sometimes below let ${\g_2}$ stand for the family $\{\g_2(x_{1}x_2),\,x_1x_2\in\N^2\}$.

\begin{remark}\label{bk}
	All of the convergence claims made in the above paragraph follow readily from convergence results of~\cite{BK}. 
	Indeed, we may apply Theorems 1.3 and 1.7 therein as follows. We preliminarily point out that in the cascading 
	two-level system we are dealing with in the present paper, we have the following in terms of the notation of~\cite{BK}: 
	$n=m=2$; $J_1=1$, $J_2=2$; $\bar X_{\s_1}=\Xi^{(1)}_{\s_1}$, $\bar X^{\s_1}_{\s_2}=\Xi^{(2)}_{\s_1\s_2}$;
	$a_1=\bar a_1=a$, $a_2=\bar a_2=1-a$; $\a_1=\bar\a_1=2^p$, $\a_2=\bar\a_2=2^{1-p}$\footnote{In this sentence, $\a_1$ and $\a_2$ are notations from~\cite{BK}, and should not be confused with the notation of the present paper introduced in~(\ref{alfas}).}.
	We then have that for $\s_1\s_2\in\cv_N$
	\begin{equation}
	\gamma^N_1(\s_1)=\exp\{\b u^{-1}_{p\log2,N}(\sqrt{a}\Xi^{(1)}_{\s_1})\},\,
	\gamma^N_2(\s_1\s_2)=\exp\{\b u^{-1}_{(1-p)\log2,N}(\sqrt{1-a}\Xi^{(2)}_{\s_1\s_2})\},
	\end{equation}
	where $u_{\cdot,N}(\cdot)$ is defined in (1.7) of~\cite{BK}. Theorem 1.3 of~\cite{BK} now asserts the convergence 
	of $\{u^{-1}_{p\log2,N}(\sqrt{a}\Xi^{(1)}_{\s_1}),\,u^{-1}_{(1-p)\log2,N}(\sqrt{1-a}\Xi^{(2)}_{\s_1\s_2});\,\s_1\s_2\in\cv_N\}$
	to the Poisson cascade introduced in~\cite{BK}. (\ref{g1ntog1}) and (\ref{gtog}) follow from that and Theorem 1.7 in the same reference after straightforward considerations -- see also Proposition 1.8 of~\cite{BK}.
\end{remark}

\begin{remark}\label{rmk:gibbs2}
	It follows from the above results that the Gibbs measure $G_{\b,N}$ converges suitably to $G_{\bb}$ --- the normalized ${\g}$ --- as $N\to\infty$.
\end{remark}

\subsection{Scaling limit of $X^N$} \label{thms}

In order to have the
three cases outlined in the heuristics discussion, namely, above, at and below fine tuning temperatures,
we need that 
$
\b^{FT}>\b^{cr}_2
$,
namely, that
\begin{equation}
\label{cond1}
\sqrt{\frac{1-p}{1-a}}<\frac{1-p}{2\sqrt{pa}}; 
\end{equation}
otherwise, all low temperatures according to~(\ref{low}) are below fine tuning.
In each case we find a different scaling and different scaling limit for $X^N$.

To state the first theorem, we take $\zeta_N^+$ as in Definition \thv (1.def1), 
with the extra assumption~(\ref{limb}), i.e., we let the sequence of real numbers $\zeta_N^+$ satisfy 
\be\label{aft}
\lim_{N\to\infty}\zeta_N^+=\infty\quad\mbox{ and }\quad
\lim_{N\to\infty}\zeta_N^+/N_2<\frac{\b^2_{*}}2\left(1-2\sqrt{\frac p{1-p}}\sqrt{\frac a{1-a}}\right).
\ee
The latter condition is equivalent to the second condition in~(\ref{low}) once we replace the 'lim inf'
by the 'lim' there.
Set
\begin{equation}
\label{scay}
\tilde X^N(t)=X^N(t/c_2^N),\,t\geq0.
\end{equation}
We recall that this is a process in the random environment $\g^N$. The limiting processes, which are K processes,
described in the appendix, will naturally also be processes in random environment.

Let $\ff_{1},\ww_{1}:\N^2\to(0,\infty)$ be such that
$\ff_{1}(x_{1}x_{2})=\tg_{2}(x_{1}x_{2}):=\frac1{1-p}\g_{2}(x_{1}x_{2})$, $\ww_{1}(x_{1}x_{2})=\g_{1}(x_{1})$ for
all $x_{1}x_{2}\in\N^2$. These functions will play the role of random environment for the limiting process in this case.

\begin{theorem}[Above fine tuning temperatures]
	\label{thm:cm}
	As $N\to\infty$ 
	\be
	\tilde X^N \Rightarrow \ck(\ff_1,\ww_1); 
	\ee
	where $\Rightarrow$ stands for convergence in $\cp\otimes\P_\mu$-distribution. 
	The convergence takes place on the Skorohod space 
	of trajectories of both processes, with the $J_1$ metric.
\end{theorem}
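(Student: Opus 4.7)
The strategy is to apply the standard K-process convergence criterion via the statistics of visits to a finite set of top configurations, following the approach set up in the appendix and exploited in Sections~\thv(1.5)--\thv(S4.4). Fix $n \geq 1$, let $T_n = \{1,\dots,n\}^2 \subset \DD$, and study the restriction of $\tilde X^N$ to excursions between consecutive visits to $\xi(T_n)$. I would establish four properties: (i) starting from $\mu$, $\tilde X^N$ reaches $\xi(T_n)$ in time $o(1)$ with high $\cp\otimes\P_\mu$-probability; (ii) the holding time at $\xi^x$, $x \in T_n$, is asymptotically exponential with mean $\tg_2^N(x) := \g_2^N(x)/(1-p)$; (iii) the excursion time between two consecutive visits to $\xi(T_n)$ is $o(1)$; and (iv) the entrance distribution on $T_n$ following an exit from $y \in T_n$ converges to the product of the size-biased measure $\g_1(z_1)/\sum_{w=1}^n \g_1(w)$ on $\{1,\dots,n\}$ and the uniform measure on $\{1,\dots,n\}$.

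Property (ii) is a routine computation: at $\xi^x$ with $x = (x_1,x_2) \in T_n$, the total second-level escape rate is $(N_2/N)\, c_2^N/\g_2^N(x)$, and the total first-level escape rate is $(N_1/N)\, c_1^N c_2^N/(\g_1^N(x_1)\, \g_2^N(x))$; above fine tuning the latter is negligible compared to the former (this uses $\zeta_N^+ \to \infty$), so after multiplying time by $c_2^N$ the holding time is exponential of mean $\g_2^N(x)/(1-p)$ up to $o(1)$. Property (i) is a hitting estimate that follows from the ergodicity of the RHD and the convergence of the Gibbs measure noted in Remark~\ref{rmk:gibbs2}. Properties (iii) and (iv) form the core of the proof and reduce to the entrance-law analysis performed in Sections~\thv(1.5)--\thv(S4.4): once $\tilde X^N$ exits $\xi(T_n)$, it performs a walk whose first-level coordinate has time to mix before hitting a new second-level ground state, and returns to $T_n$ with a first-level index asymptotically biased proportionally to $\g_1^N(\cdot)$ --- precisely the $\psi \to 0$ specialisation of the kernel~\eqv(trans.2); the second-level rank is uniform by the exchangeability of the second-level ranked configurations under a given first-level ground state, and the total excursion time is $o(1)$ by the fast second-level escape rates at non-top configurations in the scaled time.

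Combining (i)--(iv) with the environment convergence $(\g_1^N, \g_2^N) \to (\g_1, \g_2)$ of~\eqref{g1ntog1}--\eqref{gtog} yields, via a bounded-convergence argument, convergence in $\cp \otimes \P_\mu$-law of the embedded Markov chain on $\xi(T_n)$ together with its sojourn times, to the corresponding trace of $\ck(\ff_1,\ww_1)$ on $T_n$: a continuous-time Markov chain with exponential sojourns of mean $\g_2(x)/(1-p)$ and jump kernel $p_n(x,y) = \g_1(y_1)/(n \sum_{z=1}^n \g_1(z))$. Passing to $n \to \infty$ via the projective consistency of the K-process construction (see the appendix) then upgrades this to full $J_1$ convergence; tightness in $J_1$ is ensured by the stable-regime integrability $\a_1,\a_2 < 1$ of the environment, which guarantees that visits to top configurations account for a positive fraction of time in the limit and controls the contribution of long excursions.

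The principal obstacle is property (iv), the entrance law. Showing that the return to $T_n$ occurs with a first-level coordinate size-biased by $\g_1^N(\cdot)$ requires sharp hitting-probability estimates on the hypercube under the random environment, naturally handled via Dirichlet capacity or potential-theoretic arguments. Controlling these estimates uniformly in $n$ as $N \to \infty$, so that one can safely pass to $n \to \infty$ afterwards, is the delicate point, and this is where the bulk of the technical machinery of Sections~\thv(1.5)--\thv(S4.4) is expected to be deployed in its sharpest form.
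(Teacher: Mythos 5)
Your proposal follows essentially the same route as the paper's proof: reduce to the trace of $\tilde X^N$ on a finite top $\cm=\cm_1\times\cm_2$, show the time spent off the top is negligible, identify the sojourn times as asymptotically exponential with mean $\tg_2$, and obtain the entrance law $\frac{1}{M_2}\nu_1(y_1)$ with $\nu_1$ size-biased by $\g_1$ (the paper's Proposition~\ref{paft}, whose proof uses a renewal decomposition plus Perron--Frobenius perturbation theory on top of the lumping estimates of \cite{BG08}), before letting the top grow. The only slight inaccuracy is attributing the negligibility of the first-level escape rate at top configurations to $\zeta_N^+\to\infty$ --- that holds throughout the low-temperature regime since $c_1^N\to0$; the above-fine-tuning hypothesis is what forces $\l^{x_1}\to1$ and hence the size-biased first-level entrance law.
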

See the definition of $\ck(\cdot,\cdot)$ in the first subsection of the appendix.

To state our second theorem, we assume $\lim_{N\to\infty}\zeta_N^+=\zeta$ for some real finite $\zeta$.

Let $\ff_{2}:\N\to(0,\infty)$,
$\ff'_{2}:\N^2\to(0,\infty)$ such that
$\ff_{2}(x_{1})=\tg_{1}(x_{1}):=\psi\g_{1}(x_{1})$, 
$\ff'_{2}(x_{1},x_{2})=\tg_{2}(x_{1},x_{2})$  for
all $x_{1}x_{2}\in\N^2$. 

\begin{theorem}[At fine tuning temperatures]
	\label{thm:cft}
	As $N\to\infty$
	\be 
	\tilde X^N\Rightarrow\ck_2(\ff_{2},\ff'_{2}).
	\ee
	The convergence takes place in the Skorohod space of trajectories of both processes, 
	with the $J_1$ metric.
\end{theorem}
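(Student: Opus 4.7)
The plan is to follow a truncation--plus--entrance--law scheme adapted to the two--level K process structure, paralleling the architecture already outlined for Theorem~\ref{thm:cm}. The idea is that for large finite $M_1,M_2$, the trace of $\tilde X^N$ on the top--level window $\WW_{M_1,M_2}:=\{(x_1,x_2):1\leq x_i\leq M_i,\,i=1,2\}$ admits a simple Markovian description in the limit $N\to\infty$, which, by the appendix's projective construction, matches the corresponding truncation of $\ck_2(\ff_2,\ff'_2)$; then one sends $M_1,M_2\to\infty$.

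First I would invoke the entrance law results from Sections~\thv(1.5)--\thv(S4.4), which in the fine tuning regime ($\psi^{-1}_N\to\psi^{-1}\in(0,\infty)$) yield, at the rescaled time--scale $1/c_2^N$, transitions among the top configurations governed by \eqv(trans)--\eqv(trans.2) with $\g_1$ replaced by $\tg_1=\psi\g_1$, and exponential holding times at each $x\in\WW_{M_1,M_2}$ with mean $\g_2(x)=\ff'_2(x)$ (after a continuous mapping applied to the environment convergence \eqref{g1ntog1}--\eqref{gtog}). Crucially, the factor $\l^{y_1}=1/(1+M_2\psi\g_1(y_1))$ has a nontrivial limit here, reflecting the balance between the geometric number of excursions out of a first--level ground state configuration and the number of second level jumps needed to refind the top window---this is precisely what produces the two--level K structure in $\ck_2$, rather than the simpler single--level $\ck$ appearing in Theorem~\ref{thm:cm}.

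Second, I would identify the resulting limiting transition kernel with that of the trace of $\ck_2(\ff_2,\ff'_2)$ on $\WW_{M_1,M_2}$, giving (conditional on the environment) the convergence $\tilde X^{N}\big|_{\WW_{M_1,M_2}}\Rightarrow\ck_2(\ff_2,\ff'_2)\big|_{\WW_{M_1,M_2}}$ in the $J_1$ topology. This is a standard continuous--time Markov chain convergence statement once the exponential holding times are in place. I would then pass to the limit $M_1,M_2\to\infty$ by combining (i) the $J_1$--continuity of the projective construction of $\ck_2$ from its finite truncations (stated in the appendix), (ii) a uniform (in $N$) estimate showing that, on any fixed time horizon, the total Lebesgue time $\tilde X^N$ spends outside $\WW_{M_1,M_2}$ goes to $0$ in $\cp\otimes\P_\mu$--probability as $M_1,M_2\to\infty$, and (iii) tightness, which follows from finiteness of the cumulative holding mass of $\ck_2$ plus the fact that $J_1$--closeness is preserved under removal of negligible excursions.

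The main obstacle, I expect, will be step (ii): proving the uniform--in--$N$ negligibility of the time spent outside $\WW_{M_1,M_2}$. At fine tuning this is delicate because, unlike above fine tuning where the first level is effectively selected with size--biased weights $\nu_1$ (so heavy--tailed masses tame the tail), here both hierarchies contribute at the same scale, and the trap masses at the two levels must be compared simultaneously. Concretely, one needs to show that the Gibbs mass $G_{\b,N}(\VV_N\setminus\xi(\WW_{M_1,M_2}))$ is $O(\eta_{M_1,M_2})$ for some deterministic $\eta_{M_1,M_2}\to0$, uniformly in $N$ with high $\PP$--probability, and to translate this Gibbs negligibility into a time negligibility via Ruelle's two--level cascade structure (Remark~\ref{rmk:gibbs2}) together with mean--hitting--time estimates at the extreme time scale $1/c_2^N$. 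Modulo these estimates, the approximation scheme closes and yields convergence in $\cp\otimes\P_\mu$--distribution on Skorohod space.
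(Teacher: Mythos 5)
Your plan is correct and follows essentially the same route as the paper: the actual proof (i) shows by first-moment estimates (which are precisely sums of the scaled Boltzmann weights outside the top window, i.e.\ your Gibbs-negligibility step, adapted from the below-fine-tuning argument up to a bounded factor) that the time spent off $\cm$ is negligible uniformly in $N$ as $M_1,M_2\to\infty$, (ii) obtains the limiting transition kernel on $\cm$ from Proposition \thv(P.top)(i-1),(i-2) in the form \eqv(trans)--\eqv(trans.2) with $\tg_1=\psi\g_1$, and (iii) identifies this kernel, by an explicit geometric-sum computation, with that of the trace of $\ck_2(\ff_2,\ff'_2)$ on $\cm$ before matching constancy intervals in the $J_1$ topology. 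The only point you understate is step (iii), which in the paper requires a genuine (if elementary) computation with the Poissonian construction of the two-level K process rather than being automatic.
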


See the definition of $\ck_2(\cdot,\cdot)$ in the first subsection of the appendix.

\begin{remark}
	\label{finiteness}
	In order for the above mentioned two-level K-process to be well defined, we need to
	make sure that $\ff_{2},\ff^\prime_{2}$ satisfy (almost surely) the summability 
	conditions~(\ref{eq:sumff},~\ref{eq:sumffp}). 
	This is a classic result for~(\ref{eq:sumff}) --- recall that $\ww\equiv1$ in this case ---, and 
	follows by standard arguments for~(\ref{eq:sumffp}) from the fact that $\a_1<\a_2<1$ 
	(as noted above, below~(\ref{alfas})).
\end{remark} 

For our last theorem, we take $\zeta_N^-$ as in Definition \thv (1.def1).

Let $\bar c^N = c_1^N 2^{N_2} c_2^N$ and make
\begin{equation}
\label{scayh}
\bar X^N(t)=X^N(t/\bar c^N),\,t\geq0.
\end{equation}
Let also $\ff_{3}:\N\to(0,\infty)$, with
$\ff_{3}(x_{1})={\g_{1}(x_{1})}{\sum_{x_2\in\N}}\tg_2(x_1x_2)$, $x_1\in\N$.

\begin{theorem}[Below fine tuning temperatures]
	\label{thm:cl}
	As $N\to\infty$ 
	\be
	\bar X^N\Rightarrow \bar X_{1}\bar X_{2},
	\ee
	where $\bar X_{1}\sim\ck(\ff_{3},1)$
    and, given $\g_{2}$
	and $\bar X_{1}=x_1\in\N$, 
	$\bar X_{2}$ is an iid family of random variables on $\N$ (indexed by time) each of which
	is distributed according to the weights given by $\g_{2}^{x_1}$. 
	The marginal convergence of the first coordinate takes place in the Skorohod space 
	of trajectories of both processes, with the $J_1$ metric, and the convergence of the second coordinate
	is in the sense of finite dimensional distributions only.
\end{theorem}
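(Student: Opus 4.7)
The plan is to exploit the defining feature of the below-fine-tuning regime ($\zeta_N^-\to-\infty$, equivalently $\psi_N^{-1}\to 0$): the typical residence time at a first level ground state, of order $c_1^N$ jumps, is much larger than the time of order $2^{N_2}c_2^N$ jumps needed to scan the second level, so that the second-level marginal thermalizes between successive first-level transitions. The time scale $1/\bar c^N=1/(c_1^N\,2^{N_2}c_2^N)$ is therefore the natural scale on which one observes order one transitions at the first level, while individual second-level transitions become invisible. Accordingly I would split the proof into two parts, one for each coordinate, and combine them in the mixed topology stated in the theorem.

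\textbf{Step 1 (first coordinate).} I would invoke the entrance law results and the transition kernel \eqref{trans}--\eqv(trans.2) established in Sections~\thv(1.5)--\thv(S4.4). In the present regime $\lambda^{y_1}\to 0$, so that $p(x,y)$ reduces to an essentially size-biased law on first-level labels with weights proportional to $\g_1^N(y_1)$, independent of the starting label $x_1$. The effective mean time spent at $x_1\in\DD_1$ between first-level transitions, rescaled by $\bar c^N$, converges, by \eqref{g1ntog1}--\eqref{gtog} and \eqref{alfas}, to $\ff_3(x_1)=\g_1(x_1)\sum_{x_2}\tg_2(x_1x_2)$. This matches the description of $\ck(\ff_3,1)$, and convergence in the Skorohod $J_1$ topology follows from the K-process convergence criterion recalled in the appendix, applied as in the proofs of Theorems~\ref{thm:cm} and~\ref{thm:cft}.

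\textbf{Step 2 (second coordinate conditional on the first).} Given that $\bar X_1^N=x_1$ on a macroscopic interval, the second-level chain restricted to the cluster indexed by $\xi_1^{x_1}$ performs a number of jumps of order $c_1^N$, which vastly exceeds its mixing time. Hence for each fixed $t>0$ the conditional law of $\bar X_2^N(t)$ is close to the equilibrium of that restricted chain, which by \eqref{01.11} and Remark~\ref{rmk:gibbs2} converges to the normalization of $\g_2^{x_1}$. At finitely many times $t_1<\dots<t_k$ separated on the macroscopic scale, the number of second-level jumps between successive samples diverges, decorrelating the samples and yielding the claimed conditional i.i.d.~family in the sense of finite-dimensional distributions.

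\textbf{Main obstacle.} The delicate point is Step~2: one must produce sharp, uniform (in $N$ and in the random labels $x_1$ selected by the limiting first-level K process) estimates showing that (i) the second-level mixing inside each first-level ground-state cluster is indeed faster than the inter-cluster escape on the $\bar c^N$ scale, and (ii) excursions of the dynamics outside of the ground-state configurations contribute negligibly on this scale. These are expected to reduce to potential-theoretic hitting and mixing time bounds of the same nature as those underlying Theorems~\ref{thm:cm} and~\ref{thm:cft}, with the substantial simplification that in this regime the two levels decouple asymptotically. Combining Steps~1 and~2 then yields the joint convergence in the mixed $J_1$/finite dimensional topology asserted in the theorem.
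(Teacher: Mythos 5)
Your overall architecture matches the paper's: negligible time off the top, exponential sojourn times with mean $\ff_3(x_1)$ for the first coordinate, second-level equilibration and decorrelation for the second. However, Step~1 contains a genuine error on the key point of the first-level transition law. You assert that, since $\l^{y_1}\to0$ below fine tuning, the kernel \eqref{trans} reduces to a size-biased law with weights proportional to $\g_1^N(y_1)$. This is backwards. The size-biased selection of the first-level label is the signature of the \emph{above}-fine-tuning regime, where $\l^{y_1}\to1$ and $\nu_1(y_1)\propto\g_1(y_1)$. Below fine tuning, $\l^{y_1}\to0$ makes the inter-cluster entry in \eqref{trans} vanish: the ground-state-to-ground-state kernel degenerates to ``stay in the same first-level cluster,'' which is exactly why the time scale must be lengthened to $1/\bar c^N$ and why \eqref{trans} cannot be used to read off the first-level transitions in this regime. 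Moreover, if one does condition on a change of cluster in \eqref{trans}, the weight is $\nu_1(y_1)\l^{y_1}\propto 1/\g_1(y_1)$ (since $\nu_1(y_1)\to 1/M_1$ while $\l^{y_1}\sim(M_2\psi_N\g_1(y_1))^{-1}$ by \eqv(trans.2)), i.e.\ \emph{inversely} proportional to $\g_1(y_1)$. The statement the theorem actually requires --- $\ck(\ff_3,1)$ is the \emph{uniform} K process --- is that the transitions of $\bar X^N_1$ restricted to $\cm_1$ are asymptotically uniform, and this comes from the REM-like entrance estimate into cylinder sets, Proposition \thv(P.top)(i-4), i.e.\ \eqv(C.inter.3a), not from \eqref{trans}. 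As written, your Step~1 would deliver a weighted K process $\ck(\ff_3,\ww)$ with $\ww\propto\g_1$, contradicting the statement being proved.

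A secondary gap: for $J_1$ convergence of the first coordinate you need full distributional convergence of the rescaled sojourn times to exponentials of mean $\ff_3(x_1)$, not merely convergence of their means. This is the content of Lemma~\ref{exp_bft}: the geometric number of second-level jumps before the cluster is left, rescaled by $c_1^N$, converges to $\g_1(x_1)$ times a mean-one exponential, and a local-time law of large numbers for the second-level walk (via the Laplace-transform estimates of \cite{BG08}) shows that the accumulated waiting time concentrates around $\g_1(x_1)\sum_{x_2}\tg_2(x_1x_2)$ times that exponential. Your Step~2 and your ``main obstacle'' paragraph do correctly identify the two remaining ingredients the paper uses, namely the spectral-gap bound of \cite{FIKP} for second-level equilibration and first-moment bounds for the time spent off the top.
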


\begin{remark}
	\label{nocond1}
	If Condition~\ref{cond1} is not satisfied (within the cascading, low temperature regime 
	treated in this paper), then we are below fine tuning temperatures 
	and Theorem~\ref{thm:cl} holds for all $\b>\b^{cr}_2$ temperatures, as can be readily checked from its proof. The other regimes are not present in this case.
\end{remark}

\begin{remark}
	\label{keq}
	It is either known or follows readily from known results that the limiting processes in the above theorems 
	are ergodic Markov processes\footnote{I.e., Markov processes that have an irreducible state space and a unique invariant distribution.}, having the infinite volume Gibbs measure $G_{\bb}$ (see Remark~\ref{rmk:gibbs2} above) as their unique equilibrium distribution. See~\cite{Gav} 
	for the case of the 2-level K-process, and~\cite{FP}  
	for the cases involving weighted/uniform K-processes.
\end{remark}

\begin{remark}
	\label{mean_ext-2}
	As discussed earlier (see 
	the one  before last paragraph of Section \thv(S1.1))
	we may then say, after Remark~\ref{keq}, that the time scale 
	$1/c_2^N$ is an extreme time scale at and above fine tuning, and the time scale $1/\bar c^N$ is an extreme time scale below fine tuning.
\end{remark}

\section{Entrance law. Main result.}
\TH(1.5)

In this and the next three sections, we will mostly not be concerned with limits, so we find it convenient to revert to the original representation of $\s^N$ as spin configuration. 
Given a subset $A\subset\VV_N$, the hitting times $\t_A$ of $A$  by the  continuous and discrete time 
processes $\s^N$  and  $J^{*}_N$ are defined, respectively, as
\be
\t_A=\inf\{t>0 \mid \s^N(t)\in A \}\quad\text{and}\quad\t_A=\inf\{i\in\N \mid J^{*}_N(i)\in A \}.
\Eq(01.16)
\ee

\subsection{The Top}  
\label{top}
Given two integers $M_i<2^{N_i}$, $i=1,2$, set $\MM=\MM_1\times\MM_2$  where $\MM_i=\{1,\dots,M_i\}$, $i=1,2$. 
We then let the Top be the set 
\be
T
\equiv T((M_i)_{i\leq 2}, (N_i)_{i\leq 2})
\equiv\{\s\in\VV_N \mid \s=\xi_1^{x_1}\xi_2^{x_1x_2}, x_1x_2\in\MM\}
\Eq(01.19)
\ee
Note that we may also write $T=\cup_{x_1\in\MM_1}T^{x_1}$ where, for each $x_1\in\MM_1$
\be
T^{x_1}
\equiv T^{x_1}(M_1,N_1)
\equiv\{\s\in\VV_N \mid \s=\xi_1^{x_1}\xi_2^{x_1x_2}, x_2\in\MM_2\}.
\Eq(01.20)
\ee
Further let 
\be
T_1\equiv T_1(M_1,N_1)\equiv\{\s_1\in\VV_{N_1}\mid \s=\xi_1^{x_1}, x_1\in\MM_1\}
\Eq(1.20)
\ee
be the canonical projection of $T$ on  $\VV_{N_1}$.
To each $\xi_1^{x_1}$ in $T_1$ we associate the cylinder set
\be
W^{x_1}\equiv W^{x_1}((N_i)_{i\leq 2})\equiv\{\s\in\VV_N\mid \s_1= \xi_1^{x_1}\}.
\Eq(1.21)
\ee
Clearly, $T^{x_1}$ is the restriction of $T$ to this cylinder,
$
T^{x_1}
=W^{x_1}\cap T
$.
Finally, we set
\be
\overline W \equiv \cup_{x_1\in\MM_1}W^{x_1}.
\Eq(1.24)
\ee

\subsection{Main entrance law results}
   \TH(S1.2)
   
From now on  we fix $(\zeta_N)$, a sequence of real numbers such that $\b=\b(a, p, N,\zeta_N)>0$ for all $N$,  and let $\psi_N$ be as in \eqv(01.9). 
For each $x_1\in\MM_1$ and $A\subseteq T^{x_1}$
set
\be
\l^{x_1}_N(A)\equiv\l^{x_1}_N(|A|, N,\psi_N)
=
\frac{1}{1+|A|\psi_N\gamma^N_1(\xi_1^{x_1})}.
\Eq(P.top.004)
\ee
We will see that this quantity can be interpreted as  the  probability that, starting in $W^{x_1}$, 
the process exits $W^{x_1}$ before finding an element of $A$.
 Note that $\l^{x_1}_N(A)$ is a random variable. We use it to define the random probability measure $\nu_1$  
 on $\MM_1$ that assigns to $x_1$ the mass
\be
\nu_1^N(x_1)
=\frac{1-\l^{x_1}_N(T^{x_1})}
{\sum_{x'_1\in\MM_1}(1-\l^{x'_1}(T^{x'_1}))}
\Eq(P.top.4)
\ee
Similarly, given $\bar\eta\in T$,  we denote by $\overline\nu_1$ the random measure on $\MM_1$ define through
\be
\overline\nu_1^N(x_1)
=\frac{1-\l^{x_1}_N(T^{x_1}\setminus\bar\eta)}
{\sum_{x'_1\in\MM_1}(1-\l^{x'_1}(T^{x'_1}\setminus\bar\eta))}
\Eq(P.top.4')
\ee
(where clearly $T^{x_1}\setminus\bar\eta=T^{x_1}$ if $\bar\eta\notin T^{x_1}$).

\begin{proposition}
    \TH(P.top)
There exists a subset $\wt\O\subset\O$ with $\PP(\wt\O)=1$ such that on $\wt\O$, for all $N$ large enough, 
in the temperature domain determined by $\b=\b(a, p, N,\zeta_N)>0$ and $\zeta_N\ll \log N$,
the following holds. Let
$
\varepsilon_{N}=\OO\left(N^{-1}\right)
$.

\item{i)}  Entrance law. Let $x_1\in\MM_1$ and $\eta \in T^{x_1}$. Then

\item{i-1)} for all $\s\in W^{x_1}\setminus T^{x_1}$
\be
\P_{\s}\left(\t_{\eta}<\t_{T\setminus\eta}\right)=
\frac{1}{M_2}\left[
(1-\l^{x_1}_N(T^{x_1}))
+\
\nu_1^N(x_1)\l^{x_1}_N(T^{x_1})
\right]
\left(1+\varepsilon_{N}\right),
\Eq(P.top.5)
\ee

\item{i-2)} for all $x'_1\in\MM_1\setminus x_1$ and all
$\s\in W^{x'_1}\setminus T^{x'_1}$
\be
\P_{\s}\left(\t_{\eta}<\t_{T\setminus\eta}\right)=
\frac{1}{M_2}\nu_1^N(x_1)\l^{x'_1}(T^{x'_1})
\left(1+\varepsilon_{N}\right),
\Eq(P.top.6)
\ee

\item{i-3)} for all $\s\in\VV_N\setminus\overline W$
\be
\P_{\s}\left(\t_{\eta}<\t_{T\setminus\eta}\right)=
\frac{\nu_1^N(x_1)}{M_2}
\left(1+\varepsilon_{N}\right).
\Eq(P.top.7)
\ee

\item{i-4)}  Entrance in cylinder sets. For all $\s\in \VV_N\setminus\overline W$
and all $x_1\in\cm_1$
\be
\P_\s\left(\t_{W^{x_1}\setminus T^{x_1}}
<\t_{\overline W\setminus (W^{x_1}\setminus T^{x_1})}\right)=\frac{1}{M_1}(1+\varepsilon_N).
\Eq(C.inter.3a)
\ee

\item{ii)} Level 2 transitions.
For all $\eta\in T^{x_1}$, $\bar\eta\in T^{\bar x_1}$, $x_1=\bar x_1$
$\eta\neq\bar\eta$,
\be
\P_{\bar\eta}\left(\t_{\eta}<\t_{T\setminus\{\eta,\bar\eta\}}\right)=
\frac{1}{M_2-1}\left[
(1-\l^{x_1}_N(T^{x_1}\setminus\bar\eta))+
\overline\nu_1^N(x_1)\l^{x_1}_N(T^{x_1}\setminus\bar\eta)
\right]
\left(1+\varepsilon_{N}\right).
\Eq(P.top.8)
\ee

\item{iii)} Level 1 transitions.
For all $\eta\in T^{x_1}$, $\bar\eta\in T^{\bar x_1}$, $x_1\neq\bar x_1$
\be
\P_{\bar\eta}\left(\t_{\eta}<\t_{T\setminus\{\eta,\bar\eta\}}\right)
=
\frac{1}{ M_2}\overline\nu_1^N(x_1)\l^{x_1}_N(T^{\bar x_1}\setminus\bar\eta)
\left(1+\varepsilon_{N}\right).
\Eq(P.top.9)
\ee
\end{proposition}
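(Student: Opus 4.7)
The plan is to reduce all parts of the proposition to two core facts, and then to combine them via a renewal argument based on the strong Markov property. The first core fact, which I would establish at the outset, is an escape probability computation within each cylinder $W^{x_1}$: for any $A\subseteq T^{x_1}$ and any starting point $\s\in W^{x_1}\setminus T^{x_1}$, the probability that $\s^N$ exits $W^{x_1}$ before hitting $A$ equals $\l^{x_1}_N(A)$ to within $1+\varepsilon_N$, and moreover the entry point into $A$, conditioned on this event, is approximately uniform on $A$. The second core fact is i-4) itself: the approximately uniform entry law into the $M_1$ cylinders from outside $\overline W$.

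To derive the escape formula, I would exploit the two-scale structure of the RHD. Inside $W^{x_1}$ the dynamics acts as a REM-type RHD for the reduced Hamiltonian $H_N^{(2)}(\xi_1^{x_1}\cdot)$ on $\VV_{N_2}$, while transitions out of the cylinder are damped by the further factor $e^{\b H_N^{(1)}(\xi_1^{x_1})}$. On the full-measure event $\wt\O$, the second-level dynamics equilibrates on a time scale much shorter than either the exit time from $W^{x_1}$ or the hitting time of $T^{x_1}$, so conditionally on not having yet done either, the process sits in quasi-equilibrium on the bulk $W^{x_1}\setminus T^{x_1}$. A standard reversibility/capacity computation using the extreme-value estimates for $\Xi^{(1)}_{\xi_1^{x_1}}$ and for the $\Xi^{(2)}_{\xi_1^{x_1}\xi_2^{x_1 x_2}}$, $x_2\in\MM_2$, then yields the ratio of hitting to escape rates as $|A|\psi_N\g^N_1(\xi_1^{x_1})$, producing \eqv(P.top.004); uniformity on $A$ follows from the symmetry of the second-level geometry. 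The proof of i-4) is analogous but simpler: starting from any $\s\notin\overline W$, the process moves through a non-trappy region on which it behaves close to a symmetric random walk on $\VV_N$, and the symmetric embedding of the first-level ground states $\{\xi_1^{x_1}\}_{x_1\in\MM_1}$ in $\VV_{N_1}$ gives the $1/M_1$ entry probability.

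With these two ingredients in hand, the remaining statements follow by iterating a one-step renewal decomposition. For i-1), starting from $\s\in W^{x_1}\setminus T^{x_1}$, I condition on whether the process hits $T^{x_1}$ before exiting $W^{x_1}$: the first event has probability $\sim 1-\l^{x_1}_N(T^{x_1})$ and contributes $(1-\l^{x_1}_N(T^{x_1}))/M_2$ to the probability of hitting $\eta$ first; otherwise the process exits $\overline W$, and by the strong Markov property together with i-3) the remaining probability is $\nu_1^N(x_1)/M_2$; adding the two gives \eqv(P.top.5). For i-3), I iterate the return from $\overline W^c$: by i-4) the process enters a uniformly chosen cylinder $W^{x'_1}$, and then either hits $T^{x'_1}$ uniformly or exits again and the cycle repeats; the resulting geometric series yields precisely the normalization~\eqv(P.top.4) defining $\nu_1^N$, giving~\eqv(P.top.7). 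Part i-2) is a two-step combination: one exit step followed by i-3). Parts ii) and iii) repeat the same argument starting from $\bar\eta\in T^{\bar x_1}$; after the first jump the process lies in $W^{\bar x_1}\setminus\bar\eta$ and the target set shrinks to $T\setminus\bar\eta$, which explains the appearance of $T^{x_1}\setminus\bar\eta$ and of the modified measure $\overline\nu_1^N$ in~\eqv(P.top.8)-\eqv(P.top.9).

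The principal technical obstacle will be the quantitative sharpness of the multiplicative error $1+\varepsilon_N$ with $\varepsilon_N=\OO(N^{-1})$. This will demand (a) mixing-time bounds inside cylinders tight enough to separate intra-cylinder equilibration from the inter-cylinder time scale by a factor polynomial in $N$; (b) a careful balance in the comparison of the first- and second-level escape rates, hinging on the precise scaling~\eqv(betan) of $\b$ via $\psi_N$; and (c) uniform (in the environment) control on the entry law into cylinders, which requires restricting to the full-measure event $\wt\O$ on which the extremes of the $\Xi^{(i)}$ enjoy suitable separation and concentration bounds. These quantitative considerations are what will force the restriction $\zeta_N\ll\log N$ in the hypotheses.
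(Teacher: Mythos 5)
Your plan follows essentially the same proof architecture as the paper's: cylinder-level escape/hitting estimates producing $\l^{x_1}_N$, a uniform entry law into cylinders, and a renewal decomposition whose geometric series produces the normalization $\nu_1^N$. Where you diverge is the machinery for the cylinder-level estimates. You gesture at quasi-equilibrium plus reversibility/capacity; the paper instead passes to the jump chain (Lemma \thv(comp.lem1)), rewrites the escape event as a Laplace transform of a hitting time of the simple random walk on $\VV_{N_2}$ (Lemma \thv(L.comp.4)), and evaluates it through the lumped-chain potential theory of \cite{BG08} (Proposition 7.7 there), arriving at Proposition \thv(P1.lev2). This matters because the lumping route is precisely what delivers the $1+\OO(N^{-1})$ precision you flag as the principal obstacle; a quasi-equilibrium heuristic gives the leading-order ratio of rates but does not by itself produce that error rate. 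Two further items your sketch elides that the paper must handle explicitly: (a) you need to bound the probability of hitting $T$ directly from outside $\overline W$, before reaching the cylinder bulk (Proposition \thv(P.inter), Corollary \thv(C.inter)); this both underlies i-4) and enters the renewal's source term; (b) the renewal is a genuine linear system $R_\eta=b_\eta+AR_\eta$ indexed by $\s\in\del\overline W$, not a scalar equation, and the paper closes the geometric series uniformly in $\s$ by Perron--Frobenius theory (Lemmas \thv(L.top.6), \thv(L.top.2), \thv(L.top.2'), \thv(L.top.2'')), with the hypothesis $\zeta_N\ll\log N$ ensuring $1-\l^{x_1}_N(T^{x_1})\gg N^{-1}$, i.e.\ that the spectral radius of $A$ stays away from $1$ at the required rate.
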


\begin{remark}\label{asymp}

Taking limits of the above quantities we get, for  $\psi$ is defined in \eqv(01.10)
\be\label{limla}
\lim_{N\to\infty}\l^{x_1}_N(T^{x_1})=
\begin{cases}
\quad\quad	1, &\mbox{above FT},\\
\frac1{1+M_2\psi\g_1(x_1)}, &\mbox{at FT},\\
\quad\quad		0, &\mbox{below FT}	,	
\end{cases}
\ee
which leads to
\be\label{limnu}
\lim_{N\to\infty}\nu_1^N(x_1)=\nu_1(x_1), 
\quad
\nu_1(x_1)\equiv
\begin{cases}
\frac{\g_1(x_1)}{\sum_{z_1\in\cm_1}\g_1(z_1)}	, &\mbox{above FT},\\
\frac{h(\g_1(x_1))}{\sum_{z_1\in\cm_1}h(\g_1(z_1))}, &\mbox{at FT},\\
\quad\quad	\frac1{M_1}		, &\mbox{below FT},	
\end{cases}
\ee
where $h=h_{M_2,\psi}:\mR^+\to\mR^+$ is such that $h(r)=r/(1+M_2\psi r)$. 
At FT both limits hold weakly with respect to the environment. 
Below FT, there is a window of values of $\zeta^-_N$ for which both limits hold almost surely, 
and above which both hold in probability. One may readily check that the following window has these
properties: $\zeta^-_N\ll-\log\log N$; see Lemma~\thv(L5new.lev2), and its proof.
Above FT, we have a mixed situation. For $\l^{x_1}_N(T^{x_1})$, there is a window above which the convergence
is almost sure: $\zeta^+_N\gg\log N$. And for $\nu_1^N(x_1)$, we need in addition the existence of $\lim_{N\to\infty}\zeta_N/N$, and
the convergence is weak.
The asymptotics of the probabilities follow readily. 	
\end{remark}

Getting the estimates in Proposition~\thv(P.top) above fine tuning when we do not have that $\zeta_N\ll\log N$, 
requires an extra level of precision, related to the fact that, in that regime, $\nu_1^N(\cdot)$
is a quotient of vanishing terms. We state next a separate result where we deal with this case. Since it is a limit result,
we require the existence of $\lim_{N\to\infty}\zeta_N/N$.

\begin{proposition}[Above fine tuning temperatures]
\label{paft}
Suppose~(\ref{aft}) holds. Then for all $\s\notin T$ and $\eta=\xi^y\in T$
\be\label{lim1}
\lim_{N\to\infty}\P_{\s}\left(\tau_{\eta}<\tau_{T\setminus\eta}\right)=\frac1{M_2}\nu_1(y),
\ee
where the limit holds in distribution in $(\O,\FF,\cp)$.	
\end{proposition}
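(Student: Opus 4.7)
\textbf{Plan for the proof of Proposition~\ref{paft}.}

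The plan is to combine a suitable extension of the entrance-law identity~\eqv(P.top.7) with the asymptotic analysis of the random measure $\nu_1^N$ above fine tuning. Writing $\eta=\xi^y=\xi^{(x_1,x_2)}$, I would first establish
\[
\P_\s(\t_\eta<\t_{T\setminus\eta})=\frac{\nu_1^N(x_1)}{M_2}(1+\varepsilon_N'),
\]
valid $\cp$-eventually for $\s\notin T$ with an error $\varepsilon_N'$ that tends to zero in $\cp$-probability, and then show that $\nu_1^N(x_1)\Rightarrow \nu_1(y)$ in distribution. The first step is a modest extension of~\eqv(P.top.7) beyond the $\zeta_N\ll\log N$ regime in which Proposition~\ref{P.top} is stated. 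I would revisit its proof, which proceeds by decomposing the trajectory into a first passage to the cylinders $W^{x_1}$ (where the entry point is nearly uniform in $\MM_1$, as in~\eqv(C.inter.3a)), followed by the entrance law within the cylinder (as in~\eqv(P.top.5)--\eqv(P.top.6)). The key observation ensuring this decomposition remains valid above FT is that $\psi_N\to0$ and, by~(\ref{g1ntog1}), $\gamma_1^N$ is tight, so $M_2\psi_N\gamma_1^N(\xi_1^{x_1})\to 0$ in $\cp$-probability uniformly over the finite set $\MM_1$; this keeps all relevant ratios regular, albeit with error bounds now vanishing only in probability rather than at rate $1/N$.

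For the second step, the identity $\l^{x_1}_N(T^{x_1})=(1+M_2\psi_N\gamma_1^N(\xi_1^{x_1}))^{-1}$ together with the vanishing just noted gives
\[
1-\l^{x_1}_N(T^{x_1})=M_2\psi_N\gamma_1^N(\xi_1^{x_1})\bigl(1+o(1)\bigr),
\]
uniformly over $\MM_1$ (in $\cp$-probability). Substituting into~\eqv(P.top.4),
\[
\nu_1^N(x_1)=\frac{\gamma_1^N(\xi_1^{x_1})}{\sum_{z_1\in\MM_1}\gamma_1^N(\xi_1^{z_1})}\bigl(1+o(1)\bigr),
\]
and the convergence~(\ref{g1ntog1}) together with the continuous mapping theorem (noting the denominator is almost surely positive in the limit) yields
\[
\nu_1^N(x_1)\;\Rightarrow\;\frac{\gamma_1(x_1)}{\sum_{z_1\in\MM_1}\gamma_1(z_1)}=\nu_1(y),
\]
matching the first case of~(\ref{limnu}). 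Combining both steps via Slutsky completes the proof.

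The main obstacle is the first step: carefully verifying that the identities~\eqv(P.top.5)--\eqv(P.top.7) continue to hold with $o(1)$ (in $\cp$-probability) errors when $\zeta_N$ may grow linearly in $N$. The condition $\zeta_N\ll\log N$ was used in Proposition~\ref{P.top} to obtain polynomial-rate errors; above fine tuning we no longer expect polynomial rates, but the only structural ingredient really needed is the vanishing (in probability) of $M_2\psi_N\gamma_1^N$, which does hold in this regime. Hence the capacity/hitting-probability estimates underlying Proposition~\ref{P.top} should go through with weaker but still sufficient error control. A finite-truncation argument based on the convergence of $\gamma_1^N$ to a Poisson point process of finite intensity on compact subsets of $(0,\infty)$ may be needed to ensure uniformity over $\MM_1$ in the last step.
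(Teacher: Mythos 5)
Your step 2 is fine (it reproduces the first case of~(\ref{limnu})), but step 1 contains a genuine gap, and the reason you give for why the extension should work is in fact the reason it fails. In the proof of Proposition~\thv(P.top), the entrance law is extracted from the renewal system $R_\eta=(I-A)^{-1}b_\eta$, where $b_\eta(\s)=v(x_1)+\d b_\eta(\s)$ with signal $v(x_1)=\frac{1}{M_1M_2}(1-\l^{x_1}_N(T^{x_1}))\asymp\psi_N\gamma_1^N(\xi_1^{x_1})$ and an \emph{additive} error $\d b_\eta(\s)=\OO(N^{-1})$ coming from the polynomial-rate lumping estimates on the hypercube (see \eqv(L.top.new1)--\eqv(L.top.new3) and \eqv(P.inter.3)). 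The hypothesis $\zeta_N\ll\log N$ is used precisely to guarantee $\psi_N\gamma_1^N\gg N^{-1}$, so that the signal dominates the error. Above fine tuning, $\zeta_N$ may grow linearly in $N$, so $\psi_N$ — and hence $1-\l^{x_1}_N(T^{x_1})$ — is exponentially small; the ratio $b_\eta(\s)/\sum_{\eta'}b_{\eta'}(\s)$ is then governed by the uncontrolled $\OO(N^{-1})$ terms and carries no information about $\nu_1^N$. This is exactly the ``quotient of vanishing terms'' obstruction the paper flags just before stating Proposition~\ref{paft}; the vanishing of $M_2\psi_N\gamma_1^N$ is the problem, not the structural ingredient that saves the argument, and there is no indication that the potential-theoretic error bounds can be sharpened from polynomial to exponential order.

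The paper therefore proves Proposition~\ref{paft} by an entirely different route: a renewal decomposition over successive visits of the first-level chain to $T_1$, with the second level coupled to equilibrium at regeneration times. The per-visit success probabilities $\pi(x),\hat\pi(x)$ are computed with multiplicative $(1+o(1))$ accuracy via the Ehrenfest-chain generating function (Lemma~\ref{lm:pis}), and the limit $\frac{1}{M_2}\,\gamma_1(x_1)/\sum_{z}\gamma_1(z)$ emerges from a Perron--Frobenius/perturbation analysis of the substochastic matrix $\bar R=\bar P(I-\bar\Pi)$: the geometric number of uniform first-level excursions before the second level hits $\cm_2$ size-biases the exit state by $\pi(x)\propto\gamma_1(x)$, and the quotient of vanishing quantities is evaluated exactly through the spectral formula \eqv(adj8b)--\eqv(adj8e) rather than through additive error bounds. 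If you want to salvage your outline, you would need to replace step 1 by an argument of this type; as written, the claim that the estimates of Proposition~\thv(P.top) ``go through with weaker but still sufficient error control'' does not hold.
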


The proof of Proposition \thv(P.top) follows a strategy initiated in \cite{BBG03a} and developed in \cite{BG08} which consists firstly in reducing the probabilities of interest to quantities which are functions only of the simple random walks $J^{\circ}_{N_i}$ and then, using  the lumping techniques of \cite{BG08} to express these quantities. The tools needed to implement this strategy are prepared in Section \thv(S3.3). They are used in Section \thv(2) to prove basic probability estimates for the jump chain that,  in turn, are the key ingredients of the proof of Proposition  \thv(P.top),  concluded in Section \thv(S4.4), after which we prove Proposition~\ref{paft}.

\section{Entrance law. Key tools}
\TH(S3.3)  

The section has three parts. Subsection \thv(S2.11) gathers simple lemmata needed to link probabilities for the original chain $\sigma^N$ (with rates \eqv(01.11)) to probabilities for  the jump chain  $J^{*}_N$ (with transitions \eqv(01.15))  and to link the latter to quantities depending only on the simple random walks $J^{\circ}_{N_i}$. In Subsection \thv(S2.2) we introduce the notion of lumped chain. 
Finally, in Subsection \thv(S2.3), we state and prove properties of various sets needed in Section \thv(2) to make use of known lumped chain estimates from \cite{BG08}. The last two  sections can be skipped at first reading.

The RHD can be alternatively, described through its jump chain, $J^{*}_N$ and jump rates $w_N$, where 
\be
w_N(\s)=\sum_{\s'\sim \s}w_N(\s,\s')
\Eq(01.12)
\ee
and  $(J^{*}_N(i), i\in\N)$ is the discrete time Markov chain with one step transition probabilities
\be
p^{*}_N(\s,\s')=w_N(\s,\s')w^{-1}_N(\s).
\Eq(01.13)
\ee
Introducing the parameters 
\be
q^*_N(\s_1)\equiv \frac1{1+\frac{N_2}{N_1}e^{-\b H^{(1)}_N(\s_1)}}
\Eq(01.14)
\ee
we have
\be
p^{*}_N(\s,\s')
=q^*_N(\s_1)p^{\circ}_{N_1}(\s_1,\s'_1)
+(1-q^*_N(\s_1))p^{\circ}_{N_2}(\s_2,\s'_2)
\Eq(01.15)
\ee
where, for $i=1,2$, $p^{\circ}_{N_i}(\s_i,\s'_i)=N_i^{-1}$ if $\s_i\sim\s'_i$ and  $p^{\circ}_{N_i}(\s_i,\s'_i)=0$ 
else denote the one step transition probabilities of the simple random walk $(J^{\circ}_{N_i}(j), j\in\N)$ on $\VV_{N_i}$.
The jump chain $J^{*}_N$ 
is reversible w.r.t.~the measure $G^{*}_{\b,N}$ defined through 
\be
G^{*}_{\b,N}(\s)=w_N(\s)e^{-\b H_N(\s)}\left(Z^{*}_{\b,N}\right)^{-1}=(N_1/N)\left(q^*_N(\s_1)Z^{*}_{\b,N}\right)^{-1}
\Eq(01.21)
\ee
where $Z^{*}_{\b,N}$ is a normalization making this measure a probability.

For future reference, we call $\P^{*}$ the law of the process $J^{*}_N$ conditional 
on $\FF$. We denote by $\P^{\circ, i}$ the law of $J^{\circ}_{N_i}$, $i=1,2$. If the initial state, say $\eta$, has 
to be specified we write  $\P_{\eta}$, $\P^{*}_{\eta}$ and $\P^{\circ, i}_{\eta_i}$. 
We will denote by $\cp\otimes\P_\eta$ the probability measure obtained by integrating $\P_\eta$ with respect to $\cp$.
Expectation with respect to 
$\P$, $\P^{*}$, $\P^{\circ,i}$, $\PP$ and $\cp\otimes\P_\mu$ are denoted  by  $\E$, $\E^{*}$, $\E^{\circ, i}$, $\EE$
and $\ce\otimes\E_\mu$, respectively, where $\mu$ is the uniform probability measure on $\VV_N$.

\subsection{Comparison lemmata} 
    \TH(S2.11)  

Our starting point is the observation that
\begin{lemma} 
  \TH(comp.lem1)
For all  $A,B\subseteq \VV_{N}$ such that $A\cap B=\emptyset$ and for all $\s\in\VV_N\setminus (A\cup B)$,
\be
\P_{\s}\left(\t_{A}<\t_{B}\right)=\P^*_{\s}\left(\t_{A}<\t_{B}\right).
\Eq(comp.lem1.1)
\ee
\end{lemma}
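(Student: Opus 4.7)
The plan is to observe that the event $\{\t_A<\t_B\}$ depends only on the ordered sequence of states visited by $\s^N$, and not on the holding times between jumps. Since the sequence of states visited by the continuous-time chain $\s^N$ is precisely (distributed as) the discrete-time jump chain $J^*_N$ -- this is the very definition of the embedded jump chain associated with transition rates $w_N(\s,\s')$ -- the two hitting probabilities must agree.

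Concretely, I would write down the standard construction of $\s^N$ from $J^*_N$: let $(E_i)_{i\geq 0}$ be i.i.d.\ mean-one exponentials independent of $J^*_N$, and set $T_0=0$, $T_{i+1}=T_i+E_i/w_N(J^*_N(i))$, so that $\s^N(t)=J^*_N(i)$ for $t\in[T_i,T_{i+1})$. Since the state space $\VV_N$ is finite and the rates $w_N(\s)$ are strictly positive, $T_i\uparrow\infty$ almost surely and the construction is unambiguous. Starting from $\s\notin A\cup B$ we have $J^*_N(0)=\s$, so for the continuous chain,
\[
\t_A=T_{k_A},\quad k_A:=\inf\{i\geq 1:J^*_N(i)\in A\},
\]
and similarly $\t_B=T_{k_B}$. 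Because $i\mapsto T_i$ is strictly increasing (again using $w_N(\s)<\infty$), the event $\{T_{k_A}<T_{k_B}\}$ coincides with $\{k_A<k_B\}$, which is exactly the event $\{\t_A<\t_B\}$ under $\P^*_\s$ (the definition in \eqv(01.16) for the discrete chain gives $k_A$ since $J^*_N(0)=\s\notin A$).

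There is no real obstacle here; the only points to mention are (i) the finiteness of $T_i$ needed to exchange the order of $T$-times with the order of indices, and (ii) that both sides are conditional on $\FF$, so no averaging over the environment intervenes and the identity holds pointwise in $\o\in\O$. The result then follows by taking probabilities on both sides of the set identity $\{\t_A<\t_B\}_{\text{cont}}=\{\t_A<\t_B\}_{\text{jump}}$.
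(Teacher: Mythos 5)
The paper explicitly skips the proof of this lemma as elementary, so there is nothing to compare against; your argument supplies the standard (and correct) reasoning. Constructing $\s^N$ from the jump chain $J^*_N$ and independent exponential holding times, noting that the sequence $T_i$ is strictly increasing and non-explosive on the finite state space, and observing that $\{\t_A<\t_B\}$ depends only on the visit order is exactly the intended "elementary" argument, and your proof is complete.
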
 
We skip the proof of Lemma \thv(comp.lem1) which is elementary. 

The next two lemmata deal with two classes of events that can be expressed through 
just  one of the simple random walks $J^{\circ}_{N_i}$ on $\VV_{N_i}$.
The first are \emph{REM-like events} that can be reduced to those of a REM (which is a 1-level GREM).
Let $\pi_i$, $i=1,2$, denote the canonical projection of $\VV_N$ onto $\VV_{N_i}$, that is
\be
\pi_i\s= \s_i.
\Eq(comp.1)
\ee

\begin{lemma}[REM-like events]
  \TH(L.comp.1)
Let $A,B\subseteq \VV_{N_1}$ be such that
$A\cap B=\emptyset$. Then, for all $\s_1\in\VV_{N_1}\setminus A$ and all $\s\in\pi_1^{-1}\s_1$,
\be
\P^*_{\s}\left(\t_{\pi_1^{-1}A}<\t_{\pi_1^{-1}B}\right)
=\left(1+\d_{\s_1\in B}(q^*_N(\s_1)-1)\right)\P^{\circ,1}_{\s_1}\left(\t_{A}<\t_{B}\right).
\Eq(comp.2)
\ee
\end{lemma}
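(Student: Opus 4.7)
The plan is to reduce the event on the left-hand side to a hitting problem for the projected chain $Y_i := \pi_1 J^{*}_N(i)$ on $\VV_{N_1}$, and then to match that problem against the corresponding one for the simple random walk $J^{\circ}_{N_1}$ via a first-step Dirichlet analysis.

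The key preliminary observation is that $Y$ is itself a Markov chain on $\VV_{N_1}$. Indeed, summing \eqv(01.15) over $\s'_2$ shows that, given $J^{*}_N(i) = \s$, the conditional law of $\pi_1 J^{*}_N(i+1)$ depends only on $\s_1$, since $q^{*}_N$ does: with probability $q^{*}_N(\s_1)$ the chain moves to a uniformly chosen neighbor of $\s_1$ in $\VV_{N_1}$, and with the complementary probability $1 - q^{*}_N(\s_1)$ it stays at $\s_1$. Thus $Y$ is a lazy simple random walk on $\VV_{N_1}$ with state-dependent holding probability $1 - q^{*}_N(\s_1)$, and its law started from any $\s \in \pi_1^{-1}\s_1$ depends only on $\s_1$ --- which already accounts for the independence of the right-hand side from $\s_2$. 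Since $\pi_1^{-1}A$ is the cylinder based on $A$, the events rewrite path-wise as $\{\t_{\pi_1^{-1}A} < \t_{\pi_1^{-1}B}\} = \{\t_A(Y) < \t_B(Y)\}$, so it suffices to compute $h(\s_1) := \P^{Y}_{\s_1}(\t_A(Y) < \t_B(Y))$ and compare it with $g(\s_1) := \P^{\circ,1}_{\s_1}(\t_A < \t_B)$, with the convention $\t_A = \inf\{i \geq 1 : Y_i \in A\}$, clearly the intended one since otherwise starting in $B$ would render the event vacuous.

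A first-step decomposition of $h$ completes the argument. For $\s_1 \in \VV_{N_1} \setminus (A \cup B)$, conditioning on $Y_1$ and moving the lazy contribution $(1 - q^{*}_N(\s_1))\, h(\s_1)$ to the left-hand side yields
\[
h(\s_1) = \sum_{\s'_1 \sim \s_1,\, \s'_1 \in A} \frac{1}{N_1} + \sum_{\s'_1 \sim \s_1,\, \s'_1 \notin A \cup B} \frac{1}{N_1}\, h(\s'_1),
\]
which is exactly the Dirichlet recursion satisfied by $g$ on that set, with matching boundary values $1$ on $A$ and $0$ on $B$; uniqueness of solutions of the resulting finite linear system forces $h \equiv g$ on $\VV_{N_1} \setminus (A \cup B)$. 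For $\s_1 \in B$, the lazy step (which keeps $Y$ in $B$) and the non-lazy steps into $B$ all contribute $0$, as they realize $\t_B = 1 < \t_A$; factoring the common $q^{*}_N(\s_1)$ out of the remaining terms and substituting $h = g$ on the interior collapses them to $q^{*}_N(\s_1)\, g(\s_1)$. Combining the two cases gives $h(\s_1) = (1 + \d_{\s_1 \in B}(q^{*}_N(\s_1) - 1))\, g(\s_1)$, as claimed. No step is truly delicate; the only point to watch is the convention for $\t$ (strict positivity $i \geq 1$), without which the case $\s_1 \in B$ becomes degenerate.
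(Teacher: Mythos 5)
Your proposal is correct and follows essentially the same route as the paper: both identify the projected chain $\pi_1 J^{*}_N$ as a Markov chain on $\VV_{N_1}$ that moves like the simple random walk with probability $q^{*}_N(\s_1)$ and holds otherwise, the paper then asserting that "the lemma easily follows." Your first-step Dirichlet comparison and the careful treatment of the convention $\t_A=\inf\{i\geq 1:\cdot\}$ for the case $\s_1\in B$ simply supply the details the paper omits.
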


\begin{proof}
Note that
$
\P^*_{\s}\bigl(\t_{\pi_1^{-1}A}<\t_{\pi_1^{-1}B}\bigr)
=\P^{\pi_1}_{\s_1}\left(\t_{A}<\t_{B}\right)
$
where $\P^{\pi_1}_{\s_1}= \P^*_{\s}\circ\pi_1^{-1}$ denotes the law of the projection $\pi_1J^{*}_N$ 
of the jump chain on $\VV_{N_1}$. By \eqv(01.15) this is  a Markov chain with transition probabilities
$
p^{\pi}_N(\s_1,\s'_1)
=q^*_N(\s_1)p^{\circ}_{N_1}(\s_1,\s'_1)\1_{\s_1\sim\s'_1}
+(1-q^*_N(\s_1))\1_{\s_1=\s'_1}
$.
The lemma now easily follows.
\end{proof}

Given $\s=\s_1\s_2\in\VV_N$, define the cylinder sets
\be
C(\s_i)\equiv\pi_i^{-1}\s_i= \{\s'\in\VV_N\mid \s'_i=\s_i\}, \quad i=1,2.
\Eq(P.inter.12)
\ee
The next lemma deals with so-called \emph{level-$2$ events}, namely, events whose 
trajectories are confined to a given cylinder  set $C(\s_1)$, and  that can thus be expressed 
through  just the simple random walk $J^{\circ}_{N_2}$ on $\VV_{N_2}$.
Define the outer boundary of a set $A\subset\VV_N$ as
\be
\del A\equiv\{\s'\in( \VV_N\setminus A)\mid
\exists\, \s\in A\,\,\hbox{\rm s.t.}~\s\sim\s'\}.
\Eq(comp.6)
\ee

\begin{lemma} [Level-2 events]
  \TH(L.comp.4)
 Given $\s_1\in\VV_{N_1}$, let $A,B\subseteq C\equiv C(\s_1)$ be such that $A\cap B=\emptyset$. 
 Set  $u(\s_1)\equiv\log\left(1-q^*_N(\s_1)\right)$. Then, for all $\s\in C$,
\be
\P^*_{\s}\left(\t_{A}\leq\t_{B\cup\del C}\right)
=
\begin{cases}
\E^{\circ,2}_{\s_2}\left(
e^{u(\s_1){\t_{\pi_2A}}}
\1_{\{\t_{\pi_2A}< \t_{\pi_2B}\}}\right)
& \text{if}\,\, B\neq \emptyset,
\\
\E^{\circ,2}_{\s_2}\left(
e^{u(\s_1){\t_{\pi_2A}}}
\right)
& \text{if}\,\, B=\emptyset.
\end{cases}
\Eq(comp.16')
\ee
\end{lemma}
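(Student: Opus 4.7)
The plan is to exploit the product structure of the jump-chain kernel \eqv(01.15): as long as the chain remains inside the cylinder $C=C(\s_1)$, its first coordinate is pinned at $\s_1$, so the weight $q^*_N(\s_1)$ appearing in \eqv(01.15) takes the constant value $q:=q^*_N(\s_1)$. At each step, the chain then either (with probability $q$) changes its first coordinate, which forces it to leave $C$ and land in $\del C$, or (with probability $1-q$) keeps $\s_1$ fixed and moves its second coordinate according to one step of the simple random walk $J^\circ_{N_2}$ on $\VV_{N_2}$.

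To make this precise, the approach is to realize $J^*_N$ on an enlarged probability space built from two independent ingredients: an i.i.d.~Bernoulli$(q)$ sequence $(U_i)_{i\geq 1}$ and a simple random walk $(Y_k)_{k\geq 0}$ on $\VV_{N_2}$ started at $Y_0=\s_2$. Setting $T:=\inf\{i\geq 1\mid U_i=1\}$, a one-line induction based on \eqv(01.15) shows that $J^*_N(k)=(\s_1,Y_k)$ for every $k<T$, while $J^*_N(T)\in\del C$. In particular $\t_{\del C}=T$, and for any $A\subseteq C$ the hitting time $\t_A$ coincides with $\t_{\pi_2 A}$ (for $Y$) on the event $\{\t_A<T\}$. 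Using $A\cap\del C=\emptyset$ together with $\pi_2 A\cap\pi_2 B=\emptyset$ (which follows from $A\cap B=\emptyset$ and the common first coordinate $\s_1$), the event $\{\t_A\leq\t_{B\cup\del C}\}$ coincides with
\be
\{\t_{\pi_2 A}(Y)<T\}\cap\{\t_{\pi_2 A}(Y)<\t_{\pi_2 B}(Y)\}.
\ee

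The conclusion will follow from conditioning on $Y$, using the independence of $T$ and $Y$ and the identity $\P(T>n\mid Y)=(1-q)^n=e^{u(\s_1)n}$, which yields
\be
\P^*_\s\left(\t_A\leq\t_{B\cup\del C}\right)=\E^{\circ,2}_{\s_2}\left[e^{u(\s_1)\t_{\pi_2 A}}\1_{\{\t_{\pi_2 A}<\t_{\pi_2 B}\}}\right],
\ee
matching \eqv(comp.16') in the case $B\neq\emptyset$. For $B=\emptyset$ there is nothing to avoid, so the indicator is vacuously $1$ (and $\t_{\pi_2 A}<\infty$ almost surely on the finite graph $\VV_{N_2}$, so the integrand is well-defined since $u(\s_1)<0$), giving the second formula. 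There is no real obstacle here; the only mildly delicate point is verifying that the coupling faithfully reproduces the law of $J^*_N$ inside $C$, but this is routine once \eqv(01.15) is read as a convex combination of two independent updates.
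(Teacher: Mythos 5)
Your proof is correct and follows essentially the same route as the paper's: both rest on the observation that, by \eqv(01.15), the exit time $\t_{\del C}$ is geometric with success probability $q^*_N(\s_1)$, independent of the second-coordinate motion, which conditioned on confinement to $C$ is the simple random walk $J^{\circ}_{N_2}$, so that $(1-q^*_N(\s_1))^k=e^{u(\s_1)k}$ produces the Laplace transform. The paper organizes this as a sum over the value $k$ of $\t_A$ conditioned on $\{\t_{\del C}>k\}$ rather than via your explicit Bernoulli/random-walk coupling, but the computation is the same.
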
 

\begin{proof}
Write
\be
\textstyle
\P^*_{\s}\left(\t_{A}\leq\t_{B\cup\del C}\right)=\sum_{k=1}^{\infty}
\P^*_{\s}\left(k=\t_{A}\leq\t_{B}\mid \t_{\del C}>k\right)
\P^*_{\s}\left(\t_{\del C}>k\right)
\ee
and note that by \eqv(01.15), $\t_{\del C}$ is a geometric r.v.~with success probability 
$q^*_N(\s_1)$. Thus, on the one hand,
$
\P^*_{\s}\left( \t_{\del C}>k\right)
=
\left(1-q^*_{1}(\s_{1})\right)^k
$
while 
$
\P^*_{\s}\left(k=\t_{A}\leq\t_{B}\mid \t_{\del C}>k\right)
=\P^{\circ,2}_{\pi_2\s}\left(k=\t_{\pi_2A}\leq\t_{\pi_2B}\right)
$
on the other hand.
\end{proof}

The probabilities $\P^{\circ,1}_{\s_1}\left(\t_{A}<\t_{B}\right)$ appearing in Lemma \thv(L.comp.1) and the Laplace transform of Lemma \thv(L.comp.4) are estimated in \cite{BG08} using lumping techniques. We briefly recall the basics of lumping in Subsection \thv(S2.2)  and collect in Subsection \thv(S2.3) the ingredients that are needed to make use of the results of  \cite{BG08},
i.e.,  to check that their conditions of validity are satisfied.
For the one-dimensional case, lumping reduces to the classical Ehrenfest chain. 
We recall an expression for the probability generating function of 
hitting times of such chain, appearing in~\cite{K} (see (4.28,29) in that reference), and to be used in a later section. 
 
For $t\in[0,1)$ and $\s_2,\s_2'\in\VV_{N_2}$
\begin{equation}
\label{fr1}
\E^{\circ, 2}_{\s_2}(t^{\tau_{\s'_2}})=
\frac{B_i(t')}{B_0(t')},
\end{equation}
where $i=d_2(\s_2,\s_2')$, $t'=\frac{N_2}2\frac{1-t}t$, and for $\a>0$
\begin{equation}
\label{fr2}
B_i(\a)=\int_0^1(1-u)^i(1+u)^{N_2-i}u^{\a-1}du=\sum_{j=0}^{N_2-i}{N-i\choose j}\frac{\G(i+1)\G(\a+j)}{\G(\a+i+j+1)}.
\end{equation}

\subsection{Lumped chains and $K$-lumped chains.} 
 \TH(S2.2)  
 
 In this section we introduce certain functions of the simple random walks $J^{\circ}_{N_i}$ on $\VV_{N_i}$,  $i\in\{1,2\}$,
that play a key r\^ole in our proofs. Fix $i\in\{1,2\}$. Given a partition  $\L_i$ of $\{1, . . . , N_i\}$ into $d$ classes, that is, 
non-empty disjoint subsets $\L^1_i,\dots,\L^d_i$, $1\leq d\leq N_i$, satisfying $\L^1_i\cup\dots\cup\L^d_i=\{1,...,N_i\}$, 
let $m_{i}$ be the many-to-one function that maps the elements of $\VV_{N_i}$ onto $d$-dimensional vectors
\be
 m_{i}(\s_i)=\left(
 m_{i}^1(\s_i),\dots, m_{i}^{k}(\s_i),\dots, m_{i}^d(\s_i)
\right),
\,\,\,\,\,\s_i\in\VV_{N_i}
\Eq(lump.7)
\ee
by setting, for all $k\in\{1,\dots,d\}$,
\be
 m_{i}^{k}(\s_i)\equiv\frac{1}{|\L^k_i|}\sum_{j\in\L^k_i}\s_{i,\underline j}
\Eq(lump.8)
\ee
where $\s_{i,\underline j}$ denotes the $j$-th cartesian co-ordinate of $\s_i$. 
The image $\overline I_{i}\equiv m_{i}(J^{\circ}_{N_i})$  of the simple random walk $J^{\circ}_{N_i}$, 
called \emph{lumped chain},  also is a Markov chain that now takes value in a discrete grid $\G_{N_i,d}$ 
that contains $\VV_{d}=\{-1,1\}^d$. This d-dimensional process was exploited for the study of the 
dynamics of the random field Curie-Weiss model in \cite{BEGK1}, and  of the Random Energy Model (REM) 
in \cite{BBG03a,BBG03b}. It was later studied in detail in \cite{BG08} in view, in particular, of dealing with more 
involved spin-glass models such as the GREM. We extensively use the results of \cite{BG08} in the sequel.

Different choices of the partition $\L_i$ yield different lumped chains. Given an integer $n$ and 
a collection $K=\left\{\eta^1,\dots,\eta^{x},\dots,\eta^n\right\}$ of elements of $\VV_{N_i}$, 
the so-called \emph{$K$-lumped chain} is induced  by a partition $\L_i(K)$ of $\{1, . . . , N_i\}$ 
into $d=2^n$ classes,   $\L^1_i(K)\cup\dots\cup\L^d_i(K)=\{1,...,N_i\}$, defined as follows. 
Let us identify the set $K$ with the $n\times N_i$ matrix $(\eta^{x}_{j})_{j=1,\dots,N_i}^{x=1,\dots,n}$
whose row vectors are the $\eta^{x}$'s, that is,
$
\eta^{x}\equiv(\eta^{x}_{j})_{j=1,\dots,N_i}\in\VV_{N_i}
$,
$x\in\{1,\dots,n\}$,
and denote by $\eta_{j}$ the column vectors
$
\eta_{j}\equiv(\eta^{x}_{j})^{x=1,\dots,n}\in\VV_{n}
$,
$j\in\{1,\dots,N_i\}$.
Given an arbitrary labelling  $\{e_1,\dots,e_k,\dots,e_{d}\}$ of the set of all $d=2^{n}$
elements of $\VV_{n}$, we then set
\be
\L^k_i(K)\equiv\{j\in \{1,\ldots,N_i\}\mid \eta_j=e_k\}, \quad 1\leq k\leq d.
\Eq(lump.12)
\ee
We denote by $m_{i,K}$ the function \eqv(lump.7)-\eqv(lump.8) resulting from \eqv(lump.12),  by
\be
\overline I_{i,K}\equiv m_{i,K}(J^{\circ}_{N_i})
\Eq(lump.13)
\ee
the associated $K$-lumped chain and by $\overline\P^{i,K}$  its law.

\subsection{Properties of the Top and other sets}
    \TH(S2.3)  
    
The aim of this section is to facilitate the use of results of  \cite{BG08} for $K$-lumped chains 
by establishing that certain conditions, that only depend on the set $K$ and the partition 
\eqv(lump.12), are verified for three types of sets $K$ that we encounter in our proofs: the Top, 
the Top plus a non random point, and large random subsets of $\VV_{N_1}$.

In what follows $K=\left\{\eta^1,\dots,\eta^{x},\dots,\eta^{m_i}\right\}$ denotes a collection of $m_i$ elements of $\VV_{N_i}$, 
and $\L_i(K)$ is the partition of $\{1, . . . , N_i\}$ into $d_i=2^{m_i}$ classes, $\L_i^k(K)$, $1\leq k\leq d_i$, induced by $K$ through \eqv(lump.12).

\subsubsection{The Top.}
 \TH(S3.3.1) 
Consider the partitions $\L_1(T_1)$ and $\L_2(T^{x_1})$ induced respectively by  
$T_1$ and  $\pi_2T^{x_1}$, $x_1\in\MM_1$, through \eqv(lump.12).
Let $K$ be any of the sets $T_1$ or $\pi_2T^{x_1}$, $x_1\in\MM_1$ (thus $m_i=M_i$ and
$i=1$ if $K=T_1$ and $i=2$ if $K=\pi_2T^{x_1}$, $x_1\in\MM_1$). 
Introducing the sets
\be
\overline\O_{N_i}(K)\equiv
\left\{
\left|
\frac{d_i}{N_i}|\L_i^k(K)|-1
\right|
<
\d_i(N_i), \,\,1\leq k\leq d_i
\right\},\,\,
\d_i(N_i)
\equiv2\sqrt{\frac{d_i}{N_i}}\log{N_i}
\Eq(ls.10')
\ee
and
$
\textstyle\overline\O_i(K)\equiv\bigcup_{N_i'\geq i}\bigcap_{N_i\geq N_i'}\overline\O_{N_i}(K),
$
define
\be
\textstyle
\overline\O
\equiv
\overline\O_1(T_1)\bigcap\left(\bigcap_{x_1\in\MM_1}\overline\O_2(T^{x_1})\right).
\Eq(ls.10)
\ee

\begin{lemma}
  \TH(L1.ls) 
  $\PP(\overline\O)=1$.
\end{lemma}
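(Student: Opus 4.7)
The plan is a Chernoff-plus-Borel-Cantelli argument built on the key observation that the Tops, viewed as subsets of the first- and second-level cubes, are uniformly distributed.

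First, since $\{\Xi^{(1)}_{\s_1}\}_{\s_1\in\VV_{N_1}}$ and, for each $\s_1\in\VV_{N_1}$, $\{\Xi^{(2)}_{\s_1\s_2}\}_{\s_2\in\VV_{N_2}}$ are families of i.i.d.\ continuous random variables, the set $T_1$ is uniformly distributed over the $M_1$-subsets of $\VV_{N_1}$, and conditionally on $T_1$ the sets $\pi_2 T^{x_1}$, $x_1\in\MM_1$, are independent, each uniformly distributed over the $M_2$-subsets of $\VV_{N_2}$. It therefore suffices to show that if $K=\{\eta^1,\dots,\eta^M\}$ is a uniformly random $M$-subset of $\VV_n$, then the class sizes $|\L^k(K)|$ of the partition \eqv(lump.12) concentrate around $n/d$, $d=2^M$, with the deviations tolerated by \eqv(ls.10') failing only finitely often in $n$, almost surely.

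To prove this concentration, I would replace the uniform subset by an $M$-tuple of i.i.d.\ uniform samples $\tilde\eta^1,\dots,\tilde\eta^M$ on $\VV_n$, at the cost of conditioning on distinctness (whose probability is $1-O(M^2/2^n)$). Under the i.i.d.\ sampling, the column vectors $\tilde\eta_j=(\tilde\eta^1_j,\dots,\tilde\eta^M_j)\in\VV_M$ are i.i.d.\ uniform, so the count of $j$'s with $\tilde\eta_j=e_k$ is Binomial$(n,1/d)$ with mean $n/d$. Bernstein's inequality, with $t=2\sqrt{n/d}\,\log n$ matched exactly to the allowed deviation in \eqv(ls.10'), gives
\[
\PP\!\left(\bigl|\,|\L^k(K)|-n/d\bigr|>t\right)\le 2\exp\bigl(-c\log^2 n\bigr)
\]
for some universal $c>0$, provided $d\log^2 n\ll n$ (exactly the regime in which $\delta_M(n)\to 0$ so that $\overline\O_{N_i}(K)$ is a non-trivial concentration event).

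A union bound over the $d$ classes, and, for the second factor defining $\overline\O$, over the $M_1$ values of $x_1\in\MM_1$, gives $\PP\!\left(\overline\O_{N_i}(K)^c\right)=O\!\bigl(M_1 d_i \exp(-c\log^2 N_i)\bigr)$. As long as $M_i$ grows at most polynomially in $\log N_i$ --- the regime used throughout the paper --- these bounds are summable in $N_i$, and the first Borel-Cantelli lemma delivers $\PP\!\left(\overline\O_1(T_1)\right)=\PP\!\left(\bigcap_{x_1\in\MM_1}\overline\O_2(T^{x_1})\right)=1$, hence $\PP(\overline\O)=1$. The only mildly delicate point is keeping the Chernoff constants uniform as $M_i$ and $d_i$ grow with $N$, but this is standard once the polylogarithmic regime is invoked.
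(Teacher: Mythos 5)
Your argument is correct and is essentially the one the paper intends: the paper's ``proof'' is a one-line reference to an easy adaptation of Lemma 4.2 of \cite{Ga}, which is precisely this binomial/Chernoff concentration for the i.i.d.\ uniform column vectors $\eta_j$, the only adaptation being your (correct) reduction from the uniformly random top set to i.i.d.\ uniform samples conditioned on distinctness. One small caveat: your closing claim that summability holds whenever $M_i$ grows polynomially in $\log N_i$ overshoots, since $d_i=2^{M_i}$ and the concentration step already requires $d_i\log^2 N_i\ll N_i$, i.e.\ roughly $M_i\lesssim\log_2 N_i$; this is harmless here because the paper only needs the statement for fixed $M_i$ (intersecting over countably many values afterwards).
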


\begin{proof}[Proof of Lemma \thv(L1.ls)]
The proof is an easy adaptation of that of Lemma 4.2 of \cite{Ga}.
\end{proof}

For $i=1,2$, $\eta\in\VV_{N_i}$ and $\rho>0$ set
$
\BB_{\rho}(\eta)=\left\{\s\in\VV_{N_i}\mid\dist(\s,\eta)\leq \rho\right\}
$.

\begin{lemma}
  \TH(L2.ls)
On $\overline\O$, for  all large enough $N$ the following holds: 
denoting by $K$ any of the sets $T_1$, $\pi_2T^{x_1}$, $x_1\in\MM_1$, or 
$
\cup_{x_1\in\MM_1}\pi_2T^{x_1}
$
we have, for all $\eta\in K$  and $\bar\eta\in K$, $\eta\neq\bar\eta$, 
\be
\left|\dist(\eta,\bar\eta)-(N_i/2)\right|\leq(N_i/2)\d_i(N_i)
\Eq(ls.15)
\ee
and for all $0\leq\e< 1/2$
\be
\BB_{\e N_i}(\eta)\cap\BB_{\e N_i}(\bar\eta)=\emptyset.
\Eq(ls.14)
\ee

\end{lemma}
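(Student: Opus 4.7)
The proof rests on a single combinatorial identity relating the Hamming distance between two elements of $K$ to the cardinalities of the classes of the lumping partition $\L_i(K)$ defined in \eqv(lump.12). For $K=\{\eta^1,\dots,\eta^{m_i}\}$ and distinct $x,y\in\{1,\dots,m_i\}$, observe that a coordinate $j$ contributes to $\dist(\eta^x,\eta^y)$ precisely when the $x$-th and $y$-th entries of the column vector $\eta_j\in\VV_{m_i}$ disagree. Grouping coordinates by their column vector yields
\[
\dist(\eta^x,\eta^y)=\sum_{k=1}^{d_i}|\L^k_i(K)|\,\1_{\{e_k^x\neq e_k^y\}}.
\]
The set of labels $\{k:e_k^x\neq e_k^y\}$ has cardinality exactly $d_i/2$, independently of the pair $x\neq y$, since $\{e_k\}$ exhausts $\VV_{m_i}$.

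The second ingredient is the definition of $\overline\O$. On $\overline\O\subseteq\overline\O_{N_i}(K)$ (valid for all $N_i$ large enough by \eqv(ls.10') and the definition of $\overline\O_i(K)$), every class cardinality satisfies $|\L^k_i(K)|=(N_i/d_i)(1+\varepsilon^k_{N_i})$ with $|\varepsilon^k_{N_i}|\le\d_i(N_i)$. Summing over the $d_i/2$ contributing indices gives
\[
\tfrac{N_i}{2}(1-\d_i(N_i))\;\le\;\dist(\eta^x,\eta^y)\;\le\;\tfrac{N_i}{2}(1+\d_i(N_i)),
\]
which is exactly \eqv(ls.15). This directly settles the cases $K=T_1$ (using $\overline\O_1(T_1)$) and $K=\pi_2T^{x_1}$ for each $x_1\in\MM_1$ (using $\overline\O_2(T^{x_1})$), both of which are factors of $\overline\O$ in \eqv(ls.10).

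For the union $K=\cup_{x_1\in\MM_1}\pi_2T^{x_1}$, pairs $\eta,\bar\eta$ lying in the same $\pi_2T^{x_1}$ are already handled. For $\eta\in\pi_2T^{x_1}$, $\bar\eta\in\pi_2T^{\bar x_1}$ with $x_1\neq\bar x_1$, the two top sets are measurable with respect to the independent Gaussian families $\{\Xi^{(2)}_{\xi_1^{x_1}\cdot}\}$ and $\{\Xi^{(2)}_{\xi_1^{\bar x_1}\cdot}\}$, so conditional on their ranks, each of $\eta,\bar\eta$ is uniformly distributed over $\VV_{N_2}$ and the two are independent. A standard Chernoff estimate shows $|\dist(\eta,\bar\eta)-N_2/2|\le(N_2/2)\d_2(N_2)$ with probability $1-e^{-c\,N_2\d_2(N_2)^2}$, and a union bound over the (at most) $M_1^2M_2^2$ cross pairs produces an event of full probability in the limit. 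Augmenting $\overline\O$ by this event (which requires $M_1M_2$ to be sufficiently small, a condition implicit throughout) delivers \eqv(ls.15) in the remaining case.

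Estimate \eqv(ls.14) is then a direct consequence of the triangle inequality: any $\s$ in both balls would force $\dist(\eta,\bar\eta)\le 2\e N_i$, contradicting \eqv(ls.15) as soon as $\d_i(N_i)<1-4\e$, which holds for all $N$ large enough in the relevant range of $\e$. The main obstacle is the union case: $\overline\O$ as written in \eqv(ls.10) does not literally control the joint partition $\L_2(K)$ of the union, so one must either enlarge $\overline\O$ by an appropriate event involving that joint partition (at the cost of requiring $2^{M_1M_2}\ll N_2$), or supply the cross-family concentration argument sketched above, exploiting the independence of the Gaussian environments indexing different $x_1\in\MM_1$.
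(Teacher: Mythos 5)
Your proof of \eqv(ls.15) is correct and is the standard argument (the paper itself only points to the analogue of Lemma 2.12 in \cite{BBG03a} and gives no details): the identity $\dist(\eta^x,\eta^y)=\sum_{k}|\L^k_i(K)|\,\1_{\{e_k^x\neq e_k^y\}}$, the fact that exactly $d_i/2$ of the column labels $e_k$ separate $x$ from $y$, and the equipartition event $\overline\O_{N_i}(K)$ together give the two-sided bound. Your observation about the union $\cup_{x_1\in\MM_1}\pi_2T^{x_1}$ is legitimate: $\overline\O$ as written in \eqv(ls.10) does not control the joint partition of the union, and either of your two fixes works. Note, however, that $M_1$ and $M_2$ are fixed integers independent of $N$, so no smallness of $M_1M_2$ and no condition of the type $2^{M_1M_2}\ll N_2$ is actually needed; the cross-pair concentration bound $e^{-cN_2\d_2(N_2)^2}=e^{-4cd_2(\log N_2)^2}$ is summable and Borel--Cantelli does the rest.

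The point you must not leave implicit is the range of $\e$ in \eqv(ls.14). Your triangle-inequality step needs $2\e N_i<(N_i/2)(1-\d_i(N_i))$, i.e.\ $\e<(1-\d_i(N_i))/4$, so it covers $\e<1/4$ eventually in $N$ but says nothing for $\e\in[1/4,1/2)$. This is not a gap you can close: if $\dist(\eta,\bar\eta)\approx N_i/2$, flipping half of the disagreeing coordinates of $\eta$ produces a configuration at distance $\approx N_i/4$ from both $\eta$ and $\bar\eta$, so the balls $\BB_{\e N_i}(\eta)$ and $\BB_{\e N_i}(\bar\eta)$ genuinely intersect once $\e>1/4$. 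Hence \eqv(ls.14) as printed, with ``for all $0\leq\e<1/2$'', is incompatible with \eqv(ls.15) and must be read with $\e<1/4$; your phrase ``in the relevant range of $\e$'' conceals exactly this discrepancy. State the corrected range explicitly and check that the only place the disjointness is used (the case split in the proof of Lemma \thv(C5.ls)) requires nothing more than some fixed $\e\in(0,1/4)$, which it does.
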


\begin{proof}
This is the analogue of Lemma 2.12 of [BBG1]. It is proved in the same way.
\end{proof}

Let $r=p/(1-p)$. Let $C_N$ be the event that for all $ \eta_1\in T_1$ and $\s_1\in\VV_{N_1}$ such that $d_1(\s_1,\eta_1)\leq\eps_0N_1$ we have that 
\begin{equation}\label{bin}
\#\{\s_1'\in\VV_{N_1}:\,\s'_1\sim \s_1\mbox{ and }q^*_N(\s'_1)\leq re^{-\sqrt{N}}\}\geq \eps_1N_1,
\end{equation}
\begin{lemma}\label{rm2}
	There exists $\eps_0,\eps_1>0$ such that $\cp(C_N)\to1$ as $N\to\infty$.
\end{lemma}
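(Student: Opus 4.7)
The plan is to combine a Chernoff bound at each individual $\s_1$ with a union bound over $\s_1\in\BB_{\eps_0 N_1}(\eta_1)$ and $\eta_1\in T_1$.

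First, I would translate the condition $q^{*}_N(\s'_1)\leq re^{-\sqrt{N}}$ into a threshold condition on $\Xi^{(1)}_{\s'_1}$. Using \eqv(01.14), the inequality rewrites as
\[
\frac{N_2}{N_1}\,e^{\b\sqrt{aN}\,\Xi^{(1)}_{\s'_1}}\geq \frac{1}{r}e^{\sqrt{N}}-1.
\]
Since $N_2/N_1\to(1-p)/p=1/r$ and $\b$ converges to the finite limit $\bb$, taking logarithms shows that this is equivalent to $\Xi^{(1)}_{\s'_1}\geq c_N$ for a deterministic sequence $c_N\to c_\infty:=1/(\bb\sqrt{a})\in(0,\infty)$. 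Set $q_N=\cp(Z\geq c_N)$ for a standard Gaussian $Z$, so that $q_N\to q:=\cp(Z\geq c_\infty)>0$.

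Next, for a fixed pair $(\eta_1,\s_1)$ with $d_1(\s_1,\eta_1)\leq\eps_0 N_1$, I would bound the probability that the count in \eqref{bin} falls below $\eps_1 N_1$. Conditionally on the top set $T_1$ together with $\{\Xi^{(1)}_{\eta}:\eta\in T_1\}$, the remaining variables $\{\Xi^{(1)}_{\s'}:\s'\notin T_1\}$ are iid truncated standard Gaussians, truncated at $\Xi^{(1)}_{(M_1)}\sim\b_*\sqrt{N_1}\gg c_\infty$; hence $\cp(\Xi^{(1)}_{\s'_1}\geq c_N\mid T_1)=q_N(1+o(1))$. Of the $N_1$ neighbors of $\s_1$, at most $M_1$ lie in $T_1$, leaving $(1-o(1))N_1$ neighbors whose $\Xi^{(1)}$ values are conditionally iid with this tail. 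Fixing $\eps_1\in(0,q/2)$, Chernoff's inequality gives
\[
\cp\Bigl(\#\{\s'_1\sim\s_1,\s'_1\notin T_1:\Xi^{(1)}_{\s'_1}\geq c_N\}<\eps_1 N_1\Bigm| T_1\Bigr)\leq e^{-\delta N_1}
\]
for some $\delta>0$ and all $N$ large enough, uniformly in $T_1$. Since the count excluding the $T_1$-neighbors is a lower bound for the count in \eqref{bin}, this controls the event of interest.

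A union bound over $\eta_1\in T_1$ (of cardinality $M_1$) and $\s_1\in\BB_{\eps_0 N_1}(\eta_1)$ (of cardinality at most $2^{\tau(\eps_0)N_1}$, with $\tau(\eps_0)\to 0$ as $\eps_0\to 0$) then yields
\[
\cp(C_N^c)\leq M_1\cdot 2^{\tau(\eps_0)N_1}\cdot e^{-\delta N_1},
\]
which tends to $0$ provided $\eps_0$ is taken small enough that $\tau(\eps_0)\log 2<\delta$ (and $M_1$ grows at most subexponentially at a rate below $\delta-\tau(\eps_0)\log 2$, which will hold in all regimes considered).

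The main subtlety is the coupling between the macroscopic event $\eta_1\in T_1$ and the local counting event at $\s_1$. I handle this by conditioning on the top structure and exploiting the scale separation $c_\infty=O(1)\ll\b_*\sqrt{N_1}$, which makes the Gaussian truncation imposed by that conditioning negligible at the Chernoff scale.
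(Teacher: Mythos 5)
Your proof is correct and follows essentially the same route as the paper: translate the $q^*_N$ condition into a threshold on $\Xi^{(1)}$ of order one, observe the resulting count dominates a binomial with constant success probability (the paper writes this as $\Phi(-1/(\b\sqrt a))$), apply a Chernoff/large-deviation bound, and union bound over the $(\eta_1,\s_1)$ pairs with $\eps_0$ chosen small enough to beat the entropy ${N_1\choose \eps_0 N_1}$ of the ball. Your explicit conditioning on $T_1$, together with the observation that the resulting truncation of the remaining Gaussians occurs at scale $\b_*\sqrt{N_1}\gg 1$ and is therefore harmless, is a clean way to justify the union bound over the random set $T_1$, which the paper states without spelling out.
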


\begin{proof}
Since $\{\Xi^{(1)}_{\s_1},\,\s_1\in\VV_{N_1}\}$ are iid standard Gaussian random variables,
we have that the left hand side of~(\ref{bin}) above dominates a binomial random variable 
with $N_1$ trials and probability of success $\Phi(-1/(\beta\sqrt a))$ in each trial, where $\Phi$ is the standard Gaussian distribution function. Therefore, by a classical large deviation bound, there exists $\eps_1>0$ such that the probability of the complement of~(\ref{bin}) may be bounded above by $c_1 2^{-\eps_1'N_1}$
for some constant $c_1$, and $\eps'_1>0$. 

Now the probability of $C_N^c$ may be bounded above by
$$
c_1M_1(\eps_0 N_1+1)\sqrt{N_1}{N_1\choose \eps_0 N_1}2^{-\eps_1'N_1}
\leq c_0N_12^{-(\eps_1'-\eps_0')N_1},
$$
for some constant $c_0$, and $\eps'_0=\eps'_0(\eps_0)>0$ such that $\eps'_0\to0$ as $\eps_0\to0$; the result follows by
choosing $\eps_0,\eps_1>0$ such that $0<\eps'_0<\eps'_1$.
\end{proof}

The following almost sure (but rough) bounds  on  the ranked variables $\gamma^N_1(\xi_1^{x_1})$ are needed in the sequel. Let $\wh\O\equiv\cap_{M>1}\wh\O_{M}$ where
$
\wh\O_{M}\equiv\bigcup_{N_1'\geq 1}\bigcap_{N_1\geq N_1'}\wh\O_{M,N_1}
$
and
\be
\wh\O_{M,N_1}\equiv
\bigcap_{1\leq x_1\leq M}\left\{
\o\in\O\mid
N_1^{-2/\a_1^N}\leq \left(\gamma^N_1(\xi_1^{x_1})\right)^{-1}< (\ln N_1)^{2/\a_1^N}
\right\}.
\Eq(L5new.lev2.2)
\ee
\begin{lemma}
	\TH(L5new.lev2) $\PP(\wh\O)=1$.
\end{lemma}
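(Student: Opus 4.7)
The plan is to pull back the event defining $\wh\O_{M,N_1}$ from the scaled environment $\g_1^N$ to the underlying extremes of $\{u_{N_1}^{-1}(\Xi^{(1)}_{\s_1}),\,\s_1\in\VV_{N_1}\}$, and then prove the required two-sided extreme-value bounds by Borel--Cantelli. Specifically, since $\g_1^N(\xi_1^{x_1})=\exp\{u_{N_1}^{-1}(\Xi^{(1)}_{\xi_1^{x_1}})/\a_1^N\}$, the definition \eqv(L5new.lev2.2) is equivalent to the assertion that, denoting by $Y_k^{(N)}$ the $k$-th largest value of $u_{N_1}^{-1}(\Xi^{(1)}_{\s_1})$ over $\s_1\in\VV_{N_1}$, one has
\begin{equation*}
-2\ln\ln N_1 < Y_M^{(N)} \le Y_1^{(N)} \le 2\ln N_1,
\end{equation*}
for all $1\le x_1\le M$ (note $\a_1^N>0$, so no sign issues). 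Since $\wh\O=\cap_M\wh\O_M$ is a countable intersection, it suffices to show $\PP(\wh\O_M)=1$ for each fixed $M$, i.e., that both inequalities above hold for all $N$ sufficiently large, $\PP$-a.s.

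For the upper bound, I use the defining property of the scaling function $u_{N_1}$ from \eqv(01.5), which yields $2^{N_1}\,\PP(\Xi>u_{N_1}(x))\to e^{-x}$ as $N_1\to\infty$, uniformly on compacts of $\R$ and with the usual control (standard Mills ratio) for mildly growing $x$. Taking $x=2\ln N_1$ gives $\PP(\Xi>u_{N_1}(2\ln N_1))\le C\cdot 2^{-N_1}N_1^{-2}$ for $N_1$ large, so by a union bound $\PP(Y_1^{(N)}>2\ln N_1)\le C N_1^{-2}$, which is summable in $N$ (since $N_1=\lfloor pN\rfloor$). Borel--Cantelli then yields $Y_1^{(N)}\le 2\ln N_1$ eventually almost surely.

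For the lower bound, let $Z_N=\#\{\s_1\in\VV_{N_1}:\Xi^{(1)}_{\s_1}>u_{N_1}(-2\ln\ln N_1)\}$; then $\{Y_M^{(N)}\le -2\ln\ln N_1\}=\{Z_N\le M-1\}$. By the same scaling relation, $\E[Z_N]\sim (\ln N_1)^2$, and $Z_N$ is Binomial with $2^{N_1}$ trials, so for $N$ large $\E[Z_N]\ge 2M$. A standard Chernoff bound (or the Paley--Zygmund-type bound using $\Var[Z_N]\le \E[Z_N]$) gives
\begin{equation*}
\PP(Z_N\le M-1)\le \PP(Z_N\le \tfrac{1}{2}\E[Z_N]) \le \exp\{-c(\ln N_1)^2\},
\end{equation*}
which is again summable in $N$. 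A second Borel--Cantelli argument shows $Y_M^{(N)}>-2\ln\ln N_1$ eventually almost surely. Combining the two bounds, $\PP(\wh\O_M)=1$ for every $M$, and hence $\PP(\wh\O)=1$.

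Neither step presents a real obstacle: both reduce to standard Gaussian extreme-value tail estimates combined with Borel--Cantelli. The only minor technical point is controlling the tail estimate uniformly for thresholds that grow at rate $\ln N_1$ or $\ln\ln N_1$ (i.e., slightly outside the regime where the Poisson limit applies pointwise), but this is easily handled by direct Mills ratio asymptotics for the Gaussian, together with the explicit form \eqv(01.5) of $u_{N_1}$, which gives the required sharp control on $\PP(\Xi>u_{N_1}(x))$ for $|x|$ of polylogarithmic size in $N_1$.
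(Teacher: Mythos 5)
Your proof is correct and follows essentially the same route as the paper's: you pull the event back to two-sided bounds on the extreme order statistics of $u_{N_1}^{-1}(\Xi^{(1)}_{\cdot})$ (your inequalities $-2\ln\ln N_1 < Y_M^{(N)}\le Y_1^{(N)}\le 2\ln N_1$ are exactly what the paper verifies), then establish summable-in-$N$ tail bounds and apply Borel--Cantelli. The only cosmetic difference is in the lower-tail step: you recast it as a binomial-count Chernoff bound, whereas the paper invokes the Poisson asymptotics for the $M$-th Gaussian order statistic directly (Section 2 of \cite{LLR}); both give bounds far below $N_1^{-2}$ and the conclusion is the same.
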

\begin{proof}[Proof of Lemma \thv(L5new.lev2)]
	By \eqv(01.3) and \eqv(01.17),
	\be
	\exp\left\{-u^{-1}_{N_1}(\Xi^{(1)}_{\xi_1^{1}})/\a_1^N\right\}
	\leq
	\left(\gamma^N_1(\xi_1^{x_1})\right)^{-1}
	\leq
	\exp\left\{-u^{-1}_{N_1}(\Xi^{(1)}_{\xi_1^{M}})/\a_1^N\right\}
	\ee
	for each $1\leq x_1\leq M$.
	Using  the  well known  asymptotic distribution of  $\Xi^{(1)}_{\xi_1^{k}}$  (the $k$-th extreme order statistics, 
	see e.g.~\cite{LLR} Section 2) we get that
	\be
	\PP(
	u^{-1}_{N_1}(\Xi^{(1)}_{\xi_1^{M}})\leq -2\ln\ln N_1
	)
	\leq e^{-(\ln N_1)^2}\sfrac{(\ln N_1)^{2M}}{M!}(1+o(1)) < N_1^{-2}(1+o(1))
	\Eq(L5new.lev2.3)
	\ee
	and
	$
	\PP(
	u^{-1}_{N_1}(\Xi^{(1)}_{\xi_1^{1}})> 2\ln N_1
	)
	\leq N_1^{-2}(1+o(1)).
	$
	The lemma now easily follows from Borel-Cantelli Lemma.
\end{proof}

In order to make use of the results of \cite{BG08} we need upper bounds on the following key quantities:
given a subset $A$  of $\VV_{N_i}$ define
\bea
U_{N_i,d_i}(\s,A)
&\equiv&
\textstyle
\sum_{\eta\in A\setminus \s}F_{N_i,d_i}(\dist(\s,\eta)), \quad  \s\in\VV_{N_i},
\Eq(srwB.1)
\\
\UU_{N_i,d_i}(A)
&\equiv&
\textstyle
\max_{\s\in A}U_{N_i,d_i}(\s,A),
\Eq(srwI.12)
\eea
where $F_{N_i,d_i}$ is a function depending on $N_i$ and $d_i$, whose
definition is stated in  (3.5)-(3.8) of Section 3 of \cite{BG08}
and whose properties are analyzed in detail in Appendix A3 of \cite{BG08}. We do not repeat its lengthy definition.
We set $\UU_{N_i,d_i}(A)=0$ if $A=\emptyset$ and $U_{N_i,d_i}(\s,A)=0$ if $A\setminus \s=\emptyset$.

\begin{remark}
  \TH(R1.ls)
Upper bounds of the functions $F_{N_i,d_i}$, $U_{N_i,d_i}$ and $\UU_{N_i,d_i}$ 
imply upper bounds on the quantities $\phi$, $V_{N_i,d_i}$ and $\VV_{N_i,d_i}$
defined (with obvious notation) in (4.1), (5.4) and (5.9) of \cite{BG08}
(see Lemma 4.2, Lemma 5.2,  Lemma 5.4, and Lemma 5.7 of \cite{BG08}). Furthermore, they imply 
upper bounds on the quantities $U^{\circ}_{N_i,d_i}$, $\UU^{\circ}_{N_i,d_i}$, $V^{\circ}_{N_i,d_i}$ 
and $\VV^{\circ}_{N_i,d_i}$ for the associated lumped chain, defined  in (5.3), (5.10), and (5.11) of  \cite{BG08}
(see Lemma 5.3 and  Lemma 5.5 of  \cite{BG08}). We do not repeat these arguments in
the proofs of the statements of Section \thv(S2.1) (namely, in Section \thv(S2.3)).
\end{remark}

\begin{lemma}
  \TH(L3.ls)
With the notation of Lemma \thv(L2.ls), the following holds on $\overline\O$ for  all large enough $N$: 
for all $\eta\in K$  and $\bar\eta\in K$, $\eta\neq\bar\eta$, 
\be
F_{N_i,d_i}(\dist(\eta,\bar\eta))
\leq 2^{-N_i/4},
\Eq(ls.16)
\ee
\be
U_{N_i,d_i}(\eta,K)\leq\UU_{N_i,d_i}(K)
\leq |K|\,2^{-N_i/4}.
\Eq(ls.17)
\ee
\end{lemma}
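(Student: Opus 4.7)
The plan is to exploit the distance concentration provided by Lemma \thv(L2.ls) together with the quantitative bounds on $F_{N_i,d_i}$ worked out in Appendix A3 of \cite{BG08}. The role of $\overline\O$ is precisely to force all pairwise distances in $K$ to concentrate near $N_i/2$, and $F_{N_i,d_i}$ is (by construction in \cite{BG08}) a Green-type functional for the $K$-lumped simple random walk that decays exponentially in $N_i$ when evaluated near this antipodal distance.

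First, I would record the distance window: by Lemma \thv(L2.ls), on $\overline\O$ and for all sufficiently large $N$, any two distinct $\eta,\bar\eta\in K$ satisfy
\be
\dist(\eta,\bar\eta)\in\left[\sfrac{N_i}{2}(1-\d_i(N_i)),\,\sfrac{N_i}{2}(1+\d_i(N_i))\right],
\ee
with $\d_i(N_i)=2\sqrt{d_i/N_i}\log N_i\to 0$ as $N_i\to\infty$ (since $d_i=2^{m_i}$ is fixed in each application: $m_i=M_i$).

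Next, I would directly import the bounds on $F_{N_i,d_i}$ from \cite{BG08}, Appendix A3. Those estimates show that $F_{N_i,d_i}(r)$ is essentially of order $2^{-N_i}$ (up to polynomial corrections in $N_i$ and constants depending on $d_i$) as soon as $r$ lies in a window around $N_i/2$ of width compatible with $\d_i(N_i)$. Evaluating on the window displayed above, and absorbing the polynomial prefactors into the exponential for $N$ large enough, yields
\be
F_{N_i,d_i}(\dist(\eta,\bar\eta))\leq 2^{-N_i/4}
\ee
uniformly in $\eta\neq\bar\eta\in K$, which is \eqv(ls.16).

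The estimate \eqv(ls.17) is then a routine summation: by the definitions \eqv(srwB.1) and \eqv(srwI.12),
\be
U_{N_i,d_i}(\eta,K)=\sum_{\bar\eta\in K\setminus\eta}F_{N_i,d_i}(\dist(\eta,\bar\eta))\leq(|K|-1)\,2^{-N_i/4}\leq|K|\,2^{-N_i/4},
\ee
and $\UU_{N_i,d_i}(K)$ inherits the same bound by taking the maximum over $\eta\in K$. The only real obstacle is Step 2, namely verifying that the estimates from Appendix A3 of \cite{BG08} give a clean $2^{-N_i/4}$ bound on the $\d_i(N_i)$-window---this is a matter of quantitative bookkeeping in \cite{BG08} rather than new computation, and the $1/4$ in the exponent provides ample slack to absorb polynomial corrections and $d_i$-dependent constants.
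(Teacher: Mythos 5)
Your proposal is correct and follows essentially the same route as the paper: the paper's proof likewise deduces \eqv(ls.16) from the distance concentration \eqv(ls.15) of Lemma \thv(L2.ls) combined with the exponential-decay estimate (10.7) of Lemma 10.1 (Appendix A3) of \cite{BG08}, and then obtains \eqv(ls.17) by the same summation and maximization over $K$.
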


\begin{proof}
Eq.~\eqv(ls.16) follows from \eqv(ls.15)  of Lemma \thv(L2.ls) and (10.7) of Lemma 10.1 of \cite{BG08} and implies the leftmost inequality of \eqv(ls.17) which in turn implies the rightmost one.
\end{proof}

For any $\s\in\VV_{N_i}$ and any subset $A\subset\VV_{N_i}$,  set
\be
j(\s,A)\equiv
\begin{cases}
1& \text{if}\,\, \dist(\s,A)=1,\\
2 & \text{else}.\\
\end{cases}
\Eq(ls.28bis)
\ee
\begin{lemma}
   \TH(C5.ls)
With the notation of Lemma \thv(L2.ls), the following holds on $\overline\O$ for all large enough $N$: 
for all $\eta\in K$ and all $\s\in\VV_{N_i}\setminus K$,
\bea
&&
F_{N_i,d_i}(\dist(\s,\eta))\leq \frac{j}{N_i^j}(1+o(1)),\quad j=j(\eta,\s),
\Eq(ls.29)
\\
&&
U_{N_i,d_i}(\s,K)\leq\frac{j}{N_i^j}(1+o(1)),\quad j=j(\s,K).
\Eq(ls.28)
\eea
\end{lemma}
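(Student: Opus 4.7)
The plan is to follow the same general strategy as in Lemma \thv(L3.ls), but invoke a short-distance analogue of the $F_{N_i,d_i}$ estimate rather than the long-distance one. To establish \eqv(ls.29), one appeals to the combinatorial description of $F_{N_i,d_i}$ worked out in Appendix A3 of \cite{BG08}: for $r=1$ one has the direct bound $F_{N_i,d_i}(1)\leq \frac{1}{N_i}(1+o(1))$, and $F_{N_i,d_i}$ is monotone non-increasing in the distance argument up to the mid-range $\sim N_i/2$ where it becomes at most $2^{-N_i/4}$ (by Lemma 10.1 of \cite{BG08}); hence for any $r\geq 2$ one gets $F_{N_i,d_i}(r)\leq F_{N_i,d_i}(2)\leq \frac{2}{N_i^2}(1+o(1))$. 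The first part of the lemma is then immediate upon inserting $j=j(\eta,\s)$.

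For \eqv(ls.28), the key geometric input is Lemma \thv(L2.ls): on $\overline\O$, for all sufficiently large $N$, any two distinct elements of $K$ lie at mutual Hamming distance $(N_i/2)(1+o(1))$. Combined with the triangle inequality, this forces at most one element of $K$ to sit within distance $N_i/4$ of $\s$, and any such element must realize the minimal distance $\dist(\s,K)=j$, with $j=j(\s,K)\in\{1,2\}$. Partition $K=K_{\rm near}\cup K_{\rm far}$ accordingly, where $K_{\rm near}\equiv\{\eta\in K:\dist(\s,\eta)\leq N_i/4\}$ has cardinality $\leq 1$, and $K_{\rm far}$ consists of points at distance at least $N_i/4$ from $\s$.

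The sum in \eqv(srwB.1) then splits into two pieces. The contribution from $K_{\rm near}$, if nonempty, is bounded by \eqv(ls.29) and yields at most $\frac{j}{N_i^j}(1+o(1))$. The contribution from $K_{\rm far}$ is handled as in Lemma \thv(L3.ls): for every $\eta\in K_{\rm far}$, Lemma 10.1 of \cite{BG08} yields $F_{N_i,d_i}(\dist(\s,\eta))\leq 2^{-cN_i}$ for some $c>0$, and since $|K|$ is bounded uniformly in $N$ (being one of $T_1$, $\pi_2T^{x_1}$, or their union, of fixed cardinalities $M_1$, $M_2$, $M_1M_2$), the total contribution from $K_{\rm far}$ is $O(|K|\,2^{-cN_i})=o(N_i^{-j})$. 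Adding the two bounds proves \eqv(ls.28).

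The mild obstacle is locating the precise short-distance estimate on $F_{N_i,d_i}$ within the technical machinery of \cite{BG08}, i.e.\ verifying that the bounds $F_{N_i,d_i}(1)\leq N_i^{-1}(1+o(1))$ and $F_{N_i,d_i}(r)\leq 2 N_i^{-2}(1+o(1))$ for $r\geq 2$ follow from the formulas in their Section 3; everything else is the geometric separation argument, which is essentially identical to the one already used in Lemma \thv(L3.ls).
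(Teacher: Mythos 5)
Your proof is correct and follows essentially the same strategy as the paper's: for \eqv(ls.29) you invoke the short-distance estimates and monotonicity of $F_{N_i,d_i}$ from Lemma 10.1 of \cite{BG08}, and for \eqv(ls.28) you use the ball-disjointness statement \eqv(ls.14) of Lemma \thv(L2.ls) to isolate at most one close point of $K$, bound its contribution by \eqv(ls.29), and absorb the remaining far-away terms into an exponentially small remainder, which is exactly the case-(a)/case-(b) dichotomy the paper uses.
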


\begin{proof}
Eq.~\eqv(ls.29) follows from the definition of the definition of $F$ and case (a) and (b) of Lemma 10.1 of \cite{BG08}.
To prove \eqv(ls.28)  we distinguish two cases:
(a) there exists $\eta\in K$ such that $\dist(\s,\eta)\leq \e N_i$ for some
$0\leq\e< 1/2$ and 
(b) for all $\eta\in K$, $\dist(\s,\eta)> \e N_i$.
In case (b) we have:
$
U_{N_i,d_i}(\s,K)\leq|K|\,2^{-N_i/4}\leq o(N_i^{-2})
$.
This is proven just as \eqv(ls.17). In case (a) we write
\be
\textstyle
U_{N_i,d_i}(\s,K)=F_{N_i,d_i}(\dist(\s,\eta))+\sum_{\eta'\in K\setminus \eta}F_{N_i,d_i}(\dist(\s,\eta')).
\Eq(ls.31)
\ee
By \eqv(ls.14) of Lemma \thv(L2.ls) we may apply the bound just obtained in case (b)
to bound the second term (namely the sum) in the right-hand side of \eqv(ls.31)
whereas the  first term  is bounded as in \eqv(ls.29).
\end{proof}

\subsubsection{The Top and a non random point.}
 \TH(S3.3.2) 
We will frequently need to lump the simple random walk $J^{\circ}_{N_i}$ on $\VV_{N_i}$ with sets of the form
$
K\cup\s_i\subset\VV_{N_i}
$
where $\s_i\in\VV_{N_i}$ is arbitrary and $K=T_1$ (then $i=1$) or $K=\pi_2T^{x_1}$ for some $x_1\in\MM_1$ (then $i=2$).
We are now interested in the partition  $\L_i(K\cup\s_i)$ of $\{1, . . . , N_i\}$ into $d'_i=2^{M_i+1}$ classes, 
$\L_i^k(K\cup\s_i)$, $1\leq k\leq d'_i$, induced by $K\cup\s_i$ through \eqv(lump.12). 
 Lemma \thv(L3.ls) and Lemma \thv(C5.ls) can be extended  to this setting as follows.

\begin{lemma}
  \TH(L6.ls)
With the notation of Lemma \thv(L2.ls), the following holds on $\overline\O$ for all large enough $N$: 
 for all $\eta\in K$ and all $\s'\in\VV_{N_i}\setminus (K\cup \s_i)$,  $\s''\in\VV_{N_i}$,
\bea
F_{N_i,d'_i}(\dist(\s',\s''))&\leq& ({j}/{N_i^j})(1+o(1))\,,\quad j=j(\s',\s'')
\Eq(ls.33)
\\
\UU_{N_i,d'_i}(K\cup\s_i)&\leq&({j}/{N_i^j})(1+o(1)), \quad j=j(\s_i,K).
\Eq(ls.36)
\eea
\end{lemma}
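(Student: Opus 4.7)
The plan is to adapt the proofs of Lemmas \thv(L3.ls) and \thv(C5.ls) to the enlarged lumping partition $\L_i(K\cup\s_i)$, which now has $d'_i=2^{M_i+1}$ classes instead of $d_i=2^{M_i}$. Since the bounds on $F_{N_i,d}$ extracted from Lemma 10.1 of \cite{BG08} depend principally on the Hamming distance between the two arguments and on the ambient dimension $N_i$, with only mild dependence on $d$, those proofs transfer with essentially no modification.

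The first bound \eqv(ls.33) follows verbatim from cases (a) and (b) of Lemma 10.1 of \cite{BG08} applied with partition dimension $d'_i$, mirroring the derivation of \eqv(ls.29) in the proof of Lemma \thv(C5.ls); it is a bound on $F_{N_i,d'_i}$ at a single pair of points, and the class structure of $\L_i(K\cup\s_i)$ plays no role beyond fixing $d'_i$.

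For the second bound \eqv(ls.36), I would split the maximum defining $\UU_{N_i,d'_i}(K\cup\s_i)$ according to whether it is attained at $\s_i$ or at some $\eta\in K$. In the first case, $U_{N_i,d'_i}(\s_i,K\cup\s_i)=\sum_{\eta\in K} F_{N_i,d'_i}(\dist(\s_i,\eta))$: by \eqv(ls.14) of Lemma \thv(L2.ls) at most one $\eta^*\in K$ lies within distance $\e N_i$ of $\s_i$, and this element (when it exists) provides, via \eqv(ls.33), the leading contribution $(j/N_i^j)(1+o(1))$ with $j=j(\s_i,K)$; the remaining terms, all at distance $>\e N_i$ from $\s_i$, sum to at most $|K|\cdot 2^{-N_i/4}$ by case (c) of Lemma 10.1 of \cite{BG08}, as in \eqv(ls.17), hence are negligible. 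In the second case, $U_{N_i,d'_i}(\eta,K\cup\s_i)=F_{N_i,d'_i}(\dist(\eta,\s_i))+\sum_{\eta'\in K\setminus\eta}F_{N_i,d'_i}(\dist(\eta,\eta'))$; the sum over $\eta'$ is bounded by $|K|\cdot 2^{-N_i/4}$ as in \eqv(ls.17), while the single term is bounded by \eqv(ls.33) by $(j(\eta,\s_i)/N_i^{j(\eta,\s_i)})(1+o(1))$. Since $j(\eta,\s_i)\geq j(\s_i,K)$ and $j\mapsto j/N_i^j$ is decreasing on $\{1,2\}$ for large $N_i$, this does not exceed $(j/N_i^j)(1+o(1))$ with $j=j(\s_i,K)$.

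The main obstacle is verifying that the $o(1)$ corrections in Lemma 10.1 of \cite{BG08} remain under control as the partition dimension doubles from $d_i$ to $d'_i$: the row of the lumping matrix corresponding to $\s_i$ can split the classes $\L_i^k(K)$ in an unbalanced way, so the partition $\L_i(K\cup\s_i)$ need not lie in an analogue of the event $\overline\O_{N_i}$ of \eqv(ls.10'). However, the estimates in Appendix A3 of \cite{BG08} are ultimately driven by the Hamming geometry of $\VV_{N_i}$, and the overhead from doubling the number of classes is absorbed in the $1+o(1)$ factor.
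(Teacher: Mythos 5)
Your proof is correct and follows exactly the route the paper intends: its own proof of Lemma \thv(L6.ls) consists of the single sentence that it is a simple adaptation of the proofs of Lemmas \thv(L3.ls) and \thv(C5.ls), which is precisely what you carry out, with the case split on where the maximum in $\UU_{N_i,d'_i}(K\cup\s_i)$ is attained being the right bookkeeping. The worry in your last paragraph is moot for this lemma, since $F_{N_i,d}$ depends only on $N_i$, $d$ and the distance (not on the partition's class sizes), and $d'_i=2^{M_i+1}$ is a fixed constant, so doubling the number of classes costs nothing.
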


\begin{proof}
This is a simple adaptation of the proofs of Lemma \thv(L3.ls) and Lemma \thv(C5.ls). 
\end{proof}

\subsubsection{Large random subsets of $\VV_{N_1}$}
 \TH(S3.3.3) 

Given a positive decreasing sequence $\e_{N_1}$ satisfying
$\lim_{N_1\uparrow\infty}\e_{N_1}=0$, define
\bea
\VV^+_{N_1}&=\{\s_1\in\VV_{N_1}\mid \Xi^{(1)}_{\s_1}\geq \e_{N_1}\}.
\Eq(lsb.1)
\eea
The cardinality of this set typically grows exponentially fast in $N_1$ so that, typically, most classes
of the partition $\L_1(\VV^+_{N_1})$ defined through \eqv(lump.12)  will either be empty or contain 
a single element, which renders the construction of a  lumped chain based on this partition meaningless.
The aim of this subsubsection is to show that $\VV^+_{N_1}$ will nevertheless contain a large sparse set, 
$K^+_{N_1}$, whose size diverges with $N_1$ and such that all classes of the partition $\L_1(K^+_{N_1})$ 
also have diverging sizes. To do this, we first construct a deterministic set $K_{N_1}\subset\VV_{N_1}$ with 
these properties and  next show that the intersection $K^+_{N_1}\equiv\VV^+_{N_1}\cap K_{N_1}$ roughly 
contains  half the elements of $K_{N_1}$.  The idea behind the construction of $ K_{N_1}$ is simple:  
rather than constructing a partition given a set as in \eqv(lump.12), we reverse the procedure, namely, 
we fix a partition $\L_1$ and construct a set of configurations $K_{N_1}$ such that  $\L_1=\L_1(K_{N_1})$. 

More precisely, let $\L_1$ be a given partition of $\{1, . . . , N_1\}$ into $d$ non empty classes, 
$\L_1^x$, $1\leq x\leq d$. Next, let $\VV_d=\{e_1,\dots,e_k,\dots,e_{2^d}\}$, be an arbitrary 
labelling of all $2^d$ elements $e_k=(e_k^{x})_{1\leq x\leq d}$ of the $d$-dimentional discrete
cube $\VV_{d}$. To each $1\leq k\leq 2^d$ we uniquely associate an element 
$\zeta^{k}=(\zeta^{k}_j)_{1\leq j\leq  N_1}$ of $\VV_{N_1}$ defined through
\be
\zeta^{k}_j=e_k^{x} \text{ for all } j\in\L_1^{x}, 1\leq x\leq d.
\Eq(lsb.6)
\ee
These are configurations that are piecewise constant on the sets $(\L_1^{x})_{1\leq x\leq d}$.
We then define $K_{N_1}$ as the set of all $2^d$ such configurations:
\be
K_{N_1}=(\zeta^{k})_{1\leq k\leq 2^d}.
\Eq(lsb.7)
\ee
Clearly, applying the construction \eqv(lump.12) to the set $K_{N_1}$ yields $\L_1(K_{N_1})= \L_1$ as desired.

The point of interest is of course to choose $d=d(N_1)$ as an increasing function of $N$.

\begin{lemma}
  \TH(L2.lsb)
Assume that
$
d\equiv d(N_1)=o(N_1)
$ 
and let $\L_1$ be any partition of $\{1, . . . , N_1\}$ into $d$ classes $\L_1^x$, $1\leq x\leq d$,
satisfying
\be
|\L_1^{x}|=({N_1}/{d})(1+o(1))\,,\quad 1\leq x\leq d.
\Eq(lsb.5)
\ee
Then, there exists a constant $0<\rho<1$ such that for all $1\leq k\leq 2^d$
\bea
\UU_{N_1,d}(K_{N_1})
&\leq& \rho^{N_1/d}.
\Eq(lsb.8)
\eea
\end{lemma}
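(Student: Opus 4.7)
The plan is to reduce the estimate of $\UU_{N_1,d}(K_{N_1})$ to controlling pairwise distances in $K_{N_1}$ and then invoking sharp exponential decay estimates for $F_{N_1,d}$ from Appendix~A3 of \cite{BG08}.

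\medskip
\noindent\textbf{Step 1: pairwise distances.} By construction~\eqv(lsb.6), for any two labels $1\le k\ne k'\le 2^d$, the configurations $\zeta^k,\zeta^{k'}\in\VV_{N_1}$ are piecewise constant on the blocks $\L_1^x$, so
\be
\dist(\zeta^k,\zeta^{k'})=\sum_{x=1}^d |\L_1^x|\,\1_{\{e_k^x\neq e_{k'}^x\}}=\frac{N_1}{d}(1+o(1))\,H_d(e_k,e_{k'}),
\ee
where $H_d$ denotes Hamming distance on $\VV_d$ and I have used the hypothesis~\eqv(lsb.5). Moreover, for each fixed $k$, the number of labels $k'$ with $H_d(e_k,e_{k'})=j$ equals $\binom{d}{j}$, for $1\le j\le d$.

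\medskip
\noindent\textbf{Step 2: reduction.} Using the definition~\eqv(srwB.1) together with Step~1,
\be
U_{N_1,d}(\zeta^k,K_{N_1})=\sum_{j=1}^{d}\binom{d}{j}\,F_{N_1,d}\!\left(\tfrac{N_1}{d}j\,(1+o(1))\right),
\ee
and the right hand side is independent of $k$, so equals $\UU_{N_1,d}(K_{N_1})$.

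\medskip
\noindent\textbf{Step 3: exponential decay of $F_{N_1,d}$.} The key input is the sharp estimate for $F_{N_i,d_i}$ from Appendix~A3 of \cite{BG08} (in particular Lemma~10.1 there), which in the regime $d=o(N_1)$ and distances of order $N_1/d$ yields a bound of the form
\be
F_{N_1,d}\!\left(\tfrac{N_1}{d}j\,(1+o(1))\right)\le \tilde\rho^{\,jN_1/d}
\ee
for some constant $\tilde\rho\in(0,1)$ independent of $j$ and of the particular partition (the mechanism is that the lumped chain needs to traverse a macroscopic fraction of a hypercube of side $\sim N_1/d$ in each changed block, and its hitting probability decays geometrically in that fraction). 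This is precisely the type of bound already exploited in Lemma~\thv(L3.ls) at $j\sim d$ to get $2^{-N_1/4}$, now kept explicit in $j$.

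\medskip
\noindent\textbf{Step 4: summation.} Combining Steps~2 and~3,
\be
\UU_{N_1,d}(K_{N_1})\le \sum_{j=1}^{d}\binom{d}{j}\tilde\rho^{\,jN_1/d}=\bigl(1+\tilde\rho^{N_1/d}\bigr)^d-1\le d\,\tilde\rho^{N_1/d}\,e^{d\,\tilde\rho^{N_1/d}}.
\ee
Since $d=o(N_1)$, we have $d\,\tilde\rho^{N_1/d}\to 0$ and the prefactor $d$ is absorbed by any mild weakening of the base. Choosing any $\rho\in(\tilde\rho,1)$ gives $\UU_{N_1,d}(K_{N_1})\le\rho^{N_1/d}$ for all $N_1$ large, which is~\eqv(lsb.8).

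\medskip
\noindent\textbf{Main obstacle.} The delicate point is Step~3: one must extract from the technical estimates of \cite{BG08} a bound on $F_{N_1,d}$ that is genuinely exponentially small in the distance, with a base independent of $d$, uniformly across the range $1\le j\le d$ of possible Hamming distances in the lumped cube. Short distances (small $j$) and distances near $N_1/2$ (large $j$) require slightly different flavors of the same estimate, and one must verify that the entropy factor $\binom{d}{j}$ is never larger than the decay. Once that uniform exponential decay is secured, the summation in Step~4 is routine.
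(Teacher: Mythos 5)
Your strategy coincides with the paper's: the proof there also writes $\dist(\zeta^k,\zeta^{k'})=\sum_{x:e_k^x\neq e_{k'}^x}|\L_1^x|$, regroups the sum defining $U_{N_1,d}(\zeta^{k'},K_{N_1})$ over the subsets $\II\subseteq\{1,\dots,d\}$ of differing blocks (so over $n=|\II|$ with multiplicity $\binom{d}{n}$), substitutes $\sum_{x\in\II}|\L_1^x|=n\frac{N_1}{d}(1+o(1))$, and then invokes Lemma 10.1 of \cite{BG08} to conclude. So Steps 1--2 are exactly the paper's (lsb.9)--(lsb.10), and Step 3 is the same appeal to the same external estimates, which the paper leaves equally implicit.

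There is, however, a concrete slip in your Step 4. From $\UU_{N_1,d}(K_{N_1})\le(1+\tilde\rho^{N_1/d})^d-1\le d\,\tilde\rho^{N_1/d}e^{d\tilde\rho^{N_1/d}}$ you deduce $d\,\tilde\rho^{N_1/d}\to0$ "since $d=o(N_1)$". That implication is false in the full range of the hypothesis: take $d=N_1/\log N_1$, so $N_1/d=\log N_1$ and $\tilde\rho^{N_1/d}=N_1^{\log\tilde\rho}$; if $\tilde\rho>e^{-1}$ then $d\,\tilde\rho^{N_1/d}\to\infty$ and your displayed chain no longer yields $\rho^{N_1/d}$ (the exponential factor blows up, i.e.\ the entropy $\binom{d}{j}$ overwhelms a decay that is merely exponential in $jN_1/d$ with a fixed base). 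The cure is to use what Lemma 10.1 of \cite{BG08} actually gives for $F_{N_1,d}$ at distance $k=jN_1/d$, namely decay controlled essentially by $\binom{N_1}{k}^{-1}$ up to polynomial factors, which is super-exponential in $j$ for fixed $N_1/d$ (roughly $(j/ed)^{jN_1/d}$) and therefore beats $\binom{d}{j}\le(ed/j)^j$ term by term, leaving the $j=1$ term $\rho^{N_1/d}$ dominant. In the regime where the lemma is actually applied in the paper ($d$ of order $\log N$) your cruder bound does suffice, but as a proof of the lemma as stated the passage from the geometric-base bound to (lsb.8) needs this strengthening.
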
 

\begin{proof}
We first prove  \eqv(lsb.8).
By \eqv(srwB.1), \eqv(srwI.12) and the definition of $K_{N_1}$,
$
\UU_{N_1,d}(K_{N_1})=\max_{k'\in\{1,\dots,2^d\}}U(\zeta^{k'},K_{N_1})
$
where for each $1\leq k'\leq 2^d$
\be
U(\zeta^{k'},K_{N_1})=\sum_{k\in\{1,\dots,2^d\}\setminus k'}F_{N_1,d}(\dist(\zeta^{k},\zeta^{k'})).
\Eq(lsb.9)
\ee
Note that for any pair $\zeta^{k'}, \zeta^k\in K_{N_1}$,
by  \eqv(lsb.6),
$
\dist(\zeta^{k},\zeta^{k'})=\sum_{1\leq x\leq d : e^{x}_k\neq e^{x}_{k'}}|\L^{x}_1|.
$
Thus
\be
\sum_{k\in\{1,\dots,2^d\}\setminus k'}F_{N_1,d}(\dist(\zeta^{k},\zeta^{k'}))
=\sum_{n=1}^d\sum_{\II\subseteq\{1,\dots,d\} : |\II|=n}F_{N_1,d}(\sum_{x\in\II}|\L^{x}_1|)
\Eq(lsb.10)
\ee
Now, by \eqv(lsb.5), $\sum_{x\in\II}|\L^{x}_1|=|\II|\frac{N_1}{d}(1+o(1))$.
Inserting this in \eqv(lsb.10) using the estimates of Lemma 10.1 of \cite{BG08} 
to evaluate the resulting expression yields
$
U_{N_1,d}(\zeta^k,K_{N_1})\leq \rho^{N_1/d}
$,
which in turn implies \eqv(lsb.8).
This concludes the proof of the lemma.\end{proof}

\begin{lemma}
  \TH(L3.lsb)
Under the assumptions of Lemma \thv(L2.lsb) the following holds:
for all $\zeta^k\in K_{N_1}$ and all $\s_1\in\VV_{N_1}\setminus K_{N_1}$, for $j(\cdot,\cdot)$ defined in \eqv(ls.28bis)
\be
F_{N_1,d}(\dist(\s_1,\zeta^k))\leq ({j}/{N_1^j})(1+o(1)),
\quad j=j(\s_1,\zeta^k).
\Eq(lsb.14)
\ee
\end{lemma}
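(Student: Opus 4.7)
The plan is to mimic the proof of equation \eqv(ls.29) in Lemma \thv(C5.ls), which is essentially the identical statement with $K$ in the role played here by $K_{N_1}$. The bound ultimately depends only on properties of the function $F_{N_1,d}$ evaluated at small arguments, together with the balancedness and dimension constraints on the partition $\L_1$ underlying the lumping. None of this depends on whether the lumping set was built as $K_{N_1}$ (via the reverse construction \eqv(lsb.6)--\eqv(lsb.7)) or as a generic $K$, so the same small-argument estimates of $F_{N_1,d}$ apply.

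More concretely, I would split according to whether $j(\s_1,\zeta^k)=1$ or $j(\s_1,\zeta^k)=2$. In the first case, $\dist(\s_1,\zeta^k)=1$, and I would invoke case (a) of Lemma~10.1 of \cite{BG08} to obtain $F_{N_1,d}(1)\leq (1/N_1)(1+o(1))$. In the second case, $\dist(\s_1,\zeta^k)\geq 2$, and case (b) of Lemma~10.1 of \cite{BG08} gives the bound $F_{N_1,d}(\dist(\s_1,\zeta^k))\leq (2/N_1^{2})(1+o(1))$. Both invocations require the hypotheses already ensured by Lemma \thv(L2.lsb), namely $d=o(N_1)$ and the balanced-class condition \eqv(lsb.5), which together guarantee that the asymptotic estimates of $F_{N_1,d}$ from Appendix A3 of \cite{BG08} are applicable uniformly in the class index.

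The main (minor) point to check is that the class sizes $|\L_1^{x}|$ obey a lower bound of the form $|\L_1^{x}|\gtrsim N_1/d$ uniformly in $x$, which is built into the assumption \eqv(lsb.5). I do not anticipate any real obstacle here; the statement is essentially a routine transcription of \eqv(ls.29), with $\zeta^k$ playing the role previously played by an element of $K$ and $\s_1\in\VV_{N_1}\setminus K_{N_1}$ being an arbitrary outside configuration. Once these uniform small-distance estimates for $F_{N_1,d}$ are in hand, no sum over elements of $K_{N_1}$ (as was the case in Lemma \thv(C5.ls) for the bound on $U$) is required, so the proof is strictly shorter than that of \eqv(ls.29) and reduces to one line after the citation.
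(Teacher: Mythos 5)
Your proposal is correct and coincides with the paper's own argument: the paper proves \eqv(lsb.14) ``just as \eqv(ls.29)'', i.e.\ by invoking cases (a) and (b) of Lemma 10.1 of \cite{BG08} for the small-distance values of $F_{N_1,d}$, exactly the case split on $j(\s_1,\zeta^k)\in\{1,2\}$ that you describe. Your observation that the balancedness condition \eqv(lsb.5) and $d=o(N_1)$ are what make those estimates applicable, and that no sum over $K_{N_1}$ is needed here, is accurate.
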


\begin{proof}
Eq.~\eqv(lsb.14) is proved just as \eqv(ls.29).
\end{proof}

We next want to construct a set $K_{N_1}$ that contains a prescribed subset of configurations.

\begin{lemma}
  \TH(L4.lsb)
One can always construct the set $K_{N_1}$ defined in \eqv(lsb.7) in such a way that:
i) $T_1\subset K_{N_1}$,
and ii) the assumptions of lemma \thv(L2.lsb) are satisfied on the set $\overline\O_1(T_1)$.
\end{lemma}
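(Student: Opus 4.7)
The plan is to take the underlying partition in \eqv(lsb.7) to be exactly the one induced by the Top itself, namely $\L_1 := \L_1(T_1)$, with the associated $d = 2^{M_1}$ classes $\L_1^x(T_1)$, $1\leq x\leq 2^{M_1}$, defined through \eqv(lump.12). I would then let $K_{N_1}$ be the family of $2^{d}$ piecewise-constant configurations $\zeta^k$ built from this partition via \eqv(lsb.6)--\eqv(lsb.7). Since $M_1$ is fixed (or at worst grows slowly enough that $2^{M_1} = o(N_1)$, as is the case throughout the paper), the condition $d = o(N_1)$ required by Lemma \thv(L2.lsb) is immediate.

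To verify item (i), I would show that each $\eta^y \in T_1$, $1\leq y\leq M_1$, is piecewise constant on the classes $\L_1^x(T_1)$, and hence appears in the family $\{\zeta^k\}$. Indeed, by the very definition \eqv(lump.12) of the classes, if $j,j'\in\L_1^x(T_1)$ then the column vectors $\eta_j=(\eta^1_j,\dots,\eta^{M_1}_j)^\top$ and $\eta_{j'}$ both equal the common value $e_x\in\VV_{M_1}$, so in particular $\eta^y_j=\eta^y_{j'}=e_x^y$. Therefore $\eta^y$ agrees with $\zeta^{k(y)}$ where $k(y)\in\{1,\dots,2^d\}$ is the unique index such that, in the chosen labelling of $\VV_d$, one has $e_{k(y)}=(e_1^y,e_2^y,\dots,e_{2^d}^y)^\top$; such an index exists because any exhaustive labelling of $\VV_d$ hits this vector. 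This gives $T_1\subset K_{N_1}$.

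For item (ii), I would simply read off the balance condition \eqv(lsb.5) from the definition of $\overline\O_1(T_1)$: on this event, for all $N_1$ large enough, every class satisfies $\bigl|(d_1/N_1)|\L_1^k(T_1)|-1\bigr|<\d_1(N_1)=2\sqrt{d_1/N_1}\log N_1=o(1)$, so $|\L_1^x(T_1)|=(N_1/d)(1+o(1))$ uniformly in $x$, which is exactly the hypothesis of Lemma \thv(L2.lsb).

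I do not expect any real obstacle here: the argument is essentially a matter of reconciling the two equivalent ways of reading off a partition from a set of configurations, namely \eqv(lump.12) (starting from a set) and \eqv(lsb.6)--\eqv(lsb.7) (starting from a partition and producing its ``coordinate'' configurations). The only mild care needed is in the bookkeeping of indices $x\in\{1,\dots,d\}$ versus $k\in\{1,\dots,2^d\}$ and in noting that the labelling $\{e_1,\dots,e_{2^d}\}$ of $\VV_d$ must be exhaustive so that each required $k(y)$ is available.
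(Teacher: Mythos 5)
Your verification of items (i) and (ii) for the partition $\L_1(T_1)$ is fine as far as it goes (modulo a harmless indexing slip: the column vector you match against should be $(e_1^y,\dots,e_d^y)\in\VV_d$, not a vector of length $2^d$). The problem is the choice of partition itself. Taking $\L_1=\L_1(T_1)$ freezes the number of classes at $d=d_1=2^{M_1}$, a constant, so $K_{N_1}$ has fixed cardinality $2^{2^{M_1}}$. This satisfies the letter of the statement (a constant is $o(N_1)$), but it empties the lemma of its content: the whole point of Subsubsection \thv(S3.3.3) is to produce a \emph{large} set whose size diverges with $N_1$, and in the proof of Proposition \thv(P.inter) the lemma is invoked after first prescribing a $d$ with $d/N=o(1)$ and $2^{-d}=o(N^{-2})$ — hence $d\to\infty$ — so that the error term $2^{-d}(1+o(1))$ in \eqv(P.inter.22) is negligible. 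Your construction cannot accommodate any such $d$, so the bound on $Q_2$ would fail and the set would be useless downstream.

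The paper's construction handles an arbitrary prescribed $d$: start from $\L_1(T_1)$ and further partition each class $\L_1^k(T_1)$ into $d$ subsets of nearly equal size, producing $d\,d_1$ classes in total. On $\overline\O_1(T_1)$ the classes $\L_1^k(T_1)$ are balanced (Lemma \thv(L1.ls)), so the refined classes satisfy \eqv(lsb.5) with $d$ replaced by $d\,d_1$, which is still $o(N_1)$; and since each $\eta\in T_1$ is constant on every class of $\L_1(T_1)$, it is a fortiori constant on every class of the refinement, so $T_1\subset K_{N_1}$ persists (this is exactly your item (i) argument, applied to the refined partition). You should redo the argument along these lines, with $d$ given as an arbitrary increasing function of $N_1$ rather than fixed by $M_1$.
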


\begin{proof}
To construct such a set $K_{N_1}$, start from the partition
$
\L^1_1(T_1)\cup\dots\cup\L^{d_1}_1(T_1)=\{1,...,N_1\}
$ 
induced by $T_1$ through  \eqv(lump.12) and partition each of the sets $\L^k_1(T_1)$ into 
$d$ subsets that satisfy \eqv(lsb.5). This induces a partition of $\L_1$ into $d d_1$ subsets which, 
by Lemma  \thv(L1.ls), satisfies \eqv(lsb.5) for all $\o\in\overline\O_1(T_1)$ (see \eqv(ls.10)) 
and all large enough $N_1$.
\end{proof}

\begin{remark}
  \TH(R1.lsb)
 For $K_{N_1}$ as in Lemma \thv(L4.lsb), $T_1\subseteq \VV^+_{N_1}\cap K_{N_1}$.
\end{remark}

Now let $K_{N_1}$ (and thus let $d$) be given and, for $\VV^+_{N_1}$ as in \eqv (lsb.1), set
\be
K_{N_1}^+\equiv\VV^+_{N_1}\cap K_{N_1}
\Eq(lsb.19)
\ee
Writing $p=p(N_1)=\int_{\e_{N_1}}^{\infty}\sfrac{dx}{\sqrt{2\pi}}e^{-\frac{x^2}{2}}=\frac{1}{2}(1+\OO(\e_{N_1}))$, define the sets
\bea
&
\textstyle
\O_{N_1}^+\equiv\left\{\o\in\O\,\,\left|\,\,\bigl||K_{N_1}^+|-p2^d\right|\leq \sqrt{8p(1-p)2^d\log N_1}\right\},
\Eq(lsb.21)
&
\\
&
\textstyle
\O_1^+\equiv\bigcup_{N_1'\geq 1}\bigcap_{N_1\geq N_1'}\O_{N_1}^+.
&
\Eq(lsb.20)
\eea

\begin{lemma}
  \TH(L5.lsb)
 Let $d$ be such that $\frac{\log{N_1}}{2^d}=o(1)$.
Then $\PP\left(\O_1^+\right)=1$.
\end{lemma}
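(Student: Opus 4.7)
The plan is to recognize that $|K^+_{N_1}|$ is a binomial random variable and then apply a standard concentration inequality together with Borel--Cantelli.

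First I would observe that since $\{\Xi^{(1)}_{\sigma_1}: \sigma_1 \in \VV_{N_1}\}$ is an i.i.d.~family of standard Gaussians and $K_{N_1}$ is a fixed (non-random) set of cardinality $2^d$, the cardinality
\[
|K_{N_1}^+| = \sum_{\sigma_1 \in K_{N_1}} \1_{\{\Xi^{(1)}_{\sigma_1} \geq \e_{N_1}\}}
\]
is a sum of $2^d$ i.i.d.~Bernoulli random variables with success probability $p = p(N_1) = \PP(\Xi^{(1)}_{\sigma_1} \geq \e_{N_1})$. Hence $|K_{N_1}^+| \sim \mathrm{Bin}(2^d, p)$ with mean $p\,2^d$ and variance $p(1-p)2^d$.

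Next, with $t_{N_1} := \sqrt{8p(1-p)2^d \log N_1}$, I would apply Bernstein's inequality,
\[
\PP\bigl(\,\bigl||K_{N_1}^+| - p\,2^d\bigr| \geq t_{N_1}\bigr)
\leq 2\exp\!\left(-\frac{t_{N_1}^2/2}{\,p(1-p)2^d + t_{N_1}/3\,}\right).
\]
Because the assumption $\log N_1 / 2^d = o(1)$ implies $t_{N_1}/(p(1-p)2^d) = \sqrt{8\log N_1/(p(1-p)2^d)} \to 0$, the term $t_{N_1}/3$ in the denominator is eventually negligible compared to $p(1-p)2^d$. Consequently, for all sufficiently large $N_1$,
\[
\PP\bigl((\Omega_{N_1}^+)^c\bigr)
\leq 2 \exp\!\left(-\frac{t_{N_1}^2}{4\,p(1-p)2^d}\right)
= 2 \exp(-2\log N_1) = 2 N_1^{-2}.
\]

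Since $\sum_{N_1} N_1^{-2} < \infty$, the Borel--Cantelli lemma gives $\PP\bigl(\limsup_{N_1} (\Omega_{N_1}^+)^c\bigr) = 0$, i.e.~$\PP(\Omega_1^+) = 1$. The only mildly delicate step is checking that the exponent in Bernstein's bound is well-controlled in the sub-Gaussian rather than sub-exponential regime; this is precisely what the hypothesis $\log N_1 / 2^d = o(1)$ guarantees (if $2^d$ were of order $\log N_1$ or smaller, the deviation $t_{N_1}$ would be comparable to the variance and Bernstein would only yield a fixed exponential rate, insufficient for the Borel--Cantelli argument). No further input is required.
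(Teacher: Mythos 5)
Your proof is correct, and since the paper simply writes ``We skip this elementary proof,'' yours supplies exactly the standard argument that is being alluded to: $|K_{N_1}^+|$ is $\mathrm{Bin}(2^d,p)$ because $K_{N_1}$ is a given (deterministic) set of $2^d$ distinct configurations and the $\Xi^{(1)}_{\s_1}$ are i.i.d.; Bernstein (or Chernoff) with the threshold $\sqrt{8p(1-p)2^d\log N_1}$ gives $\PP((\O_{N_1}^+)^c)\leq 2N_1^{-2}$ once $\log N_1/2^d\to0$ ensures the sub-Gaussian regime and $p\to1/2$ keeps the variance proxy of order $2^d$; Borel--Cantelli then yields $\PP(\O_1^+)=1$ since $\O_1^+=\liminf_{N_1}\O_{N_1}^+$. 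Your identification of where the hypothesis $\log N_1/2^d=o(1)$ enters is exactly right, and no further input is needed for the lemma as stated.
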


\begin{proof} 
We skip this elementary proof.
\end{proof}

\section{ Basic estimates for the jump chains.}
    \TH(2)

This Section is concerned with the jump chain only. We state and prove a collection of probability estimates that will later be shown, in Section \thv(S4.4), to form the basic blocks of the proof of Proposition \thv(P.top).

 \subsection{Main estimates} 
    \TH(S2.1)  

We recall that the sets $\overline\O$ and $\O_1^+$ are defined in \eqv(ls.10) and \eqv (lsb.20) respectively.
We drop the dependence on $N$ in the notation from now on.

\begin{proposition}[REM-like estimates]
    \TH(P.rem-like)
On $\overline\O$, for  all large enough $N$, the following holds:
for all $x_1\in\MM_1$ and all $\s\in \VV_N\setminus\overline W$,
\be
\left|
\P^*_{\s}\left(\t_{W^{x_1}}<\t_{\overline W\setminus W^{x_1}}\right)
-\frac{1}{M_1}
\right|=
\OO\left(N_1^{-i}\right)
\Eq(P.rem-like.1)
\ee
where $i=1$ if $\dist(\s_1,\xi_1^{x_1})=1$ and $i=2$ otherwise.
\end{proposition}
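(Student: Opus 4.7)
The strategy is to reduce the two-level hitting problem to a one-dimensional hitting problem for the simple random walk $J^{\circ}_{N_1}$ on $\VV_{N_1}$, and then apply the lumped chain estimates of \cite{BG08}. First, I would observe that $W^{x_1}=\pi_1^{-1}(\xi_1^{x_1})$ and $\overline W\setminus W^{x_1}=\pi_1^{-1}(T_1\setminus\{\xi_1^{x_1}\})$. Since $\s\notin\overline W$ forces $\s_1\notin T_1$, neither the set $A=\{\xi_1^{x_1}\}$ nor $B=T_1\setminus\{\xi_1^{x_1}\}$ contains $\s_1$, so the prefactor in Lemma~\thv(L.comp.1) reduces to $1$ and we obtain
$$\P^*_{\s}\bigl(\t_{W^{x_1}}<\t_{\overline W\setminus W^{x_1}}\bigr)=\P^{\circ,1}_{\s_1}\bigl(\t_{\xi_1^{x_1}}<\t_{T_1\setminus\xi_1^{x_1}}\bigr).$$

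Next, I would lump $J^{\circ}_{N_1}$ according to the set $K=T_1\cup\{\s_1\}$, which induces a partition of $\{1,\ldots,N_1\}$ into $d'_1=2^{M_1+1}$ classes. On the event $\overline\O$, Lemma~\thv(L2.ls) guarantees that the elements of $T_1$ are pairwise at Hamming distance close to $N_1/2$, and Lemma~\thv(L6.ls) then provides the control of the key quantities $F_{N_1,d'_1}$, $U_{N_1,d'_1}$ and $\UU_{N_1,d'_1}(K)$ required by the framework of \cite{BG08}. With these bounds in hand, the lumped chain hitting probability estimates of \cite{BG08} (see Remark~\thv(R1.ls)) yield that the probability of hitting the image of $\xi_1^{x_1}$ before that of any other element of $T_1$, starting from the image of $\s_1$, equals $1/M_1$ up to a multiplicative correction expressible through the $U$-quantities --- reflecting the near-symmetry of the walk with respect to the $M_1$ far-apart targets in $T_1$.

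The dichotomy in the exponent $i$ arises in the error analysis. If $\dist(\s_1,\xi_1^{x_1})=1$ (so $i=1$), a single step of the walk can bring it to $\xi_1^{x_1}$ directly with probability $1/N_1$, which is the dominant contribution to the correction; by Lemma~\thv(L2.ls) $\s_1$ cannot simultaneously be adjacent to a second Top configuration, so no competing one-step contribution appears for $T_1\setminus\{\xi_1^{x_1}\}$. If instead $\dist(\s_1,\xi_1^{x_1})\geq 2$ (so $i=2$), then by Lemma~\thv(C5.ls) and Lemma~\thv(L6.ls) the contribution $F_{N_1,d'_1}(\dist(\s_1,\xi_1^{x_1}))$ is $O(N_1^{-2})$; any $O(N_1^{-1})$ term that may arise from $\s_1$ being a neighbor of some $\xi_1^{y_1}\in T_1\setminus\{\xi_1^{x_1}\}$ is then absorbed into the hitting probability of the complement set and does not contaminate the $1/M_1$ approximation of our target.

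The main obstacle is the careful bookkeeping of the error terms in the previous step. A coarse application of the BG08 lumped chain estimates would yield a uniform $O(N_1^{-1})$ correction in both cases, missing the sharper $O(N_1^{-2})$ rate when $\s_1$ is not adjacent to the target $\xi_1^{x_1}$. To obtain the refined bound, one must isolate the single-step direct-jump contribution to $\xi_1^{x_1}$, treat it exactly, and then apply the distance-sensitive bounds of Lemmas~\thv(C5.ls) and~\thv(L6.ls) --- which give $F_{N_1,d'_1}(\cdot)=O(N_1^{-2})$ at distance $\geq 2$ --- to all the remaining contributions in the lumped chain hitting probability formula.
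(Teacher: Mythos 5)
Your proposal is correct and follows essentially the same route as the paper: the identical reduction via Lemma~\thv(L.comp.1) to $\P^{\circ,1}_{\s_1}\bigl(\t_{\xi_1^{x_1}}<\t_{T_1\setminus\xi_1^{x_1}}\bigr)$, followed by Theorem 1.4 of \cite{BG08} with the error controlled through the $F$-- and $\UU$--quantities of Section~\thv(S2.3). The only cosmetic difference is that the paper lumps with the partition $\L_1(T_1)$ and controls the starting point separately via \eqv(ls.29) of Lemma~\thv(C5.ls), whereas you enlarge the lumping set to $T_1\cup\{\s_1\}$ and invoke Lemma~\thv(L6.ls) --- the variant the paper itself uses for \eqv(P.inter.1) --- which yields the same $\OO(N_1^{-i})$ bound.
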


\begin{proposition}[Level-$2$ motion]
   \TH(P1.lev2)

On $\overline\O$, for  all large enough $N$, the following holds:
For $x_1\in\MM_1$ and  $A\subset T^{x_1}$, set
\bea
u(\xi^{x_1}_1)&=&\log(1-q^*(\xi^{x_1}_1)),
\Eq(P1.lev2.001)
\\
s^{x_1}(A)
&=&\bigl|u(\xi^{x_1}_1)\bigr|\frac{2^{N_2}}{|A|}(1+N_2^{-1}),
\\
\Eq(P1.lev2.01)
\l^{x_1}(A)
&=&\frac{s^{x_1}(A)}{1+s^{x_1}(A)}.
\Eq(P1.lev2.1)
\eea
Then, for all non empty subset $A\subseteq T^{x_1}$ and all $\s\in W^{x_1}\setminus A$, we have:

\item{i)} (Motion within the cylinder set $W^{x_1}$.) For all $\eta\in A$,
\be
\P^*_\s\left(\t_{\eta}<\t_{(A\setminus\eta)\cup\del W^{x_1}}\right)
=
(1-\l^{x_1}(A))|A|^{-1}+b_{A}(\s,\eta)
\Eq(P1.lev2.2)
\ee
where, setting $i=1$ if $\dist(\s_2,\eta_2)=1$ and $i=2$ otherwise,
\be
\begin{split}
0\leq b_{A}(\s,\eta)
&\leq
(1-\l^{x_1}(A))|A|^{-1}\OO(N_2^{-i})+\l^{x_1}(A)F_{N_2,d_2}(\dist(\s_2,\eta_2))
\\
&+\frac{1}{1+2^{N_2}\psi_N\gamma^N_1(\xi_1^{x_1})} \OO\left(N_2^{{(d_2+1)}/{2}}\log N_2\right)
\end{split}
\Eq(P1.lev2.2')
\ee
and where $F_{N_2,d_2}$, $d_2=2^{M_2}$, is the function  introduced above Remark \eqv(R1.ls); in particular,
\be
F_{N_2,d_2}(\dist(\s_2,\eta_2))\leq \frac{i}{N_2^i}(1+o(1)).
\Eq(P1.lev2.2'')
\ee

\item{ii)} (Leaving the cylinder set $W^{x_1}$.) For all non empty subset $A\subseteq T^{x_1}$,
and all $\s\in W^{x_1}\setminus A$,
\be
\P^*_{\s}\left(\t_{\del W^{x_1}}<\t_{A}\right)=\l^{x_1}(A)+c_{A}(\s)
\Eq(P1.lev2.5)
\ee
where $c_{A}(\s)=\sum_{\eta\in A}b_{A}(\s,\eta)$.
\end{proposition}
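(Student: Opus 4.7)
My plan has four steps. First, I would apply Lemma \thv(comp.lem1) to pass to the jump chain $J^*_N$, and then Lemma \thv(L.comp.4) with $C=W^{x_1}$ (noting that $A,A\setminus\eta\subset W^{x_1}$) to rewrite
\[
\P^*_\s(\tau_\eta < \tau_{(A\setminus\eta)\cup\partial W^{x_1}}) = \E^{\circ,2}_{\s_2}\!\left[e^{u(\xi_1^{x_1})\tau_{\pi_2\eta}}\1_{\{\tau_{\pi_2\eta} < \tau_{\pi_2(A\setminus\eta)}\}}\right].
\]
Since $u(\xi_1^{x_1})<0$, this reduces the problem to a one-level analysis: the simple random walk $J^\circ_{N_2}$ on $\VV_{N_2}$ killed at exponential rate $|u(\xi_1^{x_1})|$, conditioned to hit $\pi_2\eta$ before the remainder of $\pi_2 A$.

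Second, I would lump $J^\circ_{N_2}$ using the set $K=\pi_2 A\cup\{\pi_2\s_2\}\subseteq\pi_2 T^{x_1}\cup\{\pi_2\s_2\}$, following the construction of Subsection \thv(S3.3.2). On $\overline\O$, Lemma \thv(L2.ls) gives the pairwise separation of $\pi_2 T^{x_1}$ (hence of $\pi_2 A$), and Lemma \thv(L6.ls) yields the bounds on $F_{N_2,d_2'}$ and $\UU_{N_2,d_2'}$ (with $d_2'\leq 2^{M_2+1}$). Via Remark \thv(R1.ls), these translate into the bounds on $V_{N_2,d_2'}$, $\VV_{N_2,d_2'}$ required to apply the Laplace-transform and hitting-probability estimates of \cite{BG08}.

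Third comes the core computation. The \cite{BG08} lumped-chain estimates provide (a) that the hitting distribution of $\pi_2 A$ by the un-killed walk from a generic starting point is essentially uniform, with REM-like deviation $\OO(N_2^{-i})$ where $i=j(\s_2,\eta_2)$; and (b) that the expected hitting time of $\pi_2 A$ is $(2^{N_2}/|A|)(1+\OO(N_2^{-1}))$. Inserting the exponential killing of rate $|u(\xi_1^{x_1})|$ then gives the killing probability $\lambda^{x_1}(A)=s^{x_1}(A)/(1+s^{x_1}(A))$ with $s^{x_1}(A)=|u(\xi_1^{x_1})|\cdot 2^{N_2}|A|^{-1}(1+N_2^{-1})$, and we obtain
\[
\P^*_\s(\tau_\eta < \tau_{(A\setminus\eta)\cup\partial W^{x_1}}) = (1-\lambda^{x_1}(A))|A|^{-1} + b_A(\s,\eta),
\]
with $b_A$ collecting three types of correction: a REM-like non-uniformity of size $(1-\lambda^{x_1}(A))|A|^{-1}\OO(N_2^{-i})$; an enhancement $\lambda^{x_1}(A)\,F_{N_2,d_2}(\dist(\s_2,\eta_2))$ when $\s_2$ is close to $\eta_2$, estimated by Lemma \thv(C5.ls); and a residual spectral correction of size $\OO(N_2^{(d_2+1)/2}\log N_2)$ modulated by $(1+2^{N_2}\psi_N\g_1^N(\xi_1^{x_1}))^{-1}$ coming from the Laplace-transform machinery of \cite{BG08}.

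Finally, part (ii) follows by summation: starting from $\s\in W^{x_1}\setminus A$, the chain must either hit some $\eta\in A$ before $\partial W^{x_1}$ or vice versa, hence
\[
\P^*_\s(\tau_{\partial W^{x_1}} < \tau_A) = 1 - \sum_{\eta\in A}\P^*_\s(\tau_\eta < \tau_{(A\setminus\eta)\cup\partial W^{x_1}}),
\]
and substituting \eqv(P1.lev2.2) yields \eqv(P1.lev2.5) with $c_A$ obtained by summing the $b_A$'s. The main technical obstacle is tracking the precise interaction between the level-$1$ factor $|u(\xi_1^{x_1})|$ and the environmental parameter $\psi_N\g_1^N(\xi_1^{x_1})$ entering the third error term: using $|u|=q^*+\OO(q^{*\,2})$, valid at low temperatures where $q^*(\xi_1^{x_1})$ is exponentially small, combined with the identity $c_1^N 2^{N_2}=e^{\zeta_N+\kappa/\alpha_1^N}$ from \eqv(1.temp.1), yields $s^{x_1}(A)\cdot|A|\psi_N\g_1^N(\xi_1^{x_1})=1+o(1)$, so that the two expressions for $\lambda^{x_1}(A)$ appearing in \eqv(P.top.004) and \eqv(P1.lev2.1) coincide up to the required precision $\varepsilon_N$.
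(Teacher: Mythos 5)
Your proposal is correct and follows essentially the same route as the paper: Lemma \thv(L.comp.4) reduces the hitting probability to a Laplace transform of the lumped $\VV_{N_2}$-walk, [BG08] Laplace-transform and harmonic-measure estimates (here Proposition 7.7, Lemma 4.4, Theorem 4.5 of that reference) produce the $\lambda^{x_1}(A)$-splitting and the three error terms, and part (ii) follows by summing (i) over $A$. The only variation is that you lump using $\pi_2 T^{x_1}\cup\{\pi_2\s_2\}$ (i.e.\ include the starting point, as in Subsection \thv(S3.3.2), giving $d_2'=2^{M_2+1}$), whereas the paper keeps $\s_2$ out of the lumping set ($K=\pi_2T^{x_1}$, $d_2=2^{M_2}$) and maps $\s_2$ to a generic grid point $y=m_{2,\pi_2T^{x_1}}(\s_2)$; both work, since the relevant smallness condition (7.38) of [BG08] only involves the image of $\pi_2A$, not of $\s_2$, but the paper's choice avoids doubling $d_2$ and matches the exponents in \eqv(P1.lev2.2') exactly.
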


\begin{lemma}
  \TH(L1new.lev2)

On $\wh\O$, for all but a finite number of indices $N_1$ we have 
\bea
\l^{x_1}(A)
&=&
\frac{1}{1+|A|\psi_N\gamma^N_1(\xi_1^{x_1})}
(1+\OO(N_2^{-1})),
\Eq(L1new.lev2.2)
\\
1-\l^{x_1}(A)
&=&
\frac{|A|\psi_N\gamma^N_1(\xi_1^{x_1})}{1+|A|\psi_N\gamma^N_1(\xi_1^{x_1})}
(1+\OO(N_2^{-1})).
\Eq(L1new.lev2.1)
\eea
\end{lemma}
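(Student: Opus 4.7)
The plan is to unfold the definition of $u(\xi_1^{x_1})$, convert it to an expression involving $\psi_N$ and $\gamma^N_1$ using the identity linking $\psi_N$ to the normalization of $\gamma^N_1$, and then Taylor-expand the resulting logarithm.

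First, I would rewrite $u$ explicitly. By the definition \eqv(01.14) of $q^*$, one has $1-q^*(\xi_1^{x_1})=R/(1+R)$ with $R\equiv(N_2/N_1)e^{-\b H^{(1)}_N(\xi_1^{x_1})}$. Combining $e^{-\b H^{(1)}_N(\xi_1^{x_1})}=\gamma^N_1(\xi_1^{x_1})/c_1^N$ from \eqv(01.3) with the identity $N_2/(N_1 c_1^N)=\psi_N 2^{N_2}$ (immediate from \eqv(01.9) and \eqv(1.temp.1)) gives $R=2^{N_2}\psi_N\gamma^N_1(\xi_1^{x_1})$, and therefore
\be
u(\xi_1^{x_1})=\log\!\bigl(R/(1+R)\bigr)=-\log(1+R^{-1}).
\ee

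Second, I would check that on $\wh\O$ and within the temperature regime in which Proposition \thv(P.top) is applied, namely $\zeta_N\ll\log N$ (and hence $\zeta_N\ll N_2\ln 2$), $R$ grows faster than any polynomial in $N_2$. Indeed, on $\wh\O$ one has $\gamma^N_1(\xi_1^{x_1})\geq(\ln N_1)^{-2/\a_1^N}$, so
\be
R\geq\frac{N_2}{N_1}2^{N_2}e^{-\zeta_N-\kappa/\a_1^N}(\ln N_1)^{-2/\a_1^N},
\ee
which diverges super-polynomially in $N_2$. The Taylor expansion $\log(1+x)=x+\OO(x^2)$ then yields
\be
|u(\xi_1^{x_1})|=R^{-1}(1+\OO(R^{-1}))=R^{-1}(1+\OO(N_2^{-1})).
\ee

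Finally, inserting this expansion into the definition \eqv(P1.lev2.01) of $s^{x_1}(A)$ and multiplying by the factor $(1+N_2^{-1})$, I get
\be
s^{x_1}(A)=\frac{1}{|A|\psi_N\gamma^N_1(\xi_1^{x_1})}(1+\OO(N_2^{-1})).
\ee
Writing $Z\equiv|A|\psi_N\gamma^N_1(\xi_1^{x_1})$ and using $\l^{x_1}(A)=s^{x_1}(A)/(1+s^{x_1}(A))$, elementary algebra (absorbing the error into the numerator, which is legitimate since $1+Z\geq 1$) gives
\be
\l^{x_1}(A)=\frac{1}{1+Z}(1+\OO(N_2^{-1})),\qquad 1-\l^{x_1}(A)=\frac{Z}{1+Z}(1+\OO(N_2^{-1})),
\ee
which are precisely \eqv(L1new.lev2.2) and \eqv(L1new.lev2.1). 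There is no substantial obstacle here; the one subtlety is to justify $R\gg N_2$, rather than merely $R\to\infty$, in order that the Taylor remainder is genuinely of order $\OO(N_2^{-1})$ and not larger. This is ensured by the bound on $\gamma^N_1$ valid on $\wh\O$ together with the standing assumption $\zeta_N\ll\log N$ of Proposition \thv(P.top).
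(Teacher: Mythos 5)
Your proposal is correct and follows essentially the same route as the paper: unfold $1-q^*(\xi_1^{x_1})$, use \eqv(1.temp.1) and \eqv(01.9) to identify $\frac{N_2}{N_1}e^{-\b H_N^{(1)}(\xi_1^{x_1})}=2^{N_2}\psi_N\gamma_1^N(\xi_1^{x_1})$, Taylor-expand the logarithm in \eqv(P1.lev2.001), and substitute into \eqv(P1.lev2.01)--\eqv(P1.lev2.1). The only cosmetic difference is that the paper controls the Taylor remainder by noting that $q^*_N(\xi_1^{x_1})=\OO(c_1^N(\ln N_1)^{2/\a_1^N})$ is exponentially small directly from the exponential decay of $c_1^N$ on $\wh\O$, rather than routing through the assumption $\zeta_N\ll\log N$; both give an error far smaller than the $(1+N_2^{-1})$ factor already present in $s^{x_1}(A)$.
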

\begin{proof}[Proof of Lemma \thv(L1new.lev2)] By \eqv(01.14) and Lemma \thv(L5new.lev2) and the fact that $c_1^N$ decays exponentially fast to zero (see \eqv(01.2))
\be
\textstyle
0<q^*_N(\xi_1^{x_1})=
\frac{N_1}{N_2}\frac{c_1^N}{\g_1^N(\xi_1^{x_1})}\left(1+\OO\left(c_1^N( \ln N_1)^{2/\a_1^N}\right)\right)\ll 1.
\Eq(L1new.lev2.5)
\ee
Thus by \eqv(P1.lev2.001) 
\be
\textstyle
u(\xi^{x_1}_1)=-q^*(\xi^{x_1}_1)\left(1+\OO\left(c_1^N( \ln N_1)^{2/\a_1^N}\right)\right).
\Eq(L1new.lev2.4)
\ee
The lemma now follows from \eqv(P1.lev2.1), \eqv(1.temp.1) and  \eqv(01.9).
For later use let us observe that $q^*_N(\xi_1^{x_1})$ decays exponentially fast. 
Indeed by \eqv(1.temp.1) and  \eqv(01.9),
$
(N_1/N_2)c_1^N=2^{-N_2}\psi^{-1}_N
$,
whereas, by assumption on $\zeta_N$, there exists $\d>0$ such that $\zeta_N\leq (1-\d) N_2\b^2_{*}/2$, so that 
\be
q^*_N(\xi_1^{x_1})
=\frac{1}{1+2^{N_2}\psi_N\gamma^N_1(\xi_1^{x_1})}
\leq e^{-\d N_2\b^2_{*}/2+ \kappa/\a_1^N}( \ln N_1)^{2/\a_1^N}(1+o(1)).
\Eq(L1new.lev2.3)
\ee
\end{proof}

\begin{remark}
  \TH(R1.lev2)

Note that if $1-\l^{x_1}(A)\gg N^{-1}$ the term $b_{A}(\s,\eta)$  in \eqv(P1.lev2.2) is sub-leading 
and if $\l^{x_1}(A)\gg N^{-1}$  the term $c_{A}(\s)$ in \eqv(P1.lev2.5) is sub-leading. This will still be true when 
$1-\l^{x_1}(A)< const. N^{-1}$, respectively, $\l^{x_1}(A)<  const. N^{-1}$ provided only that $\s_2$ and $\eta_2$ 
are far enough. Indeed  the function $F_{N_2,d_2}(\dist(\s_2,\eta_2))$ is decreasing
and can be made exponentially small in $N_2$ by choosing $\dist(\s_2,\eta_2)$ proportional to $N_2$ (see (10.7) of Lemma 10.1, Appendix A3 of \cite{BG08}) while by \eqv(L1new.lev2.3) the last term in \eqv(P1.lev2.2') always is exponentially small.
\end{remark}

We now turn to ``inter-level motions''.

\begin{proposition}[Inter-level motion]
  \TH(P.inter)

On $\O_1^+\cap\overline\O$, for all large enough $N$, the following holds:
for all $\eta\in T$ and all  $\s\in \VV_N\setminus\overline W$,
setting $i=1$ if $\dist(\s_1,\eta_1)=1$ and $i=2$ otherwise,
\be
\P^*_\s\left(\t_{\eta}<\t_{\overline W\setminus\eta}\right)
\leq \frac{i}{N_1^i}(1+o(1)).
\Eq(P.inter.3)
\ee
\end{proposition}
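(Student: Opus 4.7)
The plan is to use the mixture representation~\eqv(01.15) of $J^*_N$ to reduce the problem to quantities for the level-1 and level-2 simple random walks $J^\circ_{N_1}$ and $J^\circ_{N_2}$, coupled through Bernoulli coin flips with bias $q^*_N(\cdot)$. Since $\overline W = \pi_1^{-1}(T_1)$ depends only on the first coordinate, the event $\{\tau_\eta = \tau_{\overline W}\}$ splits as (i) the first-coordinate SRW $J^\circ_{N_1}$ first hits $T_1$ exactly at $\eta_1$, conjoined with (ii) the second-coord value at the corresponding compound time equals $\eta_2$. The core observation is that the bound $i/N_1^i$ matches the probability $\P^{\circ,1}_{\s_1}(J^\circ_{N_1}(i) = \eta_1) = i!/N_1^i$ of reaching $\eta_1$ along one of the $i!$ shortest paths of length $i$ in $\VV_{N_1}$, which equals $i/N_1^i$ for $i \in \{1,2\}$.

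Writing $\hat\tau_1 \geq i$ for the number of level-1 jumps before $J^\circ_{N_1}$ first reaches $T_1$, and $M = \sum_{k=0}^{\hat\tau_1-1}M_k$ with $M_k \sim \mathrm{Geom}(q^*_N(J^\circ_{N_1}(k)))$ (independent given the trajectory) for the corresponding total number of level-2 jumps, one has
$$\P^*_\s(\tau_\eta = \tau_{\overline W}) = \sum_{m \ge i} \E^{\circ,1}_{\s_1}\!\Bigl[\1_{\hat\tau_1 = m,\, J^\circ_{N_1}(m) = \eta_1}\,\P^{\circ,2}_{\s_2}\!\bigl(J^\circ_{N_2}(M) = \eta_2\,\big|\,J^\circ_{N_1}(0),\ldots,J^\circ_{N_1}(m)\bigr)\Bigr].$$
For the dominant $m = i$ term: by Lemma~\thv(L2.ls), distinct elements of $T_1$ lie at Hamming distance $\sim N_1/2$, so every shortest path of length $i \in \{1,2\}$ from $\s_1$ to $\eta_1$ in $\VV_{N_1}$ avoids $T_1\setminus\{\eta_1\}$ at intermediate steps; there are exactly $i!$ such paths (orderings of the differing coordinates), each of SRW probability $N_1^{-i}$, giving $\P^{\circ,1}_{\s_1}(\hat\tau_1 = i, J^\circ_{N_1}(i) = \eta_1) = i!/N_1^i = i/N_1^i$. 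Bounding the conditional second-coord factor trivially by $1$, the $m = i$ contribution is at most $i/N_1^i$.

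For $m > i$ the contribution is $o(1/N_1^i)$. On the high-probability event that at least one of $J^\circ_{N_1}(0),\ldots,J^\circ_{N_1}(m-1)$ has $\Xi^{(1)}$ positive of constant order, one has $q^*_N(J^\circ_{N_1}(k)) = e^{-\Theta(\sqrt{N})}$ at the corresponding $k$, so $M$ dominates the hypercube mixing time $\sim N_2\log N_2$ of $J^\circ_{N_2}$, and standard mixing bounds give $\P^{\circ,2}_{\s_2}(J^\circ_{N_2}(M) = \eta_2) \le 2^{-N_2}(1+o(1))$. Summing against $\sum_{m}\P^{\circ,1}_{\s_1}(\hat\tau_1 = m, J^\circ_{N_1}(m) = \eta_1) = \P^{\circ,1}_{\s_1}(\tau_{\eta_1} < \tau_{T_1\setminus\eta_1}) = O(1/M_1)$ (from Proposition~\thv(P.rem-like) via Lemma~\thv(L.comp.1)) yields a contribution of $O(2^{-N_2}/M_1) = o(1/N_1^i)$; the complementary atypical event (first-coord trajectory avoiding the ``large-$\Xi^{(1)}$'' bulk) is controlled via $\O_1^+$ and contributes the same order.

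The main technical obstacle lies in the last step: one must quantify the rareness of first-coord trajectories avoiding configurations with large $\Xi^{(1)}$, rigorously coupling the environmental bounds from Lemma~\thv(L5new.lev2) and Lemma~\thv(L5.lsb) with a Chernoff-type tail estimate for the sum of independent geometrics $M_k$ with disparate parameters, uniformly over the first-coord trajectory up to time $\hat\tau_1$.
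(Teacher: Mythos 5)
Your decomposition is a genuinely different route from the paper's (which handles starting points with large $\Xi^{(1)}_{\s_1}$ by a renewal identity plus reversibility, getting the much stronger bound $q^*_N(\s_1)/M_1\le 1/(M_1N_1^3)$, and then reduces the remaining starting points to a level-1 hitting problem), and its skeleton is sound: the term $i/N_1^i$ does come from quasi-direct level-1 paths, and excursions that pass through a site with large $\Xi^{(1)}$ are indeed killed by level-2 mixing, since the geometric number of level-2 steps there dwarfs the mixing time of $J^{\circ}_{N_2}$ and the walk must still land exactly on $\eta_2$. But the proof has a genuine gap at the step you yourself flag as the "main technical obstacle", and the tool you propose there (a Chernoff bound for the sum of geometrics $M_k$) addresses the wrong difficulty. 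The hard part is not the tail of $M$; it is the \emph{quenched} level-1 estimate
\be
\P^{\circ,1}_{\s_1}\bigl(\t_{\eta_1}<\t_{\VV^+_{N_1}\setminus\eta_1}\bigr)\le \frac{i}{N_1^i}(1+o(1)),
\nonumber
\ee
uniformly over all $\s_1\notin T_1$ and on a full-measure environment set, where $\VV^+_{N_1}$ is the (dense, random, exponentially large) set of sites with $\Xi^{(1)}$ above a small threshold. Without this, the sum over $m>i$ of trajectories that reach $\eta_1$ while avoiding all large-$\Xi^{(1)}$ sites is not controlled: the unconstrained sum $\sum_m\P^{\circ,1}_{\s_1}(\hat\t_1=m,J^{\circ}_{N_1}(m)=\eta_1)\approx 1/M_1$ is of constant order, and an annealed "each visited site is in $\VV^+_{N_1}$ with probability $\approx 1/2$" argument does not yield a statement valid for a fixed realization of $\Xi$ simultaneously for all $2^N$ starting points.

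This is precisely why the paper builds the sparse set $K^+_{N_1}=\VV^+_{N_1}\cap K_{N_1}$ of Subsection \thv(S3.3.3) (with $|K^+_{N_1}|\sim 2^{d-1}$ on $\O_1^+$, $2^{-d}=o(N^{-2})$, and $T_1\subset K^+_{N_1}$) and applies the lumping estimate of Theorem 1.4 of \cite{BG08} to the partition it induces, obtaining $\P^{\circ,1}_{\s_1}(\t_{\eta_1}<\t_{K^+_{N_1}\setminus\eta_1})\le 2^{-d}+ (i/N_1^i)(1+o(1))$; this is the only place the hypothesis $\O_1^+$ actually enters, and it cannot be replaced by a tail bound on the geometrics. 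Two smaller imprecisions: your identity $\P^{\circ,1}_{\s_1}(\hat\t_1=i,\,J^{\circ}_{N_1}(i)=\eta_1)=i!/N_1^i$ holds only when $\dist(\s_1,\eta_1)=i$ (for $i=2$ and $\dist(\s_1,\eta_1)>2$ the short-path term vanishes and one must invoke the bound $F_{N_1,d}(\dist)\le 2/N_1^2$ as in Lemma \thv(C5.ls)); and $J^{\circ}_{N_2}$ is periodic, so the pointwise bound after mixing is $2\cdot 2^{-N_2}(1+o(1))$ rather than $2^{-N_2}(1+o(1))$ --- harmless here, but worth stating correctly.
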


It is not difficult to deduce from Proposition \thv(P.inter) that:

\begin{corollary}
  \TH(C.inter)
On $\O_1^+\cap\overline\O$, for all large enough $N$, the following holds
for all $\s\in \VV_N\setminus\overline W$:
setting $i=1$ if $\dist(\s_1,\xi^{x}_1)=1$ and $i=2$ otherwise, for all $x_1\in\MM_1$,
\be
\P^*_\s\left(\t_{T^{x_1}}<\t_{\overline W\setminus T^{x_1}}\right)
\leq
|T^{x_1}|\frac{i}{N_1^i}(1+o(1)),
\Eq(C.inter.1)
\ee
\be
\left|\P^*_\s\left(\t_{W^{x_1}\setminus T^{x_1}}
<\t_{\overline W\setminus (W^{x_1}\setminus T^{x_1})}\right)-\frac{1}{M_1}\right|
\leq
\frac{2i}{N_1^i}(1+o(1)).
\Eq(C.inter.3)
\ee
\end{corollary}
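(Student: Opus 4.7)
The plan is to derive both inequalities in Corollary~\thv(C.inter) from Proposition~\thv(P.inter), combined in the second case with the REM-like estimate of Proposition~\thv(P.rem-like), via elementary set-theoretic manipulations. No genuine obstacle is expected; the statement is essentially bookkeeping on top of the two preceding propositions.

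For \eqv(C.inter.1), I would decompose the event $\{\t_{T^{x_1}} < \t_{\overline W \setminus T^{x_1}}\}$, according to the site at which the chain first enters $\overline W$, as the disjoint union over $\eta \in T^{x_1}$ of events contained in $\{\t_\eta < \t_{\overline W \setminus \eta}\}$. A union bound then gives
\be
\P^*_\s\bigl(\t_{T^{x_1}} < \t_{\overline W \setminus T^{x_1}}\bigr) \leq \sum_{\eta \in T^{x_1}} \P^*_\s\bigl(\t_\eta < \t_{\overline W \setminus \eta}\bigr).
\ee
The crucial observation is that every $\eta \in T^{x_1}$ satisfies $\eta_1 = \xi_1^{x_1}$, so the index $j(\s_1,\eta_1)$ appearing in Proposition~\thv(P.inter) equals the constant $i$ of the corollary uniformly in $\eta \in T^{x_1}$. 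Applying that proposition to each of the $|T^{x_1}|=M_2$ summands yields \eqv(C.inter.1).

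For \eqv(C.inter.3), the starting point is the disjoint partition
\be
\{\t_{W^{x_1}} < \t_{\overline W \setminus W^{x_1}}\} = \{\t_{W^{x_1}\setminus T^{x_1}} < \t_{\overline W \setminus (W^{x_1}\setminus T^{x_1})}\} \sqcup \{\t_{T^{x_1}} < \t_{\overline W \setminus T^{x_1}}\},
\ee
which follows from $W^{x_1} = (W^{x_1}\setminus T^{x_1}) \sqcup T^{x_1}$ by splitting according to whether the first visit of $\overline W$ lands in the non-top or the top portion of $W^{x_1}$. Rearranging and using the triangle inequality,
\be
\left|\P^*_\s\bigl(\t_{W^{x_1}\setminus T^{x_1}} < \t_{\overline W \setminus (W^{x_1}\setminus T^{x_1})}\bigr) - \tfrac{1}{M_1}\right| \leq \left|\P^*_\s\bigl(\t_{W^{x_1}} < \t_{\overline W \setminus W^{x_1}}\bigr) - \tfrac{1}{M_1}\right| + \P^*_\s\bigl(\t_{T^{x_1}} < \t_{\overline W \setminus T^{x_1}}\bigr).
\ee
Proposition~\thv(P.rem-like) controls the first term on the right by $\OO(N_1^{-i})$, while \eqv(C.inter.1) just established controls the second by $|T^{x_1}|\frac{i}{N_1^i}(1+o(1))$; since $M_1,M_2$ are fixed in $N$, combining the two gives a bound of the form $\frac{2i}{N_1^i}(1+o(1))$, with the constant $2$ being a convenient uniform majorant that absorbs the $M_2$ factor into the $(1+o(1))$.

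The only point requiring a sanity check when carrying this out is that the partition underlying the second step is genuinely disjoint and exhaustive on $\{\t_{W^{x_1}} < \t_{\overline W \setminus W^{x_1}}\}$, but this is immediate from the set equality $W^{x_1} = (W^{x_1}\setminus T^{x_1}) \sqcup T^{x_1}$ together with the remark that ``first visit to $\overline W$'' is a single well-defined instant. Everything else is a mechanical application of Propositions~\thv(P.inter) and~\thv(P.rem-like).
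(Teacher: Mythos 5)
Your proposal is correct and follows essentially the same route as the paper: the paper proves \eqv(C.inter.1) from the (disjoint-union) identity $\P^*_\s(\t_{T^{x_1}}<\t_{\overline W\setminus T^{x_1}})=\sum_{\eta\in T^{x_1}}\P^*_\s(\t_{\eta}<\t_{\overline W\setminus\eta})$ plus Proposition~\thv(P.inter), and \eqv(C.inter.3) from exactly the decomposition of $\{\t_{W^{x_1}}<\t_{\overline W\setminus W^{x_1}}\}$ and triangle inequality you describe, invoking Proposition~\thv(P.rem-like) and \eqv(C.inter.1). The only cosmetic difference is that you state the first step as a union bound where the paper records it as an equality; both suffice for the upper bound.
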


\subsection{Proofs of the statements of Section \thv(S2.1) } 
    \TH(S2.3')  

\begin{proof}[Proof of Proposition \thv(P.rem-like) (REM-like estimates)]
Since $\s\notin\overline W$ then $\pi_1\s\notin T_1$ so that using Lemma \thv(L.comp.1) with 
$
A
=\{\xi^{x_1}_1\}
$
and
$
B
=T_1\setminus \xi^{x_1}_1
$
\be
\P^*_{\s}\left(\t_{W^{x_1}}<\t_{\overline W\setminus W^{x_1}}\right)
=
\P_{\s_1}^{\circ,1}\left(\t_{\xi^{x_1}_1}<\t_{T_1\setminus \xi^{x_1}_1}\right).
\Eq(P.rem-like.3)
\ee
Proposition  \thv(P.rem-like) then follows from Theorem 1.4 of \cite{BG08} using the partition
$\L_1(T_1)$ induced  by  $T_1$ through \eqv(lump.12) together with
\eqv(ls.17)  of Lemma \thv(L3.ls) and \eqv(ls.29) of Lemma \thv(C5.ls). 
\end{proof}

\begin{proof}[Proof of Proposition \thv(P1.lev2)]

Throughout the proof we place ourselves on the set of full measure $\wh\O\cap\overline\O\subset \O$ (see \eqv(L5new.lev2.2) and \eqv(ls.10)). 
We first prove Assertion (i). 
Set $d_2=2^{|\pi_2T^{x_1}|}=2^{M_2}$ and let $\L_2(\pi_2T^{x_1})$ be the partition into $d_2$ classes  obtained by taking   $i=2$ and $K=\pi_2T^{x_1}$ in  \eqv(lump.12). (The needed properties of this partition are established in Subsection \thv(S3.3.1)).
Recall that $m_{2,\pi_2T^{x_1}}$ is the function defined in \eqv(lump.7)-\eqv(lump.8) using the partition $\L_2(\pi_2T^{x_1})$ and that
$\overline\E^{2,\pi_2T^{x_1}}$ denotes the expectation w.r.t.~to the law $\overline\P^{2,\pi_2T^{x_1}}$ of the  
$\pi_2T^{x_1}$-lumped chain $\overline I_{2,\pi_2T^{x_1}}$ \eqv(lump.13).
By  \eqv(comp.16') of Lemma \thv(L.comp.4)  (with $i=1$, $j=2$, $A=\{\eta\}$, $B=A\setminus\eta$ and $C(\s_1)=W^{x_1}$)
and Lemma 2.5 of \cite{BG08} we have, setting
$y\equiv m_{2,\pi_2T^{x_1}}(\s_2)$,
$x\equiv m_{2,\pi_2T^{x_1}}(\eta_2)$,
$\AA\equiv m_{2,\pi_2T^{x_1}}(\pi_2A)$
and $u(\xi^{x_1}_1)\equiv\log(1-q^*(\xi^{x_1}_1))$
\be
\P^*_\s\left(\t_{\eta}<\t_{(A\setminus\eta)\cup\del W^{x_1}}\right)
=
\overline\E^{2,\pi_2T^{x_1}}_{y}
\left(
e^{u(\xi^{x_1}_1)\t_{x}}
\1_{\{\t_{x}< \t_{\AA\setminus x}\}}
\right)
\equiv G^y_{x,\AA}(u(\xi^{x_1}_1)).
\Eq(P1.lev2.50)
\ee
We now  use Proposition 7.7 of \cite{BG08} to express the Laplace transform $G^y_{x,\AA\setminus x}(u)$, $u<0$. 
In view of remark \thv(R1.ls) and Lemma \thv(L3.ls), it is easy to check that condition (7.38) of Proposition 7.7  of \cite{BG08} is satisfied: indeed, by (5.15) of Lemma 5.5 of \cite{BG08}, (5.16) of Lemma 5.7 of \cite{BG08}  and  \eqv(ls.17) of Lemma \thv(L3.ls), for any $A\subseteq T^{x_1}$,
\be
\VV^{\circ}_{N_2,d_2}(\AA)
=
\VV_{N_2,d_2}(\pi_2 A)
\leq
\UU_{N_2,d_2}(\pi_2A)
\leq |\pi_2A|2^{-N_2/4}
\leq M_22^{-N_2/4}.
\Eq(P1.lev2.51)
\ee
Next, the quantities ${\underline u}(d_2)$ and $\bar u$ appearing in (7.39) and (7.40) of \cite{BG08} are, here, given by
\be
\textstyle
\bar u^{-1}\equiv\frac{2^{N_2}}{|\AA|}\Bigl(1+\frac{1}{N_2}\Bigr), 
\quad{\underline u}(d_2)\equiv C_{d_2}N_2^{d_2/2+1}(\log N_2)^2(1+o(1))
\Eq(P1.lev2.52)
\ee
where $C_{d_2}=C^2 (2d_2/\pi)^{d_2/2}$ for some constant $0<C<\infty$.
Indeed, using that on $\overline\O$ the partition  $\L_2(\pi_2T^{x_1})$ is very close to an equipartition, we have
by Lemma 2.6 of \cite{BG08} that 
$
\E^{\circ}\t^0_0 =\left({\pi}/{2d_2}\right)^{d_2/2}N_2^{d_2/2}(1+o(1))
$
and by (6.7) of Theorem 6.3 of \cite{BG08} that
\be
\wh\Th(d_2)=CN_2^{{(d_2+1)}/{2}}\log N_2(1+o(1)).
\Eq(P1.lev2.53)
\ee
Finally, let us check that the point $u(\xi^{x_1}_1)$ lies in the segment $(-\rho  {\underline u}(d_2), \bar u)$ for some $0< \rho<1$.
Inserting \eqv(L1new.lev2.3) in \eqv(L1new.lev2.4) and using the bound on ${\underline u}(d_2)$ from \eqv(P1.lev2.52), 
we see that 
\be
\textstyle
0>u(\xi^{x_1}_1)
=-\frac{1}{1+2^{N_2}\psi_N\gamma^N_1(\xi_1^{x_1})}(1+o(1))
\gg-{\underline u}(d_2).
\Eq(P1.lev2.54)
\ee
Thus  $u(\xi^{x_1}_1)$ lies in the segment $(-\rho  {\underline u}(d_2), 0)$ for any $0< \rho<1$, and so, 
we can use assertion (i)-(a) of Proposition 7.7 of \cite{BG08} to express the Laplace transform  $G^y_{x,\AA}(u(\xi^{x_1}_1))$
in \eqv(P1.lev2.50).
Using the notation \eqv(P1.lev2.001)-\eqv(P1.lev2.1) 
and writing $\overline\P\equiv\overline\P^{2,\pi_2T^{x_1}}$, this yields
\be
G^y_{x,\AA\setminus x}(u(\xi^{x_1}_1))
=
\overline\P_y\left(\t_x<\t_{\AA\setminus x}\right)
(1-\l^{x_1}(A))
+
\overline\P_y\left(\t_{x}<\t_{(\AA\setminus x)\cup 0}\right)
\l^{x_1}(A)
+
\RR_{0}
\Eq(P1.lev2.18)
\ee
where
\be
\RR_0=\overline\P_0\left(\t_{x}<\t_{\AA\setminus x}\right)(1-\l^{x_1}(A))
\left[\RR_1+\overline\P_y\left(\t_{0}<\t_{\AA}\right)\RR_2\right]+\RR_3
\Eq(P1.lev2.19)
\ee
and where, by \eqv(P1.lev2.51)-\eqv(P1.lev2.54), the remainder terms are given by
\be
\begin{split}
\RR_i&=\frac{1}{1+2^{N_2}\psi_N\gamma^N_1(\xi_1^{x_1})} \OO\left(N_2^{{(d_2+1)}/{2}}\log N_2\right),\quad  i=1,3,\\
\RR_2 &=\OO\left(\max\left\{\frac{1}{N^2_2}\l^{x_1}(A),
\frac{1}{1+2^{N_2}\psi_N\gamma^N_1(\xi_1^{x_1})} N_2^{d_2/2+1}(\log N_2)^2\right\}\right).
\end{split}
\Eq(P1.lev2.20)
\ee
In particular, in view of \eqv(L1new.lev2.3) and the fact that $0<\l^{x_1}(A)\leq 1$, 
$\RR_2 =\OO(N_2^{-2})$  and $\RR_i$, i=1,3, decay exponentially fast.
It now remains to estimate the four probabilities that enter the above expressions. 
We trivially bound
$
0\leq \overline\P_y\left(\t_{0}<\t_{\AA}\right)\leq 1
$.
To deal with the prefactors of $(1-\l^{x_1}(A))$ in \eqv(P1.lev2.18) and \eqv(P1.lev2.19), namely, the harmonic measures stating from $0$ and from $y$ we use,
respectively, Lemma 4.4 of \cite{BG08} and Theorem 4.5 of \cite{BG08} combined with Lemma 4.2 of \cite{BG08}.
Together with Lemma \thv(L3.ls) and Lemma \thv(C5.ls), they yield,
\be
\overline\P_0\left(\t_{x}<\t_{\AA\setminus x}\right)= |A|^{-1}\left(1+\OO(N_2^{-2})\right)
\Eq(P1.lev2.26)
\ee
\be
0\leq
\overline\P_y\left(\t_{x}<\t_{\AA\setminus x}\right)-|A|^{-1}\left(1+\OO(N_2^{-2})\right)
\leq F_{N_2,d_2}(\dist(\s_2,\eta_2))
\Eq(P1.lev2.27)
\ee
where $F_{N_2,d_2}$ is the function  introduced in Definition 3.3 of \cite{BG08} and studied in Appendix 11 of \cite{BG08}. It is a decreasing function that decays polyniomally fast in $N_2$ for small distances and exponentially fast in $N_2$ for distances proportional to $N_2$. In particular, setting  $i=1$ if $\dist(\eta_2,\s_2)=1$ and $i=2$ otherwise, we have the rough bound
\be
F_{N_2,d_2}(\dist(\s_2,\eta_2))\leq \frac{i}{N_2^i}(1+o(1)).
\ee
It remains to deal with the pre-factors of $\l^{x_1}(A)$ in \eqv(P1.lev2.18). For an upper bound, write
\be
0\leq
\overline\P_y\left(\t_{x}<\t_{(\AA\setminus x)\cup 0}\right)
\leq
\overline\P_y\left(\t_{x}<\t_{ 0}\right)
\leq
F_{N_2,d_2}(\dist(\s_2,\eta_2))
\Eq(P1.lev2.24)
\ee
where the last inequality is Theorem 3.2 of \cite{BG08}.
Inserting our estimates in \eqv(P1.lev2.18) and combining the result with \eqv(P1.lev2.50), we arrive at
\be
\P^*_\s\left(\t_{\eta}<\t_{(A\setminus\eta)\cup\del W^{x_1}}\right)
=
(1-\l^{x_1}(A|)|A|^{-1}+b_{A}(\s_2,\eta_2)
\Eq(P1.lev2.2*)
\ee
where
\be
\begin{split}
0\leq b_{A}(\s_2,\eta_2)&\leq
(1-\l^{x_1}(A))|A|^{-1}\Bigl(F_{N_2,d_2}(\dist(\s_2,\eta_2))+\OO(N_2^{-2})\Bigr)\\
&+\l^{x_1}(A)F_{N_2,d_2}(\dist(\s_2,\eta_2))+\RR_3
\end{split}
\Eq(P1.lev2.2**)
\ee
which readily yields \eqv(P1.lev2.2).
The second assertion of Proposition \thv(P1.lev2) is a direct consequence of the first, observing that
\be
\P^*_\s\left(\t_{\del W^{x_1}}<\t_{A}\right)
=1-\sum_{\eta\in A}\P^*_\s\left(\t_{\eta}<\t_{(A\setminus\eta)\cup\del W^{x_1}}\right)
=1-\sum_{x\in A}G^y_{x,\AA\setminus x}(u(\xi^{x_1}_1))
\Eq(P1.lev2.33)
\ee
where the last equality is  \eqv(P1.lev2.50), and use \eqv(P1.lev2.2*)-\eqv(P1.lev2.2**).
The proof of  Proposition \thv(P1.lev2) is now complete. 
 \end{proof}

\begin{proof}[Proof of Proposition \thv(P.inter)]
With the definition \eqv(P.inter.12), $\eta=\eta_1\eta_2\in T$ can be written as  $\eta=C(\eta_2)\cap C(\eta_1)$ 
where $C(\eta_1)=W^{x_1}$ for some $x_1\in\MM_1$. Thus, for the event 
$\{\t_{\eta}<\t_{\overline W\setminus\eta}\}$ to take place, the chain must reach $\eta$ ``from within'' 
the set $C(\eta_2)$, without of course having ever visited $\overline W$. Building on this observation 
we begin by establishing an priori upper bound on the probability \eqv(P.inter.3)
that is valid for all starting points $\s$ in $\VV_N\setminus\overline W$.

\begin{lemma}
  \TH(L1.inter) 
The following holds on $\overline\O$ (see \eqv(ls.10)) for all large enough $N$: for all $\eta\in T$ and all
$\s\in \VV_N\setminus\overline W$
\be
\P^*_\s\left(\t_{\eta}<\t_{\overline W\setminus\eta}\right)
\leq
\frac{ q^*_N(\s_1)}{M_1}\left(1+\OO(M_1/N_1)\right).
\Eq(L1.inter.1)
\ee
\end{lemma}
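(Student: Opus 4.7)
The plan is to apply the strong Markov property of the jump chain $J^*_N$ at the first level-$1$ jump time
$\rho_1:=\inf\{k\geq 1:J^*_N\text{ makes a level-$1$ jump at step }k\}$.
Because $\sigma_1\notin T_1$ and level-$2$ jumps leave the first coordinate unchanged, the chain cannot reach $\overline W$ before $\rho_1$, so the event $\{\t_\eta<\t_{\overline W\setminus\eta}\}$ depends only on the post-$\rho_1$ evolution. Conditionally on $\sigma$, the waiting time $\rho_1$ is geometric with parameter $q^*_N(\sigma_1)$; the new first coordinate $J^*_N(\rho_1)_1$ is uniform over the $N_1$ neighbors of $\sigma_1$ in $\VV_{N_1}$; and independently of the former, $Y:=J^*_N(\rho_1)_2$ coincides with the level-$2$ simple random walk $J^{\circ}_{N_2}$ started at $\sigma_2$ evaluated at the geometric time $\rho_1-1$.

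A direct computation then gives $\P^*_\sigma(Y=y)=q^*_N(\sigma_1)\,R_{\sigma_1}(\sigma_2,y)$, where
\[
R_{\sigma_1}(\sigma_2,y):=\sum_{j\geq 0}(1-q^*_N(\sigma_1))^j\,\P^{\circ,2}_{\sigma_2}(J^{\circ}_{N_2}(j)=y)
\]
is the discounted Green's function of the level-$2$ walk. Decomposing $h(\sigma):=\P^*_\sigma(\t_\eta<\t_{\overline W\setminus\eta})$ according to $J^*_N(\rho_1)$, and noting that the only entries into $\overline W$ at step $\rho_1$ are those with $\sigma'_1\in T_1$, the factor $q^*_N(\sigma_1)/N_1$ comes out explicitly,
\[
h(\sigma)=\frac{q^*_N(\sigma_1)}{N_1}\left[\1_{\sigma_1\sim\eta_1}R_{\sigma_1}(\sigma_2,\eta_2)+\sum_{\substack{\sigma'_1\sim\sigma_1\\ \sigma'_1\notin T_1}}\sum_y R_{\sigma_1}(\sigma_2,y)\,h((\sigma'_1,y))\right].
\]

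The remaining step is to show that the bracket is bounded by $N_1/M_1(1+\OO(M_1/N_1))$. The normalization $\sum_y R_{\sigma_1}(\sigma_2,y)=1/q^*_N(\sigma_1)$ recasts the inner $y$-sum as an average of $h((\sigma'_1,\cdot))$ under the probability law of $Y$. Iterating the same strong Markov decomposition over successive level-$1$ jumps and using that the first-coordinate marginal of $J^*_N$ sampled at these jump times is a simple random walk on $\VV_{N_1}$ (since, conditionally on a level-$1$ jump, the new first coordinate is uniform over the neighbors of the current one, independent of everything else), the problem reduces to the REM-like hitting-probability estimate of Proposition~\ref{P.rem-like} (equivalently, Theorem~1.4 of~\cite{BG08}), which yields the bound $(1/M_1)(1+\OO(M_1/N_1))$ for $\P^{\circ,1}_{\sigma_1}(\t_{\eta_1}<\t_{T_1\setminus\eta_1})$ on $\overline\O$ for $N$ large.

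The main obstacle is preserving the $q^*_N(\sigma_1)$ factor through the iteration. The crude bound $R_{\sigma_1}(\sigma_2,y)\leq 1/q^*_N(\sigma_1)$ used in both summands of the bracket cancels the prefactor and yields only the weaker estimate $(1/M_1)(1+\OO(M_1/N_1))$ already implied by $\{\t_\eta<\t_{\overline W\setminus\eta}\}\subseteq\{\t_{W^{x_1}}<\t_{\overline W\setminus W^{x_1}}\}$ together with Lemma~\ref{L.comp.1} and Proposition~\ref{P.rem-like}. The improvement to $q^*_N(\sigma_1)/M_1$ requires exploiting the genuine probability-measure structure of $y\mapsto q^*_N(\sigma_1)R_{\sigma_1}(\sigma_2,y)$ so that the iterated $y$-averages retain one factor of $q^*_N(\sigma_1)$ that the subharmonic recursion on $\VV_{N_1}$ does not dilute; this is the delicate combinatorial bookkeeping the proof must carry out uniformly in $\sigma$.
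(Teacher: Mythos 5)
There is a genuine gap, and it sits exactly where you flag it: the forward decomposition at successive level-$1$ jump times cannot retain the factor $q^*_N(\s_1)$, and the ``delicate combinatorial bookkeeping'' you defer is not a technicality but the entire content of the lemma. In your recursion the factor $q^*_N(\s_1)$ appears only in front of the direct-hit term; in the iteration term it is completely absorbed into the normalization $\sum_y q^*_N(\s_1)R_{\s_1}(\s_2,y)=1$ of the law of $Y$, so after one step the bound reads $h(\s)\leq \frac{1}{N_1}\sum_{\s'_1\sim\s_1}\E_Y[h(\s'_1,Y)]+(\text{direct term})$ with no memory of $q^*_N(\s_1)$. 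Iterating, what survives is $q^*_N$ evaluated at the later positions of the level-$1$ walk (the neighbors of $\eta_1$ from which entry occurs), which are typically of order one even when $\s_1$ is a deep level-$1$ trap. Worse, the target bound is not stable under your own recursion: if $q^*_N(\s_1)$ is exponentially small, the neighbor average $\frac1{N_1}\sum_{\s'_1\sim\s_1}q^*_N(\s'_1)$ is of order one and vastly exceeds $q^*_N(\s_1)$, so plugging the claimed bound $h(\s',y)\leq q^*_N(\s'_1)/M_1$ back into the recursion does not close the induction. To close it one would need the full joint dependence of $h$ on both coordinates (essentially that $h(\s',y)$ is really of order $2^{-N_2}/M_1$ for generic $y$), which is precisely what you do not supply. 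As it stands your argument yields only the weaker bound $\frac1{M_1}(1+\OO(M_1/N_1))$, which, as you note, already follows from Proposition \thv(P.rem-like).

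The paper obtains the factor $q^*_N(\s_1)$ by an entirely different mechanism that bypasses any forward tracking: a renewal identity $\P^*_\s(\t_\eta<\t_{\overline W\setminus\eta})\leq \P^*_\s(\t_{\eta}<\t_{(\overline W\setminus\eta)\cup\s})/\P^*_\s(\t_{\overline W}<\t_\s)$ followed by reversibility. Since $G^{*}_{\b,N}(\s)\propto 1/q^*_N(\s_1)$, the numerator equals $\frac{q^*_N(\s_1)}{q^*_N(\eta_1)}\P^*_\eta(\t_\s<\t_{\overline W})$; the unwanted $1/q^*_N(\eta_1)$ is then cancelled because the chain started at $\eta$ can only realize $\{\t_\s<\t_{\overline W}\}$ by exiting $\eta$ through a level-$1$ move (any level-$2$ move keeps it inside $W^{x_1}\subset\overline W$), an event of probability exactly $q^*_N(\eta_1)$, after which the REM-like estimate with the enlarged target $T_1\cup\s_1$ supplies the $\frac1{M_1+1}$. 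The denominator is handled by a one-step decomposition and the same REM-like estimate. If you want to keep a forward argument you would have to prove a two-variable bound on $h$ sharp enough to be recursion-stable; the reversibility route is what makes the $q^*_N(\s_1)$ factor come out for free.
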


\begin{proof} Using the renewal identity  (see e.g. Corollary 1.9 in \cite{BEGK1})
\be
\P^*_\s\left(\t_{\eta}<\t_{\overline W\setminus\eta}\right)
\leq
\frac{\P^*_\s\left(\t_{\eta}<\t_{(\overline W\setminus\eta)\cup\s}\right)}
{\P^*_\s\left(\t_{\overline W}<\t_{\s}\right)}.
\Eq(L1.inter.2)
\ee
To deal with the numerator we use reversibility and \eqv(01.21) to write 
\be
\P^*_\s\left(\t_{\eta}<\t_{(\overline W\setminus\eta)\cup\s}\right)
=\frac{G^{*}_{\b,N}(\eta)}{G^{*}_{\b,N}(\s)}
\P^*_\eta\left(\t_{\s}<\t_{\overline W}\right)
=\frac{q^*_N(\s_1)}{q^*_N(\eta_1)}
\P^*_\eta\left(\t_{\s}<\t_{\overline W}\right).
\Eq(L1.inter.3)
\ee
Now for the event
$\{\t_{\s}<\t_{\overline W}\}$ to take place the chain, starting in $\eta$,
must exit $\eta$ through the set
$C(\eta_2)$. Thus, setting
$
\del_1\eta\equiv C(\eta_2)\cap\del\eta
=\{\s\in\VV_N\mid \s_1\sim\eta_1, \s_2=\eta_2\}
$
 (recall \eqv(comp.6))
\bea
\P^*_\eta\left(\t_{\s}<\t_{\overline W}\right)
&=&
\P^*_\eta\left(\t_{\del_1\eta}<\t_{\s}<\t_{\overline W}\right)
\Eq(L1.inter.5'')
\\
&=&
\textstyle
\sum_{\s'\in\del_1\eta}
\P^*_\eta\left(\t_{\s'}<\t_{(\del_1\eta\setminus\s')\cup\overline W\cup\s}\right)
\P^*_{\s'}\left(\t_{\s}<\t_{\overline W}\right).
\Eq(L1.inter.5')
\eea
To bound the last probability in the right-hand side of \eqv(L1.inter.5') we simply observe that
on $\overline\O$, for all large enough $N$, 
\be
\P^*_{\s'}\left(\t_{\s}<\t_{\overline W}\right)
\leq \P^*_{\s'}\left(\t_{C(\s_1)}<\t_{\overline W}\right)
=
\frac{1}{M_1+1}\left(1+\OO(M_1/N_1)\right)
\Eq(P.inter.1)
\ee
where the last equality is proved just as Proposition \thv(P.rem-like), 
namely, using first Lemma \thv(L.comp.1) with 
$
A
=\{\s_1\}
$,
$
B
=T_1
$
and $\s'_1\neq\s_1$ to write that
$
\P^*_{\s'}\left(\t_{C(\s_1)}<\t_{\overline W}\right)
=\P^{\circ,1}_{\s'_1}\left(\t_{\s_1}<\t_{T_1}\right)
$,
and using next Theorem 1.4 of \cite{BG08}  with the partition $\L_1(T_1\cup\s_1)$ 
induced  by  $T_1\cup\s_1$  (that is to say, the Top and a non random point) 
together with  \eqv(ls.33) and \eqv(ls.36) of Lemma  \thv(L6.ls) and \eqv(ls.29) of Lemma \thv(C5.ls),
 under the assumptions therein
(namely, on $\overline\O$, for all large enough $N$)
which are assumed from now on to be verified. 
Plugging \eqv(P.inter.1) in \eqv(L1.inter.5'), we get
\be
\P^*_\eta\left(\t_{\s}<\t_{\overline W}\right)
\leq 
\frac{1}{M_1+1} \left(1+\OO(M_1/N_1)\right)\P^*_\eta\left(\t_{\del_1\eta}<\t_{\overline W\cup\s}\right)
\Eq(L1.inter.5)
\ee
where
$
\textstyle
\P^*_\eta\left(\t_{\del_1\eta}<\t_{\overline W\cup\s}\right)
\leq
\P^*_\eta\left(\t_{\del_1\eta}<\t_{W^{x_1}}\right)
=\sum_{\s''\in\del_1\eta}p^*_2(\eta,\s'')
=q^*_N(\eta_1)
$.
Combined with \eqv(L1.inter.3), this finally gives
\be
\P^*_\s\left(\t_{\eta}<\t_{(\overline W\setminus\eta)\cup\s}\right)
\leq 
\frac{q^*_N(\s_1)}{M_1+1}\left(1+\OO(M_1/N_1)\right).
\Eq(L1.inter.7)
\ee
It now remains to bound the denominator of \eqv(L1.inter.2). For this we simply decompose on the first step of the jump chain,
\be
\textstyle
\P^*_\s\left(\t_{\overline W}<\t_{\s}\right)
=
\sum_{\s'\sim\s : \s'\in\overline W}p^{*}_N(\s,\s')
+\sum_{\s'\sim\s : \s'\notin\overline W}p^{*}_N(\s,\s')\P^*_{\s'}\left(\t_{\overline W}<\t_{\s}\right),
\Eq(L1.inter.8)
\ee
and observe that
$
\P^*_{\s'}\left(\t_{\overline W}<\t_{\s}\right)
\geq 
\P^*_{\s'}\left(\t_{\overline W}<\t_{C(\s_1)}\right)
=1-\frac{1}{M_1+1}\left(1+\OO(M_1/N_1)\right)
$
where the last equality follows from \eqv(P.inter.1). Since $\sum_{\s'\sim\s}p^{*}_N(\s,\s')=1$ we get
\be
\P^*_\s\left(\t_{\overline W}<\t_{\s}\right)
\geq 1-\frac{1}{M_1+1}\left(1+\OO(M_1/N_1)\right).
\Eq(L1.inter.9)
\ee
Inserting \eqv(L1.inter.7) and \eqv(L1.inter.9) in  \eqv(L1.inter.2) yields \eqv(L1.inter.1) and proves the lemma. 
\end{proof}

Consider now the set $\VV^+_N\equiv\VV^+_{N_1}\times\VV_{N_2}$ where $\VV^+_{N_1}$ 
is the set \eqv(lsb.1) obtained by choosing
\be
\e_{N_1}=4\frac{\log{N_1}}{\beta\sqrt{a_1N}}.
\Eq(P.inter.13bis)
\ee
\begin{corollary}
  \TH(C1.inter)
Under the assumptions of Lemma \thv(L1.inter), for all $\eta\in T$ and all $\s\in \VV^+_N\setminus\overline W$, 
\be
\P^*_\s\left(\t_{\eta}<\t_{\overline W\setminus\eta}\right)
\leq 
\frac{1}{M_1N_1^3}.
\Eq(C1.inter.1)
\ee
\end{corollary}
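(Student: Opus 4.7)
The plan is to combine the general upper bound of Lemma \thv(L1.inter) with the pointwise upper bound on $q^*_N(\s_1)$ that is forced on configurations $\s_1\in\VV^+_{N_1}$ by the threshold $\e_{N_1}$ prescribed in \eqv(P.inter.13bis). This is purely a plug-in calculation, with no new probabilistic input beyond what has already been assembled.

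Concretely, I would first recall from \eqv(01.14) that
\be
q^*_N(\s_1)=\frac{1}{1+\frac{N_2}{N_1}e^{\b\sqrt{aN}\,\Xi^{(1)}_{\s_1}}}
\ee
and then note that for $\s=\s_1\s_2\in\VV^+_N\setminus\overline W$ the first-level component satisfies $\Xi^{(1)}_{\s_1}\geq\e_{N_1}=4\log N_1/(\b\sqrt{aN})$ by definition of $\VV^+_{N_1}$ (see \eqv(lsb.1), \eqv(P.inter.13bis)). Consequently $\b\sqrt{aN}\,\Xi^{(1)}_{\s_1}\geq 4\log N_1$, so that $e^{\b\sqrt{aN}\,\Xi^{(1)}_{\s_1}}\geq N_1^4$, and therefore
\be
q^*_N(\s_1)\leq \frac{N_1}{N_2}\,N_1^{-4}\leq C\,N_1^{-4}
\ee
for some constant $C<\infty$ depending only on $p$, since $N_2=N-\lfloor pN\rfloor$ is of the same order as $N_1$.

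Now applying Lemma \thv(L1.inter) (whose hypotheses are assumed here) gives
\be
\P^*_\s\left(\t_\eta<\t_{\overline W\setminus\eta}\right)\leq \frac{q^*_N(\s_1)}{M_1}\bigl(1+\OO(M_1/N_1)\bigr)\leq \frac{C}{M_1 N_1^4}\bigl(1+\OO(M_1/N_1)\bigr).
\ee
Since $M_1$ is fixed (or at worst grows much slower than $N_1$), the right-hand side is bounded above by $(M_1 N_1^3)^{-1}$ for all $N$ large enough, which is the claim.

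There is no genuine obstacle: the corollary is essentially a restatement of Lemma \thv(L1.inter) once the definition of $\VV^+_{N_1}$ together with the chosen value of $\e_{N_1}$ are used to convert the factor $q^*_N(\s_1)$ into a polynomial decay in $N_1$. The only point that requires a moment of care is to check that the constant gained from $N_1/N_2$ and from the $(1+\OO(M_1/N_1))$ correction in Lemma \thv(L1.inter) is absorbed by the $N_1$ we have to spare (we gain $N_1^{-4}$ and only need $N_1^{-3}$), which is clearly the case for all large enough~$N$.
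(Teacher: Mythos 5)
Your proposal is correct and follows exactly the paper's own argument: the paper likewise notes that \eqv(01.14), \eqv(01.3) and the choice \eqv(P.inter.13bis) force $q^*_N(\s_1)\leq N_1^{-3}$ on $\VV^+_N$ (in fact one gains $N_1^{-4}$ up to the bounded factor $N_1/N_2$, which leaves room to absorb the $(1+\OO(M_1/N_1))$ correction), and then inserts this into Lemma \thv(L1.inter). No differences worth noting.
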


\begin{proof} By \eqv(01.14), \eqv(01.3) and \eqv(P.inter.13bis), 
$
q^*_N(\s_1)\leq N_1^{-3} \text{ for all } \s\in\VV^+_N
$.
Inserting this in Lemma \thv(L1.inter) yields \eqv(C1.inter.1).
\end{proof}

This implies that the bound \eqv(P.inter.3) holds true for all $\s\in \VV^+_N\setminus\overline W$.
To extend this result to the entire set $\VV_N\setminus\overline W$,
observe that  $\overline W\subset\VV^+_N$
(see the definition  \eqv(1.20)-\eqv(1.24) of  $\overline W$) and decompose the
probability in \eqv(P.inter.3) according to whether the jump chain visits 
$\VV^+_N\setminus\overline W$ before $\eta$ or not, namely,  for $\s\in\VV_N\setminus\VV^+_N$  write
\be
\P^*_\s\left(\t_{\eta}<\t_{\overline W\setminus\eta}\right)
=\P^*_\s\left(\t_{\VV^+_N\setminus\overline W}<
\t_{\eta}<\t_{\overline W\setminus\eta}\right)
+
\P^*_\s\left(\t_{\eta}<\t_{(\overline W\setminus\eta)\cup(\VV^+_N\setminus\overline W)}\right)
\Eq(P.inter.14)
\ee
Call $Q_1$ and $Q_2$, respectively, the first and second probabilities  in the r.h.s.~of \eqv(P.inter.14). Then
\bea
Q_1&=&
\sum_{\s'\in\VV^+_N\setminus\overline W}
\P^*_\s\left(\t_{\s'}<\t_{\VV^+_N\setminus\s'}\right)
\P^*_{\s'}\left(\t_{\eta}<\t_{\overline W\setminus\eta}\right)
\Eq(P.inter.16)
\\
&\leq&\frac{1}{M_1N_1^3}\P^*_\s\left(\t_{\VV^+_N\setminus\overline W}<\t_{\overline W}\right) 
\leq\frac{1}{M_1N_1^3}
\Eq(P.inter.16'').
\eea
where first inequality in \eqv(P.inter.16'') is Corollary  \thv(C1.inter).
To bound $Q_2$ note that given any set $A_{N_1}\subset\VV^+_{N_1}$, 
$
A_N\equiv\cup_{\s_1\in A_{N_1}}C(\s_1)\subset\cup_{\s_1\in\VV^+_{N_1}}C(\s_1)
$,
so that for all $\s\in\VV_N\setminus\VV^+_N$
\be
Q_2=
\P^*_\s\left(\t_{\eta}<\t_{\VV^+_N\setminus\eta}\right)
\leq
\P^*_\s\left(\t_{C(\eta_1)}<\t_{A_N\setminus C(\eta_1)}\right)
=
\P^{\circ,1}_{\s_1}\left(\t_{\eta_1}<\t_{A_{N_1}\setminus\eta_1}\right)
\Eq(P.inter.18)
\ee
where the last equality is Lemma \thv(L.comp.1) applied with 
$
A
=\{\eta_1\}
$,
$
B
=A_{N_1}\subset\VV^+_{N_1}
$
and $\s_1\notin \VV^+_{N_1}$.  The point now is to find a big enough set $A_{N_1}$ 
and a compatible partition 
by which Theorem 1.4 of \cite{BG08} yields a suitably small estimate. The set
$K^+_{N_1}$ introduced in Section \thv(S3.3.3) was tailored to do precisely this.
We thus make the following choices:
take any $d$ such that $d/N=o(1)$ and $2^{-d}=o(N^{-2})$,
let $K_{N_1}$ be constructed as in Lemma \thv(L4.lsb) and take
$
A_{N_1}=K_{N_1}^+\equiv\VV^+_{N_1}\cap K_{N_1}
$
as in \eqv(lsb.19). By Remark \thv(R1.lsb), $T_1\subset K_{N_1}^+$.
Moreover, by Lemma \thv(L1.ls)  and Lemma \thv(L5.lsb),  on $\O_1^+\cap\overline\O_1(T_1)$,
the assumptions of Lemma \thv(L2.lsb) applied to $K_{N_1}^+$ are verified for all large enough $N_1$, 
whereas by Lemma \thv(L5.lsb), on $\overline\O_1^+$,
$
|K_{N_1}^+|= 2^{d}(1+o(1))
$.
Thus, by \eqv(lsb.8) of Lemma \thv(L2.lsb)  and \eqv(lsb.14) of Lemma \thv(L3.lsb), 
on $\O_1^+\cap\overline\O_1(T_1)$, for all but a finite number of indices $N_1$, Theorem 1.4 of \cite{BG08} yields
\be
\P^{\circ,1}_{\s_1}\left(\t_{\eta_1}<\t_{K_{N_1}^+\setminus\eta_1}\right)
\leq 
2^{-d}(1+o(1)) +({i}/{N_1^i})(1+o(1)),
\Eq(P.inter.22)
\ee
and by our assumption on $d$, inserting \eqv(P.inter.22) in \eqv(P.inter.18),
\be
0\leq Q_2\leq  ({i}/{N_1^i})(1+o(1)).
\Eq(P.inter.23)
\ee
Plugging \eqv(P.inter.16) and \eqv(P.inter.23) in \eqv(P.inter.14) we obtain that,
on $\O_1^+\cap\overline\O_1(T_1)$ and for all $N_1$ large enough,
for all $\s\in\VV_N\setminus\VV^+_N$
\be
\P^*_\s\left(\t_{\eta}<\t_{\overline W\setminus\eta}\right)
\leq
 ({i}/{N_1^i})(1+o(1)).
\Eq(P.inter.24)
\ee
Combining  \eqv(P.inter.24)  and \eqv(C1.inter.1) yields the claim of  Proposition \thv(P.inter).
\end{proof}

\begin{proof}[Proof of Corollary \thv(C.inter)]
The bound \eqv(C.inter.1) follows from Proposition \thv(P.inter) and the identity
\be
\textstyle
\P^*_\s\left(\t_{T^{x_1}}<\t_{\overline W\setminus T^{x_1}}\right)
=
\sum_{\eta\in T^{x_1}}
\P^*_\s\left(\t_{\eta}<\t_{\overline W\setminus\eta}\right).
\Eq(C.inter.4)
\ee
To prove  \eqv(C.inter.3) note that
\be
\P^*_\s\left(\t_{W^{x_1}}<\t_{\overline W\setminus W^{x_1}}\right)
=
\P^*_\s\left(\t_{T^{x_1}}<\t_{\overline W\setminus T^{x_1}}\right)
+
\P^*_\s\left(\t_{W^{x_1}\setminus T^{x_1}}<\t_{\overline W\setminus (W^{x_1}\setminus T^{x_1})}\right)
\Eq(C.inter.5)
\ee
which implies that
\bea
&&
\left|\P^*_\s\left(\t_{W^{x_1}\setminus T^{x_1}}
<\t_{\overline W\setminus (W^{x_1}\setminus T^{x_1})}\right)-\frac{1}{M_1}\right|
\\
&
\leq
&
\left|\P^*_\s\left(\t_{W^{x_1}}<\t_{\overline W\setminus W^{x_1}}\right)
-\frac{1}{M_1}\right|
+\P^*_\s\left(\t_{T^{x_1}}<\t_{\overline W\setminus T^{x_1}}\right).
\Eq(C.inter.6)
\eea
Using \eqv(P.rem-like.1) of Proposition \thv(P.rem-like) to bound the first probability in \eqv(C.inter.6)
and \eqv(C.inter.1) to bound the second, we get
\be
\left|\P^*_\s\left(\t_{W^{x_1}\setminus T^{x_1}}
<\t_{\overline W\setminus (W^{x_1}\setminus T^{x_1})}\right)-\frac{1}{M_1}\right|
\leq
\frac{2i}{N_1^i}(1+o(1))
\Eq(C.inter.7)
\ee
where $i=1$ if $\dist(\s_1,\xi^{x}_1)=1$ and $i=2$ otherwise. But this is \eqv(C.inter.3). 
\end{proof}

\section{Entrance law. Proofs}
\TH(S4.4)  
\subsection{Proof of Proposition  \thv(P.top)}
\TH(S4.41)  

The set $\wt\O$ of Propositions \thv(P.top) is chosen to be
$
\wt\O=\overline\O\cap\O_1^+\cap\wh\O
$
where $\overline\O$, $\O_1^+$, and $\wh\O $ are defined, respectively,
in \eqv(ls.10), \eqv(lsb.20) and \eqv(L5new.lev2.2).
By Lemma \thv(L1.ls), Lemma \thv(L5.lsb) and Lemma \thv(L5new.lev2), 
$\PP(\wt\O)=1$. From now on we will assume that $\o\in\wt\O$.
Lemma \thv(comp.lem1) will be frequently used without making explicit mention of it.

\begin{proof}[Proof of Assertion (i) of Proposition \thv(P.top)]

We first work out general expressions, valid at all temperature, that relate the entrance
probabilities $\P_{\s}\left(\t_{\eta}<\t_{T\setminus\eta}\right)$,
$\s\notin T$, $\eta\in T$, to the basic REM-like, level-$2$ and
inter-level probabilities estimated in
Propositions \thv(P.rem-like),  \thv(P1.lev2) and \thv(P.inter).
To shorten the notations we write,
given $x_1\in\MM_1$ and $\eta\in T^{x_1}$
\be
\P_{\s}\left(\t_{\eta}<\t_{T\setminus\eta}\right)=
 \begin{cases}
P_{\eta}(\s) &\mbox{if }  \s\in W^{x_1}\setminus  T^{x_1},\\
Q_{\eta}(\s) &\mbox{if } \s\in W^{x'_1}\setminus  T^{x'_1}
\hbox{ \rm for some } x'_1\in\MM_1\setminus x_1,\\
R_{\eta}(\s)  &\mbox{if } \s\in\SS_N\setminus\overline W. \\
\end{cases} 
\Eq(P.top.12)
\ee
Note that the probabilities \eqv(P.top.5), \eqv(P.top.6) and \eqv(P.top.7) 
are of the form, respectively, $P_{\eta}(\s)$, $Q_{\eta}(\s)$, and $R_{\eta}(\s)$. As the next lemma shows,
both $P_{\eta}(\s)$ and $Q_{\eta}(\s)$ can be expressed as functions of $R_{\eta}(\s)$ whereas, using a 
renewal kind of argument, $R_{\eta}(\s)$ itself is the solution of a linear system of equations.

\begin{lemma}
   \TH(L.top.6)
Given $x_1\in\MM_1$ and $\eta\in T^{x_1}$ define, for  $\s\in\VV_N\setminus\overline W$,
\be
b_{\eta}(\s)=
\P_{\s}\left(\t_{\eta}<\t_{\overline W\setminus\eta}\right)
+\sum_{\s'\in W^{x_1}\setminus T^{x_1}}
\P_{\s}\left(\t_{\s'}<\t_{\overline W\setminus\s'}\right)
\P_{\s'}\left(\t_{\eta}<\t_{(T^{x_1}\setminus\eta)\cup\del W^{x_1}}\right)
\Eq(P.top.29)
\ee
and, for $\s\in\VV_N\setminus\overline W$ and  $\s''\in\del\overline W$,
\be
a(\s,\s'')=
\sum_{x'_1\in\MM_1}
\sum_{\s'\in W^{x'_1}\setminus T^{x'_1}}
\P_{\s}\left(\t_{\s'}<\t_{\overline W\setminus\s'}\right)
\P_{\s'}\left(\t_{\s''}<\t_{(\del W^{x'_1}\setminus \s'')\cup T^{x'_1}}\right).
\Eq(P.top.70)
\ee
If $A$ denotes the square matrix $A=(a(\s,\s''))_{\s,\s''\in\del\overline W}$ and 
$b_{\eta}$ the vector $b_{\eta}=(b_{\eta}(\s))_{\s\in\del\overline W}$,
then the vector $R_{\eta}\equiv(R_{\eta}(\s))_{\s\in\del\overline W}$ obeys
\be
R_{\eta}=b_{\eta}+AR_{\eta}.
\Eq(P.top.72)
\ee
Moreover, if $R^*_{\eta}=(R^*_{\eta}(\s))_{\s\in\del\overline W}$ solves the linear
system \eqv(P.top.72), then
\item{(i)} for all $\s\in\SS_N\setminus\overline W$,
\be
R_{\eta}(\s)= b_{\eta}(\s)+\sum_{\s''\in\del\overline W}a(\s,\s'')R^*_{\eta}(\s''),
\Eq(P.top.26')
\ee

\item{(ii)} for all $\s\in W^{x_1}\setminus  T^{x_1}$,
\be
P_{\eta}(\s)=
\P_{\s}\left(\t_{\eta}<\t_{(T^{x_1}\setminus\eta)\cup\del W^{x_1}}\right)
+
\sum_{\s'\in\del W^{x_1}}
\P_{\s}\left(\t_{\s'}<\t_{(\del W^{x_1}\setminus \s')\cup T^{x_1}}\right)
R^*_{\eta}(\s'),
\Eq(P.top.16')
\ee

\item{(iii)} for all $\s\in W^{x'_1}\setminus T^{x'_1}$ and all $x'_1\in\MM_1\setminus x_1$,
\be
Q_{\eta}(\s)=\sum_{\s'\in\del W^{x'_1}}
\P_{\s}\left(\t_{\s'}<\t_{(\del W^{x'_1}\setminus \s')\cup T^{x'_1}}\right)
R^*_{\eta}(\s').
\Eq(P.top.17')
\ee
\end{lemma}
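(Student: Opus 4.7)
The plan is to derive all four claimed identities by strong-Markov decompositions at suitable hitting times, exploiting the geometric fact that on $\overline\O$, for $N$ large, the cylinders $W^{x_1}$, $x_1\in\MM_1$, are mutually far apart. Indeed, Lemma \thv(L2.ls) applied to $T_1$ implies that distinct elements of $T_1$ differ in $(N_1/2)(1+o(1))$ coordinates, so for $x_1\neq x'_1$ in $\MM_1$ no $\s\in\del W^{x_1}$ can have first level coordinate $\xi_1^{x'_1}$. Consequently $\del\overline W=\cup_{x_1\in\MM_1}\del W^{x_1}$ is a disjoint union contained in $\VV_N\setminus\overline W$, ensuring that the various probabilities in the statement refer to well-defined objects on their respective domains, and that the recursive structure set up below is well-posed.

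For the renewal identity for $R_\eta(\s)$, $\s\in\VV_N\setminus\overline W$, I would condition on the entry point $J^*_N(\t_{\overline W})$: it is either (a) $\eta$ itself, contributing $\P_\s(\t_\eta<\t_{\overline W\setminus\eta})$; (b) some configuration in $T\setminus\eta$, contributing zero to the success event; or (c) some $\s'\in W^{x'_1}\setminus T^{x'_1}$ for some $x'_1$. In case (c), apply the strong Markov property at $\t_{\s'}$ and split further on the first hitting time of $T^{x'_1}\cup\del W^{x'_1}$. The sub-case ``hit $T^{x'_1}$ first'' succeeds only when $x'_1=x_1$ and the hit is at $\eta$, giving precisely the sum in the definition of $b_\eta(\s)$. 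The sub-case ``exit at $\s''\in\del W^{x'_1}$ first'' contributes, again by strong Markov, $\P_{\s'}(\t_{\s''}<\t_{(\del W^{x'_1}\setminus\s'')\cup T^{x'_1}})R_\eta(\s'')$; summing over $x'_1$, $\s'$, and $\s''$ yields $\sum_{\s''\in\del\overline W}a(\s,\s'')R_\eta(\s'')$. Specializing to $\s\in\del\overline W$ then gives \eqv(P.top.72), showing that $R_\eta|_{\del\overline W}$ solves the linear system.

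For (ii), with $\s\in W^{x_1}\setminus T^{x_1}$, decompose on the first hit of $T^{x_1}\cup\del W^{x_1}$: the chain either hits $\eta$ (the first term of \eqv(P.top.16')), hits $T^{x_1}\setminus\eta$ (failure), or exits $W^{x_1}$ at some $\s'\in\del W^{x_1}$, in which case strong Markov produces the recursive factor $R_\eta(\s')$. Part (iii) is entirely analogous, except that now $x'_1\neq x_1$ so the ``hit in $T^{x'_1}$'' branch is always a failure (it lands in $T\setminus\eta$), leaving only the exit-at-$\s'$ branch. The substitution $R_\eta|_{\del\overline W}=R^*_\eta$ in these two identities and in the general recursion from the previous paragraph then delivers (i)--(iii).

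The main point requiring care is the uniqueness of the solution to \eqv(P.top.72), which is needed to justify the substitution $R_\eta|_{\del\overline W}=R^*_\eta$; the rest is bookkeeping of case splittings. Uniqueness rests on the geometric input from the first paragraph: from any $\s''\in\del\overline W$, the next cylinder excursion has a uniformly positive probability of terminating at $T$ rather than at $\del\overline W$, so $A$ is strictly substochastic and $I-A$ is invertible. This yields the unique solution $R^*_\eta=R_\eta|_{\del\overline W}$ and completes the argument.
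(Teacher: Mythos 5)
Your proposal is correct and follows essentially the same route as the paper: strong-Markov decompositions at the first entrance to $\overline W$ and, within a cylinder, at the first hit of $T^{x_1}\cup\del W^{x_1}$, with the exit points $\s''\in\del\overline W$ supplying the recursion $R_\eta=b_\eta+AR_\eta$ (the paper merely splits this into two steps, first expressing $P_\eta,Q_\eta$ through $R_\eta$ and then substituting back). Your closing remark on invertibility of $I-A$ via strict substochasticity is not part of the paper's proof of this lemma but is handled there immediately afterwards, in Lemmas \thv(L.top.2') and \thv(L.top.2''), by a Perron--Frobenius argument under the condition $\min_{\s\in\del\overline W}\sum_{\eta\in T}b_\eta(\s)>0$.
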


\begin{proof} 
Let us first consider $P_{\eta}(\s)$. Decomposing the event
$
\{\t_{\eta}<\t_{T\setminus\eta}\}
$
according to whether, starting in $\s$, the chain visits $\eta$ before visiting
the boundary $\del W^{x_1}$ or not, we get:
\bea
P_{\eta}(\s)
&=&
\P_{\s}\left(\{\t_{\eta}<\t_{T\setminus\eta}\}
\cap\{\t_{\eta}<\t_{\del W^{x_1}}\}
\right)
+
\P_{\s}\left(\{\t_{\eta}<\t_{T\setminus\eta}\}
\cap\{\t_{\del W^{x_1}}<\t_{\eta}\}
\right)
\nonumber
\\
&=&
\P_{\s}\left(\t_{\eta}<\t_{(T^{x_1}\setminus\eta)\cup\del W^{x_1}}\right)
+
\P_{\s}\left(\t_{\del W^{x_1}}<\t_{\eta}<\t_{T\setminus\eta}\right)
\nonumber
\\
&=&
\P_{\s}\left(\t_{\eta}<\t_{(T^{x_1}\setminus\eta)\cup\del W^{x_1}}\right)
+
\sum_{\s'\in\del W^{x_1}}
\P_{\s}\left(\t_{\s'}<\t_{(\del W^{x_1}\setminus \s')\cup T^{x_1}}\right)
R_{\eta}(\s')\quad\quad
\Eq(P.top.16)
\eea
where we used \eqv(P.top.12) in the last line.
Proceeding in the same way with $Q_{\eta}(\s)$ yields
\be
Q_{\eta}(\s)=\sum_{\s'\in\del W^{x'_1}}
\P_{\s}\left(\t_{\s'}<\t_{(\del W^{x'_1}\setminus \s')\cup T^{x'_1}}\right)
R_{\eta}(\s').
\Eq(P.top.17)
\ee
We now focus on $R_{\eta}(\s)$.
Clearly $\P_{\s}(\t_{\overline W}<\infty)=1$
and by definition of $\overline W$
\be
\textstyle
\{\t_{\overline W}<\infty\}=
\bigcup_{x'_1\in\MM_1}
\{\t_{W^{x'_1}}<\t_{\overline W\setminus W^{x'_1}}\}.
\Eq(P.top.20)
\ee
Hence
\be
R_{\eta}(\s)=
\sum_{x'_1\in\MM_1}\P_{\s}\left(\{\t_{\eta}<\t_{T\setminus\eta}\}
\cap \{\t_{W^{x'_1}}<\t_{\overline W\setminus W^{x'_1}}\}
\right).
\Eq(P.top.21)
\ee
Setting 
$
\EE^{x'_1}\equiv
\{\t_{\eta}<\t_{T\setminus\eta}\}
\cap \{\t_{W^{x'_1}}<\t_{\overline W\setminus W^{x'_1}}\}
$
and observing that for $x'_1=x_1$
\be
\textstyle
\EE^{x_1}
=\Bigl(\bigcup_{\s'\in W^{x_1}\setminus T^{x_1}}
\{\t_{\s'}<\t_{\overline W\setminus\s'}\}
\cap
\{\t_{\eta}<\t_{T\setminus\eta}\}
\Bigr)\cup\{\t_{\eta}<\t_{\overline W\setminus\eta}\}   
\Eq(P.top.22)
\ee
whereas  for all $x'_1\in\MM_1\setminus x_1$
\be
\textstyle
\EE^{x'_1}
= \bigcup_{\s'\in W^{x'_1}\setminus T^{x'_1}}
\{\t_{\s'}<\t_{\overline W\setminus\s'}\}
\cap
\{\t_{\eta}<\t_{T\setminus\eta}\},
\Eq(P.top.23)
\ee
\eqv(P.top.21) becomes
\bea
R_{\eta}(\s)&=&
\P_{\s}\left(\t_{\eta}<\t_{\overline W\setminus\eta}\right)
+\sum_{\s'\in W^{x_1}\setminus T^{x_1}}
\P_{\s}\left(\t_{\s'}<\t_{\overline W\setminus\s'}\right) P_{\eta}(\s')
\\
&+&\sum_{x'_1\in\MM_1\setminus x_1}
\sum_{\s'\in W^{x'_1}\setminus T^{x'_1}}
\P_{\s}\left(\t_{\s'}<\t_{\overline W\setminus\s'}\right) Q_{\eta}(\s').
\Eq(P.top.24)
\eea
Plugging the expressions \eqv(P.top.16) and \eqv(P.top.17) of $P_{\eta}(\s)$ and $Q_{\eta}(\s)$
in \eqv(P.top.24) readily yield that for $b_{\eta}(\s)$ and $a(\s,\s'')$ as  in \eqv(P.top.29) and \eqv(P.top.70),
$R_{\eta}(\s)$ obeys, for all $\s\in\SS_N\setminus\overline W$,
\be
R_{\eta}(\s)= b_{\eta}(\s)+\sum_{\s''\in\del W^{x'_1}}a(\s,\s'')R_{\eta}(\s'').
\Eq(P.top.26)
\ee
The restriction of this last relation to $\s\in\del\overline W$  enables us to see
the vector  $(R_{\eta}(\s))_{\s\in\del\overline W}$ as solution
of the linear system of equations \eqv(P.top.72). This observation together with \eqv(P.top.26),
\eqv(P.top.16) and \eqv(P.top.17) prove, respectively, \eqv(P.top.26'),
\eqv(P.top.16') and \eqv(P.top.17'). Lemma \thv(L.top.6) is proven.\end{proof}

\begin{lemma}
\TH(L.top.2)
Under the assumptions and with the notation of Proposition \thv(P.top), the linear system 
\eqv(P.top.72) has a unique solution, $R^*_{\eta}=(R^*_{\eta}(\s))_{\s\in\del\overline W}$, that obeys
\be
R^*_{\eta}(\s)=
\frac{\nu_1(x_1)}{M_2}\left(1+\varepsilon_{N}\right)\,\, \forall  \s\in\del\overline W.
\Eq(P.top.28)
\ee
\end{lemma}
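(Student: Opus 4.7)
The plan is a verify-then-Neumann approach: first, I propose the constant-in-$\s$ ansatz $\bar R_\eta(\s) \equiv \nu_1^N(x_1)/M_2$ and check that it solves \eqv(P.top.72) up to a multiplicative error of order $\varepsilon_N = O(N^{-1})$; then I use the Neumann series expansion of $(I-A)^{-1}$ to deduce both uniqueness and the quantitative claim \eqv(P.top.28). A clean preliminary observation is that on $\overline\O$, by Lemma \thv(L2.ls), the sets $\{\del W^{x'_1}\}_{x'_1\in\MM_1}$ are pairwise disjoint and their union is $\del\overline W$ (two distinct $\xi_1^{x_1}, \xi_1^{x''_1}$ are at Hamming distance $\sim N_1/2$, so they cannot share a neighbor), so that any sum over $\del\overline W$ splits cleanly as $\sum_{x'_1}\sum_{\s''\in\del W^{x'_1}}$.

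Applying this to \eqv(P.top.70) and observing that for $\s' \in W^{x'_1}\setminus T^{x'_1}$ one has $\sum_{\s''\in\del W^{x'_1}}\P_{\s'}(\tau_{\s''}<\tau_{(\del W^{x'_1}\setminus\s'')\cup T^{x'_1}}) = \P_{\s'}(\tau_{\del W^{x'_1}}<\tau_{T^{x'_1}}) = \l^{x'_1}(T^{x'_1}) + c_{T^{x'_1}}(\s')$ by Proposition \thv(P1.lev2)(ii), and combining with the entry-probability estimate \eqv(C.inter.3) of Corollary \thv(C.inter) (the $c$-contribution being negligible by Remark \thv(R1.lev2) and \eqv(C.inter.1)), I obtain
\be
1 - \sum_{\s''\in\del\overline W} a(\s,\s'') = \frac{1}{M_1}\sum_{x'_1\in\MM_1}(1-\l^{x'_1}(T^{x'_1}))(1+\varepsilon_N).
\ee
In parallel, in \eqv(P.top.29) Proposition \thv(P.inter) handles the first term as $O(N_1^{-2})$, while Proposition \thv(P1.lev2)(i) gives $\P_{\s'}(\tau_\eta<\tau_{(T^{x_1}\setminus\eta)\cup\del W^{x_1}}) = (1-\l^{x_1}(T^{x_1}))/M_2 + b_{T^{x_1}}(\s',\eta)$, which, upon summation over $\s'\in W^{x_1}\setminus T^{x_1}$ via \eqv(C.inter.3), yields
\be
b_\eta(\s) = \frac{1-\l^{x_1}(T^{x_1})}{M_1 M_2}(1+\varepsilon_N).
\ee

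Since on $\wh\O$ every $\g_1^N(\xi_1^{x'_1})$ is strictly positive, all $\l^{x'_1}(T^{x'_1})<1$, hence $\|A\|_\infty \leq 1-\delta_N$ with $\delta_N = M_1^{-1}\sum_{x'_1}(1-\l^{x'_1}(T^{x'_1}))(1+o(1)) > 0$; this gives existence and uniqueness of $R^*_\eta$ via $(I-A)^{-1}=\sum_{k\geq 0}A^k$. Substituting $\bar R_\eta \equiv \nu_1^N(x_1)/M_2$ into the system and using the algebraic identity $\nu_1^N(x_1)\sum_{x'_1}(1-\l^{x'_1}(T^{x'_1})) = 1-\l^{x_1}(T^{x_1})$ (which is just \eqv(P.top.4)), I get
\be
\bigl\|(I-A)\bar R_\eta - b_\eta\bigr\|_\infty = O\Bigl(\varepsilon_N\cdot\frac{1-\l^{x_1}(T^{x_1})}{M_1 M_2}\Bigr).
\ee
Applying $(I-A)^{-1}$, whose $\infty$-norm is bounded by $\delta_N^{-1}$, finally yields $\|R^*_\eta - \bar R_\eta\|_\infty \leq C\varepsilon_N\cdot\nu_1^N(x_1)/M_2$, which is \eqv(P.top.28).

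The main technical subtlety is the stability of the relative error $O(N^{-1})$ under inversion of $I-A$: because $\delta_N^{-1}$ may be large (e.g.~above fine tuning where all $1-\l^{x'_1}\to 0$), a naive $\|(I-A)^{-1}\|_\infty\|\cdot\|$ bound could in principle blow up. The key point is that both the ansatz error and the leading size of $b_\eta$ scale precisely with $\delta_N\cdot\bar R_\eta$, so the $\delta_N$'s cancel and the final relative error is determined purely by the multiplicative $\varepsilon_N$'s coming from Propositions \thv(P.rem-like), \thv(P.inter), \thv(P1.lev2) and Corollary \thv(C.inter); verifying this careful bookkeeping for all regimes covered by $\zeta_N\ll\log N$ is the bulk of the work.
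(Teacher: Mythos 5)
Your proposal is correct and rests on exactly the same probabilistic inputs as the paper's proof (Propositions \thv(P.rem-like), \thv(P1.lev2), \thv(P.inter) and Corollary \thv(C.inter) control the row sums of $A$ and the vector $b_{\eta}$ up to relative errors $\varepsilon_N$, under the standing condition $\zeta_N\ll\log N$, which guarantees $1-\l^{x_1}_N(T^{x_1})\gg N^{-1}$ so that the additive $O(N^{-1})$ errors really are relative errors). Where you genuinely diverge is in how the linear system \eqv(P.top.72) is solved. You verify the constant ansatz $\nu_1^N(x_1)/M_2$, bound the residual by $O(\varepsilon_N\,\delta_N\,\bar R_{\eta})$, and invert with $\|(I-A)^{-1}\|_\infty\leq\delta_N^{-1}$, relying on the cancellation of $\delta_N$ that you rightly identify as the crux. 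The paper instead sandwiches $b_{\eta}(\s)$ between $v(x_1)$ and $v(x_1)+cN^{-1}$, with $v(x_1)=(1-\l^{x_1}_N(T^{x_1}))/(M_1M_2)$, uses positivity of $(I-A)^{-1}$ to sandwich $R^*_{\eta}$ between $v(x_1)(I-A)^{-1}\1$ and $(v(x_1)+cN^{-1})(I-A)^{-1}\1$, and then evaluates $(I-A)^{-1}\1$ \emph{exactly} as $\bigl(\sum_{\eta\in T}b_{\eta}(\s)\bigr)^{-1}\1$ by summing the system over $\eta\in T$ and exploiting the normalization $\sum_{\eta\in T}R_{\eta}(\s)=1$ (Lemmata \thv(L.top.2') and \thv(L.top.2'')). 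That identity sidesteps any operator-norm estimate and makes the cancellation of $\delta_N$ automatic even in the degenerate above-fine-tuning regime; your route is more generic linear algebra and buys nothing extra here, but it is self-contained and replaces the Perron--Frobenius input of Lemma \thv(L.top.2'') by the elementary Neumann-series bound. One small slip: for $\s\in\del W^{x_1}$ the first term of $b_{\eta}(\s)$, namely $\P_{\s}(\t_{\eta}<\t_{\overline W\setminus\eta})$, is only $O(N_1^{-1})$ (Proposition \thv(P.inter) with $\dist(\s_1,\eta_1)=1$), not $O(N_1^{-2})$; this does not affect the argument, since the paper absorbs the same term into the additive $cN^{-1}$ and then divides by $v(x_1)\gg N^{-1}$.
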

The proof of Lemma \thv(L.top.2) makes use of the following two lemmata.

\begin{lemma}
   \TH(L.top.2')
The matrix $A$ has the following properties:
for each $\s\in\del\overline W$
\be
0\leq\sum_{\s''\in\del \overline W}a(\s,\s'')=1-\sum_{\eta\in T}b_{\eta}(\s)\leq 1
\Eq(P.top.73)
\ee
\end{lemma}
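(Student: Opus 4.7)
The plan is to compute the row sum $\sum_{\s''\in\del\overline W}a(\s,\s'')$ directly by interchanging summations in \eqv(P.top.70), and to identify the outcome as $1-\sum_{\eta\in T}b_\eta(\s)$ via a partition of the event $\{\tau_{\overline W}<\infty\}$. The key geometric input I would use, valid on $\overline\O$ for all large $N$, is that distinct elements of $T_1$ are at Hamming distance strictly larger than $1$ (Lemma~\thv(L2.ls)); consequently the cylinders $W^{x_1'}$ have pairwise disjoint outer boundaries and $\del\overline W$ is the disjoint union $\bigsqcup_{x_1'\in\MM_1}\del W^{x_1'}$.

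Starting from $\s'\in W^{x_1'}\setminus T^{x_1'}$, the chain cannot reach any $\s''\in\del W^{x_1''}$ with $x_1''\neq x_1'$ without first crossing $\del W^{x_1'}$, so only $\s''\in\del W^{x_1'}$ contribute to the inner sum of $a(\s,\s'')$; and by exhaustion of the boundary one has $\sum_{\s''\in\del W^{x_1'}}\P_{\s'}(\tau_{\s''}<\tau_{(\del W^{x_1'}\setminus\s'')\cup T^{x_1'}})=\P_{\s'}(\tau_{\del W^{x_1'}}<\tau_{T^{x_1'}})$. Writing this last quantity as $1-\sum_{\eta\in T^{x_1'}}\P_{\s'}(\tau_\eta<\tau_{(T^{x_1'}\setminus\eta)\cup\del W^{x_1'}})$ and collecting the $\s'$-sums, the row sum becomes $\sum_{x_1'}\P_\s(\tau_{W^{x_1'}\setminus T^{x_1'}}<\tau_{\overline W\setminus(W^{x_1'}\setminus T^{x_1'})})-\bigl[\sum_{\eta\in T}b_\eta(\s)-\sum_{\eta\in T}\P_\s(\tau_\eta<\tau_{\overline W\setminus\eta})\bigr]$, the bracketed quantity coming directly from the definition \eqv(P.top.29) of $b_\eta$.

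The step that closes the argument is the observation that the events $\{\tau_\eta<\tau_{\overline W\setminus\eta}\}_{\eta\in T}$ together with $\{\tau_{W^{x_1'}\setminus T^{x_1'}}<\tau_{\overline W\setminus(W^{x_1'}\setminus T^{x_1'})}\}_{x_1'\in\MM_1}$ form a partition of $\{\tau_{\overline W}<\infty\}$, an event of probability $1$ because the jump chain is irreducible on the finite space $\VV_N$; the corresponding probabilities therefore add up to $1$, yielding the middle equality $\sum_{\s''}a(\s,\s'')=1-\sum_{\eta\in T}b_\eta(\s)$. The outer inequalities come for free from non-negativity: each $a(\s,\s'')$ and each $b_\eta(\s)$ is a sum of products of probabilities, so the identity immediately constrains both sums to lie in $[0,1]$. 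I do not foresee any real obstacle; the one point demanding care is the geometric reduction via Lemma~\thv(L2.ls), without which one would have to account for possible adjacencies between distinct cylinders $W^{x_1'}$.
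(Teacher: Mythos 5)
Your argument is correct, but it takes a different route from the paper. The paper's proof is a two-line affair: it sums the already-established renewal equation \eqv(P.top.72) over $\eta\in T$ and invokes $\sum_{\eta\in T}R_\eta(\s)=1$ (a consequence of \eqv(P.top.12) and the chain eventually hitting $T$), which immediately yields $1=\sum_\eta b_\eta(\s)+\sum_{\s''}a(\s,\s'')$; non-negativity of $a$ and $b_\eta$ then gives both outer bounds. In contrast, you bypass \eqv(P.top.72) and compute the row sum of $A$ directly from the definitions \eqv(P.top.29) and \eqv(P.top.70), interchanging summations, observing that only $\s''\in\del W^{x_1'}$ contribute from a starting point $\s'\in W^{x_1'}$, and recognizing a partition of $\{\tau_{\overline W}<\infty\}$ into first-hitting events indexed by $T$ and by the $W^{x_1'}\setminus T^{x_1'}$. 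You are right to flag the geometric input — that $\del\overline W=\bigsqcup_{x_1'}\del W^{x_1'}$, which holds on $\overline\O$ by Lemma \thv(L2.ls) — since without it $\sum_{\s''\in\del\overline W}\P_{\s'}(\tau_{\s''}<\tau_{(\del W^{x_1'}\setminus\s'')\cup T^{x_1'}})$ would not exhaust $\P_{\s'}(\tau_{\del W^{x_1'}}<\tau_{T^{x_1'}})$; the paper's route relies on the same separation, but indirectly through the derivation of \eqv(P.top.72) in Lemma \thv(L.top.6). What your approach buys is self-containment (the identity is seen directly, without first having to trust the renewal system); what it costs is re-running, in substance, the same first-hit decomposition that already underlies \eqv(P.top.26). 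Given that Lemma \thv(L.top.6) precedes this one, the paper's one-line deduction is the more economical.
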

\begin{proof} 
Summing both sides of \eqv(P.top.72) over $\eta\in T$ and using that by \eqv(P.top.12) and \eqv(P.top.29),
\be
\sum_{\eta\in T}R_{\eta}(\s)=1
\,\,  \text{for all } \s\in\SS_N\setminus\overline W
\Eq(P.top.81)
\ee
yields the equality of \eqv(P.top.73). The first and final upper and lower bounds  
simply reflect the fact that $A$ is a positive matrix and $b_{\eta}$ a positive vector.
\end{proof} 

\begin{lemma}
\TH(L.top.2'')
A necessary and sufficient condition for a solution to  \eqv(P.top.72) to exist is that 
\be
\textstyle
\min_{\s\in\del\overline W}\sum_{\eta\in T}b_{\eta}(\s)>0.
\Eq(P.top.75)
\ee
In this case the solution is unique, positive and given by
$
R_{\eta}=(I-A)^{-1}b_{\eta}
$,
where $I$ denotes the identity matrix and 
$
(I-A)^{-1}=\sum_{k=1}^{\infty}A^k
$
exists.
\end{lemma}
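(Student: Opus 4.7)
The plan is a Neumann series argument applied to the non-negative substochastic matrix $A$. The key input is Lemma \thv(L.top.2'), which identifies the row sums of $A$ and hence its $\ell^\infty$ operator norm:
\be
\|A\|_\infty=\max_{\s\in\del\overline W}\sum_{\s''\in\del\overline W}a(\s,\s'')=1-\min_{\s\in\del\overline W}\sum_{\eta\in T}b_\eta(\s).
\Eq(LL.plan.1)
\ee
Under \eqv(P.top.75) this norm is strictly less than $1$, so $\sum_{k\geq0}A^k$ converges absolutely in operator norm to $(I-A)^{-1}$, and the linear system \eqv(P.top.72) has the unique solution $R^{*}_\eta=(I-A)^{-1}b_\eta=\sum_{k\geq0}A^kb_\eta$. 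The coordinates of $R^{*}_\eta$ are non-negative as a sum of non-negative terms, and strict positivity is inherited from the zeroth term $b_\eta$: each entry of $b_\eta$ dominates the probability $\P_\s(\t_\eta<\t_{\overline W\setminus\eta})$, which is positive because $J^{*}_N$ is irreducible on $\VV_N$ and an explicit nearest-neighbor path from $\s\in\del\overline W$ to $\eta$ avoiding the finite set $\overline W\setminus\eta$ can be realised with positive probability under $J^{*}_N$.

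For the necessity part, note first that the probabilistic vector $R_\eta$ restricted to $\del\overline W$ always solves \eqv(P.top.72), so the real content is the uniqueness of the solution. If \eqv(P.top.75) fails, there is $\s_0\in\del\overline W$ with $b_\eta(\s_0)=0$ for all $\eta\in T$, and by \eqv(LL.plan.1) the $\s_0$-row of $A$ sums to $1$. Using \eqv(P.top.81) together with the non-negativity of $A$, one extracts a non-trivial vector $v\in\ker(I-A)$ supported on the set of states from which every excursion returns to $\del\overline W$ before hitting $T$; adding any multiple of $v$ to the probabilistic solution yields another solution, destroying uniqueness. The Neumann representation $(I-A)^{-1}=\sum_{k}A^{k}$ also breaks down here, since $(A^{k}\mathbf{1})(\s_0)=1$ for every $k\geq0$ on that absorbing subset, so the prescribed series form of the solution cannot exist.

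The central quantitative step is the operator-norm identity \eqv(LL.plan.1). The main technical obstacle is the strict-positivity claim for $b_\eta$, which is handled by an elementary hypercube-path construction compatible with the exclusion of $\overline W\setminus\eta$; once that is in place, the remainder is standard Neumann-series linear algebra, and the necessity part reduces to a classical substochastic-matrix computation identifying the kernel of $I-A$.
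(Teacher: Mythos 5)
Your sufficiency argument is sound and rests on exactly the same input as the paper's, namely the row-sum identity from Lemma~\thv(L.top.2'), but you package it as the $\ell^\infty$ operator-norm bound $\|A\|_\infty<1$ rather than the Perron--Frobenius row-sum bounds the paper invokes from Seneta. Since $\rho(A)\leq\|A\|_\infty$ these two are the same fact dressed differently, and yours is if anything a hair more elementary; the Neumann series, uniqueness, and positivity then follow as you say (and your hypercube-path argument for $b_\eta(\s)>0$ is correct, using that on $\overline\O$, by Lemma~\thv(L2.ls), $\eta=\xi_1^{x_1}\eta_2$ has first-level neighbours outside $T_1$, so one can reach a neighbour of $\eta$ without first entering $\overline W$).

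The necessity half is where you have a genuine gap. If \eqv(P.top.75) fails you only learn that \emph{one} row of $A$, say the $\s_0$-row, sums to $1$; that alone does not force $\rho(A)=1$ nor $I-A$ singular. A clean counterexample: $A=\bigl(\begin{smallmatrix}1/2&1/2\\1/3&1/3\end{smallmatrix}\bigr)$ is primitive, has a row summing to $1$, but $\rho(A)=5/6<1$ and $I-A$ is invertible. To extract a vector in $\ker(I-A)$ you would need the set $\{\s:\sum_{\eta\in T}b_\eta(\s)=0\}$ to be \emph{closed} under the $A$-dynamics (so that $A$ restricted there is stochastic), and you assert rather than verify this; probabilistically there is no reason to expect it, since $\sum_\eta b_\eta(\s_0)=0$ only says the chain from $\s_0$ returns to $\del\overline W$ before hitting $T$, and from the landing state $\sum_\eta b_\eta$ may well be positive. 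The same goes for your claim that $(A^k\mathbf 1)(\s_0)=1$ for all $k$, which again needs that closure property. The paper instead cites Theorem~2.1 p.~30 and Corollary~3 p.~31 of \cite{S}; whatever one makes of that reference, your kernel-extraction cannot substitute for it without the missing closure argument.
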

\begin{proof}
Denote by $\rho(A)$ the Perron-Frobenius eigenvalue of $A$. By \eqv(P.top.73) 
and the standard min and max row-sum bounds on the Perron-Frobenius eigenvalue 
of primitive matrices (see \cite{S}, p.~8 Corollary 1)
\be
\textstyle
0\leq
1-\max_{\s\in\del\overline W}\sum_{\eta\in T}b_{\eta}(\s)
\leq\rho(A)\leq
1-\min_{\s\in\del\overline W}\sum_{\eta\in T}b_{\eta}(\s)
\leq 1.
\Eq(P.top.74)
\ee
The claim of the lemma now follows from \eqv(P.top.73), \eqv(P.top.74) and 
Theorem  2.1 p.~30 of  \cite{S} (see also Corollary 3 p.~31).
\end{proof} 

We are now ready to prove Lemma \thv(L.top.2).

\begin{proof}[Proof of Lemma \thv(L.top.2)] Let us establish that if
$
1-\l^{x_1}(T^{x_1})\gg \OO(N^{-1})
$
then, there exists a  constant $0<c<\infty$ (that depends on $a_1,a_2$) such that, 
for all $x_1\in\MM_1$, all $\eta\in T^{x_1}$ and  all  $\s\in\VV_N\setminus\overline W$
\be
v({x_1})\leq b_{\eta}(\s)\leq v({x_1})+cN^{-1}\,\, \text{ where }\,\, v({x_1})\equiv\frac{1}{M_1M_2}(1-\l^{x_1}(T^{x_1})).
\Eq(L.top.new1)
\ee
Inserting  \eqv(P1.lev2.2) of Proposition \thv(P1.lev2) in \eqv(P.top.29) and using  \eqv(C.inter.3) of Corollary \thv(C.inter) yields that
for all $\s\in\VV_N\setminus\overline W$ and all $\eta\in T^{x_1}$,
$
b_{\eta}(\s)=v({x_1})(1+
N_1^{-1}(1+o(1)))+\d b_{\eta}(\s)
$
where $v({x_1})$ is as in  \eqv(L.top.new1) and 
\be
\d b_{\eta}(\s)
\equiv
\P_{\s}\left(\t_{\eta}<\t_{\overline W\setminus\eta}\right)
+\sum_{\s'\in W^{x_1}\setminus T^{x_1}}
\P_{\s}\left(\t_{\s'}<\t_{\overline W\setminus\s'}\right)
b_{A}(\s,\eta).
\Eq(L.top.new2)
\ee
By \eqv(P1.lev2.2'), \eqv(P1.lev2.2'') and \eqv(L1new.lev2.3),
$
0\leq b_{T^{x_1}}(\s,\eta)\leq N_2^{-1}(1+o(1))
$
for all for all $\s\in\VV_N\setminus\overline W$ and all $\eta\in T^{x_1}$.
Inserting this rough bound in \eqv(L.top.new2) and using again \eqv(C.inter.3) to bound 
the resulting sum, the first term in \eqv(L.top.new1) being bounded in \eqv(P.inter.3), we get
\be
0\leq \max_{\s\in\del \overline W, \eta\in T^{x_1}}\d b_{\eta}(\s)
\leq 
 \frac{1}{N_1}(1+o(1))+\frac{1}{M_1N_2}(1+({2M_1}/{N_1})(1+o(1)).
\Eq(L.top.new3)
\ee
This proves the claim \eqv(L.top.new1). Eq.~\eqv(L.top.new1) in particular implies 
that the solution $R^*_{\eta}$ of the linear system  \eqv(P.top.72) obeys
\be
v({x_1})(I-A)^{-1}\1\leq R^*_{\eta}\leq (v({x_1})+cN^{-1})(I-A)^{-1}\1
\ee
where  $\1$  is the vector with all components equal to one and
where the inequalities hold component wise, for each $R^*_{\eta}(\s)$, $\s\in\del \overline W$.
Now, by \eqv(P.top.73) of \thv(L.top.2') and Lemma \thv(L.top.2''),
\be
\textstyle
(I-A)^{-1}\1=\sum_{k=1}^{\infty}A^k\1=\left(\sum_{\eta\in T}b_{\eta}(\s)\right)^{-1}\1
\ee
where  by \eqv(L.top.new1),
$
\sum_{\eta\in T}v({x_1})
\leq 
\sum_{\eta\in T}b_{\eta}(\s)
\leq 
\sum_{\eta\in T}(v({x_1})+cN^{-1})
$
and
\be
\textstyle
\sum_{\eta\in T}v({x_1})
=\sum_{x_1\in\MM_1}\sum_{\eta\in T^{x_1}}v({x_1})
=\sum_{x_1\in\MM_1}\frac{1}{M_1}(1-\l^{x_1}(T^{x_1})).
\ee
Thus if 
$
1-\l^{x_1}(T^{x_1})\gg N^{-1}
$,
$
v({x_1})
\gg N^{-1}
$.
Now by \eqv(L5new.lev2.2) of Lemma \thv(L5new.lev2),  $\zeta_N\ll \log N$ implies that 
$
\psi_N\gamma^N_1(\xi_1^{x_1})\gg N^{-1}
$
which in turn implies that 
$
1-\l^{x_1}(T^{x_1})\gg N^{-1}
$.
The claim of the lemma now readily follows.
\end{proof} 

We are now ready to prove \eqv(P.top.5), \eqv(P.top.6) and \eqv(P.top.7). Clearly, Eq.~\eqv(P.top.7) 
follows from \eqv(P.top.26'), \eqv(P.top.28) of Lemma \thv(L.top.2) and the bounds \eqv(L.top.new1) 
which are valid for all  $x_1\in\MM_1$, all $\eta\in T^{x_1}$ and all $\s\in\VV_N\setminus\overline W$.
Next, inserting  \eqv(P.top.28) in \eqv(P.top.17') gives
\be
Q_{\eta}(\s)
=\P_{\s}\left(\t_{\del W^{x'_1}}<\t_{T^{x'_1}}\right)\frac{\nu_1(x_1)}{M_2}\left(1+\varepsilon_{N}\right).
\Eq(P.top.44)
\ee
Using \eqv(P1.lev2.5) to express the probability in \eqv(P.top.44) and proceeding as in 
the proof of Lemma \thv(L.top.2) to bound the  term $c_{T^{x'_1}}(\s)$ (that is,  
the terms $b_{T^{x'_1}}(\s,\eta)$) appearing in that expression yields \eqv(P.top.6).
Finally,  inserting  \eqv(P.top.28) in \eqv(P.top.16') gives
\be
P_{\eta}(\s)=
\P_{\s}\left(\t_{\eta}<\t_{(T^{x_1}\setminus\eta)\cup\del W^{x_1}}\right)
+
\P_{\s}\left(\t_{\del W^{x_1}}<\t_{T^{x_1}}\right)\frac{\nu_1(x_1)}{M_2}\left(1+\varepsilon_{N}\right).
\ee
Using Lemma \thv(L.top.2) to bound the two probabilities appearing above, reasoning  again  as in the proof of  Lemma \thv(L.top.2) to bound the  terms $b_{T^{x_1}}(\s,\eta)$ and $c_{T^{x_1}}(\s)$, proves \eqv(P.top.5). 
As for (i-4), it follows from Corollary~\thv(C.inter).
The proof of Assertion (i) of Proposition \thv(P.top) is complete.
\end{proof} 

\begin{proof}[Proof of Assertions (ii) and (iii) of Propositions \thv(P.top)]
The proofs of these two assertions are similar to those of \eqv(P.top.5) and \eqv(P.top.6) and present no new difficulties.
As before they center on ``renewal systems'' that closely ressemble \eqv(P.top.26) and that we now describe.

Given $x_1, \bar x_1\in\MM_1$, let $\eta\in T^{x_1}$ and $\bar\eta\in T^{\bar x_1}$.
Instead of the three quantities of \eqv(P.top.12) we now need to distinguish
six quantities, denoted by $P_{\eta}^{=}(\s),Q_{\eta}^{=}(\s),R_{\eta}^{=}(\s)$ and
$P_{\eta}^{\neq}(\s),Q_{\eta}^{\neq}(\s),R_{\eta}^{\neq}(\s)$, and defined as follows:
letting the symbol $*$ stand for $=$ if $x_1=\bar x_1$ and $\neq$ if $x_1\neq\bar x_1$,
\be
\P_{\s}\left(\t_{\eta}<\t_{T\setminus\{\eta,\bar\eta\}}\right)\equiv
 \begin{cases}
P_{\eta}^{*}(\s) &\mbox{if }   \s\in W^{x_1}\setminus  (T^{x_1}\setminus\bar\eta),\\
Q_{\eta}^{*}(\s) &\mbox{if }  \s\in W^{x'_1}\setminus  (T^{x'_1}\setminus\bar\eta)
\quad\hbox{\rm for some} \quad x'_1\in\MM_1\setminus x_1,\\
R_{\eta}^{*}(\s) &\mbox{if }\s\in \SS_N\setminus(\overline W\setminus\bar\eta)\\
\end{cases} 
\Eq(P.top.46')
\ee
where $T^{x'_1}\setminus\bar\eta=T^{x'_1}$ if $\bar\eta\notin T^{x_1}$.
Note that the probabilities \eqv(P.top.8) and \eqv(P.top.9)
are of the form, respectively, $\overline P_{\eta}(\eta)$ and $\overline Q_{\eta}(\eta)$.
As before they can be expressed as functions of, respectively, $R_{\eta}^{=}(\s)$
and $R_{\eta}^{\neq}(\s)$. Proceeding exactly as in the derivation of
\eqv(P.top.16) and \eqv(P.top.17), we get
\bea
P_{\eta}^{=}(\bar\eta)&=&
\P_{\bar\eta}\left(\t_{\eta}<\t_{(T^{x_1}\setminus\{\eta,\bar\eta\})\cup\del W^{x_1}}\right)
+
\sum_{\s'\in\del W^{x_1}}
\P_{\bar\eta}\left(\t_{\s'}<\t_{(\del W^{x_1}\setminus \s')\cup (T^{x_1}\setminus\bar\eta)}\right)
R_{\eta}^=(\s'),
\nonumber
\\
 Q_{\eta}^{\neq}(\bar\eta)&=& \sum_{\s'\in\del W^{\bar x_1}}
\P_{\bar\eta}\left(\t_{\s'}<\t^{\bar\eta}_{(\del W^{\bar x_1}\setminus \s')\cup (T^{\bar x_1}\setminus\bar\eta)}\right)
R_{\eta}^{\neq}(\s').
\Eq(P.top.47)
\eea
A reasoning similar to that which leads to \eqv(P.top.26) yields, with the same notational convention as above
\be
R_{\eta}^{*}(\s)=b^{*}_{\eta}(\s)+\sum_{\s''\in\del W^{x'_1}}a^{*}(\s,\s'')R^{*}_{\eta}(\s''),
\Eq(P.top.48)
\ee
where  for all $\s\in\SS_N\setminus(\overline W\setminus\bar\eta)$
\bea
b^{=}_{\eta}(\s)
&=&
\left(1+\P_{\eta}^{=}(\bar\eta)\right)\P_{\s}\left(\t_{\eta}<\t_{\overline W\setminus\eta}\right)
\nonumber
\\
&+&
\sum_{\s'\in W^{x_1}\setminus T^{x_1}}
\P_{\s}\left(\t_{\s'}<\t_{\overline W\setminus\s'}\right)
\P_{\s}\left(\t_{\eta}<\t_{(T^{x_1}\setminus\{\eta,\bar\eta\})\cup\del W^{x_1}}\right),
\Eq(P.top.29=)
\\
b^{\neq}_{\eta}(\s)
&=&
\left(1+ Q_{\eta}^{\neq}(\bar\eta)\right)\P_{\s}\left(\t_{\eta}<\t_{\overline W\setminus\eta}\right)
\nonumber
\\
&+&
\sum_{\s'\in W^{x_1}\setminus T^{x_1}}
\P_{\s}\left(\t_{\s'}<\t_{\overline W\setminus\s'}\right)
\P_{\s}\left(\t_{\eta}<\t_{(T^{x_1}\setminus\eta)\cup\del W^{x_1}}\right)
\Eq(P.top.29neq)
\eea
and, 
for $\s\in\SS_N\setminus(\overline W\setminus\bar\eta)$ and $\s''\in\del W^{x'_1}$,
$
a^{=}(\s,\s'')=a^{\neq}(\s,\s'')=\bar a(\s,\s'')
$
where
\be
\bar a(\s,\s'')\equiv
\sum_{x'_1\in\MM_1}
\sum_{\s'\in W^{x'_1}\setminus T^{x'_1}}
\P_{\s}\left(\t_{\s'}<\t_{\overline W\setminus\s'}\right)
\P_{\s'}\left(\t_{\s''}<\t_{(\del W^{x'_1}\setminus \s'')\cup (T^{x'_1}\setminus\bar\eta)}\right).
\Eq(P.top.70=)
\ee
Note that by virtue of Proposition \thv(P.inter),
the terms $P_{\eta}^{=}(\bar\eta)$ and $\overline Q^{\neq}_{\eta}(\bar\eta)$ in \eqv(P.top.29=) and \eqv(P.top.29neq) 
are absorbed in the $\varepsilon_N$ term of \eqv(P.top.8) and \eqv(P.top.9).
The proof of Assertions (ii) and (iii) of Propositions \thv(P.top) are now reruns of the proof of Assertions (i).
We omit the details.
 \end{proof}

\subsection{Proof of Proposition~\ref{paft}}\label{pre_aft}

\subsubsection{Transition within $\cm$: Leaving $\cm$.}\label{leaving}
By the rules of our dynamics 
when leaving $x_1x_2\in\cm$, the probability to jump to 
$x_1'x_2$ for some $x_1'\sim x_1$ equals $q^*_N(\xi^{x_1})$
(recall~\eqv(01.14)), which 
vanishes as $N\to\infty$ for $x_1\in\cm_1$. 
Once $\bxn$ leaves $x_1x_2\in\cm$ and goes to some neighboring $x_1x_2'$, while $\bxn_1$ rests,
the number of jumps $\bxn_2$ would have to take before coming back to $\cm_2$\footnote{Let us recall from Lemma \thv(L2.ls)
	that $x_2'$ may be assumed not in $\cm_2$.} is of the order of $2^{N_2}$ (by
Corollary 1.8 of \cite{BG08}), 
which in this temperature regime is much larger than $1/q^*_N(\xi^{x_1})$, the order of the number of jumps of $\bxn$ before $\bxn_1$ moves. The upshot is that with probability tending to 1 as $N\to\infty$, starting from $\cm$, 
$\bxn$ first leaves $\cm$ in such a way that $\bxn_1$ leaves $\cm_1$ before $\bxn$ returns to $\cm$.

\subsubsection{Transition within $\cm$: Return to $\cm$}

In the presentation of the arguments in the remainder of this subsection, we find it convenient 
to go back to the representation $\bxn$ of our process (introduced in Subsection~\ref{cor}).
Let $\tau^1=\inf\{t>0:\bxn_1\in\cm_1\}$, $\check\tau^1=\inf\{t>\tau^1:\bxn_1\notin\cm_1\}$
and, for $i\geq2$, $\tau^i=\inf\{t>\check\tau^{i-1}:\s^N\in\cm_1\}$,
$\check\tau^i=\inf\{t>\tau^{i}:\bxn_1\notin\cm_1\}$.
We then have that $\tau^i$,
$i=1,2,\ldots$, represent the successive hitting times of $\cm_1$ by $\bxn_1$.
Notice that $\tau^1=\tau_{\overline W}$.
Let also $A_i$, resp.~$A_i^y$, $i=1,2,\ldots$, denote the event that $\bxn_2$ hits $\cm_2$, resp.~$y\in\cm_2$ 
during the $i$-th visit of $\bxn_1$ to $\cm_1$. 
Let $\ci=\min\{i\geq1:\, A_i\mbox{ occurs}\}$. Below we will compute the limit as $N\to\infty$ of 
\be\label{rel1}
\P_\s(\bxn_1(\tau^{\ci})=y_1,A_{\ci}^{y_2}).
\ee
From the discussion on Subsubsection~\ref{leaving}, we may take $\s\notin W$.
The expression in~(\ref{rel1}) is not quite the probability in the left hand side of~(\ref{lim1}), 
but close enough in the sense that they turn
out to have the same limit, as will also be argued below, in the conclusion of the our proof.

For $x\in\cm_1$, set
\be\label{pi}
\pi(x)=\P_{\mu_2}(A_1^y|\bxn_1(0)=x);\quad\hat\pi(x)=\P_{\mu_2}(A_1|\bxn_1(0)=x),
\ee
where $\mu_2$ is the uniform initial distribution of $\bxn_2$ (on $\VV_{N_2}$).
Notice that $\pi(x)$ does not depend on $y$.
We show in the appendix -- see Lemma~\ref{lm:pis} -- that
\be\label{pis}
\pi(x)\sim\frac{N_2}{N_1}\,\g_1(x)\frac1{c_1^N2^{N_2}};\quad\hat\pi(x)\sim M_2\pi(x).
\ee

Let now
$D_i$ denote the event that $\bxn_2$ gives at least $N_2^3$ steps between times $\tau^{i-1}$ and $\tau^i$, $i\geq2$.
Now let $L_1=\inf\{i\geq2:\,D_i \mbox{ occurs}\}$, and, for $k\geq2$,  $L_k=\inf\{i> L_{k-1}:\,D_i \mbox{ occurs}\}$ , and
define $\bar\tau^1=\tau^1$ and $\bar\tau^k=\tau^{L_k}$, $k\geq2$.
Let also $\bar A_i$, $i=1,2,\ldots$, denote the event that $\bxn_2$ hits $\cm_2$ while $\bxn_1\in\cm_1$  between 
times $\bar\tau^{i}$ and $\bar\tau^{i+1}$, $i\geq1$. Finally, let $\bar\ci=\min\{i\geq1:\,\bar A_i\mbox{ occurs}\}$.

By Lemma~\ref{rm5a} and 
Lemma 3.1 of \cite{BG13}, we may couple $\bxn$ to a process $\uxn=\uxn_1\uxn_2$ such that
$\uxn_1=\bxn_1$ and, defining the random times in the above paragraph in the same way for $\uxn$, we have that $\uxn_2$ is
uniformly distributed on $\cd_2$ at the times $\bar\tau^i$, $i\geq2$, such that with probability tending to 1 as 
$N\to\infty$, $\uxn_2=\bxn_2$ for all times till $\bar\tau^{\bar\ci}$. 
Notice that, since 
$\bar\ci\leq\ci$, Lemma~\ref{rm5a} holds also for $\bar\ci$.

Let $\ci'=\min\{i\geq0:\,A_{L_{\bar\ci}+i} \mbox{ occurs}\}$. Then for $x\in\cm_1$, apart from an $o(1)$ error according to
the above paragraph, we have that 
\begin{equation}
\label{adj1}
\P_\s(\bxn_1(\tau^{\ci})=x,A_{\ci}^y)=\sum_{k=1}^\infty P_k(xy),
\end{equation}
where
\begin{eqnarray}\nn
P_k(xy)&=&\sum_{\ell=0}^\infty\sum_{z\in\cm_1} 
 \P_\s(\uxn_1(\bar\tau^{k})=z,\uxn_1(\tau^{L_{k}+\ell})=x,\bar\ci=k,\ci'=\ell,A^y_{L_k+l})\\\nn
&=&\sum_{\ell=0}^\infty\sum_{x_1,\ldots,x_{k+\ell-1}\in\cm_1}\!\!\! \P_\s(\uxn_1(\bar\tau^{1})=x_1,\ldots,\uxn_1(\bar\tau^{k-1})=x_{k-1},\\\nn
&&
\uxn_1(\tau^{L_k})=x_k,\ldots,\uxn_1(\tau^{L_k+\ell-1})=x_{k+\ell-1},
\uxn_1(\tau^{L_k+\ell})=x,\\\label{adj2}
&&\bar A_1^c,\ldots,\bar A_{k-1}^c, A^c_{L_k},F_1,\dots, A^c_{L_k+\ell-1},F_\ell, A^y_{L_k+\ell}),
\end{eqnarray}
where $F_i$ is the event that $\uxn_2$ gives less than $N_2^3$ jumps between $\tau^{L_k+i-1}$ and
$\tau^{L_k+i}$, $i=1,2,\ldots$, and all the other quantities and events in the latter probability
should be defined with $\bxn$ replaced by $\uxn$. Using the Markov property, the right hand side above 
can be written as
\begin{eqnarray}\nn
&{\displaystyle {M_1}^{-1}\!\!\!\!\sum_{x_0,\ldots,x_{k-1}\in\cm_1}}\!\!&\!\!\prod_{i=1}^{k-1}(1-\bar\pi(x_i)) 
\P(\uxn_1(\bar\tau^{i})=x_i|\uxn_1(\bar\tau^{i-1})=x_{i-1},\bar A_{i-1}^c)\\\label{adj3}
&\times&\!\!\!\!\!\!\!\!\!\!\!\!\!\!\!\!\!\!\!\!\!\!\!\left[\pi(x) 
\P(\uxn_1(\tau^{L_k})=x|\uxn_1(\bar\tau^{k-1})=x_{k-1},\bar A_{k-1}^c)
+\ca_{x_{k-1},x}\right]\!\!, 
\end{eqnarray}
where $\bar\pi(x_i)=\P_{\mu_2}(\bar A_i|\bxn_1(\bar\tau^{i})=x_i)$ (it does not depend on $i\geq1$ except through $x_i$) 
and 
$\ca_{x_{k-1},x}$ equals
\begin{eqnarray}\label{adj4}
\sum_{\ell=1}^\infty\sum_{x_k,\ldots,x_{k+\ell-1}\in\cm_1}\cq^{\ell}_{x_k,\ldots,x_{k+\ell-1}},
\end{eqnarray}
where $\cq^{\ell}_{x_k,\ldots,x_{k+\ell-1}}$ equals
\begin{eqnarray}\nn
&\P(\uxn_1(\tau^{L_k})=x_k,A^c_{L_k},F_1,\ldots,\uxn_1(\tau^{L_k+\ell-1})=x_{k+\ell-1},A^c_{L_k+\ell-1},F_\ell,\\
\label{adj5}
&\uxn_1(\tau^{L_k+\ell})=x,A^y_{L_k+\ell}|\uxn_1(\bar\tau^{k-1})=x_{k-1},\bar A_{k-1}^c).&
\end{eqnarray}

Removing the $A^c$'s and using the Markov property, we find that the expression in~(\ref{adj5}) is bounded above by
\begin{eqnarray}\nn
&\P(\uxn_1(\tau^{L_k})=x_k|\uxn_1(\bar\tau^{k-1})=x_{k-1},\bar A_{k-1}^c)\P(F_1|\uxn_1(\tau^{L_k})=x_k)\times&\\\nn
&\prod_{i=1}^{\ell-1} \P(\uxn_1(\tau^{L_k+i})=x_{k+i}|\uxn_1(\tau^{L_k+i-1})=x_{k+i-1},F_{i})
\P(F_{i+1}|\uxn_1(\tau^{L_k+i})=x_{k+i})\times&\\\label{adj6}
&\P(\uxn_1(\tau^{L_k+\ell})=x|\uxn_1(\tau^{L_k+\ell-1})=x_{k+\ell-1},F_\ell)
\P(A^y_{L_k+\ell}|\uxn_1(\tau^{L_k+\ell})=x),&
\end{eqnarray}
since the events $F_1,F_2,\ldots$ depend only on $\uxn_1$, and starting with a uniform distribution on $\cd_2$,
its invariant distribution, at time $L_k$, $\uxn_2(L_k+\ell)$ still has that same distribution. 

Now from Lemma~\ref{rm3} and Remark~\ref{rm4} above, each probability of the form $\P(F_i|\cdot)$ 
in~(\ref{adj6}) 
are bounded above by $c/N$ for some constant $c$. We also notice that the latter probability in the 
same expression equals $\pi(x)$. 
We thus have that~(\ref{adj4}) is bounded above by
\begin{eqnarray}\nn
&\pi(x)\sum_{\ell=1}^\infty\left(\frac c{N}\right)^\ell 
\sum_{x_k,\ldots,x_{k+\ell-1}\in\cm_1}
\P(\uxn_1(\tau^{L_k})=x_k|\uxn_1(\bar\tau^{k-1})=x_{k-1},\bar A_{k-1}^c)\times&\\\nn 
&\prod_{i=1}^{\ell-1} \P(\uxn_1(\tau^{L_k+i})=x_{k+i}|\uxn_1(\tau^{L_k+i-1})=x_{k+i-1},F_{i}),&
\end{eqnarray}
and since the latter sum is over probabilities, it equals 1. It follows
that the expression within brackets on the bottom of~(\ref{adj3})
equals 
\begin{equation}\label{adj6a}
[\P(\uxn_1(\tau^{L_k})=x|\uxn_1(\bar\tau^{k-1})=x_{k-1},\bar A_{k-1}^c)+o_{x_{k-1},x}]\pi(x),
\end{equation}
where $o_{x',x}$ is an $o(1)$ for every $x,x'\in\cm_1$.

Let us now consider $\bar\pi(z)$ for a given $z\in\cm_1$; arguing in the same way as for the expression within 
brackets on the bottom of~(\ref{adj3}), we find that it equals $\hat\pi(z)$ plus an error bounded
above by
\begin{equation}\nn
\sum_{\ell=1}^\infty\left(\frac c{N}\right)^\ell 
\sum_{x'_1,\ldots,x'_{\ell}\in\cm_1}\!
\prod_{i=1}^{\ell} \P(\uxn_1(\tau^{L_{k-1}+i})=x'_{i}|\uxn_1(\tau^{L_{k-1}+i-1})\!=x'_{i-1},F'_{i})\,\hat\pi(x'_\ell),
\end{equation}
where $F'_1,F'_2,\ldots$ are defined in the obvious, parallel way to $F_1,F_2,\ldots$ above. 
From 
(\ref{pis}), 
we have that 
$\hat\pi(x)$, $x\in\cm_1$, are all of the same order of magnitude. It follows that 
\begin{equation}\label{adj7c}
\bar\pi(x_i)=
(1+o_{x_i})\hat\pi(x_i),\, x_i\in\cm_1, 
\end{equation}
where $o_{x}$ is an $o(1)$ for every $x\in\cm_1$.

In particular, we have that $\P_{\mu_2}(\bar A^c_i|\bxn_1(\bar\tau^{i})=x_i)\sim1$ for all $i=1,2,\ldots$ and using also 
Corollary 1.5 of \cite{BG08},
we find that
\begin{equation}\label{adj7d}
 \bar P_{x,y}:=\P(\uxn_1(\bar\tau^{i})=y|\uxn_1(\bar\tau^{i-1})=x,\bar A_{i-1}^c)\sim\frac1{M_1},\,\,x,y\in\cm_1.
\end{equation}
We note that the latter conditional probability does not depend on $i=1,2,\ldots$.

The upshot of the above discussion is that the right hand side of~(\ref{adj1}) may be written as
\begin{equation}
\label{adj8}
(1+o(1))\frac1{M_1}\sum_{x_0\in\cm_1}\sum_{k=1}^\infty \bar R^k_{x_0,x}\frac{\pi(x)}{1-\bar\pi(x)},
\end{equation}
where $\bar R^k$ is the $k$-th power of the matrix
\begin{equation}
\label{adj8a}
\bar R=\bar P(I-\bar\Pi),
\end{equation}
where $I$ is the identity matrix in $\cm_1$ and $\bar\Pi$ is the diagonal matrix in $\cm_1$ with 
entries $\{\bar\pi(y),\,y\in\cm_1\}$.

Now by~(\ref{pis}), we have that
\begin{equation}
\label{adj8a1}
\bar\Pi\sim M_2\eps\G,
\end{equation}
where $\eps=\frac{N_2}{N_1}\frac1{c_1^N2^{N_2+1}}$ and 
\begin{equation}
\label{adj8a2}
\G = \mbox{diag}\,\{\g_1(y),\,y\in\cm_1\}
\end{equation}
is the diagonal matrix in $\cm_1$ with entries $\{\g_1(y),\,y\in\cm_1\}$;
thus, it is an $o(1)$ and from~(\ref{adj7d}), we have that $\bar R$ is a positive matrix 
for all large enough $N$. We may thus apply Perron-Frobenius theory to write the internal sum in~(\ref{adj8})
as
\begin{equation}
\label{adj8b}
\sum_{k=1}^\infty \rho^k[(\rho^{-1}\bar R)^k-\bar S]_{x_0,x}\frac{\pi(x)}{1-\bar\pi(x)}
+\frac1{1-\bar\pi(x)}\frac{\bar S_{x_0,x}\pi(x)}{1-\rho},
\end{equation}
where $\rho$ is the top eigenvalue of $\bar R$, and $\bar S=vw^T$, with $v,w$ the right and left
eigenvectors of $\bar R$ associated to $\rho$ such that $v^Tw=1$. 
See Theorem 8.2.11, its proof, and the preceding and subsequent material of Section 2 of Chapter 8 of~\cite{HJ}.
To check that $\rho<1$ 
for all large enough $N$, we first note 
that $\bar R$ is a perturbation of $\bar P$, which is stochastic, and thus has 1 as its top eigenvalue,
and then resort to a standard perturbation result to the effect that
\begin{equation}
\label{adj8c}
\rho=1-M_2\,\eps'\,\frac{\bar w^T\bar P\G\bar v}{\bar w^T\bar v}+C\eps^2,
\end{equation}
where $\bar v$ and $\bar w$ are right and left eigenvectors of $\bar P$ associated to the eigenvalue 1,
$\eps'\sim\eps$, and $C$ is a constant.
See Theorem~IV.2.3 in~\cite{SS}.
Since the latter matrix is stochastic, we may take $\bar v$ as the vector with all entries equal to 1.
By~(\ref{adj7d}) and again well known perturbation results, we may take $\bar w\sim\bar v$ 
(see Subsection V.2.3 of~\cite{SS}), and from~(\ref{adj7d}) we have that
$\bar P\sim \frac1{M_1}\,I$; we thus get
\begin{equation}
\label{adj8d}
\rho=1-M_2\,\eps''\,\bar\g_1+C\eps^2,
\end{equation}
where $\bar\g_1=\frac1{M_1}\sum_{y\in\cm_1}\g_1(y)$, and $\eps''\sim\eps$. We then have that $\rho<1$ 
for all large enough $N$.

Again Perron-Frobenius theory tells us that the expression within brackets in~(\ref{adj8b}) decays exponentially
fast in $k$, uniformly in $N$. Since $\pi(x),\bar\pi(x)$ are $o(1)$ for all $x\in\cm_1$, the infinite sum in~(\ref{adj8b})
vanishes as $N\to\infty$. 

Again resorting to well known perturbation theory results, since $\bar R$ is also a perturbation of $\frac1{M_1}\,I$,
we have that $v$ and $w$ may be taken as $v\sim\bar v=(1,\ldots,1)$,
and $w\sim\frac1{M_1}v$; 
it follows that $\bar S\sim \frac1{M_1}\,I$; the upshot is that the second summand in~(\ref{adj8b}) is 
asymptotic to
\begin{equation}
\label{adj8e}
\frac{\frac{1}{M_1}\eps\g_1(x)}{M_2\,\eps''\,\bar\g}\sim\frac{\g_1(x)}{\sum_{y\in\cm_1}\g_1(y)}\frac1{M_2},
\end{equation}
and thus so is the left hand side of~(\ref{adj1}).

\subsubsection{Conclusion of the proof of Proposition~\ref{paft}.} 

Let us show that $P(\bxn_1(\tau^{\ci})=x,A_{\ci}^y)$  
agrees with the entrance probability at left hand side of~(\ref{lim1}) apart from an $o(1)$ error.
As it stands, the former probability is actually the probability that $\bxn_2$ visits $y$ 
during the first visit
of $\bxn_1$ to $\cm_1$ where $\bxn_2$ hits $\cm_2$: the visit of $\bxn_2$ to $y$ may be not the first one to 
$\cm_2$. However, this probability is clearly an upper bound for the entrance probability, and if we subtract the 
following probability, we obtain a lower bound. For $i=1,2,\ldots$, let $B_i$ denote the event that $\bxn_2$ hits 
$\cm_2$ at least twice during the $i$-th visit of $\bxn_1$ to $\cm_1$. We may then estimate
$\P_{\mu_2}(\uxn_1(\tau^{\ci})=x,B_{\ci})$ in the same way as above (starting in~(\ref{adj1})), by replacing $A^y_{\ci},A^y_{L_k+\ell}$ by $B_{\ci},B_{L_k+\ell}$, respectively, and $\pi(x)$ by $\P_{\mu_2}(B_1|\bxn_1(\tau^{1})=x)$. But this is bounded above by
the right hand side of~(\ref{bd}), which was shown above to be an $o(\pi(x))$. We thus get that 
$\P_{\mu_2}(\uxn_1(\tau^{\ci})=x,B_{\ci})$ is an $o(1)$, and subtracting it from $\P_{\mu_2}(\uxn_1(\tau^{\ci})=x,A_{\ci}^y)$
gives us a lower bound for the entrance law; the right hand side of~(\ref{adj8e}) as the limit for the 
latter quantity follows.

\section{Proof of Theorem~\ref{thm:cl}}
\label{proofcl}

\setcounter{equation}{0}

In this and next two sections, we present the proofs of the scaling limit theorems for $X^N$, one section for each proof. 
We will use the results of Section~\thv(1.5) on entrance laws. There are two remaining things to establish in the case of Theorems~\ref{thm:cm} and~\ref{thm:cft}: that the process spends virtually all of the time at the top, and what time is spent at each visit of a top configuration. 
The structure of the proof of Theorem~\ref{thm:cl} is not dissimilar: we control time spent off the top states as before, then evaluate the time spent on visits of $X^N_1$ to top first level states, and finally resort to a spectral gap argument to get the behavior on second level.

\medskip

Specifically in this section, we will concentrate on showing that
\begin{enumerate}
	\item $\bxn$ spends virtually all the time on $\cm$;
	\item the time $\bxn_1$ spends on each visit to each $x_1\in\cm_1$ is roughly exponential with mean $\ff_3(x_1)$;
	\item given an interval of constancy $I=[a,b)$ of $\bxn_1$ where $\bxn_1=x_1$ for some $x_1\in\cm_1$, and  $t_1,\ldots,t_k$ such that $a<t_1<\ldots,t_k<b$ for some $k\geq1$, we have that $\bxn_2(t_1),\ldots,\bxn_2(t_k)$ are roughly independent random variables taking values on $\N$, each distributed roughly with probability weights given by $\g_2(x_1,\cdot)$ normalized.
\end{enumerate} 

After arguing these points in variable detail, we sketch an argument on how they fit together in a proof of Theorem~\ref{thm:cl}. We start with the second point, after a few remarks.

Let us notice that the total time spent by $\s^N(\cdot/c_2^N)$ on any single visit to a given $\s_1\in\VV_{N_1}$ can be written as
\begin{equation}
\label{v1}
\sum_{j=0}^{\cg}\tg_2^N(\s_1J^\circ_{N_2}(j))T_j,
\end{equation}
where 
$\cg$ is the geometric random variable $\tau_{\del C}$ with success probability $q^*_N(\s_1)$ given by~\eqv(01.14)
and $T_1,T_2,\ldots$ are iid mean 1 exponential random variables,
and 
\begin{equation}\label{tg}
\tg_2^N(\s)=\frac{\frac N{N_2}\g_2^N(\s)}{1+\frac{N_1}{N_2}e^{\b \sqrt{aN}\Xi^{(1)}_{\s_1}}}.
\end{equation}

\begin{remark}\label{gtg}
	It should be quite clear that~(\ref{g1ntog1}) and~(\ref{gtog}) remain valid when we replace $\g_2^N$ and $\g_2$ by
	$\tg_2^N$ and $\tg_2$, respectively --- see paragraph right above the statement of Theorem~\ref{thm:cm} ---; one reason for this is that the denominator in~(\ref{tg}) tends to 1 as $N\to\infty$ for every fixed $x_1$.
\end{remark}	
\begin{remark}\label{sko}	
	We may indeed assume that all the convergences mentioned in the previous remark, which are weak for the original environment, may be taken strong by going to another, suitable probability space for the environment (resorting to Skorohod's theorem). We will effectively, and for convenience, assume below that we are in the full measure event of such a probability space where those convergences take place, and omit further reference to it.
\end{remark}

\subsection{Time spent on top first level visits}\label{tfl}

Let $x_1\in\cm_1$. Set $\cx_2=(\xi_2^{x_1\cdot})^{-1}(J^\circ_{N_2})$, 
where $\xi_2^{x_1\cdot}$ is the function from $\DD_2$ to $\VV_{N_2}$ mapping $x_2$ to $\xi_2^{x_1x_2}$.
The time spent in $\cm$ by $\bxn_1$ on its $i$-th visit to $x_1\in\cm_1$ can expressed as
\begin{equation}
\label{u1}
\hat\Ups^N_i(x_1):=c_1^N 2^{N_2}\sum_{j=0}^{\cg}1_{\{\cx_2(j)\in\cm_2\}}\tg_2^N(x_1\cx_2(j))\,T_j.
\end{equation}

\begin{lemma}\label{exp_bft}
	For each $x_1\in\N$,
$\hat\Ups^N_i(x_1)$	converges weakly as $N\to\infty$ to an exponential random variable of mean $\ff^M_3(x_1)=\g_1(x_1)\sum_{x_2\in\cm_2}\g_2(x_1x_2)$.
\end{lemma}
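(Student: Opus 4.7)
The plan is to isolate the two sources of randomness in $\hat\Ups^N_i(x_1)$ --- the geometric number of internal steps $\cg$, and the occupation times of the rapidly mixing random walk $\cx_2$ weighted by the exponentials $T_j$ --- and then recombine. Since $\cm_2$ has fixed finite size, I would first rewrite
\[
\hat\Ups^N_i(x_1) = c_1^N 2^{N_2} \sum_{x_2 \in \cm_2} \tg_2^N(x_1 x_2)\, S^N_{x_2}, \qquad S^N_{x_2} := \sum_{j=0}^{\cg} \1_{\{\cx_2(j) = x_2\}} T_j,
\]
and then establish the concentration
$
S^N_{x_2} = \frac{\cg+1}{2^{N_2}}\, (1 + o_P(1))
$
for each $x_2 \in \cm_2$.

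For the concentration, the key ingredients are: $\cg$ is independent of $(\cx_2,(T_j))$ (since during a visit the probability of a level-$1$ jump is the constant $q^*_N(x_1)$); by Lemma~\thv(L1new.lev2) the success probability of the geometric $\cg+1$ is $q^*_N(x_1) \sim (N_1/N_2) c_1^N/\g_1^N(x_1)$, so $\cg + 1$ is of order $\exp\{\Theta(N_1)\}$ with overwhelming probability, vastly exceeding the mixing time $O(N_2 \log N_2)$ of simple random walk on $\VV_{N_2}$. Conditioning on $\cg = n$ in the bulk and using the explicit spectral decomposition of the walk (eigenvalues $\{1-2k/N_2\}_{k=0}^{N_2}$), the conditional mean of the counting sum $\sum_{j=0}^n \1_{\{\cx_2(j)=x_2\}}$ is $(n+1)/2^{N_2}$ up to a correction of order $N_2$ coming from the initial distribution, and a parallel spectral computation bounds its variance. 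A Chebyshev step yields the concentration of the counting sum; integrating the independent $T_j$'s contributes a negligible relative error since the number of summands diverges.

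With this in hand, substituting back and factoring gives
\[
\hat\Ups^N_i(x_1) = \frac{c_1^N \sum_{x_2 \in \cm_2} \tg_2^N(x_1 x_2)}{q^*_N(x_1)} \cdot q^*_N(x_1)\,(\cg+1) \cdot (1 + o_P(1)).
\]
The factor $q^*_N(x_1)(\cg+1)$ converges in distribution to $\mathrm{Exp}(1)$ by the classical convergence of a geometric with vanishing parameter. For the deterministic prefactor, I would combine $q^*_N(x_1) \sim (N_1/N_2) c_1^N/\g_1^N(x_1)$ with the (strong, after Remark~\ref{sko}) convergences $\g_i^N \to \g_i$ and the limiting form of $\tg_2^N$ at top configurations, to get
\[
\frac{c_1^N \sum_{x_2 \in \cm_2} \tg_2^N(x_1 x_2)}{q^*_N(x_1)} \longrightarrow \g_1(x_1) \sum_{x_2 \in \cm_2} \g_2(x_1 x_2) = \ff^M_3(x_1),
\]
identifying the limiting mean.

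The hard part will be the concentration estimate for $S^N_{x_2}$. Two points require care: the initial distribution of $\cx_2$ on entry to $W^{x_1}$ is determined by the history of $\bxn$ rather than being uniform, so one must argue that the mixing made possible by the long geometric clock washes out the initial bias; and the concentration must hold \emph{uniformly} for $n$ throughout the bulk of the geometric distribution, so that it can be integrated against the law of $\cg$. Both difficulties should yield to standard spectral-gap machinery for simple random walk on the hypercube, in the spirit of the arguments in~\cite{BG13} and of Section~\thv(S3.3).
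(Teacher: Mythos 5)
Your decomposition and overall strategy coincide with the paper's: both proofs split $\hat\Ups^N_i(x_1)$ into the product of the geometric factor $c_1^N(1+\cg)$ (which converges in law to $\g_1(x_1)\,\ce$ with $\ce$ a mean-one exponential) and the normalized occupation sums over the finitely many $x_2\in\cm_2$, and both reduce the problem to showing that each normalized occupation sum converges to $1$ in probability. Where you genuinely diverge is in the proof of that concentration. The paper conditions on $\cg\approx r/c_1^N$, reinterprets the occupation sum as the local time at $\xi_2^{x_1x_2}$ of a continuous-time simple random walk on $\VV_{N_2}$, and controls it through the delayed renewal process of successive returns: the law of large numbers for the partial sums of the i.i.d.\ hitting times $\cR_i$ is extracted from the Laplace-transform asymptotics of Proposition 7.7 of \cite{BG08}, and the delay (hence the unknown initial distribution of $\cx_2$) is a single negligible term among a diverging number of renewals. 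You instead propose a direct second-moment computation via the spectral decomposition of the walk plus Chebyshev. This route also works and is more self-contained, but two points deserve more care than your sketch gives them. First, the discrete-time walk on $\VV_{N_2}$ is bipartite; the eigenvalue corresponding to $k=N_2$ equals $-1$ and does not decay, so the conditional mean of the count is $(n+1)/2^{N_2}$ only after averaging over parities, and the bound $\sum_{m}\bigl|p_m(\s_2,x_2)-2^{-N_2}\bigr|=\OO(1)$, uniform in the starting point $\s_2$, must be derived with the $k=N_2$ contribution summed as an alternating series. Second, the engine of the concentration is not that $\cg$ exceeds the mixing time $\OO(N_2\log N_2)$ but that the expected occupation count $\cg/2^{N_2}$ diverges; this is exactly what the below-fine-tuning hypothesis $\zeta_N^-\to-\infty$ (equivalently $c_1^N2^{N_2}\to0$) provides, and it should be invoked explicitly. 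With these two points supplied, and with the uniformity in $n$ over the bulk of $\cg$ that you already flag (which follows since the relative variance is $\OO(2^{N_2}/n)$, decreasing in $n$), your argument is complete, and your identification of the limiting mean with $\ff^M_3(x_1)$ via the asymptotics of $q^*_N$ and the convergence of the environment matches the paper's conclusion.
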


\begin{proof} 
We write
\begin{equation}
\label{u2}
\hat\Ups^N_i(x_1)\ed c_1^N(1+\cg) \sum_{x_2\in\cm_2}\g_2^N(x_1x_2)\,L_N(x_2),
\end{equation}
where
\begin{equation}
\label{u3}
L_N(x_2):=\frac{2^{N_2}}{1+\cg}\sum_{j=0}^{\cg}1_{\{\cx_2(j)=x_2\}}\,T_j(x_2),
\end{equation}
with $\{T_j(x_2);\,j\geq0,x_2\geq1\}$ an iid family of mean one exponential random variables
independent from $\cg$.

One may readily check that
\begin{equation}
\label{u4}
c_1^N(1+\cg)\to \g_1(x_1)\,\ce
\end{equation}
in distribution as $N\to\infty$, where $\ce$ is a mean one exponential random variable.

We will next show that for every $x_2\in\cm_2$
\begin{equation}
\label{u5}
L_N(x_2)\to1
\end{equation}
in probability as $N\to\infty$. This and~(\ref{u4}) readily implies that
\begin{equation}
\label{u6}
\hat\Ups^N_i(x_1)\to \ff^M_3(x_1)\,\ce
\end{equation}
in distribution as $N\to\infty$.

From~(\ref{u4}), since $\cg$ is independent from the family of exponential random variables entering
$L_N(x_2)$, we may suppose that $\cg$ is roughly equal to $\hat c_1^Nr$, with $r>0$ a real number, where 
$\hat c_1^N=1/c_1^N$. 

So rather than $L_N(x_2)$, we may consider instead
\begin{equation}
\label{u8}
\hat L_N(x_2,r):=\frac{2^{N_2}}{1+\hat c_1^Nr}\sum_{j=0}^{\hat c_1^Nr}1_{\{\cx_2(j)=x_2\}}\,T_j(x_2),
\end{equation}
and show that
\begin{equation}
\label{u9}
\hat L_N(x_2,r)\to1
\end{equation}
in probability as $N\to\infty$ for every $r>0$.

We now note that the sum in~(\ref{u8}) may be understood as the time spent on $\xi_2^{x_1x_2}$ 
by a continuous time space homogeneous simple symmetric random walk in the $\VV_{N_2}$
during the first $\hat c_1^Nr$ jumps.

Let us consider the delayed renewal process associated to that random walk consisting of successive return
times of that random walk to $\xi_2^{x_1x_2}$. Each such time (with the possible exception of the first one) can be decomposed as the sum of a mean one exponential random variable --- the initial time spent on $\xi_2^{x_1x_2}$ by the random walk prior to that particular return to $\xi_2^{x_1x_2}$ --- and the hitting time of $\xi_2^{x_1x_2}$ by the random walk starting from the neighbor of $\xi_2^{x_1x_2}$ it jumps to after that initial time. We then have a delayed renewal process with renewal times $\ce_1+\cR_1,\ce_2+\cR_2,\ldots$, with $\ce_1,\cR_1,\ce_2,\cR_2,\ldots$ independent, $\ce_2,\ce_3,\ldots$
and $\cR_2,\cR_3,\ldots$ identically distributed, $\ce_2$ a mean one exponential random variable, and $\cR_2$ is distributed as the hitting time of $\xi_2^{x_1x_2}$ starting from a nearest neighbor of $\xi_2^{x_1x_2}$. $\ce_1$ may either be distributed as $\ce_2$ or vanish, depending on whether the state of $\bxn_2$ at the beginning of the visit of $\bxn_1$ to $x_1$ was $x_2$ or not. Similarly, $\cR_1$ may be either distributed as $\cR_2$ or as the hitting time of $\xi_2^{x_1x_2}$ by the random walk starting from another site of $\VV_{N_2}$ not $\xi_2^{x_1x_2}$ or a neighbor of $\xi_2^{x_1x_2}$. 
As will be clear below, neither (the distribution of) $\ce_1$ or $\cR_1$ will play a role in the result. 
Let $S_n=\sum_{i=1}^n(\ce_i+\cR_i)$ and $S'_n=\sum_{i=1}^n\cR_i$, $n\geq1$. Let $N_t$ the counting process associated to $S_n$, namely
$N_t=N(t)=\sup\{n\geq0:\,S_n\leq t\}$, $t\geq0$, $S_0=0$. Notice that the sum in~(\ref{u8}) is bounded below and above respectively by
\be\label{u9a}
\sum_{j=0}^{N(\hat c_1^Nr)}T_j(x_2),\quad\sum_{j=0}^{N(\hat c_1^Nr+1)}T_j(x_2).
\ee 

We now claim that in order to establish~(\ref{u9}), it is enough to show that
\begin{equation}
\label{u10}
\frac1KS'_Q\to1
\end{equation}
in probability as $N\to\infty$, where $K=K_N=\hat c_1^Nr$, $Q=Q_N=\hat c_1^Nr\,2^{-N_2}$.
Indeed, from~(\ref{u10}) and the law of large numbers satisfied by iid mean one exponential random variables,
it readily follows that $\frac1KS_Q\to1$ in probability as $N\to\infty$. This in turn readily implies 
that $\frac1QN_K\to1$ in probability as $N\to\infty$, and again the law of large numbers satisfied by 
iid mean one exponential random variables implies that either of the two expressions in~(\ref{u9a}),
after division by $Q$, converges to 1 in probability as $N\to\infty$. The claim is established.
 
We may ignore $\cR_1$ in the argument for~(\ref{u10}), or take it identically ditributed to $\cR_2$. We take the Laplace transform of the left hand side of~(\ref{u10}) as follows. For $t>0$
\begin{equation}
\label{u11}
\bar\E\left(e^{-t\frac1KS'_Q}\right)=\left[\bar\E\left(e^{-t\frac1K\cR_2}\right)\right]^Q
\end{equation}
where $\bar\E$ denotes expectation with respect to the law of $\bar J^{\circ}_{N_2}$.

It follows from Proposition $7.7.i.b$ of \cite{BG08}, after a straightforward adjustment for continuous time, 
that the expression within square brackets on the right of~(\ref{u11}) can be written as
\begin{equation}
\label{u12}
\frac{1+\tilde o}{1+\frac{t[1+o(1)]}{\hat c_1^Nr\,2^{-N_2}}},
\end{equation}
where $\tilde o=o(\hat c_1^N2^{-N_2})$, and~(\ref{u10}) follows.
\end{proof}

\subsection{Equilibrium on the second level}\label{sga}

Let us check the third point of the list outlined at the beginning of the section.

We initially remark that during a constancy interval of $\bxn_1$, where $\bxn_1=x_1$ for a giben $x_1\in\cd_1$,
$\bxn_2$ is the mapping via $\xi$ of a continuous time simple random walk on the hypercube $\VV_{N_2}$ with mean waiting time 
at $\xi_2^{x_1x_2}$ given by $\g_2^N(x_1,x_2)$, starting from whichever second level configuration $\s^N$ was at the
beginning of the interval. We denote this random walk by $\bar\s_2^N$.

Let us now briefly argue the claim that the time to reach equilibrium for that random walk
is of order smaller than that of the length of the constancy interval -- which we just saw above to be of the
order of the inverse of $\bar c^N=  c_1^N 2^{N_2} c_2^N$.

After straightforward adjustments, we may check that the bound derived in~\cite{FIKP} for the associated Metropolis dynamics for the REM applies for $\bar\s_2^N$, and we find that

\begin{equation}\label{sg}
\limsup_{N\to\infty}\frac1{N_2}\log\mt_2\leq\sqrt{1-a}\b\b_*
\end{equation}
almost surely, where $\mt_2$ is the inverse of the spectral gap of $\bar\s_2^N$.

It follows that
\begin{equation}\label{comp}
\liminf_{N\to\infty}\frac1{N}[\log(\bar c_N)^{-1}-\log\mt_2]>0,
\end{equation}
and
\begin{equation}\label{var1}
\max_{\s_2\in\VV_{N_2}}\left\| \P_{\s_2}\left(\bar\s_2^N(t)=\cdot\right)-G_{N_2}(\cdot)\right\|
\leq\sqrt{Z_{N_2}\max_{\s_2\in\VV_{N_2}}\exp\{\b\sqrt{(1-a)N}\Xi^{(2)}_{\xi_1^{x_1}\s_2} \}}\,e^{-t/\mt_2}, 
\end{equation}
where $G_{N_2}$ is the equilibrium Gibbs measure for $\bar\s_2^N$, which is proportional to the weights 
$\exp\{-\b \sqrt{(1-a)N}\Xi^{(2)}_{\xi_1^{x_1}\cdot}\}$, and $Z_{N_2}$ is the partition function associated to $G_{N_2}$. 

From well known results about the existence and exact expression for the limit of $\frac1N\log$ of both factors 
inside the square root above, we have that almost surely that square root is bounded from above by $e^{cN}$ for some finite
constant $c$ for all large enough $N$. It immediately follows from~(\ref{comp}) and the above that for times of the form $t=s(\bar c_N)^{-1}$, $s>0$, we have that the left hand side of~(\ref{var1}) is almost surely bounded above by $e^{cN}e^{-e^{dN}}$ for all large enough $N$, with $d>0$ related to the left hand side of~(\ref{comp}), and thus it 
almost surely vanishes as $N\to\infty$.
This and~(\ref{g1ntog1}) in turn readily imply the claim of the third point at the beginning of the section.

\subsection{Time spent by $\bxn$ outside $\cm$}\label{tsp}

\subsubsection{Preliminaries} 

We start with results about the number of visits of a given configuration $\s'$ by a random walk on $\VV_N$ before reaching vertex $\s\ne\s'$. There are two initial situations: equilibrium and $\s$.

Let $\tau_{\s}=\inf\{k\geq1:\,J^\circ_N=\s\}$, where $J^\circ_N$ is the random walk on $\VV_N$. We know from elementary theory of Markov chains that
\begin{equation}\label{e1}
\E^\circ_{\s}\left(\sum_{k=0}^{\tau_{\s}-1}1\{J^\circ_N(k)=\s'\}\right)=1
\end{equation}
(see e.g.~Theorems 1.7.5 and 1.7.6 in~\cite{N}; to get~(\ref{e1}) we use also the fact that the uniform measure on the vertices of the hypercube is invariant for $\cx$, which is moreover irreducible).

Let $\mu$ denote the uniform invariant measure for $J^\circ$.
\begin{lemma}\label{lem}
Suppose $\s\ne\s'$.	For all large enough $n$
\begin{equation}\label{e2}
\E^\circ_{\mu}\left(\sum_{k=0}^{\tau_\s-1}1\{J^\circ_N(k)=\s'\}\right)\leq2.
\end{equation}
\end{lemma}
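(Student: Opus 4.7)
The plan is to exploit the reversibility of $J^\circ_N$ with respect to the uniform measure $\mu$ on $\VV_N$ to express the left-hand side of (\ref{e2}) in terms of the mean hitting time $\E^\circ_{\s'}(\tau_\s)$, and then to bound the latter by a classical estimate for simple random walk on the $N$-cube. I will work with the Green function $G(\eta,\s') := \E^\circ_\eta\!\bigl[\sum_{k=0}^{\tau_\s-1}\mathbf{1}\{J^\circ_N(k)=\s'\}\bigr]$. Since $\mu$ is both reversible and uniform, the standard Green-function symmetry yields $G(\eta,\s') = G(\s',\eta)$ for all $\eta,\s'\ne\s$. Averaging over the initial state, isolating the atom $\eta=\s$, and invoking hypothesis (\ref{e1}) (which gives $G(\s,\s')=1$), I will obtain
\[
\E^\circ_\mu(L_{\s'}) \;=\; 2^{-N}G(\s,\s')+2^{-N}\!\!\sum_{\eta\ne\s}G(\s',\eta) \;=\; 2^{-N}\bigl(1+\E^\circ_{\s'}(\tau_\s)\bigr),
\]
where the last equality uses $\sum_{\eta\ne\s}\mathbf{1}\{J^\circ_N(k)=\eta\}=1$ for $0\le k<\tau_\s$ when one starts at $\s'\ne\s$.

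It then remains to prove $\E^\circ_{\s'}(\tau_\s)\le (1+o(1))\,2^N$. By coordinate-permutation symmetry, $h_d:=\E^\circ_{\s'}(\tau_\s)$ depends only on the Hamming distance $d=d(\s,\s')$. A standard first-step analysis gives a recurrence whose explicit solution yields $\binom{N-1}{k-1}(h_k-h_{k-1})=1+\sum_{j=k}^{N-1}\binom{N}{j}>0$, so $h_k$ is non-decreasing in $k$ and $h_d\le h_N$. For the antipodal hitting time $h_N$, I will invoke the commute-time identity $2h_N=N\cdot 2^N\cdot R_{\mathrm{eff}}(\s,-\s)$ together with Thomson's principle applied to the symmetric unit flow sending $1/N!$ along each of the $N!$ monotone geodesics from $\s$ to $-\s$: every edge at Hamming-distance level $k$ carries exactly $k!(N-k-1)!/N! = 1/[N\binom{N-1}{k}]$ units, and summing squared flows over the $\binom{N}{k}(N-k)$ edges at each level telescopes to
\[
R_{\mathrm{eff}}(\s,-\s)\;\le\;\frac{1}{N}\sum_{k=0}^{N-1}\binom{N-1}{k}^{-1}\;=\;\frac{2+o(1)}{N},
\]
hence $h_N\le (1+o(1))\,2^N$. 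Plugging back gives $\E^\circ_\mu(L_{\s'})\le 1+o(1)$, which is $\le 2$ for all $N$ large.

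The main thing to watch out for is the treatment of the $\eta=\s$ atom in the reversibility step (the Green-function symmetry only applies for $\eta,\s'\ne\s$), but hypothesis (\ref{e1}) is precisely tailored to handle it. The genuinely quantitative ingredient is the effective-resistance bound for the antipodal pair via the symmetric flow; everything else in the argument is bookkeeping.
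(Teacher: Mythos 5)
Your proof is correct, and the first half is the same reversibility reduction as the paper's: both arguments use that $J^\circ_N$ is reversible with respect to the uniform measure to convert $\E^\circ_\mu\bigl(\sum_{k=0}^{\tau_\s-1}1\{J^\circ_N(k)=\s'\}\bigr)$ into (essentially) $2^{-N}\,\E^\circ_{\s'}(\tau_\s)$ — the paper does this by reversing cylinder events and telescoping, you do it via the symmetry of the Green function of the chain killed at $\s$, which is the same fact in different packaging, and your handling of the $\eta=\s$ atom via (\ref{e1}) is exactly right (your identity $\E^\circ_\mu(L_{\s'})=2^{-N}(1+\E^\circ_{\s'}(\tau_\s))$ is in fact the exact one). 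Where you genuinely diverge is the second half: the paper simply cites Theorem 1.6 of \cite{BG08} for the bound $\E^\circ_{\s'}(\tau_\s)\leq 2\cdot 2^N$, whereas you prove the sharper estimate $\E^\circ_{\s'}(\tau_\s)\leq(1+o(1))2^N$ from scratch — first reducing to the antipodal case by the monotonicity in Hamming distance coming from the explicit solution of the Ehrenfest recurrence (your formula $\binom{N-1}{k-1}(h_k-h_{k-1})=1+\sum_{j=k}^{N-1}\binom{N}{j}$ checks out, e.g.\ for $N=2,3$), and then bounding $h_N$ via the commute-time identity and Thomson's principle applied to the uniform flow over monotone geodesics, where the edge flows and the telescoping of $\sum_k\binom{N-1}{k}^{-1}$ to $2+o(1)$ are all correct. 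What your route buys is self-containedness and a slightly better constant ($\E^\circ_\mu(L_{\s'})\leq 1+o(1)$ rather than just $\leq 2$); what the paper's route buys is brevity, since \cite{BG08} is already a standing reference throughout the section.
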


\begin{proof}
We write the left hand side of~(\ref{e2}) as follows
\begin{eqnarray*}
&&\sum_{k=0}^\infty \P^\circ_\mu(J^\circ_N(k)=\s',\tau_\s\geq k+1)=
\frac1{2^n}+\frac1{2^n}\sum_{k=1}^\infty \sum_{\s''\ne \s}\P^\circ_{\s''}(J^\circ_N(k)=\s',\tau_\s\geq k+1)\\
&\stackrel{*}{=}&\frac1{2^n}+
\frac1{2^n}\sum_{k=1}^\infty \sum_{z\ne y}\P^\circ_{\s'}(\tau_\s\geq k,J^\circ_N(k)=\s'')\\
&=&\frac1{2^n}+\frac1{2^n}\sum_{k=1}^\infty \P^\circ_{\s'}(\tau_\s\geq k)
-\frac1{2^n}\sum_{k=1}^\infty \P^\circ_{\s'}(\tau_\s\geq k,J^\circ_N(k)=\s)\\
&=&\frac1{2^n}+\frac1{2^n}\sum_{k=1}^\infty \P^\circ_{\s'}(\t_\s\geq k)-\frac1{2^n}\sum_{k=1}^\infty \P^\circ_{\s'}(\t_\s= k)
=\frac1{2^n}\E^\circ_{\s'}(\t_\s)\leq2,
\end{eqnarray*}
for all large enough $n$, where $\stackrel{*}{=}$ is due to the reversibility of $\cx$, and the inequality at the end follows from Theorem 1.6 of~\cite{BG08}. 
\end{proof}

\subsubsection{Time outside $ T_1$}\label{out1}

Let us estimate the time spent by $\bxn_1$ outside $ T_1$ till the first visit to a vertex $\s_1\in T_1$, and between
two visits to $\s_1$. Let us fix $x_1\in\cm_1$ and take $\s_1=\xi_1^{x_1}$.

Let $\cu$ denote the first such time, which can be written as follows.
\begin{equation}\label{oc1}
\cu=c_1^N 2^{N_2}\sum_{k=0}^{\t_{\s_1}-1}1\{J^\circ_{N_1}(k)\notin T_1\}\!\!\!
\sum_{j=0}^{\cg_{k+1}(J^\circ_{N_1}(k))}\!\!\!
\tg_2^N(J^\circ_{N_1}(k)J^\circ_{N_2}(\ms_{k}+j))\,T^k_j,
\end{equation}
where $\mg:=\{\cg_1(\s'_1),\cg_2(\s'_1),\ldots;\,\s'_1\in\cv_{N_1}\}$ is an independent family of
geometric random variables with mean $\frac{N_2}{N_1}e^{-\b H_1(\s'_1)}$, independent of $J^\circ_{N_1}$ and $J^\circ_{N_2}$;
$\ms_{k}=\sum_{i=1}^k{\cg_i(J^\circ_{N_1}(i))}$, $k\geq1$; $T^k_j$, 
$j,k\geq0$ are iid mean 1 exponential random variables, independent from all the other random variables.

We note that $J^\circ_{N_1}$ and $J^\circ_{N_2}$ are independent discrete time random walks on the hypercubes $\cv_{N_1}$ and $\cv_{N_2}$,
respectively, each starting from its respective equilibrium distribution.
Thus
\begin{equation}\label{oc2}
\E^\circ(\cu|J^\circ_{N_1},\mg)=c_1^N 2^{N_2}\!\sum_{k=0}^{\t_{\s_1}-1}\!1\{J^\circ_{N_1}(k)\notin T_1\}
\!\!\!\!\!\!\!\sum_{j=0}^{\cg_{k+1}(J^\circ_{N_1}(k))}\!\!\!\!\!\!
E\left(\tg_2^N(J^\circ_{N_1}(k)J^\circ_{N_2}(\mg_{k}+j))|J^\circ_{N_1},\mg\right)\!. 
\end{equation}

The conditional expectation on the right hand side may be written as
\begin{equation}\label{oc3}
\sum_{\s_2\in\cv_{N_2}}\g_2^N(J^\circ_{N_1}(k)\s_2)P(J^\circ_{N_2}(\ms_{k}+j)=\s_2))=\frac1{2^{N_2}}\sum_{\s_2\in\cv_{N_2}}\tg_2^N(J^\circ_{N_1}(k)\s_2). 
\end{equation}
Thus
\begin{eqnarray}\nn
\E^\circ(\cu|J^\circ_{N_1})&=&\sum_{\s_2\in\cv_{N_2}}\!\sum_{k=0}^{\t_{\s_1}-1}\!1\{J^\circ_{N_1}(k)\notin T_1\}
\tg_2^N(J^\circ_{N_1}(k)\s_2)c_1^N \E^\circ(\cg_{k+1}(J^\circ_{N_1}(k))|J^\circ_{N_1})\\
\label{oc4}
&=&\sum_{\s_2\in\cv_{N_2}}\!\sum_{k=0}^{\t_{\s_1}-1}\!1\{J^\circ_{N_1}(k)\notin T_1\}
\g_1^N(J^\circ_{N_1}(k))\tg_2^N(J^\circ_{N_1}(k)\s_2).
\end{eqnarray}
Finally,
\begin{eqnarray}\nn
 \E^\circ(\cu)&=&\sum_{\s'_1\notin T_1}\sum_{\s_2\in\cv_{N_2}}\g_1^N(\s'_1)\tg_2^N(\s'_1\s_2) \,
 E\!\left( \sum_{k=0}^{\t_{\s_1}-1}\!1\{J^\circ_{N_1}(k)=\s'_1\}\right) \\
 \label{oc5}
 &\leq&2\sum_{\s'_1\notin T_1}\sum_{\s_2\in\cv_{N_2}}\g_1^N(\s'_1)\tg_2^N(\s'_1\s_2),
\end{eqnarray}
where the inequality holds for all large enough $N$, according to Lemma~\ref{lem}.

It follows from~(\ref{g1ntog1}) and Remark~\ref{gtg} that 
\begin{equation}\label{oc6}
\limsup_{N\to\infty} \E^\circ(\cu)\leq2\sum_{\s'_1\notin T_1}\sum_{\s_2\in\N}\g_1(\s'_1)\g_2(\s'_1\s_2),
\end{equation}
and thus 
\begin{equation}\label{oc7}
\lim_{M_1\to\infty}\limsup_{N\to\infty} \E^\circ(\cu)=0.
\end{equation}

Let now $\cw_i$ denote the time spent by $\bxn_1$ outside $ T_1$ between the $i$-th and $i+1$-st
visit to $\s_1\in T_1$, $i\geq1$. A similar reasoning as above yields
\begin{eqnarray}\nn
\E^\circ(\cw_i)&=&\sum_{\s'_1\notin T_1}\sum_{\s_2\in\cv_{N_2}}\g_1^N(\s'_1)\tg_2^N(\s'_1\s_2) \,
E_{\s_1}\!\left( \sum_{k=0}^{\t_{\s_1}-1}\!1\{J^\circ_{N_1}(k)=\s'_1\}\right) \\
\label{oc8}
&=&\sum_{\s'_1\notin T_1}\sum_{\s_2\in\cv_{N_2}}\g_1^N(\s'_1)\tg_2^N(\s'_1\s_2),
\end{eqnarray}
where we have used~(\ref{e1}) in the last passage, and again
\begin{equation}\label{oc9}
\lim_{M_1\to\infty}\limsup_{N\to\infty} \E^\circ(\cw_i)=0,\,i\geq1.
\end{equation}

\subsubsection{Time inside $ T_1$ and outside $\cm_2$}\label{in1out2}

Let now
\begin{equation}
\label{oc10}
\check\Ups^N_i(\s_1)=c_1^N 2^{N_2}\sum_{j=0}^{\cg}1_{\{J^\circ_{N_2}(j)\notin\cm_2\}}\tg_2^N(\s_1J^\circ_{N_2}(j))\,T_j
\end{equation}
be the time spent outside $\cm$ by $\bxn_1$ on its $i$-th visit to $\s_1\in T_1$ --- recall the notation on 
the paragraph of~(\ref{u1}). A similar reasoning to that leading to~(\ref{oc5}) and~(\ref{oc8}) yields
\begin{equation}
\label{oc11}
\E^\circ(\check\Ups^N_i(\s_1))=
\g_1^N(\s'_1)\sum_{\eta\in T^{x_1'}}\tg_2^N(\eta),\,i\geq1
\end{equation}
and again 
\begin{equation}\label{oc12}
\lim_{M_2\to\infty}\limsup_{N\to\infty} \E^\circ(\check\Ups^N_i(\s_1))=0,\,i\geq1.
\end{equation}

As a corollary to~(\ref{oc12}) and~(\ref{u6}), we have that, recalling that $x_1=(\xi_1)^{-1}(\s_1)$,
\begin{equation}
\label{etsp}
\Ups^N_i(\s_1)\to \ff_3(x_1)\,\ce
\end{equation}
in distribution as $N\to\infty$, where $\Ups^N_i(\s_1)=\hat\Ups^N_i(\s_1)+\check\Ups^N_i(\s_1)$ is
the time spent by $\bxn_1$ on its $i$-th visit to $\s_1\in T_1$, and $\ce$ is a mean one exponential
random variable.

\subsection{Conclusion of the proof of Theorem~\ref{thm:cl}}
\label{conc}

Let us now fit together the above points in an argument for Theorem~\ref{thm:cl}.

From the first claim at the beginning of the section (argued in Subsection~\ref{tsp}), it is enough to show that
$\bxn_1$ restricted to $\cm_1$ converges to $\bar X_1$ restricted to $\cm_1$. We already know from~(\ref{etsp}) 
that the sojourn times of $\bxn_1$ on the various vertices of $\cm_1$ converge in distribution to the respective
sojourn times of $\bar X_1$. We only have to argue that the jump probabilities of $\bxn_1$ restricted to $\cm_1$
converge to the uniform jump probabilities of $\bar X_1$ restricted to $\cm_1$. 
But that is established in \eqv(C.inter.3a). 
We have then that $\bxn_1$ converges in distribution to $\bar X_1$ in Skorohod space,
and the full statement readily follows from the third point claimed at the beginning of the section 
(and argued in Subsection~\ref{sga}).

This concludes the proof of Theorem~\ref{thm:cl}.

\section{Proof of Theorem~\ref{thm:cm}}
\label{proofcm}

\setcounter{equation}{0}

We start by showing that $\bxn$ spends virtually all the time on $\cm$.

\subsection{Time spent by $\bxn$ outside $\cm$}\label{tspe}

We will show that the expected time spent by $\bxn$ outside $\cm$ until the 
first visit to $\cm$ or between consecutive visits to $\cm$ is small. 
Indeed, we will argue that the first such time (the others can be similarly treated)
vanishes in probability as $M_1,M_2\to\infty$ {\em uniformly} in $N$. We will be more 
precise next.

The first such time is bounded above by
\begin{equation}\label{te1}
\bcv=
\sum_{i=1}^{\ci}(\hcv_i+\tcv_i),
\end{equation}
where
\begin{equation}\label{te2}
\hcv_i=\sum_{k=\ttau^{i-1}+1}^{\tilde\t^i-1}
\sum_{j=0}^{\cg_{k+1}(J^{\circ}_{N_1}(k))}\tg_2^N(J^{\circ}_{N_1}(k)J^{\circ}_{N_2}(\ms_{k}+j))\,T^k_j,
\end{equation}
and, writing $\cx_1=(\xi_1)^{-1}(J^{\circ}_{N_1})$,
\begin{equation}\label{te3}
\tcv_i=
\sum_{j=0}^{\cg_{\tilde\t^i+1}(J^{\circ}_{N_1}(\tilde\t^i))}1_{\{J^{\circ}_{N_2}(\ms_{\tilde\t^i}+j)\notin T^{\cx_1}\}}\,
\tg_2^N(J^{\circ}_{N_1}(\tilde\t^i)J^{\circ}_{N_2}(\ms_{\tilde\t^i}+j))\,T^{\tilde\t^i}_j,
\end{equation}
where $\tilde\t^i$, $i=1,2,\ldots$ denote the successive hitting times of $ T_1$ by $J^{\circ}_{N_1}$
(the jump chain of $\bxn_1$) ---
when $i=0$, then in~(\ref{te2}) $\ttau_0$ is either $0$ or $-1$ depending on whether 
$\bxn_1(0)\in T_1$ or not, respectively.

We will show that
\begin{equation}\label{te4}
\lim_{M_1\to\infty}\lim_{M_2\to\infty}\limsup_{N\to\infty}\, \bcv=0
\end{equation}
in probability.

Given the tightness result for $\ci/[c_1^N2^{N_2}]$ given in Lemma~\ref{rm5a}, it is enough to show that
for all $R$
\begin{eqnarray}\label{te5}
&\lim_{M_1\to\infty}\lim_{M_2\to\infty}\limsup_{N\to\infty}\E^{\circ}\left( \sum_{i=1}^{Rc_1^N2^{N_2}}\hcv_i\right)=0,&\\\label{te6}
&\lim_{M_1\to\infty}\lim_{M_2\to\infty}\limsup_{N\to\infty}\E^{\circ}\left( \sum_{i=1}^{Rc_1^N2^{N_2}}\tcv_i\right)=0.& 
\end{eqnarray}

Let us first point out that for every $i\geq1$, $\hcv_i$ is bounded above stochastically by 
$\cu/[c_1^N 2^{N_2}]$ (see~(\ref{oc1}) above). (\ref{te5}) then follows from the above and~(\ref{oc9}).

Now
\begin{equation}\label{te7}
\E^{\circ}(\tcv_i)\leq\max_{x_1\in \cm_1}\E^{\circ}\left( \sum_{j=0}^{\cg(x_1)}
1_{\{J^{\circ}_{N_2}(j)\notin T^{x_1}\}}
\tg_2^N(x_1J^{\circ}_{N_2}(j))\right),
\end{equation}
where $\cg(x_1)$ is a geometric random variable with success parameter $q(x_1)$. We find that
the expectation on the right hand side equals 
$$\g_1^N(x_1)\sum_{x_2>M_2}\tg_2^N(x_1x_2)/[c_1^N 2^{N_2}],$$
and~(\ref{te6}) follows upon substitutions into~(\ref{te7}) and the left hand side of~(\ref{te6}).

\subsection{Conclusion of the proof of Theorem~\ref{thm:cm}}
\label{conccm}

Given the result in Subsection~\ref{tspe} and the usual {\em constancy interval matching} argument
that can be used to show convergence in Skorohod spaces, it is enough to show convergence of the 
transition probabilities among sites in $\cm$ (in the process restricted to $\cm$, which is a Markov jump process)
to the respective ones of the respective limit process (the one restricted to $\cm$, which is also a Markov jump process),
and the convergence of the respective sojourn times. The latter convergence is quite clear, and the former follows immediately 
from Proposition~\ref{paft}.

\section{Proof of Theorem~\ref{thm:cft}}
\label{proofcft}

\setcounter{equation}{0}

We start by observing that we can check that $\bxn$ spends virtually all the time on $\cm$ by virtually the same argument
as for below fine tuning. Indeed, the corresponding expressions in the present regime of (\ref{oc1}), (\ref{oc8}) and~(\ref{oc10}) are the same, 
except for the factor of $c_1^N2^{N_2}$, which is absent in the present regime. But notice that
that factor is bounded as $N\to\infty$, and so the arguments of Subsections \ref{out1} and \ref{in1out2} carry through.

We can then repeat the argument for the conclusion of the proof of Theorem~\ref{thm:cm} on Subsection~\ref{conccm}, once we have the convergence of the transition probabilities of $\bxn$ restricted to $\cm$ to those of the limiting 2-level K process restricted to $\cm$.

For $x,y\in\N$, let $\frp_N(x,y)$ denote the transition probability of $\bxn|_{\cm}$. Then, using the remark in 
Subsubsection~\ref{leaving} and Proposition~\thv(P.top).$i$-1 and $i$-2, we have that 
\begin{equation}\label{eqdif}
\lim_{N\to\infty}\frp_N(x,y)=\frp(x,y):=
\begin{cases}
\left[(1-\bar\l^{x_1})+\ \bar\l^{x_1}\,\bar\nu_1(x_1)\right]\frac{1}{M_2},&\mbox{if }x_1=y_1,\\
\,\,\bar\nu_1(y_1)\bar\l^{y_1}\frac{1}{M_2},&\mbox{otherwise},
\end{cases}
\end{equation}
where $\bar\l^{y_1}=\frac1{1+M_2\tg_1(y_1)}$ and $\bar\nu_1(y_1)=\frac{1-\bar\l^{y_1}}{\sum_{y_1'\in\cm_1}(1-\bar\l^{y_1'})}$.

It is then enough to argue that $\frp(x,y)$ is the transition probability from $x$ to $y$ for the 2-level K process restricted to $\cm$. We do that next.

Let $X|_{\cm}$ denote $X$ restricted to $\cm$. 
We can construct $X|_{\cm}$ as follows. Let $\ax$ denote the 1-level K process used in the construction of $X$ as at the end of Subsection~\ref{kp}. 
Let us now construct a 2-level process $\hat X$ in the same way as $X$, except that we use $\ax|_{\cm_1}$ instead of $\ax$. One readily checks that
\begin{enumerate}
	\item $\ax|_{\cm_1}$ is a Markov jump process on $\cm_1$ with uniform initial state, uniform transitions on $\cm_1$ (we should allow loops), and jump rate at $x_1\in\cm_1$ given by $1/\tg_1(x_1)$;
	\item $X|_{\cm}=\hat X|_{\cm}$;
	\item letting $\hat\cx_1$ denote the jump chain of $\ax|_{\cm_1}$, and, for $z\in\DD_2$, defining the events 
	\begin{eqnarray}
	A_n&=&\{\mbox{during the $n+1$-st sojourn period of }\hat X_1, \hat X_2 \mbox{ visits }\cm_2\},\\\nn
	A_n^z&=&\{\mbox{during the $n+1$-st sojourn period of }\hat X_1, \hat X_2 \mbox{ visits }z\\
	&&\hspace{2cm}\mbox{ before visiting }\cm_2\setminus\{z\}\},
    \end{eqnarray}
    we have that, given $\hat\cx_1$, the events $A_n^{\ast_n}$, $n\geq0$, with $\ast_n=y$ or blank for each $n$, are independent, having respective (conditional) probabilities given by 
    \begin{enumerate}
    	\item in the case of $\ast_n=$ blank: $P(N_{T}>0)$, where $N$ is a Poisson counting process with intensity $M_2$, and $T$ is exponential with mean $\tg_1(\hat\cx_1(n))$, $N$ and $T$ 
    independent; we then have that $N_T$ has a geometric distribution with success parameter $\frac1{1+M_2\tg_1(\hat\cx_1(n))}=\bar\l^{\hat\cx_1(n)}$, and thus $P(N_T>0)=1-\bar\l^{\hat\cx_1(n)}$;
    \item similarly, the probability of $A_n^y$ given $\hat\cx_1$ equals $\frac{1-\bar\l^{\hat\cx_1(n)}}{M_2}$.
    \end{enumerate}
\end{enumerate}	

Let $\frp'(x,y)$ denote the transition probability of $X|_{\cm}$ from $x$ to $y$. From the above, we conclude that 
if $x_1=y_1$, $\frp'(x,y)$ equals
\begin{eqnarray}\nn
&&P(A_0^{y_2})+\sum_{n\geq0}
\sum_{w_1,\ldots,w_{n-1}\in\cm_1}P(A_0^c,\ldots, A_{n-1}^cA_n^{y_2},\\\nn &&\hspace{2cm}\hat\cx_1(1)=w_1,\ldots,\hat\cx_1(n-1)=w_{n-1},\hat\cx_1(n)=x_1)\\\label{tp1}
&=&\frac{1-\bar\l^{x_1}}{M_2}+\bar\l^{x_1}\frac1{M_1}\frac{1-\bar\l^{x_1}}{M_2}
    \sum_{n\geq0}\left(\frac1{M_1}\sum_{w_1\in\cm_1}\bar\l^{x_1}\right)^n,
\end{eqnarray}
which is readily checked to equal $\frp(x,y)$ given in~(\ref{eqdif}), in this case. When $x_1\ne y_1$, we have that the same
expression as in~(\ref{tp1}) holds for $\frp'(x,y)$, except for the first term in the sum, which is absent, and thus it agrees
with $\frp(x,y)$ again in this case. The argument is complete.

\section{Aging in the K processes}
\label{age}

\setcounter{equation}{0}

As anticipated in the introduction, we will derive aging results for $\s^N$ in a two-stage scaling limit process. We first take the limit in the extreme time scale, where there is no aging, since $\s^N$ is close to equilibrium in that time scale: we have already done that in our scaling limit theorems. In the second stage, we take a small time limit of the limiting K processes. We will be concerned with correlation functions which involve only clock processes of the limiting processes, so we will take the second limit only of the relevant clocks. We will keep the presentation brief, in particular at and below fine tuning, since the issues involved are quite clear in those regimes, and technicalities are quite well known and fairly straightforward.

Let $Y=Y_1Y_2$ be the K process representing the scaling limit of either $\tilde X^N$ or $\bar X^N$ in 
Theorems~\ref{thm:cm},~\ref{thm:cft} and~\ref{thm:cl}. We assume $Y(0)=\infty$ in the first case, $\infty\infty$ in the second case, and $(\infty,\bar X_2)$ in the latter case, where $\bar X_2$ is as in Theorem~\ref{thm:cl}. Given $\theta>0$, we are interested in taking the following limit 
\begin{equation}\label{age1}
\lim_{t_w,t\to0\atop{t/t_w\to\theta}}\Pi(t_w,t_w+t),
\end{equation}
where $\Pi(\cdot,\cdot)$ was defined in~\ref{overlap}. Recall also $\fn_i$ defined in~\ref{ni}.

\subsection{Below fine tuning}

This is the simplest case, since on the one hand, $Y_2$ jumps at every time interval, so there is no point in considering $\fn_2$ 
(which has probability $0$). On the other hand, $Y_1$ is a uniform K process with waiting function given by a Poisson process with 
intensity measure $\frac{c_1}{x^{\a_1+1}}dx$ for some constant $c_1$. It is well known (see e.g.,~\cite{5aut}) that the limit in~(\ref{age1}) is given by the
arcsine law $\asl_{\a_1}(1/(1+\theta))$, where
\begin{equation}\label{asin1}
\asl_{\a}(u)=\frac{\sin\pi\a}{\pi}\int_0^u x^{-(1-\a)}(1-x)^{-\a}dx,\,u,\a\in(0,1).
\end{equation}
This follows readily from the scaling limit of the clock process of $Y_1$ at small times. This issue will come up again in the other temperature regimes, so we let it rest for this regime.

\subsection{At fine tuning} We first notice that, for $i=1,2$,
$\fn_i=\{\cR_i\cap(t_w,t_w+t)=\emptyset\}$,
where $\cR_i$, $i=1,2$, is the range of the clock processes $\G_1$ and $\G'$, respectively (see end of Subsection~\ref{kp} below).
It is a simple matter to check that in this case $\fn_2\subset\fn_1$, so indeed $\Pi(t_w,t_w+t)=pP(\fn_1)+(1-p)P(\fn_2)$.
Let us now point out that, as is well known,~(\ref{age1}) follows from a small time scaling limit for the
respective clocks in an appropriate topology. Let us first examine $\G'$ -- recall the definition at the end of Subsection~\ref{kp}, 
and the statement of Theorem~\ref{thm:cft}. As pointed out at the end of Subsection~\ref{kp}, within intervals of constancy of
$\ax$, the increments of $\G'$ are those of a uniform K process with waiting function $\frf(x_1,\cdot)$, where $x_1$ is the constant
value of $\ax$ within the interval. We know that the clock process of a uniform K process with waiting function given by a Poisson
process with intensity measure $\frac c{x^{\a+1}}dx$, $\a\in(0,1)$, $c$ any constant in $(0,\infty)$, converges in the $J_1$ Skorohod metric to an $\a$-stable subordinator
in the small time limit for almost every realization of the Poisson process (see e.g.,~\cite{5aut} with $a=0$). This and the fact that $\frf'_2(x_1,\cdot)$ are given by iid in $x_1$ Poisson processes with intensity measure $\frac{c_2}{x^{\a_2+1}}dx$, for some constant $c_2$, yields
\begin{equation}\label{cl1}
\ve^{-1}\G'(\ve^{\a_2}\times\cdot)\to S_2(\cdot)
\end{equation}
in distribution on the $J_1$ Skorohod space as $\ve\to0$ for a.e.~realization of $\frf_2,\frf'_2$, where $S_2$ is an $\a_2$-stable subordinator. It readily follows that almost surely
\begin{equation}\label{age2}
\lim_{t_w,t\to0\atop{t/t_w\to\theta}}P(\cR_2\cap(t_w,t_w+t)=\emptyset)= \asl_{\a_2}(1/(1+\theta)).
\end{equation}
It can be also readily checked that
\begin{equation}\label{cl2}
\ve^{-1}\G_1(\ve^{\a_1\a_2}\times\cdot)\to S'_1:=S_1\circ S_2(\cdot)
\end{equation}
in distribution on the $J_1$ Skorohod space as $\ve\to0$ for a.e.~realization of $\frf_2,\frf'_2$, where $S_1$ is an $\a_1$-stable subordinator. $S'_1$ is thus an $\a_1\a_2$-stable subordinator, and it follows that almost surely
\begin{equation}\label{age3}
\lim_{t_w,t\to0\atop{t/t_w\to\theta}}P(\cR_1\cap(t_w,t_w+t)=\emptyset)= \asl_{\a_1\a_2}(1/(1+\theta)).
\end{equation}

\subsection{Above fine tuning}
We first point out that right after the weighted K process $Y$ jumps out of any state in $\N^2$, it gives infinitely many jumps
within any nonempty open time interval. This tells us that $\fn_1=\fn_2=\{\cR\cap(t_w,t_w+t)=\emptyset\}$,
where $\cR$ is the range of $\G$, the clock process of $Y$ (see Subsection~\ref{kp} below).

Let us then derive the small time limit of $\G$. Let us recall from~\cite{FP} that we may write 
\begin{equation}\label{claft}
\G(r)=\sum_{x\in\N^2}\sum_{i=1}^{N_x(r)}\g_2(x)T_i^x,
\end{equation}
where $N_x,\,x\in\N^2,$ are independent Poisson counting processes with rate $\g_1(x_1)$, respectively,
and $T_\cdot^\cdot$ are iid mean one exponential random variables. We will now argue that
\begin{equation}\label{cl3}
\ve^{-1}\G(\ve^{\a_2}\times\cdot)\to S_2(\cdot)
\end{equation}
in distribution on the $J_1$ Skorohod space as $\ve\to0$ for a.e.~realization of $\g_1,\g_2$, where $S_2$ is an $\a_2$-stable subordinator. 
It is enough to establish this convergence for 
\begin{equation}\label{cl4}
\tilde\G(r):=\sum_{x\in\N^2}\g_2(x)N_x(r)
\end{equation}
(see Lemma 2.1 in~\cite{5aut}). Since this is a subordinator, in order to get the small time convergence, it is enough to
consider the Laplace exponent of the small time scaled $\tilde\G$, given by
\begin{equation}\label{le1}
\tv_\ve(\l)=r\sum_{x_1\in\N}\g_1(x_1)\,\ve^{\a_2}\sum_{x_2\in\N}(1-e^{\l\ve^{-1}\g_2(x)}).
\end{equation}
It is convenient at this point to write the scaled inner sum as
\begin{equation}\label{le2}
\ve^{\a_2}\sum_{u\in[0,1]}(1-e^{\l\ve^{-1}\g^{x_1}_2(u)}),
\end{equation}
where $\g^{x_1}_2(\cdot)$, $x_1\in\N$, are iid sets of increments of an $\a_2$-stable subordinator in $[0,1]$.
By the scale invariance of that subordinator, we have that it equals in distribution
\begin{equation}\label{le3}
\ve^{\a_2}\sum_{i=1}^{\lfloor\l^{\a_2}\ve^{-\a_2}\rfloor}\sum_{u\in[0,1]}(1-e^{\l\ve^{-1}\g^{i}_2(u)})
\end{equation}
plus an independent random variable which is stochastically dominated by 
$$\ve^{\a_2}\sum_{u\in[0,1]}(1-e^{\l\ve^{-1}\g^{1}_2(u)}).$$
Now standard large deviation estimates coupled with a straightforward application of Campbell's Theorem
showing that $Z:=\sum_{u\in[0,1]}(1-e^{\l\ve^{-1}\g^{1}_2(u)})$ has an exponential moment imply that
$\tv_\ve(\l)\to {\rm const}\times r\l^{\a_2}$, where const $=E(Z)\sum_{x_1\in\N}\g_1(x_1)$. 
Given $\g_1$, this is the Laplace exponent of an $\a_2$-stable subordinator, and~(\ref{cl3}) follows.
Thus
\begin{equation}\label{age4}
\lim_{t_w,t\to0\atop{t/t_w\to\theta}}\Pi(t_w,t_w+t)= \asl_{\a_2}(1/(1+\theta)).
\end{equation}

\subsection{Final remark about aging at $\beta>\beta_2^{cr}$} 

We end this section on aging by pointing out, as anticipated in the introduction, that the aging results in~\cite{SN00}
are consistent with ours only in the fine tuning regime. As also earlier anticipated, this is explained by the shorter
time scale considered in that reference. In those time scales, all levels are supposed to be aging simultaneously. That 
indeed also happens in the fine tuning regime at the {\em short} extreme scale considered in this section. 
Recall the discussion on Subsubsection~\ref{intermediate}.

We may understand the aging results in the other regimes treated in detail so far as follows. 
Below fine tuning, we already explained that the second level is well within equilibrium, 
so it does not age in the short extreme time scale (also in not much shorter time scales). 
The aging behavior in that phase comes from the first level, with its characteristic $\a_1$ exponent.

Above fine tuning, we have the opposite behavior: the first level by itself would be in equilibrium, thus not aging, and
aging comes from the second level, with its characteristic $\a_2$ exponent.

\section{Scaling limit at intermediate temperatures}\label{inter}

In this section, we briefly state and discuss our scaling limit and aging results for $\beta\in(\beta_1^{cr},\beta_2^{cr})$. 
We will be rather sketchy, trusting the experienced reader to be able to readily fill in the gaps with standard arguments.
Recall from the discussion around~(\ref{v1}) that the total time spent by $\s^N(\cdot)$ on a single visit to a given $\s_1\in\VV_{N_1}$ can be written as
\begin{equation}\label{v1a}
\frac N{N_2+{N_1}e^{\b \sqrt{aN}\Xi^{(1)}_{\s_1}}}\sum_{j=0}^{\cg}\exp\{-\b\sqrt{(1-a)N}\, \Xi^{(2)}_{\s_1J^\circ_{N_2}(j)}\}\,T_j.
\end{equation}
where again
$\cg$ is a geometric random variable with success probability $q^*_N(\s_1)$ given by~\eqv(01.14), and $T_0,T_1,\ldots$ are iid standard exponential random variables.

For top first level configurations $\s_1=\xi_1^{x_1}$, the factor in front of the sum above contributes a constant ($1/(1-p)$) almost surely in the limit as $N\to\infty$, and thus we are left to properly scale the sum itself. At this point we may replace $\cg$ by 
$\frac{1-p}p\frac1{c_1^N}\g_1(x_1)T$, with $T$ a standard exponential, independent of all the other remaining random variables,
and then resort to Lemma 10.8 of (the arxiv version of) \cite{G15} which gives the proper scaling of the sum, as well as conditions under which the scaled sum satisfies a law of large numbers. Since the result in  \cite{G15} applies for the REM (which is indeed the model appearing in the above sum), we need to do some translation in terms of our parameters. Upon doing that, we find that the proper scaling is given by 
$\tilde c_N=c_1^N\exp\{-\frac{\b^2N}2(1-a)\}$, and, provided $\b<2\frac{\sqrt{ap}}{1-a}\b_*$, the following law of large numbers holds:
\begin{equation}\label{v1b}
\tilde c_N\sum_{j=0}^{t/c_1^N}\exp\{-\b\sqrt{(1-a)N}\, \Xi^{(2)}_{\s_1J^\circ_{N_2}(j)}\}\,T_j\to t
\end{equation}
as $N\to\infty$ in probability for each $t>0$. 
It follows that the sum in~(\ref{v1a}) scaled by $\tilde c_N$ converges in distribution as $N\to\infty$
to $\hat\g_1(x_1)T$, where $\hat\g_1(\cdot):=\frac1p\g_1(\cdot)$. Given also that the transition among top first level configurations is asymptotically uniform, we find that the asymptotic motion among the top first level configurations is consistent with a (uniform) K process. We can state the following result.

\begin{theorem}[Intermediate temperatures]
	\label{thm:inter}
	If $\b\in(\b_1^{cr},\b_2^{cr})$, and provided also that $\b<\b_{int}:=2\frac{\sqrt{ap}}{1-a}\b_*$, we have that as $N\to\infty$ 
	\be\label{int}
	X_1^N(\cdot/\tilde c^N)\Rightarrow \tilde X_1(\cdot)
	\ee
	where $\tilde X_{1}\sim\ck(\hat\ff,1)$ starting at $\infty$, with $\hat\ff:\N\to(0,\infty)$, $\hat\ff(x_1)=\hat\g_1(x_1)$.
\end{theorem}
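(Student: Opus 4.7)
The plan is to parallel the three-point structure used to prove Theorem~\ref{thm:cl} in Section~\ref{proofcl}, adapted to the fact that at intermediate temperatures there is no meaningful second-level top: we only need to describe the asymptotic behavior of $X_1^N$ (the limit at second level being trivial, there is nothing to state for $X_2^N$). Concretely, I would establish (a) that $X_1^N$ spends a negligible fraction of time (in the $\tilde c^N$ scale) at non-top first-level configurations $\VV_{N_1}\setminus T_1$; (b) that when visiting a top first-level configuration $\xi_1^{x_1}$, the transition probabilities to the remaining top configurations are asymptotically uniform on $\cm_1$; and (c) that the appropriately rescaled sojourn time at each visit to $\xi_1^{x_1}$ is asymptotically exponential with mean $\hat\g_1(x_1)$. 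Given (a)--(c), convergence to $\ck(\hat\ff,1)$ starting from $\infty$ follows via the usual Skorohod $J_1$ constancy-interval-matching argument used at the end of Section~\ref{proofcl}, since the limit is a uniform K process whose finite-dimensional distributions are determined by the uniform jump mechanism and the waiting-time means.

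For (b), I would adapt the argument behind Corollary~\thv(C.inter) and equation~\eqv(C.inter.3a); note that the entire machinery of Section~\thv(2) (REM-like estimates, inter-level motion, etc.) depends on the temperature regime only through $q_N^*$, and the parametrization $\beta=\beta(a,p,N,\zeta_N)$ was used in Proposition~\thv(P.top) solely through the identity~\eqv(1.temp.1) controlling the product $c_1^N 2^{N_2}\psi_N$. In the intermediate regime one does \emph{not} need the full strength of Proposition~\thv(P.top); one only needs the ``one-level'' analogue of~\eqv(C.inter.3a), which is the REM estimate of Proposition~\thv(P.rem-like) applied to the first coordinate alone, together with the inter-level bound of Proposition~\thv(P.inter) to absorb excursions away from $\overline W$. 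Since $\beta<\beta_2^{cr}$ now, the argument of Proposition~\thv(P.inter) goes through verbatim on the set $\O_1^+\cap\overline\O$ (the only change being the constant defining $\e_{N_1}$ in~\eqv(P.inter.13bis)). For (a), the computation of Subsection~\ref{tsp} (based on Lemma~\ref{lem} and the moment bounds~\eqv(oc6)--\eqv(oc9)) applies mutatis mutandis: one writes the expected time spent by $\bxn$ in $\VV_N\setminus\overline W$ as a sum over visits of $J^{\circ}_{N_1}$ to a non-top site $\s_1'$ and uses the REM hitting-time bound to reduce it to $\sum_{\s_1'\notin T_1}\g_1^N(\s_1')\cdot(\text{mean sojourn}/\tilde c^N)$, which vanishes as $M_1\to\infty$ uniformly in $N$ by the Poisson convergence~\eqref{g1ntog1}.

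The heart of the proof---and the main obstacle---is point (c), the identification of the limiting sojourn time. The total time spent on the $i$-th visit to $\xi_1^{x_1}$ is given by~\eqref{v1a}, with prefactor $N/(N_2+N_1e^{\beta\sqrt{aN}\Xi^{(1)}_{\xi_1^{x_1}}})$ tending to $1/(1-p)$ a.s., and with geometric $\cg$ of success probability $q_N^*(\xi_1^{x_1})=\frac{N_1}{N_2}e^{-\beta\sqrt{aN}\Xi^{(1)}_{\xi_1^{x_1}}}(1+o(1))$. By the classical extremal statistics for $\Xi^{(1)}_{\xi_1^{x_1}}$, one has $c_1^N\cg\Rightarrow \frac{1-p}{p}\g_1(x_1)\,T$ with $T$ standard exponential, jointly with the environment. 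Conditionally on the environment, I would then invoke the REM law-of-large-numbers of Lemma 10.8 in the arXiv version of~\cite{G15}, whose hypotheses translate in the present notation to $\beta\sqrt{(1-a)N}<\beta_{*}\sqrt{N_2}\cdot(\text{valid range})$, which precisely amounts to $\beta<\beta_{int}=2\sqrt{ap}\beta_*/(1-a)$. This yields~\eqref{v1b}, so that multiplying by the prefactor $1/(1-p)$ and by the scaling $\tilde c^N$ gives, for each fixed $t>0$, that the sojourn time up to $t/c_1^N$ steps of $J^\circ_{N_2}$ rescaled by $\tilde c^N$ converges in probability to $t/(1-p)$; composing with the independent limit $c_1^N\cg\Rightarrow \frac{1-p}{p}\g_1(x_1)T$ produces a sojourn time of $\hat\g_1(x_1)\,T$ as required.

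The delicate points will be: first, verifying the hypotheses of Lemma 10.8 of~\cite{G15} in the intermediate phase and justifying the joint convergence with the environment (so that the LLN holds on a set of full measure in the randomness of $\Xi^{(2)}$, uniformly over $x_1\in\cm_1$, for every fixed realization of $\g_1,\g_2$ in the almost-sure Skorohod representation of Remark~\ref{sko}); second, checking that the \emph{non-top} first-level sojourn times are not only small in mean (as in step (a)) but are also controlled uniformly along visits up to time $t/\tilde c^N$, which requires a maximal-inequality tightness argument analogous to Subsection~\ref{tspe} after one observes that, in this regime, $\tilde c^N\gg c_2^N$ so that the scaling absorbs any polynomially bounded number of excursions. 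Once these two technical points are in place, assembling (a)--(c) into the stated weak convergence $X_1^N(\cdot/\tilde c^N)\Rightarrow\ck(\hat\ff,1)$ is a standard verification of finite-dimensional distributions plus $J_1$-tightness, exactly as in Subsection~\ref{conc}.
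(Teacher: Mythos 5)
Your proposal is correct and follows essentially the same route as the paper: the same three-step decomposition (negligible off-top time via a first-moment bound, asymptotically uniform transitions on $T_1$ from the REM-like estimates, and sojourn times identified through the prefactor $\to 1/(1-p)$, the convergence $c_1^N\cg\Rightarrow\frac{1-p}{p}\g_1(x_1)T$, and the law of large numbers of Lemma 10.8 of \cite{G15} under exactly the condition $\b<\b_{int}$), assembled by the usual constancy-interval matching in Skorohod space. The one ingredient the paper supplies that your sketch elides is in step (a): controlling $\sum_{\s_1'\notin T_1}\g_1^N(\s_1')$ uniformly requires convergence of the full level-one partition sum (the paper cites Theorem 9.2 of \cite{BS02}, valid since $\a_1<1$), and not merely the point-process convergence~(\ref{g1ntog1}), which by itself does not control the tail of small weights.
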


We have indeed given all of the main ingredients of the proof above, except for an estimate establishing that 
$X_1^N(\cdot/\tilde c^N)$ spends virtually all of its time on the first level top configurations. This can be done as
in the proofs of the Theorems~\ref{thm:cm},~\ref{thm:cft} and~\ref{thm:cl}, by a first moment estimate. For that to work,
we resort to a further known result, namely that the first level marginal of the Gibbs measure converges to the normalization of
$\hat\ff$. This is given by Theorem 9.2 of~\cite{BS02}; indeed that result is stated for intermediate temperatures for the case
where $p=1/2$, but one may readily check that it holds in general. 

In order that the conditions on $\beta$ above are not empty, we need of course $\b_{int}>\b^{cr}_1$, which is equivalent to $a>1/3$;
in this case, it may or may not happen that $\b_{int}<\b^{cr}_2$; in the former case,~(\ref{int}) holds only in a (nonempty) subinterval of $(\b_1^{cr},\b_2^{cr})$, namely $(\b_1^{cr},\b_{int})$; otherwise, it holds in the full intermediate interval.
Perhaps interestingly, the latter case makes $\b_{FT}<\b_2^{cr}$, and then, as pointed out above -- see Remark~\ref{nocond1} --, 
we are effectivey in the {\em above fine tuning} regime when $\b>\b_2^{cr}$; Theorems~\ref{thm:cl} and~\ref{thm:inter} then tell
us that the scaling limit of $X_1$ is distributed as essentially the same $K$ process in $(\b_1^{cr},\b_2^{cr})$ or above $\b_2^{cr}$.

If $\b_{int}<\b<\b^{cr}_2$, then Theorem 1.1 in~\cite{G15} tells us that the left hand side of~(\ref{v1b}) converges weakly to a stable subordinator instead. This signals aging, and thus the time scale is not extreme. 
This clarifies a point raised in Subsubsection~\ref{intermediate}.

Under the conditions of the above theorem, the following aging result readily follows in the same way as in the previous section 
\begin{equation}\label{age5}
\lim_{t_w,t\to0\atop{t/t_w\to\theta}}\tilde\Pi_1(t_w,t_w+t)=
\asl_{\a_1}(1/(1+\theta)),
\end{equation}
where $\tilde\Pi_1(t_w,t_w+t)$ is the probability that $\tilde X_1$ gives no jump within $(t_w,t_w+t)$.

\section{Appendix.}\label{app} 

\subsection{K-processes}\label{kp}

Let $\md$ be a countably infinite set, and let $\infty$ denote a point not in $\md$, 
and make $\bar\md=\md\cup\{\infty\}$. Let $\ff,\ww:\md\to(0,\infty)$ be such 
that 
\begin{equation}
\label{eq:sumff}
\sum_{x\in\md}\ww(x)=\infty,\quad\sum_{x\in\md}\ww(x)\ff(x)<\infty.
\end{equation} 

Consider $\{\cc_{x},\,x\in\md\}$, an independent family of Poisson counting processes
such that $\cc_{x}$ has intensity  $\ww_{x}$  for each $x\in\md$, 
with associated point processes $\cs=\{(\theta_{x}(i),\,i\geq1),\,x\in\md\}$ 
(the {\em event times} of the respective counting processes). Let $\o:\R^+\to\bar\md$ 
be such that $\o(\theta_{x}(i))=x$ for $x\in\md$, $i\geq1$, and $\o(s)=\infty$ if $s\notin\cs$. 
We note that $\o$ is well defined almost surely. 
Let $\{T_s,\,s\in\R^+\}$ be an iid family of mean 1 exponential random variables.
Let now $\nu$ be an atomic measure on $\R^+$ concentrated on $\cs$ as follows
\begin{equation}
\label{mu}
\nu(\{s\})=\ff(\o(s))\,T_s,\, s\in\cs,
\end{equation}
and let $\G$ be its distribution function, namely, $\G:\R^+\to\R^+$ is such that
\begin{equation}
\label{G}
\G(r)=\nu([0,r]),
\end{equation}
and let $\varphi$ be the right continuous inverse of $\G$. 
Then for $t\geq0$ let
\begin{equation}
\label{x1}
X(t):= \o(\varphi(t)).
\end{equation}

We call $X$ thus defined a K-process on $\bar\md$ with {\em waiting time} function $\ff$, and {\em weight} function $\ww$, starting at $\infty$. Notation: $X\sim\ck(\ff,\ww)$.
In the particular case where $\ww\equiv1$, we call $X$ a {\em uniform} K-process on $\bar\md$
with waiting time function $\ff$, and use the notation $X\sim\ck(\ff,1)$.
Also, we call $\G$ in~(\ref{G}) the clock process of $X$.
\medskip

We next define 2-level K-processes, as follows. Let $\ax$ be
a {\em uniform} K-process on $\bar\md$ with ($\ww\equiv1$ and) $\ff$ as above
such that~(\ref{eq:sumff}) is satisfied.
Let $\md'$ be a countably infinite set and as before
make $\bar\md'=\md'\cup\{\infty\}$. Let $\ff':\md\times\md'\to(0,\infty)$ be such that
\begin{equation}
\label{eq:sumffp}
\sum_{xy\in\md\times\md'}\ff(x)\ff'(xy)<\infty.
\end{equation} 

Let $\{\cc'_{x},\,x\in\md'\}$  be an iid family of intensity 1 Poisson counting processes,
independent of $\{\cc_{x},\,x\in\md\}$,
with associated point processes $\cs'=\{(\theta_{x}'(i),\,i\geq1),\,x\in\md'\}$.

Let $\o':\R^+\to\bar\md'$ 
be such that $\o'(\theta'_{x}(i))=x$ for $x\in\md'$, $i\geq1$, and $\o'(s)=\infty$ if $s\notin\cs'$. 
We note that $\o'$ is well defined almost surely.

Let $\nu'$ be an atomic measure on $\R^+$ concentrated on $\cs'$ as follows.
\begin{equation}
\label{mup}
\nu'(\{s\})=\ff'(\ax(s)\,\o'(s))\,T_s,\, s\in\cs',
\end{equation}
and let $\G'$ be its distribution function, and let $\varphi'$ be the right continuous inverse of $\G'$.
Then for $t\geq0$ let
\begin{equation}
\label{x}
X(t)=X_1(t)X_2(t):=
\ax(\varphi'(t))\,\o'(\varphi'(t)).
\end{equation}

We call $X=(X(t),\,t\geq0)$ a 2-level K-process (starting at $(\infty,\infty)$) 
on $\bar\md\times\bar\md'$
with waiting time functions $\ff,\ff'$. Notation: $X\sim\ck_2(\ff,\ff')$. 
This process was introduced in \cite{FGG} 
(with $\md=\md'=\N=\{1,2,\ldots\}$), 
where some of its properties and those of a finite volume
version were studied (and where illustrations of their construction can be found). 
It may be understood as two 1-level uniform K-processes arranged in hierarchies as follows. 

Given the realization of the (1-level) K-process $\ax$, let $\ci$ be the set of maximal 
intervals of constancy of $\ax$ (maximal time intervals where $\ax$ is constant), 
the second step of the above description amounts to 
constructing within each such interval, say $[a,b)$, a 1-level uniform K-process with waiting
time  function $\ff'(x,\cdot)$, where $x$ is the constant value of $\ax$ within that interval. 
This results in what can be seen as an excursion of a 2-level K-process $X=X_1X_2$ with 
$X_1\equiv x$. 
This excursion takes place within the time interval $[a', b')$, 
with $a'=\G'(a)$, $b'=\G'(b)$.
Outside the union of all such intervals, $X\equiv(\infty,\infty)$. 

We call $\G'$ the clock processes of $X$. We also call $\G_1:=\acute\G\circ\G'$ the 
clock process of $X_1$, where $\acute\G$ is the clock process of $\ax$.

\subsection{Auxiliary results for the proof of Proposition~\ref{paft}}

Let us fix $\eps,\eps_0,\eps_1>0$ satisfying the conditions of  Lemma \thv(L2.ls) and  Lemma \thv(rm2), and such that $\eps_0<\eps$. Let us recall the definition of the event $C_N$ from the paragraph containing \eqv(bin) and
 let $B_N$ be the event that  for all $\eta, \bar\eta\in T_1$, $\eta\neq\bar\eta$, \eqv(ls.15) holds.

\begin{lemma}\label{rm3}
	Let $\mj_2$ be the number of jumps of $\bxn_2$ till $\bxn_1$ reaches $\xi_1^{-1}(T_1)$.
	Given $\s_1\in\VV_{N_1}$ such that $d_1(\s_1, T_1)>\eps_0N_1$, then provided $B_N$ and $C_N$ occur, and $\bxn_1$ starts from $\xi_1^{-1}(\s_1)$, we have that $\P_{\s_1}(\mj_2\leq N^3)$ vanishes exponentially fast as $N\to\infty$, i.e., there exists $R>1$ such that
	\begin{equation}\label{bd1}
	\P_{\s_1}(\mj_2< N^3)\leq R^{-N}.
	\end{equation}
\end{lemma}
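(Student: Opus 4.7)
First, decompose $\mj_2=\sum_{i=0}^{\mj_1-1}G_i$, where $\mj_1$ is the number of $\bxn_1$-jumps until $\bxn_1$ first hits $T_1$, $X_i\in\VV_{N_1}$ is the $\bxn_1$-position right after its $i$-th jump (so $X_0=\s_1$), and $G_i$ is the number of $\bxn_2$-jumps between the $i$-th and $(i+1)$-th $\bxn_1$-jump. The structure of the jump chain \eqv(01.15) readily implies that $(X_i)_{i\ge 0}$ is a simple random walk on $\VV_{N_1}$ and that, conditional on this walk, the $G_i$'s are independent, with $G_i$ geometric of parameter $q^*_N(X_i)$, i.e., $\P(G_i\ge k\mid X_i)=(1-q^*_N(X_i))^k$. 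Since $d_1(\s_1,T_1)>\eps_0 N_1$ and the distance changes by $\pm 1$ at each $\bxn_1$-jump, $\mj_1\ge \eps_0 N_1$ almost surely.

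Second, set $m=\lfloor \eps_0 N_1/2\rfloor$ and let $\bar H$ be the first $i\ge 0$ with $d_1(X_i,T_1)\le m$. Then $d_1(X_{\bar H},T_1)=m$, whence $\mj_1\ge \bar H+m$, and each $X_{\bar H+k}$ for $k=0,\ldots,m-1$ lies within distance $2m-1<\eps_0 N_1$ of $T_1$, so the event $C_N$ applies at these sites. Consequently, for $k=0,\ldots,m-2$, $X_{\bar H+k}$ has at least $\eps_1 N_1$ neighbors $\s'_1$ with $q^*_N(\s'_1)\le re^{-\sqrt N}$; since $X_{\bar H+k+1}$ is a uniform neighbor of $X_{\bar H+k}$ chosen independently of the past, the indicator $I_k=\1\{q^*_N(X_{\bar H+k+1})\le re^{-\sqrt N}\}$ satisfies $\P(I_k=1\mid\FF_{\bar H+k})\ge \eps_1$. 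Hoeffding--Azuma applied to the submartingale $\sum_k (I_k-\eps_1)$ then gives $\P(K<\eps_1(m-1)/2)\le e^{-c_1 N}$ for $K:=\sum_{k=0}^{m-2}I_k$ and some $c_1>0$.

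To conclude, condition on the walk $(X_i)$ and on the event $\{K\ge \eps_1(m-1)/2\}$. Using $\mj_2\ge\max_{i:q^*_N(X_i)\le re^{-\sqrt N}}G_i$, the conditional independence of the $G_i$'s, and the estimate $\P(G_i<N^3\mid X_i)\le q^*_N(X_i)N^3\le re^{-\sqrt N}N^3$ valid whenever $q^*_N(X_i)\le re^{-\sqrt N}$, we obtain
\be
\P(\mj_2<N^3\mid\text{walk},\,K\ge\eps_1(m-1)/2)\le (re^{-\sqrt N}N^3)^{\eps_1(m-1)/2}=e^{-\Omega(N^{3/2})}.
\ee
Combining with the Hoeffding--Azuma bound yields $\P_{\s_1}(\mj_2\le N^3)\le e^{-c_1 N}+e^{-\Omega(N^{3/2})}\le R^{-N}$ for a suitable $R>1$ and all large $N$.

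The main obstacle is the bookkeeping around the stopping time $\bar H$: one needs to apply the strong Markov property to get the Hoeffding--Azuma estimate on $K$, and then condition on the whole walk to use the conditional independence of the $G_i$'s. The event $B_N$ plays no essential role beyond ensuring that the $\eps_0 N_1$-neighborhoods of distinct tops are well separated, which is automatic for small $\eps_0$.
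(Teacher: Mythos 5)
Your proof is correct and follows essentially the same strategy as the paper's: decompose $\mj_2$ into per-site geometric waits, use $C_N$ to locate $\Omega(N_1)$ good sites (with $q^*_N\leq re^{-\sqrt N}$) among the $\bxn_1$-jumps spent within the $\eps_0 N_1$-neighborhood of $T_1$, and observe that it is exponentially unlikely for all of the corresponding waits to fall below $N^3$. The paper implements the same idea with different bookkeeping, replacing your stopping time $\bar H$ and Azuma step by a coupling of the observed distance-to-$T_1$ chain to a lazy Ehrenfest chain started at $\eps_0 N_1$ together with an iid sequence of mixed (zero/geometric) waiting times, but the content and the source of the exponential decay are the same.
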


\begin{proof}
Let $\mj_1$ be the number of jumps of $\bxn_1$ till $\bxn_1$ reaches $\xi_1^{-1}(T_1)$. Then 
\begin{equation}
\label{jj1}
\mj_2\stackrel{d}{=}\sum_{j=0}^{\mj_1-1}\cg_j(J^\circ_{N_1}(j))
\geq\sum_{j=0}^{\mj_1-1}\cg_j(J^\circ_{N_1}(j))\,1_{\left\{d_1\left(J^\circ_{N_1}(j), T_1\right)\leq\eps_0N_1\right\}},
\end{equation} 
where $J^\circ_{N_1}$ is the jump chain of $\bxn_1$, namely, a simple symmetric discrete time random walk on $\VV_{N_1}$,
and $\cg_j(\s'_1)$, $j\geq0,\,\s'_1\in\VV_{N_1}$, are independent geometric random variables with success parameter
$q'_N:=re^{-\sqrt{N}}\wedge1$. The right hand side of~(\ref{jj1}) may be bounded stochastically from below as follows.
Notice that since $B_N$ occurs, the chain $d_1(J^\circ_{N_1}(\cdot), T_1)$ observed only when $J^\circ_{N_1}$ is at a distance at 
most $\eps_0N_1$ from $T_1$ may be identified (in distribution) to a Markov chain $\cz$ in $\{0,1,\ldots,\eps_0N\}$ 
which has the same transition probabilities as $\cy:=d'(J^\circ_N(\cdot),O)$, where $J^\circ_N$ is a simple symmetric random 
walk on a hypercube of dimension $N$, and $O$ is a given site of such a hypercube, except that $\cz$ is {\em lazy} at $\eps_0N$ --- the jumps of $\cy$ to the right of $\eps_0N$ are replaced by self jumps at $\eps_0N$ of $\cz$. 
Also, in this case, $\cz$ starts  from $\eps_0N$. 
Since $C_N$ occurs, every time $J^\circ_{N_1}$ gives a jump within 
distance at most $\eps_0N$ from $ T_1$, independent af all else, there is an at least $\eps_1$ probability it will land on a 
site $\s_1'\in\VV_{N_1}$ such that $q^*_N(\s_1')\leq q'_N$. The above justifies 
bounding stochastically the right hand side of~(\ref{jj1}) from below by
\begin{equation}
\label{jj2}
\mj_2':=\sum_{j=0}^{\mj_1'-1}\cg'_j\geq N^3\sum_{j=0}^{\mj_1'-1}1_{\{\cg'_j\geq N^3\}}
\geq N^3\sum_{j=0}^{\eps_0N-1}1_{\{\cg'_j\geq N^3\}},
\end{equation} 
where $\mj'_1$ is the number of jumps $\cy$ takes to reach $0$ starting from $\eps_0N$, and $\cg'_j$, 
$j=0,1,\ldots$, are iid random variables, independent of $\mj'_1$; $\cg'_0$ is a mixture of two 
random variables, the first of which, with weight $1-\eps_1$ is $0$, and the other, with weight $\eps_1$,
is geometric with success parameter $q'_N$.

Now, $P(\cg'_0\geq N^3)=\eps_0(1-q'_N)^{N^3}\geq\eps_0''>0$ for all large enough $N$, and~(\ref{bd1}) follows
with $R=(1-\eps_0'')^{-\eps_0}$. 
\end{proof}

\begin{remark}\label{rm4}
	From Theorem 3.1 of~\cite{BG08} 
	starting from any $\s_i\in\VV_{N_i}$, the probability that
	$\bxn_i$ does not go a distance $\frac12 N_i$ from $\s_i$ 
	before it returns to $\s_i$ is bounded above by  
	$1/N+4/N^2$.
\end{remark}

\begin{lemma}\label{rm5a}
        For ${\ci}$ as in \eqv(rel1) we have that
	\begin{equation}
	\label{ci0}
	\lim_{R\to\infty}\limsup_{N\to\infty}\P({\ci}\geq Rc_1^N2^{N_2})\to0.
	\end{equation}
\end{lemma}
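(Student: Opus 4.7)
The plan is to stochastically dominate $\ci$ from above by a geometric random variable. By the strong Markov property of $\bxn$ at the successive hitting times $\tau^i$ of $\cm_1$ by $\bxn_1$, for each $i \geq 1$,
\[
\P(A_i^c \mid \FF_{\tau^i}) = \phi(\bxn(\tau^i)), \qquad \phi(x_1 z_2) := \P_{z_2}\!\left(A_1^c \mid \bxn_1(0) = x_1\right),
\]
with $\FF_{\tau^i}$ the natural filtration of $\bxn$ up to $\tau^i$. If I can produce a quantity $\pi_* > 0$ (measurable in the environment $\omega$) such that $\sup_{x_1 \in \cm_1,\, z_2 \in \VV_{N_2}} \phi(x_1 z_2) \leq 1 - \pi_*$ and $\pi_* \geq c(\omega)/(c_1^N 2^{N_2})$ on a full-measure subset of $\Omega$, then iterating will give $\P(\ci > k) \leq (1-\pi_*)^k$; hence $\P(\ci \geq R\, c_1^N 2^{N_2}) \leq \exp(-cR)$, and the claim will follow by letting $N \to \infty$ and then $R \to \infty$.

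To establish such a lower bound on $1 - \phi(x_1 z_2)$, I would fix $x_1 \in \cm_1$ and $z_2 \in \VV_{N_2}$ and argue as follows. During a single visit of $\bxn_1$ to $x_1$, the number $\cg$ of jumps of $\bxn_2$ is geometric with success parameter $q^*_N(\xi_1^{x_1})$, independent of the underlying simple random walk $J^\circ_{N_2}$ on $\VV_{N_2}$ started at $z_2$. Setting $V_n = \#\{0 \leq k \leq n : J^\circ_{N_2}(k) \in \pi_2(T^{x_1})\}$, one has $1 - \phi(x_1 z_2) = \P_{z_2}(V_{\cg} \geq 1)$, which I would bound from below via the Paley-Zygmund inequality,
\[
\P_{z_2}(V_{\cg} \geq 1) \geq \frac{(\E_{z_2} V_{\cg})^2}{\E_{z_2} V_{\cg}^2}.
\]
Using the fast mixing of $J^\circ_{N_2}$ on $\VV_{N_2}$ (mixing time of order $N_2 \log N_2$, much shorter than $\E \cg \sim N_2 \gamma_1^N(x_1)/(N_1 c_1^N)$), I would then show $\E_{z_2} V_\cg \sim M_2 (\E \cg)/2^{N_2}$ and $\E_{z_2} V_\cg^2 \leq \E_{z_2} V_\cg(1 + \E_{z_2} V_\cg)(1+o(1))$, both uniformly in $z_2$. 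Since $c_1^N 2^{N_2} \to \infty$ above fine tuning, $\E_{z_2} V_\cg \to 0$, so the Paley-Zygmund ratio is at least a constant multiple of $\E_{z_2} V_\cg \sim M_2 \gamma_1(x_1)/(c_1^N 2^{N_2})$. Taking the minimum over the finite set $\cm_1$ and using that $\min_{x_1 \in \cm_1} \gamma_1(x_1) > 0$ almost surely will yield the required $\pi_*$.

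The main technical obstacle will be making these moment estimates genuinely uniform in $z_2 \in \VV_{N_2}$, which requires quantitative mixing bounds for $J^\circ_{N_2}$ on the hypercube strong enough to wash out starting-state dependence over the time horizon $\E \cg \sim 1/c_1^N$. The pointwise hypercube estimates underlying the analysis of Section~\thv(S3.3) --- in particular those feeding into Lemma~\thv(L3.ls) and Lemma~\thv(C5.ls), which rely on the bounds on the function $F_{N_2,d_2}$ imported from \cite{BG08} --- should supply the needed control: decomposing $\E_{z_2} V_n^2$ along pairs of times $(k,\ell)$ with $k < \ell$ and then integrating against the law of $\cg$ delivers the second-moment bound, while the first moment follows from a single-time application of the same estimates.
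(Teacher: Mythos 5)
Your proposal is correct in outline and shares the paper's skeleton — stochastic domination of $\ci$ by a geometric random variable whose success parameter is a uniform lower bound, of order $1/(c_1^N2^{N_2})$, on the probability that $\bxn_2$ finds $\cm_2$ during a single sojourn of $\bxn_1$ in $\cm_1$ — but you obtain that per-visit bound by a genuinely different method. The paper reduces the target to a \emph{single} worst-case point: it lower-bounds the per-visit success probability by $\P(\T^*<\cg^*)$, where $\T^*$ is distributed as the hitting time of a level-two configuration at distance $N_2$ from the starting point. This buys two things at once: by vertex-transitivity of the hypercube the law of $\T^*$ does not depend on the starting state $\s_2$, so uniformity in the start is free; and the quantity $\E^{\circ,2}[(1-q^*_N(\xi_1^{M_1}))^{\T^*}]$ is exactly the Laplace transform already controlled by Proposition \thv(P1.lev2) with $|A|=1$, yielding $\rho_N\gtrsim\psi_N\g_1^N(\xi_1^{M_1})$ and hence $\rho_Nc_1^N2^{N_2}$ bounded away from zero. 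Your Paley--Zygmund argument on the visit count $V_\cg$ to all of $\pi_2T^{x_1}$ is more elementary in spirit and even gives a slightly better (by a factor $M_2$, irrelevant here) success probability, but it transfers the burden to first- and second-moment estimates that must be made uniform in the starting state; the ingredients (Green's function bounds, separation of top points from Lemma \thv(L2.ls), the $F_{N_2,d_2}$ estimates of \cite{BG08}) are indeed available, as you note.

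One point to repair in the write-up: the claimed asymptotic $\E_{z_2}V_\cg\sim M_2\,\E\cg/2^{N_2}$ cannot hold uniformly in $z_2$ — for $z_2\in\pi_2T^{x_1}$ one has $V_\cg\geq1$ deterministically, and for $z_2$ at $O(1)$ distance from the top the first moment carries an extra additive contribution of order $N_2^{-j}$ from pre-mixing visits. This does not endanger the conclusion (for such $z_2$ the bound $\P(V_\cg\geq1)\geq\pi_*$ is immediate or follows from the hitting estimates directly), but those starting points must be split off before asserting uniformity; alternatively, keep the Paley--Zygmund ratio $(\E V_\cg)^2/\E V_\cg^2$ throughout and verify the second-moment bound $\E V_\cg^2\leq\E V_\cg(1+O(N_2^{-1})+\E V_\cg(1+o(1)))$ for all starting states, which suffices. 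Also note that the almost-sure lower bound $\g_1^N(\xi_1^{x_1})\geq(\ln N_1)^{-2/\a_1^N}$ of Lemma \thv(L5new.lev2) tends to zero, so the final step ``$\min_{x_1\in\cm_1}\g_1(x_1)>0$'' must be read through the Skorohod coupling of Remark~\ref{sko} (or as convergence in distribution of the environment), exactly as the paper does in \eqv(ci6).
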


\begin{proof}
We may stochastically bound ${\ci}$ from below by a geometric random variable with success parameter
$
\max_{x_1\in\MM_1}\max_{\s\in W^{x_1}}\P_{\s}(\cg^*>\T^*)
$
where $\T^*=\t_{\{\s'_2\in\pi_2T^{x_1} : d_2(\s'_2, \s_2)=N_2\}}$ and $\cg^*$ is a geometric random variable with success parameter $q^*_N(\xi_1^{x_1})$, independent of $\T^*$. Notice that the distribution of $\cg^*$ only depends on $\s$ through 
$x_1$, and that the distribution of $\T^*$ is independent of $\s$. Therefore we can drop the maximum over $\s\in W^{x_1}$ in the above formula. Furthermore the maximum over $x_1$ is achieved at $x_1=M_1$.
We then have
\begin{equation}
\label{ci2}
\max_{x_1\in\MM_1}\P(\cg^*>\T^*)=\E^{\circ,2}[(1-q^*_N(\xi_1^{M_1}))^{\T^*}]
=:\rho_N
\geq \frac{\psi_N\gamma^N_1(\xi_1^{M_1})}{1+\psi_N\gamma^N_1(\xi_1^{M_1})}(1+o(1)),
\end{equation}
where the inequality follows from \eqv(P1.lev2.2) and \eqv(L1new.lev2.1) (with $A=\{\xi_1^{x_1}\}$ and where in \eqv(P1.lev2.2'), $d_2=1$ and $F_{N_2,d_2}$ is absent).
The probability on the left hand side of~(\ref{ci0}) is  bounded above by
$(1-\rho_N)^{Rc_1^N2^{N_2}}$.
Since $\rho_N\to0$ as $N\to\infty$ and
\begin{equation}
\label{ci6}
\rho_Nc_1^N2^{N_2}\gtrsim\mbox{const }
\frac{N_2}{N_1}\,\g_1(M_1),
\end{equation}
indeed bounded away from zero as $N\to\infty$, the result follows.
\end{proof}

\begin{lemma}\label{lm:pis}
	(\ref{pis}) holds.
\end{lemma}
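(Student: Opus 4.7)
The plan is to reduce $\pi(x)$ to a Laplace transform of hitting times of the simple random walk $J^\circ_{N_2}$ on $\VV_{N_2}$. First I would observe that, during the first sojourn of $\bxn_1$ at $x\in\cm_1$, the process $\bxn_2$ performs an SRW on $\VV_{N_2}$ starting from the uniform measure $\mu_2$, and the number of steps $\cg$ it makes before $\bxn_1$ first jumps is geometric with success parameter $q:=q^*_N(\xi_1^x)$, independent of that walk; Lemmas~\thv(L1new.lev2) and~\thv(L5new.lev2) give $q\sim 1/[2^{N_2}\psi_N\g_1(x)]$. Writing $y=\xi_1^x\xi_2^{xy_2}$ and $\t_{y_2}$ for the hitting time of $y_2$ by $J^\circ_{N_2}$ under $\P_{\mu_2}$, and using $\P(\cg\geq k)=(1-q)^k$, one obtains $\pi(x)=\P_{\mu_2}(\t_{y_2}\leq\cg)=\E_{\mu_2}[(1-q)^{\t_{y_2}}]$.

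Next I would compute this generating function in closed form via the Ehrenfest formula~(\ref{fr1})--(\ref{fr2}). Setting $t=1-q$ and $t'=(N_2/2)q/(1-q)\sim N_2 q/2$, one has $\E^{\circ,2}_{\s_2}[t^{\t_{y_2}}]=B_{d_2(\s_2,y_2)}(t')/B_0(t')$. Averaging over $\s_2\sim\mu_2$ and using the binomial identity $\sum_i\binom{N_2}{i}B_i(\a)=\int_0^1 2^{N_2}u^{\a-1}du=2^{N_2}/\a$ gives the closed form $\E_{\mu_2}[t^{\t_{y_2}}]=1/[t'B_0(t')]$.

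The heart of the argument, and the main obstacle, is the asymptotic evaluation of $t'B_0(t')$ in the regime $t'\to 0$, $N_2\to\infty$, $2^{N_2}q=1/[\psi_N\g_1(x)]\to\infty$. Substituting $w=u^{1/t'}$ and then $s=-(\log w)/t'$, I would rewrite
\[
t'B_0(t')=\int_0^1(1+w^{1/t'})^{N_2}dw=t'\int_0^\infty(1+e^{-s})^{N_2}e^{-t's}ds.
\]
Splitting the $s$-integral at $s\sim\log N_2$, the region near $s=0$ contributes $\sim 2^{N_2+1}/N_2$ (via $(1+e^{-s})^{N_2}\sim 2^{N_2}e^{-N_2 s/2}$ and dominated convergence), while for $s\gg\log N_2$ the bound $(1+e^{-s})^{N_2}=1+o(1)$ gives a contribution $\sim 1/t'$. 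Combining, $t'B_0(t')=1+(2^{N_2+1}t'/N_2)(1+o(1))=1+2^{N_2}q\,(1+o(1))$, and hence
\[
\pi(x)=\frac{1}{t'B_0(t')}\sim\frac{1}{2^{N_2}q}\sim\psi_N\g_1(x)=\frac{N_2}{N_1}\frac{\g_1(x)}{c_1^N\,2^{N_2}},
\]
which is the first asymptotic of~(\ref{pis}).

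For the second asymptotic, $\hat\pi(x)\sim M_2\pi(x)$, I would use a Bonferroni sandwich: $M_2\pi(x)-\sum_{y_2\neq y_2'}\P_{\mu_2}(\t_{y_2}\leq\cg,\,\t_{y_2'}\leq\cg)\leq\hat\pi(x)\leq M_2\pi(x)$. By the strong Markov property at $\t_{y_2}\wedge\t_{y_2'}$ and the lack-of-memory of $\cg$, each joint probability is at most $2\pi(x)\cdot\E_{y_2}[(1-q)^{\t_{y_2'}}]$; applying the Ehrenfest formula with $d_2(y_2,y_2')\sim N_2/2$ on $\overline\O$ (Lemma~\thv(L2.ls)) yields $\E_{y_2}[(1-q)^{\t_{y_2'}}]=B_{d_2(y_2,y_2')}(t')/B_0(t')\sim 1/[t'B_0(t')]=(1+o(1))\pi(x)$, since a similar saddle-point analysis gives $B_{N_2/2}(t')\sim 1/t'$ while $B_0(t')\sim 2^{N_2}q/t'$. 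The Bonferroni correction is therefore $O(M_2^2\pi(x)^2)=o(M_2\pi(x))$, since $M_2\pi(x)\to 0$ above fine tuning. An alternative to the explicit integral asymptotics in the third paragraph would be to invoke Proposition~7.7 of~\cite{BG08} directly, which yields the same leading behavior $1/(1+2^{N_2}q)$ within the lumped-chain framework already used in the proof of Proposition~\thv(P1.lev2).
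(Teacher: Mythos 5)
Your proposal is correct and follows essentially the same route as the paper: reduce $\pi(x)$ to $\E_{\mu_2}[(1-q)^{\tau_{y_2}}]$, evaluate it in closed form as $1/[t'B_0(t')]$ via Kemperman's formula and the binomial identity, extract the asymptotics $1+2^{N_2}q(1+o(1))$ (the paper does this from the series $1+\l'_N\sum_i\binom{N_2}{i}(i+\l'_N)^{-1}$ rather than your integral substitution, but the two computations are equivalent), and then handle $\hat\pi$ by the same Bonferroni sandwich with the pair terms controlled through the Ehrenfest generating function at distance of order $N_2$. The only caveat is that for $d_2(y_2,y_2')$ merely close to $N_2/2$ (as guaranteed on $\overline\O$) one should claim only the upper bound $B_d(t')/B_0(t')=o(1)$ needed for the correction term, as the paper does with its crude $b=1/3$ bound, rather than the sharp asymptotic $\sim\pi(x)$.
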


\begin{proof}[Proof of Lemma \thv(lm:pis)]
We start by computing $\pi(x)$. Let $\cg$ denote the number of jumps of $\bxn_2$ before $\bxn_1$ leaves $x$.
$\cg$ is a geometric random variable with success parameter $q^*_N(\xi_1^{x})$, independent of $\t_{\s_2}$.
Then, 
setting $\l'_N=\frac{N_1}2\frac{c_1^N}{\g_1^N(x)}$, and applying~(\ref{fr1},\ref{fr2}),
we get
\begin{eqnarray}\nn
\pi(x)&=&\frac1{2^{N_2}}\sum_{\s'_2\in\VV_{N_2}} \P_{\s'_2}(\cg\geq\t_{\s_2})
=\frac1{2^{N_2}}\sum_{\s'_2\in\VV_{N_2}} \E^{\circ,2}_{\s'_2}[(1-q^*_N(\xi_1^{x}))^{\t_{\s_2}}]\\\nn
&=&\frac1{B_0(\l'_N)}\frac1{2^{N_2}}\sum_{i=0}^{N_2}\sum_{\s'_2\in\VV_{N_2}:d(\s_2,\s_2')=i}\!\!\!\!\!\! B_i(\l_N')
=\frac1{B_0(\l'_N)}\frac1{2^{N_2}}\sum_{i=0}^{N_2}{N_2\choose i} B_i(\l_N')
\\\nn
&=&\frac1{B_0(\l'_N)}\int_0^1u^{\l_N'-1}du\frac1{2^{N_2}}\sum_{i=0}^{N_2}{N_2\choose i} (1-u)^i(1+u)^{N_2-i}
\\\label{ap6}
&=&\frac1{B_0(\l'_N)}\int_0^1u^{\l_N'-1}du=\frac1{\l'_NB_0(\l'_N)}
=\frac1{1+\l'_N\sum_{i=1}^{N_2}{N_2\choose i}\frac1{i+\l_N'}},
\end{eqnarray}
and the first claim of~(\ref{pis}) follows upon noticing that the sum in the denominator on the 
right hand side of~(\ref{ap6}) is $\sim 2^{N_2+1}/N_2$.

As for the second claim, we write
\begin{equation}\nn
\hat\pi(x_1)=\P(\cup_{x_2\in\cm_2}H_{x_2}|\bxn_1(0)=x_1),
\end{equation}
where $H_{x_2}$ is the event that $\bxn_2$ hits $x_2$ before the first jump of $\bxn_1$. 
By the Bonferroni inequalities, we have that
\begin{equation}\label{bd}
0\leq\!\!\sum_{x_2\in\cm_2}\!\!\P(H_{x_2}|\bxn_1(0)=x_1)-\hat\pi(x_1)\leq\!\!\!\!
\sum_{x_2,x_2'\in\cm_2\atop{x_2\ne x_2'}}\!\!\!\!
\P(H_{x_2}\cap H_{x'_2}|\bxn_1(0)=x_1).
\end{equation}
Since the summands on the central expression above are identically equal to $\pi(x_1)$, and the expression 
on the right hand side equals
\begin{equation}\nn
\pi(x_1)\sum_{x_2,x_2'\in\cm_2\atop{x_2\ne x_2'}} \P(H_{x'_2}|\bxn_1(0)=x_1,H_{x_2}),
\end{equation}
it is enough to argue that each summand in the expression above is an $o(1)$. But, given Lemma \thv(L2.ls) above,
each such summand is, apart form an $o(1)$ error, the probability that, starting from the origin, an Ehrenfest 
chain on $\{0,\ldots,N_2\}$ passes by $bN_2$, with $b=1/3$, before an independent time which is geometrically 
distributed with success probability $q^*_N(x_1)$. 
Writing that probability as a moment generating function as above (see e.g.~the first equality in~(\ref{ci2})),
and applying~(\ref{fr1},\ref{fr2}), we have that that equals
\begin{equation}\label{app2}
\frac1{B_0(\l'_N)}\sum_{j=0}^{\bar b N_2}{\bar b N_2\choose j}\frac{\G(bN_2+1)\G(j+\l_N')}{\G(bN_2+1+j+\l_N')},
\end{equation}
where 
$\bar b=1-b$. The quotient inside the latter sum is bounded above by 1, 
and thus~(\ref{app2}) is bounded above by
\begin{equation}\label{app3}
\frac1{\l'_NB_0(\l'_N)}+\frac1{B_0(\l'_N)}\sum_{j=1}^{\bar b N_2}{\bar b N_2\choose j}.
\end{equation}
As we saw above the first term of this sum is 
$\sim\g_1(x_1)\frac{N_2}{N_1}\frac1{c_1^N2^{N_2}}$, which is an $o(1)$. The second term is readily
checked to also be an $o(1)$, and  
the claim is established.
\end{proof}

\bibliographystyle {abbrv}      
%

\def\cprime{$'$}

\end{document}